\pdfoutput=1  
\documentclass[11pt,reqno]{amsart}
\usepackage[utf8]{inputenc}
\usepackage{amsmath,amssymb,amsthm}
\usepackage{booktabs}
\usepackage{array}
\usepackage{longtable}
\usepackage[table]{xcolor}
\definecolor{hproved}{HTML}{2A78D6}   
\definecolor{hverif}{HTML}{EB6834}    
\definecolor{hband}{HTML}{F2F1EC}     
\definecolor{hext}{HTML}{6B6560}      
\definecolor{hrule}{HTML}{B9B7AE}     
\newcommand{\stproved}{\textcolor{hproved}{\small proved}}
\newcommand{\stverif}{\textcolor{hverif}{\small verified}}
\newcommand{\stext}{\textcolor{hext}{\small external}}
\newlength{\keyw}
\newcommand{\keybox}[1]{%
  \par\medskip\noindent
  \setlength{\keyw}{\linewidth}%
  \addtolength{\keyw}{-4\fboxsep}%
  \addtolength{\keyw}{-4\fboxrule}%
  \fcolorbox{hrule}{hband}{\parbox{\keyw}{\smallskip #1\smallskip}}%
  \par\medskip}
\usepackage{graphicx}
\usepackage[margin=3.1cm]{geometry}
\usepackage[colorlinks=true,linkcolor=blue!55!black,citecolor=blue!55!black,urlcolor=blue!55!black]{hyperref}
\usepackage{caption}
\makeatletter
\renewcommand\section{\@startsection{section}{1}%
  \z@{\linespacing\@plus\linespacing}{.5\linespacing}%
  {\normalfont\large\bfseries\centering}}
\makeatother

\theoremstyle{plain}
\newtheorem{theorem}{Theorem}[section]
\newtheorem{lemma}[theorem]{Lemma}
\newtheorem{corollary}[theorem]{Corollary}
\newtheorem{proposition}[theorem]{Proposition}
\newtheorem{conjecture}[theorem]{Conjecture}
\theoremstyle{definition}
\newtheorem{remark}[theorem]{Remark}

\newtheorem{problem}[theorem]{Problem}

\newtheorem{observation}[theorem]{Observation}

\newcommand{\ZZ}{\mathbb{Z}}

\newcommand{\zt}{\mu_t}
\newcommand{\Sp}{\operatorname{Sp}}
\newcommand{\spc}{\mathrm{sp}}
\newcommand{\Newt}{\operatorname{Newt}}
\newcommand{\sgn}{\operatorname{sgn}}

\begin{document}

\title[Torsion filters at a root-of-unity orbit]
{Schur polynomials twisted by roots of unity\\
and reciprocal pairs: torsion filters,\\
fusion quotients, and total unimodularity\\
at odd order}

\author{Carles Mar\'in}
\address{Independent researcher}
\email{karlesmarin@gmail.com}

\date{August 20, 2026}

\subjclass[2020]{Primary 05E05; Secondary 05B35, 05E10, 17B10, 20G05, 22E46}
\keywords{Schur polynomials, roots of unity, symplectic and orthogonal characters,
branching rules, maximal-rank subgroups, torsion elements, regular and principal elements,
divided differences, total unimodularity, matroids, interval matrices,
consecutive-ones property, Newton polytopes, cores and quotients}

\begin{abstract}
Write $\zt$ for the $t$-th roots of unity and $z^{\pm1}$ for $r$ free reciprocal pairs. We study
$\Phi_{t,r}(\beta)=s_{\lambda}(\zt,z^{\pm1})$, $\beta=\lambda+\delta$, and the question the
companion paper left open after $r=1$: when does it vanish?

We factor the evaluation into classical branching followed by a torsion filter, and the shape
depends on the parity of $t$: the point lies in the
orthogonal group with determinant $(-1)^{t+1}$. For odd $t$ it sits in the
identity component: an ordinary restriction $SO_{2R'+1}\downarrow SO_{2m'+1}\times SO_{2r}$,
the filter an odd orthogonal character at a principal element of order $h+1$. For even $t$
in the other: a twining, a virtual expansion, and a torsion element regular but
not principal; there we prove the filter, with its sign.

One description covers both: the filter is nonzero exactly when the shifted point is regular
semisimple. Both are minimal-level fusion projections: the even of type $C$, the odd's tensor sector of type $B$. Affine folding accounts for $0,\pm1$; what it does not survives as conjectures.

The highest surviving weight is the dominant
vertex of the numerator's Newton polytope minus the denominator's --- the latter proved, the
former conditional on a single-orbit property --- and the class there is conjecturally primitive,
$\pm$ the generator of the rank-one quotient. For odd $t$ and one $\Lambda$ that numerator is a signed transversal count in $\{0,\pm1\}$ by the
equal-rank character formula, leaving one division. We invert it in closed form, along an
arithmetic progression; the quotient is $\pm\epsilon_t\det M$ for an
explicit $0/{\pm}1$ matrix --- an interval matrix up to signs, hence totally unimodular, which
settles (L1). The fibre count is a permanent, odd only when $1$, so at a dominant index a multi-hit fibre sums to
zero. Two extremal statements remain. What is unproved is measured, in both parities.
\end{abstract}

\maketitle

\begin{center}
\fcolorbox{hrule}{hband}{%
\parbox{0.82\linewidth}{\small\smallskip
This paper belongs to an open-ended series on the evaluation of Schur polynomials at a full
root-of-unity orbit together with free reciprocal pairs. It continues the companion paper
\cite{PaperI}, which settled the case of a single free pair. The entries of the series refer to one
another by role rather than by number, so that a later entry does not renumber the earlier ones.
\smallskip}}
\end{center}

\medskip

\begin{figure}[h]
\centering
\includegraphics[width=\textwidth]{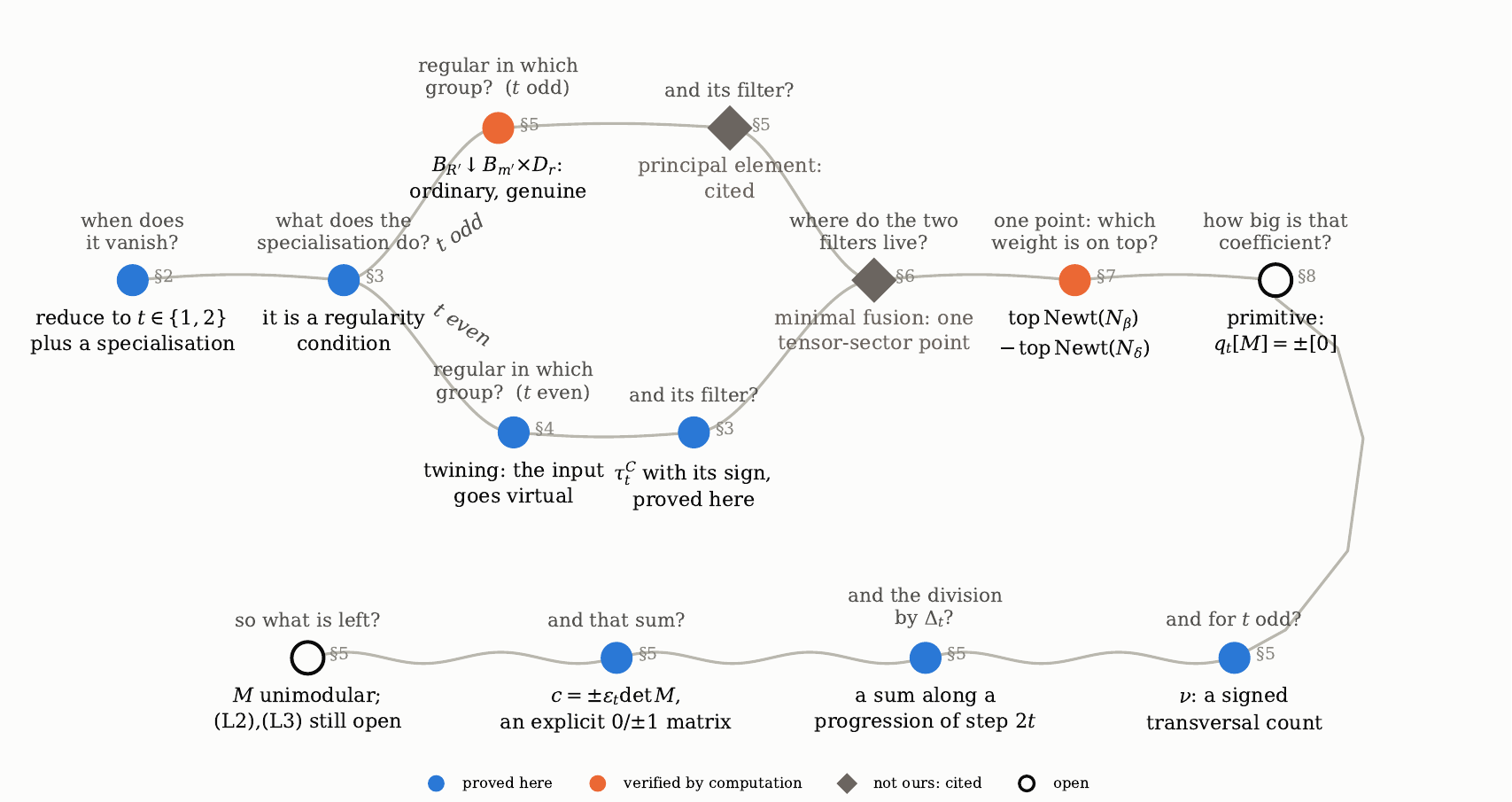}
\caption{The paper as one thread, with each question answered by the pearl below it and each question
picking up a word from the answer before. The thread \emph{forks}: once the filter is known to be a
regularity condition, the next question is \emph{regular in which group}, and the two parities answer
it differently --- an ordinary restriction $B\to B\times D$ on a genuine character above, a twining
$\rightsquigarrow C\times C$ on a virtual one below. They close again at the fusion quotient, where
the two filters turn out to be the same kind of object, and from there the chain is single-stranded
to the end. Pearls carry the colour of their status, because some are not ours: the affine folding is
standard at roots of unity, and the odd filter is a corollary of the principal-element theorem of
Nadimpalli, Pattanayak and Prasad, so both are drawn as cited. The main thread ends on an open pearl,
and that is not a flourish --- the factorisation is complete on both branches, and the extremal
coefficient is the part none of it explains. The two parities do not stop at the same distance from
it, so the thread grows a tail on the odd side. The tail wraps to the row below and is read
\emph{from right to left}: the numerator of that coefficient is a signed transversal count and is
proved (Proposition \ref{prop:transversal}); the division that is left inverts in closed form, as a
sum along an arithmetic progression (Proposition \ref{prop:divided}); that sum, regrouped, is a
determinant (Proposition \ref{prop:determinant}); and the unimodularity of that matrix is now proved
too (Lemma \ref{lem:c1p}, Corollary \ref{cor:unimodular}), so what is finally left is (L2) and (L3).
The general pearl stays open; the tail says how much of it is not.}
\label{fig:thread}
\end{figure}

\section{Introduction}

\subsection{The object}
Fix $t\ge 1$ and $r\ge 1$, put $N=t+2r$, let $\zeta=e^{2\pi i/t}$ and
$\zt=\{1,\zeta,\dots,\zeta^{t-1}\}$. For a strictly decreasing $\beta\in\ZZ^{N}$ \emph{normalised by} $\beta_N=0$ write
$\delta=(N-1,N-2,\dots,0)$ and $\lambda=\beta-\delta$, so that $\lambda$ is a partition; and set
\begin{equation}\label{eq:def}
\Phi_{t,r}(\beta)\;=\;\frac{\det\bigl(x_i^{\beta_j}\bigr)_{i,j=1}^{N}}
                          {\det\bigl(x_i^{\delta_j}\bigr)_{i,j=1}^{N}},
\qquad
x=(\underbrace{1,\zeta,\dots,\zeta^{t-1}}_{\zt},\;z_1,z_1^{-1},\dots,z_r,z_r^{-1}).
\end{equation}
This is the Schur polynomial $s_\lambda$ evaluated at a full root-of-unity orbit together with $r$
free reciprocal pairs. The normalisation $\beta_N=0$ is not cosmetic. Without it $\lambda$ is only a
weakly decreasing integer vector, possibly with $\lambda_N<0$, and the quotient is a Laurent rather
than a polynomial character; and replacing $\beta$ by $\beta+c(1,\dots,1)$ multiplies the numerator
by $\bigl(\prod_ix_i\bigr)^c=(-1)^{c(t+1)}$, which leaves the vanishing locus alone but flips signs
at even $t$. Since this paper pins signs, the normalisation has to be fixed once and kept. The companion paper \cite{PaperI} treated $r=1$ and gave a closed form; the vanishing locus for
$r\ge2$ was left open there, and is the subject of this paper.

\subsection{What is already known, and whose it is}
Two mechanisms meet in \eqref{eq:def} and both are classical. Evaluating a Schur function on a
root-of-unity orbit factors it through the $t$-quotient of $\lambda$, and it vanishes as soon as some
residue class of $\beta$ modulo $t$ is empty; this is Littlewood's and, in the form we use,
Ayyer--Kumari's \cite{LR34,AK22,Kum24} \stext{} --- and it has an antecedent that its own
rediscoverers are careful to name. Ayyer and Kumari record that Lecouvey \cite{Lec09a} \stext, in
what they call an \emph{almost unnoticed} work, had already generalised the classical character
factorisations, which they then found independently \cite{AK22}; and that \cite{Lec09b} \stext, in
their words \emph{another little-known work}, carried them to the universal characters of types
$B$, $C$ and $D$, where they were found independently again, by Albion \cite{Alb23}. What Lecouvey
proves there is worth stating, because it is the shape of this paper one level up. Writing
$p_\ell\circ s^{\mathfrak g}_\lambda=\sum_\mu a^{\ell,\mathfrak g}_{\lambda,\mu}s^{\mathfrak g}_\mu$
for the power-sum plethysm, in the stable range $n>\max(\ell\,l(\lambda),l(\lambda'))$ the
coefficients \emph{are branching coefficients}: his Theorem 4.5.1 reads
\[
a^{\ell,so}_{\lambda,\mu}
=\varepsilon(\mu)\,\bigl[V^{so_{2n+1}}(\lambda):V^{L_{\ell,\mu}}(\gamma_{\ell,\mu})\bigr],
\]
a signed multiplicity for restriction to a Levi $L_{\ell,\mu}$ that his \S4 algorithm produces
together with $\gamma_{\ell,\mu}$; at $\ell=2$ the Levi is $gl_n$ and his (23)--(24) expand the
multiplicity into Littlewood--Richardson coefficients, and the $q$-analogue, in which these become
parabolic Kazhdan--Lusztig polynomials, is \cite{Lec09a}. Four things separate that from
\eqref{eq:def}, and they are the four to keep in view: his is a plethysm and ours an evaluation; his
branching is to a Levi and ours to an equal-rank pair, which is not one; his coefficients are sums
of Littlewood--Richardson numbers and ours a filter with values in $\{0,\pm1\}$; and he works in a
stable range where we assume none.
The nearest current literature on classical characters
at roots of unity is \cite{Alb23,AK26,Albion} \stext, which revisits those specialisations systematically
but does not consider an alphabet mixing a full orbit with free reciprocal pairs. Evaluating on reciprocal pairs factors it into symplectic and orthogonal
characters; this is Ciucu--Krattenthaler and Ayyer--Behrend \cite{CK09,AB19} \stext, and the rectangular case is older
than either of those papers: Ayyer and Kumari record that specialising one half of the variables to
the reciprocals of the other half was considered for rectangular shapes by Okada \cite{Oka98}
\stext\ and by Ciucu--Krattenthaler \cite{CK09}, and generalised to self-dual shapes by
Ayyer--Behrend \cite{AB19}. We cite Okada on their authority; his paper is not one we have read.
What is peculiar to \eqref{eq:def}
is that both are present at once, and they meet at the fixed points of $x\mapsto x^{-1}$ inside
$\zt$: at $+1$ always, and at $-1$ exactly when $t$ is even.

We take no credit for either mechanism, nor for the two facts about them that we use most. The first
is that a character of a simple module evaluated at a suitable element of finite order is $0$ or
$\pm1$. For the Coxeter element that is Kostant's theorem \cite{Kostant,Prasad}. For a
\emph{principal} element of any order the governing statement is Theorem 4.1 of Nadimpalli,
Pattanayak and Prasad \cite{NPP} --- which is more general than a bound: it computes the value in
terms of the dimension of a representation of another group, and collapses to a unit only when that
group is a torus, which is our situation --- with the accompanying constant classified in general by
Polo \cite{Polo}. Which of these covers us
turns out to depend on the parity of $t$, and that is not a footnote but half of this paper: the odd
torsion element \emph{is} principal, so its filter is a corollary of \cite{NPP} and we cite it; the
even one is regular but not principal, no available theorem reaches it, and we prove the case we need
directly. The second fact is that a weight on a wall of the relevant affine Weyl group is killed
while a regular weight folds into the fundamental alcove with the sign of the folding element
(Andersen--Stroppel \cite{AndersenStroppel} \stext, in the form we quote it).

A third classical mechanism enters halfway through, and only on one branch. The pair
$SO_{2R'+1}\downarrow SO_{2m'+1}\times SO_{2r}$ that the odd side produces has \emph{equal rank}, and
equal-rank pairs have a character formula built for them: Gross, Kostant, Ramond and Sternberg
\cite{GKRS} \stext, with Kostant's cubic Dirac operator behind it \cite{Kostant99} and Landweber's
extension to loop groups \cite{Landweber}; the branching rule itself is Koike--Terada
\cite{KoikeTerada} \stext. Specialising that formula at our torsion element is ours; the formula is
not. The same division of labour applies to the operator language in which we then invert what it
leaves over: divided differences in Demazure's sense, in the form Harada, Landweber and Sjamaar give
them in equivariant K-theory \cite{HLS} \stext, together with the maximal-rank sequel of Landweber
and Sjamaar \cite{LS13} \stext, which is the setting our pair belongs to and in which our numerator
is a multiplet in their sense. What we add there is a coefficientwise inverse and a determinant, not
a theory.

\subsection{What this paper does}
The paper is a chain of six questions, each one picking up a word from the answer before it, and a
seventh that only one of the two branches can ask. Figure \ref{fig:thread} is that chain drawn ---
with the change the subject forces, which is that it \emph{forks} in the middle, closes again, and
then grows a short tail on the odd side.

\emph{When does it vanish?} \S\ref{sec:red} reduces every $t$ to $t\in\{1,2\}$ together with a
\textbf{specialisation}, and shows that the difficulty moves rather than shrinks.

\emph{What does the specialisation do?} \S\ref{sec:filter} computes it in closed form and then, in
Lemma \ref{lem:regular}, says what it is: three species of wall are one condition, and the filter is
nonzero exactly when the shifted torsion point is \textbf{regular} in the group.

\emph{Regular in \emph{which} group?} Here the chain forks, and \S\ref{sec:comp} and \S\ref{sec:odd}
walk the two branches. For even $t$ the evaluation point sits in the non-identity component of an
orthogonal group, the first step is a twining, and what enters the filter is a \emph{virtual}
character of $\Sp_{2R}$. For odd $t$ it sits in the identity component, the step is an ordinary
restriction $SO_{2R'+1}\downarrow SO_{2m'+1}\times SO_{2r}$, and what enters is a genuine one. Two
switches govern the difference, and Proposition \ref{prop:parity} names both: the determinant
$(-1)^{t+1}$, and whether $2$ is invertible modulo $t$. Each branch ends with its own \textbf{filter}.

\emph{And where do the two filters live?} \S\ref{sec:fusion} closes the fork. They are the same kind
of object: the minimal-level fusion projections of their types, leaving the tensor characters a
\textbf{single available point}. (The even alcove is that point; the odd alcove has a second one, and
it is spinorial --- so the odd fusion ring is not rank one, only its tensor sector is. The
qualification is not decoration: it is the whole content of \S\ref{sec:fusion} in the odd case.) That
is why both take the values $0,\pm1$: the target is a rank-one ring, and
every surviving weight folds to the same place, namely the vacuum --- checked directly in both
parities, where the survivors reach exactly one alcove point and it is the one $\eta=0$ reaches
\stverif.

\emph{If everything folds to one point, which weight is on top?} \S\ref{sec:top} answers with the
dominant vertex of the Newton polytope minus that of the denominator, the latter proved outright in
Proposition \ref{prop:newtden}; what remains conditional is that no cancellation reaches the top, and
that is isolated as Conjecture \ref{conj:A}. What comes out is a single extremal
\textbf{coefficient}.

\emph{And how big is that coefficient?} Always $\pm1$, on everything we can compute, in both
parities --- and nothing in the factorisation explains it. \S\ref{sec:unit} states it as Conjecture
\ref{conj:H}: the extremal class --- virtual on the even branch, a genuine multiplicity space on the
odd one --- is \textbf{primitive} in the rank-one quotient.

\emph{And how close is primitive to proved?} On the even side, not close. On the odd side, for one
$\Lambda$ at a time, one division. The pair
$SO_{2R'+1}\downarrow SO_{2m'+1}\times SO_{2r}$ has equal rank, so the character formula written for
such pairs applies; specialising it at the torsion point turns the numerator of that coefficient
into a signed count of \textbf{transversals} of the residue classes of the doubled shifted exponents
$2(\Lambda+\rho)$, with values in $\{0,\pm1\}$ by Proposition \ref{prop:transversal}. What is left
\emph{there} is to divide by the spinor denominator without creating a $2$ --- that is (L1).

\emph{And what \emph{is} that division?} \S\ref{sec:invert} answers it, and the answer changes the
category of the question. The denominator is $\psi^t(\Delta_1)$, the Adams image of a rank-one Weyl
denominator, so multiplying by it is a product of $r$ commuting centred difference operators; and on
functions of finite support that product inverts in closed form, as a sum along an arithmetic
\textbf{progression} of step $2t$ (Proposition \ref{prop:divided}). (L1) stops being a statement
about an algorithm --- a back-substitution that never doubles --- and becomes a statement about a
signed count on one \emph{fibre} of that sum --- one arithmetic progression, all of whose terms
land on the same $X$.

\emph{Then why does a fibre with more than one hit vanish?} It need not, for (L1) --- a fibre may
sum to $\pm1$ without vanishing and (L1) is still satisfied --- but on everything we compute it
vanishes, which is a strictly stronger statement and is Theorem \ref{thm:cancel}. What makes it
vanish is not a property of $M$ but the dominance with which the $X$ are enumerated, through Lemma
\ref{lem:laminar}; \S\ref{sec:whatcancels} keeps the mechanisms that failed first, because the
order in which they failed is what located the right one; Remark \ref{rem:nocircuit} records the
one that looks likeliest of all and does not work. Four mechanisms
are tested and fail: a product of independent binary choices, an involution exchanging one index,
pure Weyl antisymmetry, and affine folding --- the last killed by the observation that a shift by
$2t$ is a product of two affine reflections and so carries sign $+1$. What does close is a narrower
move, an exchange inside a single folded class with the chirality coupled to the folding sign; it
accounts for every cancelling pair. The \textbf{cancellation} itself is no longer open: it is Theorem
\ref{thm:cancel}, proved by a different route, through the permanent of Lemma \ref{lem:permanent} and
the parity $\operatorname{per}N\equiv\det M\pmod 2$. What stays open is only the \emph{bijective}
reading: assembling those moves into a fixed-point-free sign-reversing involution.

\emph{Is there a reading in which it is not a cancellation at all?} Yes, and it comes from noticing
that every attempt above sums over \emph{subsets} and then explains the sign separately. An element
of $W^1$ is not a subset but a signed permutation, and a signed sum over perfect matchings is a
determinant: $c=\pm\epsilon_t\det M$ for an explicit $0/{\pm}1$ matrix (Proposition
\ref{prop:determinant}). (L1) becomes total \textbf{unimodularity} of $M$ --- the first form of the
question with an existing body of technique behind it, and the first we can test exhaustively rather
than sample. At $t=3$, $r=2$ the nonzero-determinant content of (L1) is carried by three matrices.

Passing from one $\Lambda$ to Conjecture \ref{conj:H} still needs the two extremal statements (L2)
and (L3), and we do not have them either. \S\ref{sec:map} lays the four conjectures of this paper on
one page and says which implies which, because three of them circle the same difficulty from
different sides. That is the tail of Figure \ref{fig:thread}.

There the chain stops, and the rest of the paper is what one does when a chain stops: look at
the shapes that nearly break it, and try the same computation from the other side.
\S\ref{sec:top16} isolates the stratum of candidates that fail the closed form and asks what
separates them; \S\ref{sec:residue} measures what the classical criterion leaves behind, with one
constraint the locus satisfies for free; \S\ref{sec:square} computes the same coefficient the other
way round and proves that the two orders of processing commute, and at $t=4$ \S\ref{sec:comb} makes
the branching side completely combinatorial; and \S\ref{sec:where}
finds that the change of coordinates does not simplify the cancellation so much as remove it, which
is where we would look next. \S\ref{sec:answers} says what all of this settles of the questions the
companion paper left; \S\ref{sec:verif} is the verification table, \S\ref{sec:attr} separates what we
use from what we claim, \S\ref{sec:open} states the problems we are leaving open, and
\S\ref{sec:closing} says in one page what the whole paper turns on: three formalisms, each producing
the same parity.

Before any of that, here is the odd branch in one statement. It is assembled from results proved in
\S\ref{sec:odd} and nothing in it is new to the paper; what is new is that it is \emph{one}
statement, and a reader who wants the odd case and no more can stop after it.

\begin{theorem}[odd-order coefficient theorem]\label{thm:odd}
Let $t=2m'+1\ge3$ and $r\ge1$, fix a single $\Lambda$ and a \emph{dominant} regular coefficient
index $X=2(\mu+\rho_{D_r})$, $X_1>\dots>X_{r-1}>|X_r|$, and let $M=M(\Lambda,X)$ be
the explicit integer matrix of Proposition \ref{prop:determinant} and $N=N(\Lambda,X)$ its unsigned
companion of Lemma \ref{lem:permanent}. Then:
\begin{enumerate}
\item[\rm(i)] \emph{the coefficient is a determinant}:
$c(\Lambda,X)=\kappa_{t,r}\,\epsilon_t\,\det M$, with $\epsilon_t$ the filter constant and
$\kappa_{t,r}\in\{\pm1\}$ depending on $(t,r)$ alone;
\item[\rm(ii)] \emph{the matrix is totally unimodular}: after ordering the rows and changing signs by
row and by column, $M$ becomes a $0/1$ matrix whose ones are consecutive in every column; hence $M$
is totally unimodular and $\det M\in\{0,\pm1\}$, which settles \emph{(L1)};
\item[\rm(iii)] \emph{the fibre count is a permanent}: $|I_X|=\operatorname{per}N$, and this number
is odd only when it equals $1$;
\item[\rm(iv)] \emph{hence the multi-hit cancellation}:
\[
|I_X|>1\ \Longrightarrow\ \sum_{Y\in I_X}\tilde\nu(Y)=0,
\qquad\text{and}\qquad c(\Lambda,X)\in\{0,\pm1\};
\]
\item[\rm(v)] \emph{and all of it is one condition on a graph}: sending each interval column
$\mathbf1_{[a,b]}$ of $M$ to the oriented edge $e_a-e_{b+1}$ turns every block $B$, with $n$ rows,
into a multigraph $G_B$ whose non-loop part is contained in $K_{2,n-1}+\alpha\omega$, the zero
columns appearing as loops, and whose cycles are its dependencies; and
\[
c(\Lambda,X)\neq0\iff|I_X|=1\iff\text{every }G_B\text{ is a forest.}
\]
\end{enumerate}
\end{theorem}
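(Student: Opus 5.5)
The theorem is an assembly, so the plan is to chain the results of \S\ref{sec:odd} and \S\ref{sec:invert} in order. The only genuinely new bookkeeping is sign tracking and the graph reformulation in (v).

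For (i), I start from Proposition \ref{prop:transversal}. It writes the numerator of the extremal coefficient as a signed transversal count $\tilde\nu$ on the residue classes of $2(\Lambda+\rho)$, with values in $\{0,\pm1\}$. I then apply the closed-form inverse of Proposition \ref{prop:divided}: division by $\psi^t(\Delta_1)$ becomes a sum along an arithmetic progression of step $2t$, so $c(\Lambda,X)=\sum_{Y\in I_X}\tilde\nu(Y)$ up to the filter constant $\epsilon_t$. Proposition \ref{prop:determinant} regroups this fibre sum as a signed sum over signed permutations, that is, as $\pm\epsilon_t\det M$. What remains for (i) is to show the residual sign depends only on $(t,r)$. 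I would get this by fixing the normalisation $\beta_N=0$ and the ordering of rows of $M$ once. The sign then comes from two sources: the $\sgn$ of the Weyl element carrying the dominant index $X$ to the standard position, and the orientation convention of the $r$ commuting difference operators. Neither sees $\Lambda$ or $X$, and this defines $\kappa_{t,r}$.

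For (ii) I invoke Lemma \ref{lem:c1p}: after reordering rows and rescaling rows and columns by signs, every column of $M$ is an interval $\mathbf 1_{[a,b]}$ or zero. Interval matrices are network matrices, hence totally unimodular. Sign changes and permutations preserve total unimodularity, which gives Corollary \ref{cor:unimodular} and $\det M\in\{0,\pm1\}$, that is, (L1).

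For (iii), Lemma \ref{lem:permanent} identifies $|I_X|$ with $\operatorname{per}N$. The parity statement $\operatorname{per}N\equiv\det M\pmod 2$ holds because $N$ is $M$ with absolute values, and permanent and determinant agree mod $2$. By (ii), $\det M\in\{0,\pm1\}$. So if $\operatorname{per}N$ is odd then $\det M=\pm1$, and I must still show $\operatorname{per}N=1$. For this I use Lemma \ref{lem:laminar}: dominance of $X$ makes the supports of the columns laminar within each block. A laminar interval $0/1$ matrix with nonzero determinant has a unique perfect matching, found by peeling innermost intervals. Hence the permanent equals $1$ whenever it is nonzero mod $2$. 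Part (iv) then follows. If $|I_X|>1$, then $\operatorname{per}N$ is even by the above, so $\det M$ is even, so $\det M=0$ by total unimodularity, and by (i) the fibre sum vanishes. In all cases $c\in\{0,\pm1\}$.

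For (v), I send each column $\mathbf 1_{[a,b]}$ to the oriented edge $e_a-e_{b+1}$. This is the standard identification of an interval matrix, after the invertible differencing of rows, with the incidence matrix of a directed graph $G_B$ on $n+1$ vertices, with zero columns going to loops. Linear dependencies of columns then correspond exactly to cycles. A square block with $n$ columns therefore has $\det\neq0$ iff $G_B$ is a spanning tree, i.e.\ a forest with $n$ edges. The containment of the non-loop part in $K_{2,n-1}+\alpha\omega$ I would read off from the explicit column shapes of Proposition \ref{prop:determinant}: every interval starts at one of two fixed rows, except for one exceptional column $\alpha\omega$. Combining with (iii) and (iv) gives the chain of equivalences. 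The main obstacle is the uniqueness step in (iii), upgrading ``odd'' to ``equal to $1$''. That is where dominance must enter, and if laminarity alone is insufficient I would fall back to a direct induction on $n$ using the $K_{2,n-1}$ shape from (v).
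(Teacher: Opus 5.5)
Your route is the paper's own assembly: Proposition \ref{prop:determinant}, Lemma \ref{lem:c1p}, Lemma \ref{lem:permanent} with laminarity, then Theorem \ref{thm:cancel} and the block graph. Step (iii), however, has a genuine gap.

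You justify $\operatorname{per}N\equiv\det M\pmod 2$ by saying that $N$ is $M$ with absolute values. That is false at exactly the entries the paper singles out. Take a row with $t\mid V_i$ and the last free column, in the case $X_r<0$, $t\mid X_r$ and $V_i<|X_r|$. There both signs $\varepsilon=\pm1$ realise the slot, so $M_{i,r}=1-1=0$ while $N_{i,r}=2$.

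The congruence $N\equiv M\pmod 2$ survives, so $\operatorname{per}N$ odd still gives $\det M=\pm1$. But the laminar $0/1$ matrix to which you apply ``unique perfect matching'' is the signed form $\widehat M$ of Lemma \ref{lem:c1p}. What you obtain is therefore $\operatorname{per}|M|=1$, not $\operatorname{per}N=1$. When $N$ has a doubled entry it is not a $0/1$ matrix, and matchings running through that entry are counted by $N$ but not by $|M|$. Nothing in your argument rules out an odd $\operatorname{per}N\ge3$ there.

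The paper closes this case separately, as Lemma \ref{lem:doubled}, by splitting the doubled column $\mathbf 1+P$ multilinearly. A shorter repair fits inside your own framework:
\begin{enumerate}
\item[(a)] By dominance only $X_r$ can be negative.
\item[(b)] Hence every column of the class-$0$ block of $M$ is supported on the rows with $V_i>|X_r|$. These columns are the column $[V_i>|X_r|]$ of Step 5 of Lemma \ref{lem:c1p} and the suffixes $[V_i>X_j]$ with $X_j>|X_r|$.
\item[(c)] A doubled entry means at least one class-$0$ row lies below $|X_r|$. So that block cannot be square and nonsingular, and $\det M=0$.
\item[(d)] By the congruence, $\operatorname{per}N$ is then even.
\end{enumerate}
Only with this in hand does an odd $\operatorname{per}N$ force $N=|M|$, so that your uniqueness step applies.

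There are three smaller points.

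First, your uniqueness claim is true, but ``peeling innermost intervals'' does not prove it, since nothing local forbids an innermost interval of size $\ge2$. What works is the greedy count. Process the columns so that each comes after the columns it contains; a column $F$ then always has exactly $|F|-d(F)$ free rows, with $d(F)$ the number of earlier columns inside $F$, independently of earlier choices. You then need a parity argument showing that an odd product of such factors equals $1$. That is Lemma \ref{lem:sdr}.

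Second, in (v) the blocks of $M$ need not be square. So ``a square block is nonsingular iff $G_B$ is a spanning tree'' does not by itself show that forests force $\det M\neq0$. You need the count of Corollary \ref{cor:forest}: forests give $\#\mathrm{cols}\le\#\mathrm{rows}$ block by block, and squareness of $M$ forces equality.

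Third, in (i) the fibre sum equals $c$ up to $s_r$, not up to $\epsilon_t$, which already sits inside $\tilde\nu$. Also, the straightening signs of individual terms do depend on $(w,k)$; what is constant is the discrepancy between the column order of $M$ and the Weyl-coset order. Since you cite Proposition \ref{prop:determinant}, which contains the existence of $\kappa_{t,r}$, this does not damage the proof. The normalisation $\beta_N=0$ plays no role for a single $\Lambda$.
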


\noindent
Part (i) is Proposition \ref{prop:determinant}; (ii) is Lemma \ref{lem:c1p} with Corollary
\ref{cor:unimodular}; (iii) is Lemma \ref{lem:permanent} with Lemmas \ref{lem:laminar} and
\ref{lem:sdr}; (iv) is Theorem \ref{thm:cancel}; (v) is Proposition \ref{prop:graph} with Corollary
\ref{cor:forest}. What the theorem does \emph{not} contain is the passage from one $\Lambda$ to the
whole character, which is (L2) and (L3) and stays open.

\section{\texorpdfstring{The reduction to $t\in\{1,2\}$}{The reduction to t in \{1,2\}}}\label{sec:red}

\subsection{Both parities at once}
The involution $x\mapsto x^{-1}$ acts on $\zt$. Its fixed points are $+1$, and also $-1$ when $t$ is
even; every other element is matched with its inverse. Hence, writing
\[
t=2m+2 \quad(t \text{ even}),\qquad t=2m'+1 \quad(t \text{ odd}),
\]
we have the decompositions
\begin{equation}\label{eq:orbit}
\zt=\{1,-1\}\ \sqcup\ \{\xi^{\pm1},\dots,\xi^{\pm m}\}
\qquad\text{and}\qquad
\zt=\{1\}\ \sqcup\ \{\xi^{\pm1},\dots,\xi^{\pm m'}\},
\end{equation}
with $\xi=\zeta$. The right-hand blocks are reciprocal pairs, indistinguishable in \eqref{eq:def}
from the free ones except that they are pinned at roots of unity. Reading \eqref{eq:def} with the
pinned pairs counted among the free ones gives at once:

\begin{proposition}\label{prop:red}
Let $R=r+m$ and $R'=r+m'$. For the same $\beta$,
\[
\Phi_{t,r}\;=\;\Phi_{2,R}\Big|_{\,y=(\xi,\xi^2,\dots,\xi^{m})}\quad(t\text{ even}),
\qquad
\Phi_{t,r}\;=\;\Phi_{1,R'}\Big|_{\,y=(\xi,\xi^2,\dots,\xi^{m'})}\quad(t\text{ odd}),
\]
where $y$ denotes the first $m$ (resp.\ $m'$) free pairs of the right-hand object and the remaining
$r$ are left free. In both cases $N$ is unchanged: $2+2R=t+2r$ and $1+2R'=t+2r$.
\end{proposition}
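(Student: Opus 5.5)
The statement to prove is Proposition~\ref{prop:red}, and the plan is to read \eqref{eq:def} literally: the alphabet $x$ is an unordered multiset of $N$ variables, and the bialternant quotient is symmetric in them. Step one is to observe that the numerator and the Vandermonde-type denominator are both alternating in the rows. A permutation of $x$ therefore multiplies each by the same sign, and $\Phi_{t,r}(\beta)$ depends only on the multiset $\{x_i\}$. Strictly, this holds first as an identity of polynomials, namely the Schur polynomial $s_\lambda$, since $\beta_N=0$ makes $\lambda$ a partition. It then holds as an evaluation even where the denominator vanishes, because we interpret $\Phi$ as $s_\lambda(x)$. This interpretation must be stated explicitly: the orbit $\zt$ together with free $z^{\pm1}$ has distinct entries generically, but after specialising $y$ it may coincide with other entries only at isolated values, and the polynomial identity covers those too.

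Step two is the decomposition \eqref{eq:orbit}. For $t=2m+2$, the involution $\zeta^k\mapsto\zeta^{-k}$ fixes exactly $k=0$ and $k=t/2$, giving $1$ and $-1$. The remaining $2m$ elements pair as $\zeta^{k},\zeta^{t-k}$ for $1\le k\le m$, and $\zeta^{t-k}=(\zeta^k)^{-1}$. For $t=2m'+1$ only $k=0$ is fixed, and the remaining $2m'$ elements pair as $\zeta^{\pm k}$ for $1\le k\le m'$. Hence, as multisets,
\[
x=\{1,-1\}\sqcup\{\xi^{\pm k}\}_{k\le m}\sqcup\{z_j^{\pm1}\}_{j\le r}
\quad\text{and}\quad
x=\{1\}\sqcup\{\xi^{\pm k}\}_{k\le m'}\sqcup\{z_j^{\pm1}\}_{j\le r},
\]
with $\xi=\zeta$ and the pair index written as an exponent $\xi^k$.

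Step three is to compare this with the alphabet defining $\Phi_{2,R}$. That alphabet is $\mu_2=\{1,-1\}$ followed by $R$ free reciprocal pairs $w_1^{\pm1},\dots,w_R^{\pm1}$. Its length is $2+2R=2+2m+2r=t+2r$, so it has the same $N$ and the same $\delta$, and hence the same $\beta$ and $\lambda$ are admissible. Substituting $w_k=\xi^k$ for $k\le m$ and $w_{m+j}=z_j$ reproduces the multiset above. By step one, $\Phi_{2,R}$ evaluated at this substitution equals $\Phi_{t,r}$. The odd case is identical with $\mu_1=\{1\}$, $R'=r+m'$, and $1+2R'=t+2r$.

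There is no serious obstacle. The only point needing care is the normalisation of signs: the same $\beta$ with $\beta_N=0$ is used on both sides, and the ordering of the alphabet is irrelevant because of the simultaneous sign change in step one. With that normalisation no global factor $\bigl(\prod x_i\bigr)^c$ arises, so the identity holds exactly, with no sign flip at even $t$.
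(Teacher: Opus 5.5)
Your proof is correct and is essentially the paper's argument: the paper treats Proposition~\ref{prop:red} as a regrouping of one and the same alphabet via \eqref{eq:orbit}, counting the pinned pairs among the free ones. You spell out the two points the paper leaves tacit: the bialternant quotient depends only on the multiset of entries, since reordering rows multiplies numerator and denominator by the same sign, and it equals the polynomial $s_\lambda$, so the specialisation $y=(\xi,\dots,\xi^{m})$ is legitimate.
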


Proposition~\ref{prop:red} is a regrouping of one and the same alphabet, not a computation, and we
record it as such: it is an accounting identity whose content is entirely in what it makes possible.
It was verified as an implementation control on $6435$ forms for even $t$ and on $260$ forms at
$t=3$, in both cases with no exception \stverif, against a decoy comparing each specialisation with
the object of a neighbouring $\beta$ (which disagrees in $258$ of $259$ consecutive pairs).

\begin{remark}
The two halves of \eqref{eq:orbit} are not symmetric, and the asymmetry is the single structural
difference between the parities: for even $t$ the second fixed point $-1$ is present, for odd $t$ it
is not. Everything below that distinguishes the two cases descends from this.
\end{remark}

\subsection{What the reduction does not do}
The reduction does not make the problem easier by itself. On the populations of \S\ref{sec:residue},
the object $\Phi_{2,R}$ vanishes for only a small fraction of the $\beta$ for which $\Phi_{t,r}$
does: at $t=3$, of the six vanishing forms, none is inherited and all six are created by the
specialisation. The reduction relocates the problem rather than solving it, and the located problem
is the specialisation.

\noindent
One convention before the two branches separate. The cases $t=1$ and $t=2$ are the rank-zero base
cases: there is no frozen block, $\Phi_{t,r}$ is the unspecialised object, and the reduction says
nothing. From here on, statements involving $C_m$ are to be read for even $t\ge4$ and statements
involving $B_{m'}$ for odd $t\ge3$; the rank-zero groups $C_0$ and $B_0$ are trivial and we do not
name them again.

\section{The torsion filter}\label{sec:filter}

\subsection{The functional}
\emph{Throughout this section $t$ is even}, $t=2m+2$. The odd filter is a different functional on a
different group and is treated in \S\ref{sec:odd}; the two agree nowhere except trivially, and
\eqref{eq:tau} below must not be read at odd $t$ --- see Remark \ref{rem:notodd}.

For a dominant weight $\eta$ of $\Sp_{2m}$ (that is, $\eta_1\ge\dots\ge\eta_m\ge0$) put
\begin{equation}\label{eq:tau}
\tau^C_t(\eta)\;=\;\spc_\eta(\xi,\xi^2,\dots,\xi^{m}),
\qquad
a_j=\eta_j+m-j+1 \quad (1\le j\le m),
\end{equation}
where $\spc_\eta$ is the irreducible symplectic character. Since the eigenvalues of the corresponding
group element are $\xi^{\pm1},\dots,\xi^{\pm m}$, all distinct and different from $\pm1$, the element
is regular of order $t$; its order exceeds the Coxeter number $h=2m$ of $C_m$ by two. That
$\tau^C_t(\eta)\in\{0,\pm1\}$ is \emph{not} quoted here from anywhere: it is proved outright in Lemma
\ref{lem:T} below, by the bialternant. Kostant's theorem \cite{Kostant,Prasad} is the conceptual
ancestor of that species of conclusion and not a statement that covers our element --- see Remark
\ref{rem:kostant}. What we need, and what the following makes explicit, is \emph{which} value.

\begin{lemma}[the filter, with its sign]\label{lem:T}
Let $t=2m+2$ be even and let $a=(a_1,\dots,a_m)$ be as in \eqref{eq:tau}. For each $j$ set
$c_j=a_j \bmod t$. Then
\[
\tau^C_t(\eta)\neq0
\iff
\text{no } c_j\in\{0,\tfrac t2\}
\ \text{ and the classes }\ \min(c_j,t-c_j)\ \text{ are pairwise distinct,}
\]
in which case they are a permutation of $\{1,2,\dots,m\}$ and, writing
$\varepsilon_j=+1$ if $c_j\le m$ and $-1$ otherwise, and $\sigma$ for the permutation
$j\mapsto \min(c_j,t-c_j)$ read against the decreasing order $m,m-1,\dots,1$,
\[
\boxed{\ \tau^C_t(\eta)\;=\;\sgn(\sigma)\prod_{j=1}^{m}\varepsilon_j\ }
\]
\end{lemma}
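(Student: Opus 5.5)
The plan is to work directly with the Weyl bialternant for $C_m$, with no appeal to Kostant's theorem. Write the symplectic character as
\[
\spc_\eta(x_1,\dots,x_m)\;=\;\frac{\det\bigl(x_i^{a_j}-x_i^{-a_j}\bigr)_{i,j=1}^m}{\det\bigl(x_i^{m-j+1}-x_i^{-(m-j+1)}\bigr)_{i,j=1}^m},
\]
with $a_j=\eta_j+m-j+1$ as in \eqref{eq:tau}. The denominator is the same expression with $\eta=0$, whose exponents are $m,m-1,\dots,1$. Then specialise to $x_i=\xi^i$ with $\xi=e^{2\pi i/t}$ and $t=2m+2$. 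Every entry becomes $x_i^{a}-x_i^{-a}=2i\,\sin(2\pi i a/t)$. Hence both matrices are, up to the common scalar factor $(2i)^m$, column selections from one function
\[
f_i(a)=\sin(2\pi i a/t),\qquad 1\le i\le m,\ a\in\ZZ.
\]

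The key step is to record the symmetries of $a\mapsto f_i(a)$, which hold uniformly in $i$. First, it depends only on $c=a\bmod t$. Second, it is odd: $f_i(t-c)=-f_i(c)$. Third, it vanishes when $c\in\{0,t/2\}$, since then $2\pi i c/t\in\pi\ZZ$. Consequently the $j$-th numerator column equals $\varepsilon_j$ times the column of $f_i$ at the representative $b_j=\min(c_j,t-c_j)\in\{0,1,\dots,m+1\}$. Here the sign is $\varepsilon_j=+1$ exactly when $c_j\le m$; the boundary case $c_j=m+1=t/2$ is one of the vanishing cases and is excluded anyway. Pulling out the signs gives
\[
\text{numerator}=\Bigl(\prod_j\varepsilon_j\Bigr)\det\bigl(f_i(b_j)\bigr)\cdot(2i)^m .
\]
If some $c_j\in\{0,t/2\}$, there is a zero column. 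If two of the $b_j$ coincide, there are two proportional columns. In either case $\tau^C_t(\eta)=0$, which proves the direction ``$\neq0\Rightarrow$ condition''. Otherwise the $b_j$ are $m$ distinct elements of $\{1,\dots,m\}$, hence a permutation of it. The matrix $(f_i(b_j))$ is then the denominator matrix $(f_i(m-j+1))$ with its columns permuted by $\sigma$, so its determinant is $\sgn(\sigma)$ times the denominator determinant. This yields the boxed formula, provided the denominator is nonzero.

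The step I expect to need real care is that nonvanishing, together with pinning the sign convention for $\sigma$ consistently with ``read against $m,m-1,\dots,1$''. For the nonvanishing, I would note that $(\sin(\pi i k/(m+1)))_{i,k=1}^m$ is the type-I discrete sine transform matrix, which satisfies $S^2=\tfrac{m+1}{2}I$ and is therefore invertible. Alternatively, the denominator is the Weyl denominator at an element with eigenvalues $\xi^{\pm1},\dots,\xi^{\pm m}$, all distinct and $\neq\pm1$. It factors as a product of terms $x_i-x_i^{-1}$ and $x_i+x_i^{-1}-x_j-x_j^{-1}$, and none of these vanish. The nonvanishing gives both the well-definedness and the converse direction, so the lemma follows.
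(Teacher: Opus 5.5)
Your proposal is correct and is essentially the paper's own proof. Both evaluate the type-$C$ bialternant at $x_i=\xi^i$ and use that each column is periodic modulo $t$, odd, and zero at the classes $0$ and $t/2$; a forbidden class or a repeated folded class then kills the numerator, and otherwise the numerator columns are a signed permutation of the denominator columns. The only difference is that you justify the invertibility of the discrete sine transform explicitly, via $S^2=\tfrac{m+1}{2}I$ or via the factored Weyl denominator, whereas the paper simply asserts it.
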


\begin{proof}
By the type-$C$ bialternant,
$\spc_\eta(x)=\det\bigl(x_i^{a_j}-x_i^{-a_j}\bigr)\big/\det\bigl(x_i^{d_j}-x_i^{-d_j}\bigr)$
with $d_j=m-j+1$. At $x_i=\xi^i$ the $(i,j)$ entry is
$\xi^{ia_j}-\xi^{-ia_j}=2\sqrt{-1}\,\sin(2\pi i a_j/t)$ --- where the $i$ in the exponent and in
the sine is the row index, not the imaginary unit --- so column $j$ depends on $a_j$ only modulo $t$
and is
odd in $a_j$; it vanishes identically when $a_j\equiv0$ or $a_j\equiv t/2$. Since $t=2m+2$, the
classes modulo sign are $0,1,\dots,m,t/2$, so exactly $m$ of them survive those two conditions. If two columns share a class
they are proportional and the determinant vanishes. Otherwise the $m$ classes are pairwise distinct,
hence exhaust $\{1,\dots,m\}$, and the columns are -- up to a permutation and a sign per column --
the columns of the denominator, whose matrix $\bigl(\sin(2\pi ik/t)\bigr)_{i,k=1}^{m}$ is a discrete
sine transform and is invertible. The quotient is therefore the stated signed permutation factor.
\end{proof}

\begin{corollary}[the even filter lives on even weights]\label{cor:evensize}
Let $t=2m+2$ be even. If $\tau^C_t(\eta)\neq0$ then $|\eta|$ is even.
\end{corollary}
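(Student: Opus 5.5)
We need to prove Corollary \ref{cor:evensize}: if $\tau^C_t(\eta)\neq0$ then $|\eta|$ is even. The plan is to combine the nonvanishing criterion of Lemma \ref{lem:T} with a parity count, showing that nonvanishing forces $\sum_j a_j$ to have a fixed parity. That fixed parity is exactly the one that makes $|\eta|$ even.

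\emph{Step 1: relate $|\eta|$ to $\sum_j a_j$.} Since $a_j=\eta_j+m-j+1$, we have
\[
|\eta|=\sum_j a_j-\sum_{k=1}^m k=\sum_j a_j-\tfrac{m(m+1)}{2}.
\]
So it suffices to show $\sum_j a_j\equiv \tfrac{m(m+1)}2\pmod 2$.

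\emph{Step 2: compute $\sum_j a_j$ modulo $2$ from the residues.} Assume $\tau^C_t(\eta)\neq0$. By Lemma \ref{lem:T}, the classes $k_j=\min(c_j,t-c_j)$ are a permutation of $\{1,\dots,m\}$.

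Since $t=2m+2$ is even, reducing modulo $t$ preserves parity, so $a_j\equiv c_j\pmod 2$. Moreover $c_j$ equals either $k_j$ or $t-k_j$, and $t-k_j\equiv k_j\pmod 2$ because $t$ is even. Hence $a_j\equiv k_j\pmod 2$ for every $j$.

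Summing over $j$,
\[
\sum_j a_j\equiv\sum_j k_j=\sum_{k=1}^m k=\tfrac{m(m+1)}2\pmod 2.
\]
By Step 1 this gives $|\eta|\equiv0\pmod 2$.

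There is no real obstacle here; the argument is elementary once Lemma \ref{lem:T} is available. The only point needing care is the use of evenness of $t$, which is invoked twice: once so that reduction modulo $t$ preserves parity, and once so that the reflection $c\mapsto t-c$ preserves parity. This is precisely why the statement is confined to even $t$, consistent with Remark \ref{rem:notodd}.
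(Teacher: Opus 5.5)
Your proof is correct and follows essentially the same route as the paper: Lemma \ref{lem:T} turns the folded classes of $a=\eta+\rho_{C_m}$ into a permutation of $\{1,\dots,m\}$, evenness of $t$ makes the fold $c\mapsto t-c$ invisible modulo $2$, and summing gives $\sum_j a_j\equiv m(m+1)/2$. That cancels the $\rho$-shift in $|\eta|=\sum_j a_j-m(m+1)/2$.
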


\begin{proof}
By Lemma \ref{lem:T} the folded classes of $a=\eta+\rho_{C_m}$ are a permutation $\sigma$ of
$\{1,\dots,m\}$, so $a_j\equiv\pm\sigma(j)\pmod t$. Since $t$ is \emph{even}, the fold is invisible
modulo $2$ and this gives $a_j\equiv\sigma(j)\pmod 2$; summing,
$\sum_j a_j\equiv\sum_j\sigma(j)=\tfrac{m(m+1)}2\pmod2$. As
$\sum_j a_j=|\eta|+\tfrac{m(m+1)}2$, the two shifts cancel and $|\eta|\equiv0$.
\end{proof}

\noindent
The whole proof is ``since $t$ is even'', which is the same fact about $2$ and $t$ that Remark
\ref{rem:BC} turns on --- there $2$ fails to be invertible modulo $t$, here $2$ divides $t$, and the
two are one statement --- and the corollary is the frozen-block counterpart of the selection rule of
Corollary \ref{cor:selection}. Checked over the entire support at $t=4,6,8,10,12$ --- $13$, $36$, $64$, $81$
and $48$ weights, with the step $a_j\equiv\sigma(j)$ verified coordinate by coordinate --- and
against the decoy that has to fail, namely the same statement on the odd branch, where $t$ is odd and
the argument evaporates: there $|\eta|$ is even on $9$ of $17$, $25$ of $49$ and $40$ of $80$ of the
support, which is chance \stverif.

\begin{remark}[the type-$A$ ancestor]
Lemma~\ref{lem:T} is the type-$C$ counterpart of a statement the companion paper already uses:
Littlewood's evaluation \cite{LR34} of a Schur function on a full root-of-unity orbit, which is
$(-1)^{\binom t2}\sgn(\sigma)$ when the $t$-core of $\lambda$ is empty and $0$ otherwise. Both are
``a sign when a residue condition holds, zero when it fails''; what changes with the type is the
residue condition and the sign. Read side by side, the two evaluate the two halves of the alphabet
of \eqref{eq:def}: Littlewood the frozen block seen from type $A$, Lemma~\ref{lem:T} the frozen
block seen from the symplectic side, which is the reading the free pairs force.
\end{remark}

\begin{remark}[what is ours here and what is not]\label{rem:kostant}
Lemma~\ref{lem:T} is proved above from the type-$C$ bialternant and owes nothing to what follows,
which is why we can be precise about its standing. Its \emph{shape} --- on a wall $\Rightarrow$ zero,
off the walls $\Rightarrow$ fold with a sign --- is the standard mechanism of representation theory at
roots of unity, stated for quantum groups in exactly this form by Andersen and Stroppel
\cite{AndersenStroppel}. Its \emph{species} of conclusion, a character value in $\{0,\pm1\}$ at an
element of finite order, is the species of Kostant's theorem \cite{Kostant,Prasad}. But that theorem
is about the Coxeter element, and ours is not: the eigenvalues here are $\xi^{\pm1},\dots,\xi^{\pm m}$
with $\xi$ of order $t=h+2$, where $h=2m$ is the Coxeter number of $C_m$.

The precise home of the statement is the programme of Nadimpalli, Pattanayak and Prasad \cite{NPP},
who study exactly this: the character of an irreducible at a torsion element. Their Theorem 4.1
concerns the \emph{principal} element $C_d=\rho^\vee(e^{2\pi i/d})$ and says that
$\Theta_\lambda(C_d)\neq0$ if and only if the centraliser of $(\lambda+\rho)(e^{2\pi i /d})$ in the
dual group has the smallest possible dimension, the deciding set of roots being
$\Phi_{\lambda,d}=\{\alpha: d\mid\langle\lambda+\rho,\alpha^\vee\rangle\}$; the associated constant
is $1$ in types $A$ and $B$, and the classification of the root systems involved is completed by
Polo \cite{Polo}. Their Question 8.1, which they attribute to Prasad and leave open, asks whether the
same dichotomy governs an \emph{arbitrary} torsion element.

Our two elements answer that question in opposite ways, and the reason is the parity of $t$
\stproved{} --- the two lines below settle both halves, and \S\ref{sec:verif} records the same
statement as proved. For odd $t$ the torsion element is regular \emph{and} conjugate to the principal element of
order $t=h+1$; since $t>h$ forces $\Phi_{0,t}=\varnothing$ --- the set just defined at $\lambda=0$,
which is the principal element --- Theorem 4.1 of \cite{NPP} applies verbatim and
\emph{proves} the odd filter, values in $\{0,\pm1\}$ included --- see \S\ref{sec:odd}. For even $t$
the element is regular but is \emph{not} conjugate to any principal element once $m\ge2$ --- and
that is a two-line argument rather than a measurement. A principal element takes the same value
$q=\xi^k$ on every simple root. Writing the toral coordinates of a Weyl conjugate of our element as a
signed permutation $b_1,\dots,b_m$ of $\{\pm1,\dots,\pm m\}$, the simple roots $e_i-e_{i+1}$ and
$2e_m$ of $C_m$ give
\[
b_i-b_{i+1}\equiv k,\qquad 2b_m\equiv k \pmod t .
\]
Since $t=2m+2$ is even, the second congruence forces $k$ even; the first then forces all the $b_i$ to
have the same parity. But $|b_1|,\dots,|b_m|$ run over $1,\dots,m$, which for $m\ge2$ contains both
an odd and an even number. Contradiction. And there
the centraliser dichotomy is no longer equivalent to vanishing: it misses the wall $2a_i\equiv0$,
which exists as a separate condition only because $2$ is not invertible modulo an even $t$. What that
wall is --- the regularity wall of the long root of $C_m$, degraded to a short root on passing to the
dual --- is Remark \ref{rem:BC}. The smallest instance is $\Sp_4$ at $t=6$ with
$\eta=(1,0)$: both centralisers are maximal tori and the character is nevertheless $0$. So for
even $t$ we prove the case we use because no available theorem covers it; for odd $t$ we cite.
\end{remark}

\subsection{The filter is regularity in the group itself}
Lemma \ref{lem:T} lists three species of wall, and Proposition \ref{prop:oddfilter} two. Both lists
are one statement, and it is the statement that explains the parity.

\begin{lemma}[the filter is a regularity condition]\label{lem:regular}
Define, in the two cases,
\[
x^{C}_{\eta,t}=\bigl(\xi^{\,\eta_i+m-i+1}\bigr)_{i=1}^{m}\in\Sp_{2m},
\qquad
x^{B}_{\eta,t}=\bigl(\xi^{\,2\eta_i+2(m'-i)+1}\bigr)_{i=1}^{m'}\in SO_{2m'+1}.
\]
Then
\[
\tau^C_t(\eta)\neq0\iff x^{C}_{\eta,t}\ \text{is regular semisimple in }\Sp_{2m},
\]
\[
\tau^B_t(\eta)\neq0\iff x^{B}_{\eta,t}\ \text{is regular semisimple in }SO_{2m'+1}.
\]
\end{lemma}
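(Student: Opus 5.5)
The plan is to unfold "regular semisimple" into root conditions on each torus element and check that they are exactly the wall lists already on the table. An element $x=\operatorname{diag}(x_1,\dots,x_n,\dots)$ of the standard maximal torus is regular semisimple exactly when $\alpha(x)\neq1$ for every root $\alpha$. Equivalently, its eigenvalues are pairwise distinct, apart from the forced multiplicity of the eigenvalue $1$ that accounts for the odd dimension in type $B$. So each half of the lemma splits into two steps: write out the root conditions for the given element, and match them against the vanishing criterion of the corresponding filter.

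\emph{Type $C$.} For $x^C_{\eta,t}$ the roots of $C_m$ are $\pm e_i\pm e_j$ with $i<j$ and $\pm2e_i$. Regularity is therefore the conjunction of three conditions:
\begin{itemize}
\item[(a)] $\xi^{a_i-a_j}\neq1$ and $\xi^{a_i+a_j}\neq1$ for $i<j$, i.e.\ $a_i\not\equiv\pm a_j\pmod t$;
\item[(b)] $\xi^{2a_i}\neq1$, i.e.\ $2a_i\not\equiv0\pmod t$.
\end{itemize}
Since $t=2m+2$ is even, condition (b) says precisely $c_i\notin\{0,\tfrac t2\}$. Given (b), condition (a) says precisely that the folded classes $\min(c_j,t-c_j)$ are pairwise distinct. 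These are the three species of wall in Lemma \ref{lem:T}, so its vanishing criterion is literally the regularity condition. The eigenvalue formulation gives the same check: the $2m$ numbers $\xi^{\pm a_i}$ are distinct iff (a) and (b) hold.

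\emph{Type $B$.} Put $b_i=2\eta_i+2(m'-i)+1=2(\eta+\rho_{B_{m'}})_i$. The roots of $B_{m'}$ are $\pm e_i\pm e_j$ and $\pm e_i$, so regularity of $x^B_{\eta,t}$ means two things:
\begin{itemize}
\item[(a)] $b_i\not\equiv\pm b_j\pmod t$ for $i<j$;
\item[(b)] $b_i\not\equiv0\pmod t$.
\end{itemize}
Because $t$ is odd, $b_i\not\equiv0$ also gives $b_i\not\equiv-b_i$, so no separate wall $2b_i\equiv0$ arises. This is the structural reason that the odd list has two species and the even list three.

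The remaining task is to match (a) and (b) with Proposition \ref{prop:oddfilter}. If that proposition is stated through the principal-element criterion of \cite{NPP}, I would translate directly: nonvanishing iff $\Phi_{\eta,t}=\{\alpha:\ t\mid\langle\eta+\rho,\alpha^\vee\rangle\}$ is empty. For the short roots $e_i$ the coroot is $2e_i$, and the pairing is $b_i$. For the long roots $e_i\pm e_j$ the pairing is $(b_i\pm b_j)/2$, and since $t$ is odd, $t\mid (b_i\pm b_j)/2$ iff $t\mid b_i\pm b_j$. So $\Phi_{\eta,t}=\varnothing$ is exactly (a) together with (b). If instead the proposition is stated as a bialternant-type residue criterion, as in Lemma \ref{lem:T}, the match is immediate.

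I expect the only real obstacle to be this bookkeeping step on the odd side: aligning the normalisation of $\tau^B_t$ in Proposition \ref{prop:oddfilter} with the doubled exponents $b_i$, and using the invertibility of $2$ modulo odd $t$ to pass between coroot pairings and root values. The type-$C$ half is a direct restatement of Lemma \ref{lem:T}.
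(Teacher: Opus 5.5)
Your proposal is correct and follows the paper's own proof. In both, regular semisimple means that no root of the group is trivial on the torus element, and the resulting conditions are verbatim the wall lists of Lemma~\ref{lem:T} and Proposition~\ref{prop:oddfilter}, whose biconditionals supply both directions. Your explicit coroot computation, $\langle\eta+\rho,2e_i\rangle=b_i$ and $\langle\eta+\rho,e_i\pm e_j\rangle=(b_i\pm b_j)/2$ with $2$ invertible modulo odd $t$, usefully fills in a translation the paper only states after Proposition~\ref{prop:oddfilter}.

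One small correction: your aside that regularity is ``equivalently, pairwise distinct eigenvalues'' is false in $SO_{2m'+1}$ in general. A torus coordinate equal to $-1$ gives a repeated eigenvalue $-1$ without making any root of $B_{m'}$ trivial. This is harmless here, because $-1\notin\mu_t$ for odd $t$ and you argue with roots rather than eigenvalues on that side.
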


\begin{proof}
Both exponent vectors are $\eta+\rho$ read in the coordinates the group provides: in type $C$ that is
literal, $\rho_{C_m}=(m,\dots,1)$ being integral; in type $B$ the half-sum
$\rho_{B_{m'}}=(m'-\tfrac12,\dots,\tfrac12)$ is not, and one must use $2(\eta+\rho)$, which is legitimate
because $t$ is odd there and $2$ is invertible modulo $t$. This is not a convention: reading the odd
case with $\eta+\rho$ instead of $2(\eta+\rho)$ misses zeros, as Remark \ref{rem:notodd} shows.

Now evaluate the roots. In type $C_m$ they are $e_i-e_j$, $e_i+e_j$ and $2e_i$, taking the values
$\xi^{a_i-a_j}$, $\xi^{a_i+a_j}$ and $\xi^{2a_i}$; in type $B_{m'}$ they are $e_i\pm e_j$ and $e_i$,
taking $\xi^{A_i\mp A_j}$ and $\xi^{A_i}$ with $A_i=2(\eta_i+\rho_i)$. So the element fails to be
regular exactly when $a_i\equiv\pm a_j$ or $2a_i\equiv0$ in type $C$, and when $A_i\equiv\pm A_j$ or
$A_i\equiv0$ in type $B$, modulo $t$ --- which is verbatim the list of walls of Lemma \ref{lem:T} and
of Proposition \ref{prop:oddfilter} respectively. The forward implication needs no more than that:
$a_i\equiv\pm a_j$ makes two columns of the Weyl numerator equal or opposite, and the vanishing of
the long-root value makes a column identically zero. That direction is standard Weyl-numerator
theory, and is recorded for regular torsion elements in \cite[Rem.~8.2]{NPP}. The converse --- that
regularity suffices --- is the content of Lemma \ref{lem:T} in the even case and of \cite{NPP} in the
odd one, and it is the half that is not formal.
\end{proof}

\noindent
Verified independently of the whole apparatus above --- exact cyclotomic arithmetic, no character
library --- on $24\,809$ weights for $3\le t\le12$, with no exception in either parity \stverif; a
decoy using only the roots of $A_{\mathrm{rk}-1}$ agrees on $9\,562$ of them.

\begin{corollary}[the filter is a function on a finite set]\label{cor:periodic}
$\tau_t(\eta)$ depends only on the residues of the shifted vector modulo $t$. In particular it is
unchanged by $\eta_j\mapsto\eta_j+t$ whenever that leaves a partition.
\end{corollary}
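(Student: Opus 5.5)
The statement is Corollary \ref{cor:periodic}, and the plan is to read it off Lemma \ref{lem:regular} and Lemma \ref{lem:T}. That way the dependence on residues becomes visible in the closed form itself, not only in the vanishing locus.

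\emph{Even case.} I will take Lemma \ref{lem:T} as the definition of $\tau^C_t$. Its inputs are only the classes $c_j=a_j\bmod t$ with $a=\eta+\rho_{C_m}$. The vanishing criterion depends on the $c_j$ alone: no $c_j\in\{0,t/2\}$, and the folded classes pairwise distinct. So does the boxed value, because both $\varepsilon_j$ and $\sigma$ are defined from the $c_j$. Hence $\tau^C_t$ factors through $(\mathbb Z/t)^m$.

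\emph{Odd case.} Here I will not lean on the closed form of \S\ref{sec:odd}, since the corollary precedes it. Instead I argue straight from the bialternant at the torsion point. Its entries have the form $\xi^{\pm iA_j}$-type sines, with $A_j=2(\eta_j+\rho_j)$, so each column depends on $A_j$ only modulo $t$. The denominator does not depend on $\eta$ at all. The quotient is therefore a function of $(A_j\bmod t)_j$. Since $2$ is invertible modulo an odd $t$, this is the same as a function of the residues of $\eta+\rho$, read as half-integers.

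The ``in particular'' then follows at once. Replacing $\eta_j$ by $\eta_j+t$ changes $a_j$ by $t$ in type $C$ and $A_j$ by $2t$ in type $B$, so every residue is preserved. The only thing to check is that the new vector is still dominant, so that $\tau_t$ is defined at it; that is exactly the proviso ``whenever that leaves a partition''.

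The one point needing care is the odd case. There the corollary's phrase ``shifted vector'' must mean $2(\eta+\rho)$, not $\eta+\rho$, as Remark \ref{rem:notodd} warns. I would say explicitly that the two readings give the same finite set precisely because $t$ is odd. I do not expect any real obstacle beyond this bookkeeping.
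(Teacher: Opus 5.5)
Your proposal is correct and is essentially the paper's own argument: the even case is read off Lemma \ref{lem:T}, whose vanishing criterion and sign are both functions of the $c_j$ alone, and the odd case is read off the type-$B$ Weyl numerator $\det\bigl(\omega^{iA_j}-\omega^{-iA_j}\bigr)$, whose entries depend on $A_j$ only modulo $t$. The only difference is bookkeeping: the paper takes the support half uniformly from Lemma \ref{lem:regular} and uses the closed form and the bialternant only for the value, whereas you let each of them give support and value together.
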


\begin{proof}
Whether the element is regular is a condition on the residues, by Lemma \ref{lem:regular}, and that
half is uniform in the parity. For the value: when $t$ is even the sign given by Lemma \ref{lem:T} is
written entirely in terms of the $c_j$; when $t$ is odd the same holds in the coordinates
$A_i=2(\eta_i+\rho_i)$ of Proposition \ref{prop:oddfilter}, because the Weyl numerator there is
$\det\bigl(\omega^{iA_j}-\omega^{-iA_j}\bigr)$ with $\omega$ a primitive $t$-th root, and each entry
depends on $A_j$ only modulo $t$.
\end{proof}

\noindent
Checked on $3\le t\le6$: the filter is constant on each residue class --- $3$, $4$, $25$ and $36$
classes with more than one representative, all constant --- while the same test at modulus $t-1$
fails on all but one class \stverif. Small as it is, this is what makes the step size of
\S\ref{sec:where} not a coincidence: on the orbit side a move between surviving shapes is a
$t$-ribbon, and on the branching side survival is periodic with the same period $t$.

Once the filter is a function on $(\mathbb Z/t)^n$ one can ask what symmetries that finite set has,
and the answer identifies the two types with each other.

Two loci have to be kept apart, because they are the two species of wall of Lemma \ref{lem:T} and
they differ exactly where the paper's dichotomy lives. Write
\begin{equation}\label{eq:twoloci}
\begin{aligned}
\mathcal R^B_{t,n}&=\bigl\{x\in(\ZZ/t)^n:\ x_i\not\equiv0,\ x_i\not\equiv\pm x_j\ (i\neq j)\bigr\},\\
\mathcal R^C_{t,n}&=\bigl\{x\in(\ZZ/t)^n:\ 2x_i\not\equiv0,\ x_i\not\equiv\pm x_j\ (i\neq j)\bigr\}
\end{aligned}
\end{equation}
for the two regular loci of Lemma \ref{lem:regular} in exponent coordinates. The type-$C$ locus
carries the extra condition $x_i\neq t/2$, which is the long-root regularity wall of Remark
\ref{rem:BC} and is
one of the theses of this paper; it is empty of content exactly when $2$ is invertible.

We also need a name for the type-$C$ rule read at an odd modulus. That is \emph{not} the odd filter
of this paper --- Remark \ref{rem:notodd} --- so it gets its own symbol: for $t=2n+1$ put
\begin{equation}\label{eq:auxC}
\widetilde\tau^{\,C}_{t,n}(\eta)\ :=\ \spc_\eta(\zeta,\dots,\zeta^{\,n}),
\end{equation}
the auxiliary type-$C$ reading, used only for comparison. Lemma \ref{lem:regular} does not cover it
--- its type-$C$ half is the even filter --- so it needs its own line, which the same argument
supplies.

\begin{lemma}[the auxiliary reading is regularity in $\mathcal R^C$]\label{lem:auxC}
For $t=2n+1$ odd, $\widetilde\tau^{\,C}_{t,n}(\eta)\neq0$ if and only if
$\eta+\rho_{C_n}\in\mathcal R^C_{t,n}$.
\end{lemma}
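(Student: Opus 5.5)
The plan is to rerun the bialternant argument of Lemma \ref{lem:T} at the odd modulus $t=2n+1$. Set $a_j=\eta_j+n-j+1$, so that $a=\eta+\rho_{C_n}$. The type-$C$ bialternant gives
\[
\widetilde\tau^{\,C}_{t,n}(\eta)=\frac{\det\bigl(\zeta^{ia_j}-\zeta^{-ia_j}\bigr)_{i,j=1}^{n}}{\det\bigl(\zeta^{id_j}-\zeta^{-id_j}\bigr)_{i,j=1}^{n}},\qquad d_j=n-j+1 .
\]
The denominator is nonzero. Its columns are indexed by $d=(n,\dots,1)$, whose classes modulo $t$ and modulo sign are distinct and nonzero. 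So its matrix is the $n\times n$ discrete sine matrix $\bigl(\sin(2\pi ik/t)\bigr)$, which is invertible; equivalently, $\rho_{C_n}$ is itself regular in $\mathcal R^C_{t,n}$.

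\emph{Forward direction.} I show that if $\eta+\rho_{C_n}$ lies on a wall, the numerator vanishes. Column $j$ of the numerator depends only on $a_j\bmod t$ and is odd in $a_j$.
\begin{itemize}
\item If $a_i\equiv a_j$, columns $i$ and $j$ are equal.
\item If $a_i\equiv -a_j$, columns $i$ and $j$ are opposite.
\item If $2a_i\equiv0$, then $a_i\equiv0$ because $t$ is odd. Column $i$ is then identically zero.
\end{itemize}
In each case the determinant is $0$, so on the wall $\widetilde\tau^{\,C}_{t,n}(\eta)=0$.

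\emph{Converse.} Suppose $a\in\mathcal R^C_{t,n}$. The nonzero classes of $\ZZ/t$ modulo sign are exactly $\{1,\dots,n\}$, and there are $n$ of them. The $n$ folded classes $\min(c_j,t-c_j)$, with $c_j=a_j\bmod t$, are nonzero and pairwise distinct. Hence they form a permutation of $\{1,\dots,n\}$.

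Each numerator column is then $\pm$ a denominator column: the sign is $+$ if $c_j\le n$ and $-$ otherwise. The numerator is therefore the denominator times a signed permutation factor $\pm1$, which is nonzero.

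\emph{Expected obstacle.} There is essentially none, and the step needing care is only a check. At odd $t$ the condition $2x_i\not\equiv0$ collapses to $x_i\not\equiv0$, so $\mathcal R^C_{t,n}$ coincides with $\mathcal R^B_{t,n}$ as a subset of $(\ZZ/t)^n$. The argument never meets a $t/2$ wall. I will state this coincidence explicitly, so that the lemma is not misread as saying the auxiliary reading equals the odd filter. The two differ because the odd filter is evaluated at $2(\eta+\rho_{B})$, not at $\eta+\rho_{C}$ (Remark \ref{rem:notodd}).

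As a by-product, the argument also gives the value in the surviving case, by the same formula as Lemma \ref{lem:T} with $m$ replaced by $n$.
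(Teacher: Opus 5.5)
Your proposal is correct and is essentially the paper's own proof. The bialternant columns depend only on $a_j\bmod t$ and are odd in $a_j$, so the walls $a_i\equiv\pm a_j$ and $2a_i\equiv0$ (which at odd $t$ means $a_i\equiv0$) kill the numerator, while a regular $a$ must exhaust the $n$ nonzero folded classes and therefore gives a signed permutation of the base columns, so the quotient is $\pm1$. Your separation of equal from opposite columns and your explicit check that the $t=2n+1$ sine matrix is invertible are slightly more careful than the paper's wording, but they are not a different route.
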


\begin{proof}
The symplectic character is the bialternant in the exponents $a=\eta+\rho_{C_n}$ evaluated at
$\xi^{a_j}$ with $\xi$ a primitive $t$-th root, so each column depends on $a_j$ only modulo $t$.
Two columns coincide as soon as $a_i\equiv\pm a_j$, and a column vanishes as soon as $2a_i\equiv0$;
these are exactly the conditions cut out by $\mathcal R^C_{t,n}$. Conversely, if $a$ is regular then
its $n$ folded classes are distinct and nonzero, and since $t=2n+1$ there are exactly $n$ nonzero
classes modulo sign: a regular vector exhausts them, so the matrix is the base one up to a signed
permutation of columns and the ratio is $\pm1$. Note that at odd $t$ the condition $2a_i\equiv0$ is
equivalent to $a_i\equiv0$, so $\mathcal R^C_{t,n}=\mathcal R^B_{t,n}$ here and the statement is
consistent with Proposition \ref{prop:galois}(ii).
\end{proof}

\begin{proposition}[the two loci are stable under the units, and $2$ is what compares
them]\label{prop:galois}
Fix a rank $n$.
\begin{enumerate}
\item[\rm(i)] $\mathcal R^B_{t,n}$ and $\mathcal R^C_{t,n}$ are each stable under $x\mapsto kx$ for
every $k\in(\ZZ/t)^\times$.
\item[\rm(ii)] If $t$ is odd then $\mathcal R^B_{t,n}=\mathcal R^C_{t,n}$, because $2$ is
invertible; and for every $\eta$ of rank $n$
\begin{equation}\label{eq:BCtrans}
\tau^B_t(\eta)\neq0
\quad\Longleftrightarrow\quad
\widetilde\tau^{\,C}_{t,n}\Bigl(\eta+\tfrac{t-1}{2}(1,\dots,1)\Bigr)\neq0 ,
\end{equation}
so that the two \emph{nonvanishing loci} are translates of one another by $\tfrac{t-1}{2}$ in every
coordinate.
\item[\rm(iii)] For even $t$ the argument of (ii) is unavailable, and the conclusion can already fail
at rank $2$: at $(t,n)=(6,2)$ the two loci have $16$ and $8$ elements, so no bijection of any kind
relates them, let alone a translation. It fails in every even case we tested.
\end{enumerate}
\end{proposition}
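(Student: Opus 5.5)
The three parts are of very different weight, and I would prove them in order: (i) is a one-line check, (ii) is a change of variables by a unit, and (iii) is a finite count. The only tools needed are Lemma \ref{lem:regular} for the odd filter, Lemma \ref{lem:auxC} for the auxiliary type-$C$ reading, and part (i) itself.

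For (i), let $k\in(\ZZ/t)^\times$. Multiplication by $k$ is a bijection of $\ZZ/t$ fixing $0$, so $kx_i\equiv0\iff x_i\equiv0$, and likewise $2kx_i\equiv0\iff 2x_i\equiv0$. By linearity, $kx_i\equiv\pm kx_j\iff x_i\equiv\pm x_j$. Every defining condition in \eqref{eq:twoloci} is therefore preserved in both directions, so both loci are stable.

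For (ii), the equality $\mathcal R^B_{t,n}=\mathcal R^C_{t,n}$ at odd $t$ holds because $2x_i\equiv0\iff x_i\equiv0$ when $2$ is invertible. For the translation statement, put $n=m'=(t-1)/2$.
\begin{itemize}
\item By Lemma \ref{lem:regular}, $\tau^B_t(\eta)\neq0$ if and only if $A\in\mathcal R^B_{t,n}$, where $A_i=2(\eta_i+n-i)+1$.
\item By Lemma \ref{lem:auxC}, the right side of \eqref{eq:BCtrans} is nonzero if and only if $a'\in\mathcal R^C_{t,n}$, where $a'=\eta+\tfrac{t-1}2(1,\dots,1)+\rho_{C_n}$. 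Coordinatewise this is $a'_i=\eta_i+2n-i+1$.
\item The key computation is that $a'$ is a unit multiple of $A$. Take $k=n+1$, the inverse of $2$ modulo $t$, since $2(n+1)=t+1\equiv1$. Writing $y=\eta_i+n-i$, we get $kA_i=2y(n+1)+(n+1)\equiv y+n+1=a'_i \pmod t$.
\item Hence $a'=kA$ with $k$ a unit. By (i) and the equality of the loci, $A\in\mathcal R^B_{t,n}\iff a'\in\mathcal R^C_{t,n}$, which is \eqref{eq:BCtrans}.
\end{itemize}
The translate description follows because $\eta\mapsto\eta+\tfrac{t-1}2(1,\dots,1)$ carries one nonvanishing locus onto the other.

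For (iii), the only content is the count at $(t,n)=(6,2)$.
\begin{itemize}
\item In $\mathcal R^B_{6,2}$, $x_1$ ranges over the five nonzero classes. If $x_1=3$, then $x_2$ must avoid $\{0,3\}$, leaving $4$ choices. If $x_1\neq3$, then $x_2$ must avoid the three distinct values $0,\pm x_1$, leaving $3$ choices. The total is $4\cdot3+4=16$.
\item In $\mathcal R^C_{6,2}$, $x_1\in\{1,2,4,5\}$, and $x_2$ must avoid $0,3,\pm x_1$, leaving $2$ choices. The total is $8$.
\end{itemize}
Unequal cardinalities rule out any bijection, translations included. The failure of the argument of (ii) is visible in this same computation: the step $2x\equiv0\iff x\equiv0$ is false at $x=t/2$. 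The closing sentence of (iii) (``every even case we tested'') is an empirical record and is not claimed as proved.

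None of the steps is a real obstacle. The one place to be careful is (ii): the normalisation of $\rho_{C_n}$ and the offset $\tfrac{t-1}2$ must combine to exactly $(n+1)A$ modulo $t$, not merely to a multiple of $A$ up to a shift. That is what the explicit computation in the third item of (ii) settles.
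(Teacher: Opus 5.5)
Your proposal is correct and follows the paper's proof. Part (i) rests on the invertibility of $k$. Part (ii) multiplies $A=2(\eta+\rho_{B_n})$ by $2^{-1}=\tfrac{t+1}{2}$ to land on the type-$C$ exponent vector of the translated weight, then uses (i), Lemma \ref{lem:regular} and Lemma \ref{lem:auxC}. Part (iii) is the rank-two witness. The differences are cosmetic: you do the key congruence coordinatewise rather than through $\rho_{B_n}=\rho_{C_n}-\tfrac12(1,\dots,1)$, and you carry out the count $16$ against $8$ at $(t,n)=(6,2)$, which the paper only asserts.
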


\begin{proof}
(i) For $k$ invertible, $x\mapsto kx$ is an additive bijection of $\ZZ/t$ commuting with
negation. It therefore preserves each of the conditions $x_i=0$, $2x_i=0$, $x_i=x_j$, $x_i=-x_j$ in
both directions --- for the second because $2(kx_i)=k(2x_i)$ and $k$ is invertible --- and hence
preserves each of $\mathcal R^B_{t,n}$ and $\mathcal R^C_{t,n}$.

(ii) The two exponent vectors of Lemma \ref{lem:regular} are $A=2(\eta+\rho_{B_n})$ and
$a=\eta+\rho_{C_n}$, and the two $\rho$'s differ by the constant vector $\tfrac12$:
$\rho_{B_n}=\rho_{C_n}-\tfrac12(1,\dots,1)$. Since $t$ is odd, $2$ is invertible; write
$s=2^{-1}=\tfrac{t+1}{2}$. Then
\[
sA=\eta+\rho_{B_n}=\eta+\rho_{C_n}-\tfrac12(1,\dots,1)
 =\Bigl(\eta+\tfrac{t-1}{2}(1,\dots,1)\Bigr)+\rho_{C_n}
\qquad\text{in }\mathbb Z/t,
\]
because $-\tfrac12\equiv-s\equiv\tfrac{t-1}{2}$. So $sA$ \emph{is} the type-$C$ exponent vector of
the translated weight, and by (i) with $k=s$ we have $A\in\mathcal R^B_{t,n}$ if and only if
$sA\in\mathcal R^B_{t,n}=\mathcal R^C_{t,n}$. Lemma \ref{lem:regular} turns the left-hand side into
$\tau^B_t(\eta)\neq0$ and Lemma \ref{lem:auxC} turns the right-hand side into
$\widetilde\tau^{\,C}_{t,n}\bigl(\eta+\tfrac{t-1}{2}\mathbf 1\bigr)\neq0$.

(iii) $2$ is not invertible modulo an even $t$, so the computation in (ii) is unavailable; that alone
would prove nothing, since some other element might serve. The witness at rank $2$ rules it out
outright: at $(t,n)=(6,2)$ the two loci have $16$ and $8$ elements, and there is no bijection between
sets of different size. Beyond that witness we report measurement, not proof. Writing
$|\mathcal R^B_{t,n}|/|\mathcal R^C_{t,n}|$, the even counts are $36/24$ at $(8,2)$, $64/48$ at
$(10,2)$, $100/80$ at $(12,2)$, $144/120$ at $(14,2)$, and at rank $3$, $120/48$, $336/192$,
$720/480$, $1320/960$ at $(8,3)$, $(10,3)$, $(12,3)$, $(14,3)$ \stverif. In every one the two
cardinalities differ, so no bijection exists there either. We do not claim this for all even $t$ and
all $n\ge2$; a general formula for the two cardinalities would settle it, and we do not have one.

The counts are in exponent coordinates, which is where \eqref{eq:twoloci} lives. In weight
coordinates the type-$B$ side is pulled back along $\eta\mapsto2(\eta+\rho)$, which for even $t$ is
not a bijection, so the numbers there are different --- at $(6,2)$ they are $0$ and $8$. Both
comparisons give the same verdict; we quote the first because it is the one the definition names.
\end{proof}

\begin{remark}[the converse is false, and it is false one rank below]\label{rem:rank1}
This remark is about the two rules read in \emph{weight} coordinates, and the distinction matters, so
we name the sets. Write
\begin{equation}\label{eq:pullbacks}
\mathcal P^B_{t,n}=\{\eta:\ A_B(\eta)\in\mathcal R^B_{t,n}\},
\qquad
\mathcal P^C_{t,n}=\{\eta:\ a_C(\eta)\in\mathcal R^C_{t,n}\},
\end{equation}
the pullbacks of \eqref{eq:twoloci} along $A_B(\eta)=2(\eta+\rho_{B_n})$ and
$a_C(\eta)=\eta+\rho_{C_n}$. For odd $t$ the second map is a bijection of $(\ZZ/t)^n$ and so is the
first, so \eqref{eq:BCtrans} passes between the two pictures unchanged; for even $t$ the first is
not, and the two pictures say different things. It is $\mathcal P^B$ and $\mathcal P^C$ that
Figure \ref{fig:galois} draws.

Part (ii) is a mechanism, not a characterisation, and rank $1$ shows the difference \emph{in the
weight picture}. At $(t,n)=(6,1)$, $(10,1)$ and $(14,1)$ --- that is, at even $t\equiv2\pmod4$ ---
the sets $\mathcal P^B_{t,1}$ and $\mathcal P^C_{t,1}$ \emph{are} translates of one another, by
$\tfrac{t+2}{4}$ and by $\tfrac{t+2}{4}+\tfrac t2$, even though $2$ is not invertible there
\stverif. The reason is visible once stated: for those $t$ the half-modulus $t/2$ is odd, and $2$ is
invertible modulo it. So a translation can exist without the Galois element that produces it in
(ii), and only the rank-$2$ witness of (iii) shows that the general statement fails. Nothing of the
kind happens for the exponent loci themselves, where already
$|\mathcal R^B_{t,1}|=t-1$ and $|\mathcal R^C_{t,1}|=t-2$ for every even $t$. The rank-$1$
phenomenon is a fact about the pullbacks and about nothing else.
\end{remark}

\noindent
Two measurements, both designed to be able to fail. First, the translation of \eqref{eq:BCtrans} was
checked against the two rules directly, over the whole of $(\ZZ/t)^n$ for odd $3\le t\le11$ and
$n\le3$: the two \emph{supports} agree in all twelve non-degenerate cases, and \emph{no other}
translation achieves that \stverif{} --- had some other shift worked as well, the statement would
carry no information. It is a statement about supports and not about values; the witness at the end
of Remark \ref{rem:galois} shows the values differ. Second, part (i) was checked against its own
converse: over $3\le t\le12$ and $n\le3$, all $118$ units preserve the locus and \emph{none} of the
$58$ non-units does \stverif. So invertibility is not a convenience of the proof, it is the property
being used.

\begin{remark}[what this is, in one word]\label{rem:galois}
It is Galois, and it is the cyclotomic action in coordinates.
$\operatorname{Gal}(\mathbb Q(\zeta_t)/\mathbb Q)\cong(\ZZ/t)^\times$ acts by $\zeta_t\mapsto\zeta_t^k$, and on
residues that action is multiplication by a unit, which preserves $0$, equality and opposition.
That is the whole of part (i); \emph{no conjugacy theorem is needed for Proposition
\ref{prop:galois}}. Two theorems one might reach for do not apply and should not be cited here:
Springer's conjugacy of $\zeta$-regular elements is inside the reflection group, and the uniqueness
statement of \cite[Thm.~12.1]{NPP} is for orders dividing the Coxeter number, whereas our orders are
$t=h+1$ and $t=h+2$.

What the proposition does buy is specific to the two rules in play. The passage from one type to the
other is the single Galois element $\sigma_2$, and \eqref{eq:BCtrans} says the two
\emph{nonvanishing loci} are related by nothing else. It is the loci and not the values: the two
rules do not agree pointwise even where both survive. At $t=5$, $n=2$, $\eta=(0,0)$ one has
$\tau^B_5(0,0)=1$, while the translate $(2,2)$ prescribed by \eqref{eq:BCtrans} gives
$\widetilde\tau^{\,C}_{5,2}(2,2)=-1$ \stverif. The element $\sigma_2$ exists exactly when $t$ is
odd, which is why the two types see the same walls there and different ones otherwise --- the sixth
row of Proposition \ref{prop:parity} restated as a group-theoretic fact rather than an inspection of
walls.

It also makes Remark \ref{rem:notodd} quantitative. Reading the odd case with $\eta+\rho$ instead of
$2(\eta+\rho)$ does not scramble the answer; it decides survival correctly, at the wrong point,
displaced by $\tfrac{t-1}{2}$ in every coordinate --- and it can still return the wrong sign there,
by Proposition \ref{prop:galoissign}. That is why the wrong rule can reproduce the right number of
survivors --- $49$ against $49$ in Figure \ref{fig:filter} --- while disagreeing on which they are:
a translate of a set has the same size.
\end{remark}

The locus is invariant under the units; the \emph{value} is not, and that has to be said with a
witness rather than glossed. At $t=4$, $m=1$, the shifted residue $a=1$ gives $\tau=+1$ while
$a=3$ gives $\tau=-1$, and $3$ is a unit \stverif. What survives the action is a sign character.

\begin{proposition}[signed Galois covariance of the filter]\label{prop:galoissign}
Fix $t$ and let $n$ be the number of folded residue classes carried by the filter --- $n=(t-1)/2$ in
type $B$, $n=t/2-1$ in type $C$, the class $t/2$ being fixed by every unit. For
$k\in(\ZZ/t)^\times$, multiplication by $k$ followed by folding is a signed permutation $w_k$ of
$\{1,\dots,n\}$; put
\[
\gamma_t(k):=\det w_k=\sgn(\sigma_k)\prod_{j=1}^{n}\varepsilon_j(k)\ \in\{\pm1\}.
\]
Then $\gamma_t:(\ZZ/t)^\times\to\{\pm1\}$ is a character, and for every residue vector $a$
\begin{equation}\label{eq:galoissign}
\tau_t(k\cdot a)=\gamma_t(k)\,\tau_t(a).
\end{equation}
Moreover $\prod_j\varepsilon_j(k)$ is the Jacobi symbol $\left(\tfrac kt\right)$ for odd $t$, so
$\gamma_t(k)=\sgn(\sigma_k)\left(\tfrac kt\right)$.
\end{proposition}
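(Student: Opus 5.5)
The plan is to reduce everything to multiplicativity of determinants of signed permutation matrices, after putting the filter in a uniform form. In both types I write the filter as a bialternant in exponent coordinates. In type $C$ ($t=2m+2$, $n=m$) this is $\tau_t(a)=\det\bigl(\xi^{ia_j}-\xi^{-ia_j}\bigr)/\det\bigl(\xi^{id_j}-\xi^{-id_j}\bigr)$ with $d_j=n-j+1$. In type $B$ ($t=2m'+1$, $n=m'$) it is the same expression with $\omega$ and $A_j=2(\eta_j+\rho_j)$, as in the proof of Corollary \ref{cor:periodic}. In either case each column $C(x)$ depends only on $x\bmod t$ and satisfies $C(-x)=-C(x)$. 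For each class $c$ among the $n$ folded classes I let $D_c$ be the corresponding denominator column. Then $C(x)=\pm D_{\bar x}$, where $\bar x$ is the folded class of $x$ and the sign records whether $x$ lies in the upper or lower half.

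The first step is the \emph{value formula}. For $a$ regular, i.e.\ $a\in\mathcal R^C_{t,n}$ resp.\ $\mathcal R^B_{t,n}$, the folded classes of the $a_j$ exhaust $\{1,\dots,n\}$; this is the counting argument of Lemma \ref{lem:T} and Lemma \ref{lem:auxC}. So the numerator matrix equals the denominator matrix times a signed permutation matrix $P_a$, and $\tau_t(a)=\det P_a$. Against the decreasing column order this is exactly $\sgn(\sigma)\prod_j\varepsilon_j$. In type $C$ this reproves Lemma \ref{lem:T}, and in type $B$ it gives the value that \cite{NPP} guarantees lies in $\{0,\pm1\}$, now with its sign.

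The second step concerns the action of $k$. Multiplication by a unit $k$ commutes with negation and fixes $0$ and $t/2$. It therefore permutes the folded classes $\{1,\dots,n\}$, with a sign according to which half $kc$ lands in; this defines $w_k$. For a regular $a$ one has $P_{k\cdot a}=W_kP_a$ as signed permutation matrices, where $W_k$ is the matrix of $w_k$ acting on class labels. Indeed, the column of $ka_j$ is the column of class $\overline{ka_j}=w_k(\bar a_j)$, with the signs composing. Taking determinants gives \eqref{eq:galoissign} on the regular locus. Off it, Proposition \ref{prop:galois}(i) shows that $k\cdot a$ is also non-regular, so by Lemma \ref{lem:regular} both sides vanish. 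The same composition rule gives $w_{kk'}=w_kw_{k'}$, so $k\mapsto W_k$ is a homomorphism into the hyperoctahedral group. Composing with $\det$ shows that $\gamma_t$ is a character.

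The last step is the Jacobi symbol. For odd $t$, $\prod_j\varepsilon_j(k)=(-1)^{\#\{1\le j\le n:\ kj\bmod t>n\}}$, and this is exactly the count in Gauss's lemma. For prime $t$ Gauss's lemma identifies it with the Legendre symbol. For composite odd $t$ I would invoke the Jenkins--Schering extension of Gauss's lemma to the Jacobi symbol, or reprove it by noting that both sides are characters in $k$ and checking them on generators via multiplicativity in $t$. I expect this last step to be the main obstacle, since the rest is bookkeeping. A secondary point to watch is the normalisation in type $B$: the base denominator must correspond to $\eta=0$ in the coordinates $A$, i.e.\ $A=(2n-1,\dots,1)$, and not to $(n,\dots,1)$. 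The identification $\tau=\det P_a$ must therefore use the columns of the actual type-$B$ denominator, and the ordering convention defining $\sigma$ has to be fixed accordingly.
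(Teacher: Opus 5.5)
Your proposal is correct and follows essentially the paper's route. The paper also writes $\tau_t=\epsilon_t\,\delta$, using that the numerator columns of a regular $a$ are a signed permutation of the base columns; it gets $\delta(ka)=\det(w_k)\,\delta(a)$ from multiplicativity of $\det$, uses Proposition \ref{prop:galois}(i) for the vanishing case, and cites Gauss's lemma in Jacobi's form, exactly as you do. Your type-$B$ caveat is precisely the paper's constant $\epsilon_t$: read against the decreasing order, $\det P_a=\sgn(\sigma)\prod_j\varepsilon_j$ can be off by that sign (already at $t=5$), but it cancels from \eqref{eq:galoissign}, because conjugating $W_k$ by the base signed permutation leaves its determinant unchanged.
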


\begin{proof}
Write $\delta(a):=\det w_a$ for the determinant of the signed permutation that the folded classes of
$a$ define, and $\delta(a):=0$ when those classes are not a permutation of $\{1,\dots,n\}$. Both
$\tau_t$ and $\delta$ are alternating in $a$, both depend only on $a$ modulo $t$, and both vanish
exactly off the regular locus. Those three facts alone would not give proportionality; what does is
that we are at minimal level, where \emph{the number of coordinates equals the number of available
folded classes}. Hence a regular $a$ is a signed permutation of the full list
$(1,2,\dots,n)$, and there is exactly one such list: the regular locus is a \emph{single free orbit}
of the signed permutation group, and on the numerator side the columns of a regular $a$ are
literally a signed permutation of the columns of the base point. An alternating function on a single
free orbit is determined by its value at one point, so $\tau_t=\epsilon_{t}\,\delta$ for a single
sign $\epsilon_t$ depending on $t$ and the type alone.

Replacing $a$ by $ka$ composes the signed permutation attached to $a$ with $w_k$, so
$\delta(ka)=\det(w_k)\,\delta(a)$ by multiplicativity of $\det$; the vanishing case is stable by
Proposition \ref{prop:galois}(i). Multiplying by $\epsilon_t$ gives \eqref{eq:galoissign}, and
multiplicativity of $\det$ gives $\gamma_t(k_1k_2)=\gamma_t(k_1)\gamma_t(k_2)$.

The two factors of $\gamma_t$ are classical, and we claim neither. That
$\prod_j\varepsilon_j(k)=(-1)^{\mu}$ with $\mu=\#\{j\le n:\ kj\bmod t>t/2\}$ equals
$\left(\tfrac kt\right)$ is Gauss's lemma in Jacobi's form. That $\sgn(\sigma_k)=
\left(\tfrac kt\right)^{(t+1)/2}$ --- equivalently $\left(\tfrac kt\right)$ for $t\equiv1$ and $1$
for $t\equiv3$ modulo $4$ --- is a theorem of Pan \cite{Pan06}; the observation that
$\sigma_{k_1k_2}=\sigma_{k_1}\sigma_{k_2}$ and $\sigma_{-k}=\sigma_k$, so that $\sgn(\sigma_\cdot)$
is a real even character, is Pan's as well.
\end{proof}

\noindent
The product of the two can be said in one line, and saying it names a field.

\begin{corollary}[$\gamma_t$ is trivial or quadratic, and of which field]\label{cor:galoisquad}
Let $n$ be the number of folded classes the filter carries --- $n=(t-1)/2$ in type $B$, $t$ odd, and
$n=t/2-1$ in type $C$, $t$ even --- and put
\[
D_t=\det\bigl(\zeta^{ij}-\zeta^{-ij}\bigr)_{1\le i,j\le n}.
\]
Then, in \emph{both} branches, $\sigma_k(D_t)=\gamma_t(k)\,D_t$ and $D_t^{\,2}=(-t)^n$.
Consequently
\begin{equation}\label{eq:galoisquad}
\gamma_t=
\begin{cases}
1 & n\ \text{even},\\[2pt]
\chi_{\mathbb Q(\sqrt{-t})} & n\ \text{odd},
\end{cases}
\end{equation}
and for odd $t$ this reads $\gamma_t(k)=\left(\tfrac kt\right)^{n}$. Translating the parity of $n$
into a congruence on $t$ gives one statement covering both parities:
\[
\gamma_t\ \text{is trivial for}\ t\equiv1,2\pmod4,
\qquad
\gamma_t\ \text{is quadratic for}\ t\equiv0,3\pmod4 .
\]
\end{corollary}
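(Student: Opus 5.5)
The plan is to prove both identities for the single determinant $D_t$, and then read off $\gamma_t$ from them. The argument works in the same way for both branches.

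\emph{Step 1: Galois covariance.} Apply $\sigma_k$ entrywise. The $(i,j)$ entry $\zeta^{ij}-\zeta^{-ij}$ goes to $\zeta^{ikj}-\zeta^{-ikj}$, so column $j$ becomes the column indexed by the residue $kj \bmod t$. I then check that this residue folds into $\{1,\dots,n\}$.
\begin{itemize}
\item It is never $0$, because $k$ is a unit and $0<j<t$.
\item In type $C$ it is never $t/2$. Indeed $t/2$ is fixed by every unit, as recorded in Proposition \ref{prop:galoissign}, and $j\neq t/2$.
\end{itemize}
Hence $kj\equiv \varepsilon_j(k)\,\sigma_k(j)\pmod t$. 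Since the entries are odd in the exponent, the new column $j$ is $\varepsilon_j(k)$ times the old column $\sigma_k(j)$. Therefore
\[
\sigma_k(D_t)=\sgn(\sigma_k)\prod_j\varepsilon_j(k)\,D_t=\det(w_k)\,D_t=\gamma_t(k)\,D_t .
\]
The only care needed is to confirm that the orientation convention for $\sigma_k$ agrees with that of $w_k$ in Proposition \ref{prop:galoissign}. This is harmless, because $\sgn(\sigma_k)=\sgn(\sigma_k^{-1})$.

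\emph{Step 2: the square.} Write $D_t=(2\sqrt{-1})^n\det S$ with $S=\bigl(\sin(2\pi ij/t)\bigr)_{i,j=1}^n$. The matrix $S$ is real and symmetric, and I claim $S^2=\tfrac t4 I$.
\begin{itemize}
\item Start from the full sum $\sum_{j=1}^{t-1}\sin(2\pi ij/t)\sin(2\pi kj/t)=\tfrac t2\bigl([i\equiv k]-[i\equiv-k]\bigr)$.
\item For $1\le i,k\le n$ the second bracket never fires, since $i+k\le 2n<t$ in both parities.
\item The terms $j$ and $t-j$ contribute equally.
\item When $t$ is even, the middle term $j=t/2$ vanishes because $\sin(\pi i)=0$.
\end{itemize}
So the sum over $1\le j\le n$ is $\tfrac t4\delta_{ik}$. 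This gives $(\det S)^2=(t/4)^n$, and hence $D_t^2=(-4)^n(t/4)^n=(-t)^n$.

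\emph{Step 3: conclusion.} Since $D_t^2=(-t)^n\in\mathbb Q$ and $D_t\neq0$, we can describe $D_t$ exactly.
\begin{itemize}
\item If $n$ is even, then $D_t=\pm t^{n/2}$ is rational. Every $\sigma_k$ fixes it, so $\gamma_t\equiv1$ by Step 1.
\item If $n$ is odd, then $D_t=\pm(-t)^{(n-1)/2}\sqrt{-t}$. By Step 1, $\gamma_t(k)$ is the sign by which $\sigma_k$ acts on $\sqrt{-t}$, which is $\chi_{\mathbb Q(\sqrt{-t})}(k)$.
\end{itemize}
For odd $t$ we have $n=(t-1)/2$. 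This $n$ is odd exactly when $t\equiv3\pmod4$, and there the character of $\mathbb Q(\sqrt{-t})$ at $k$ is the Jacobi symbol $\left(\tfrac kt\right)$ by quadratic reciprocity. For $t\equiv1\pmod4$ both $\gamma_t$ and $\left(\tfrac kt\right)^n$ are $1$. Either way $\gamma_t(k)=\left(\tfrac kt\right)^n$, consistently with Proposition \ref{prop:galoissign}.

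The congruence form is bookkeeping on the parity of $n$:
\begin{itemize}
\item For odd $t$, $n=(t-1)/2$ is even iff $t\equiv1\pmod 4$.
\item For even $t$, $n=t/2-1$ is even iff $t/2$ is odd, i.e.\ iff $t\equiv2\pmod4$.
\end{itemize}

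I expect the main obstacle to be Step 2, specifically the uniform evaluation of the sine sum. One must check that the reflected term $[i\equiv-k]$ and, for even $t$, the term $j=t/2$ really drop out in both parities. A secondary point is the reciprocity identification in Step 3, which requires $t\equiv3\pmod4$ for the Jacobi symbol to coincide with the character of $\mathbb Q(\sqrt{-t})$.
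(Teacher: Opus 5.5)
Your proposal is correct and follows the paper's proof. The Galois action permutes the columns (the paper says rows, which is the same since the matrix is symmetric) with the folding signs, so $D_t$ picks up $\det w_k=\gamma_t(k)$; orthogonality of the discrete sine transform gives $D_t^2=(-t)^n$; and $\gamma_t$ is then read off as the Galois action on $\sqrt{(-t)^n}$.

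The one divergence is in the odd-$t$ clause $\gamma_t(k)=\left(\tfrac kt\right)^n$. The paper gets it by comparing with Proposition \ref{prop:galoissign} and Pan's formula $\sgn(\sigma_k)=\left(\tfrac kt\right)^{(t+1)/2}$, so the exponent is $(t+3)/2=n+2$. You instead identify the character of $\mathbb Q(\sqrt{-t})$ with the Jacobi symbol by reciprocity, which is equally valid and avoids citing Pan.
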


\begin{proof}
Applying $\sigma_k$ to $D_t$ replaces the row indexed by $i$ by the row indexed by $ki$; folding
$ki$ into $\{1,\dots,n\}$ costs the sign $\varepsilon$ because the entries are odd under
$\zeta\mapsto\zeta^{-1}$, so the rows are permuted by $\sigma_k$ and signed by the
$\varepsilon_j(k)$. The determinant therefore picks up exactly $\det w_k=\gamma_t(k)$. For the
square, $D_t=(2\sqrt{-1})^n\det\bigl(\sin(2\pi ij/t)\bigr)$, the $i$ inside the sine being the row
index; and the orthogonality of the discrete sine
transform gives $\bigl(\sin(2\pi ij/t)\bigr)^2=\tfrac t4 I$, whence
$D_t^2=(-4)^n(t/4)^n=(-t)^n$. So $D_t=\pm\sqrt{(-t)^n}$ generates $\mathbb Q(\sqrt{-t})$ when $n$ is
odd and lies in $\mathbb Q$ when $n$ is even, and $\gamma_t$ is the character by which the Galois
group acts on it. Comparison with the proposition gives \eqref{eq:galoisquad}, consistently:
$(t+3)/2=n+2$ and the symbol squares to $1$.
\end{proof}

\noindent
Verified independently of the proof, in both branches: $D_t^2=(-t)^n$ exactly in $21$ of $21$ moduli
$3\le t\le23$, $\sigma_k(D_t)=\gamma_t(k)D_t$ on all $170$ pairs (unit, modulus) there, and
$\gamma_t$ trivial exactly when $n$ is even in $21$ of $21$; the resulting split by $t$ modulo $4$
is uniform, trivial at $1,2$ and quadratic at $0,3$ \stverif.

\begin{remark}[whose the mechanism is]\label{rem:galoisattrib}
Signed Galois covariance of Weyl-type data is standard, and we do not claim it. In the modular-data
setting Fuchs, Schellekens and Schweigert attach to a Galois transformation the sign of the affine
Weyl element that returns the scaled weight to the alcove, and write
$S_{\widehat w(a),b}=\sgn(\widehat w)S_{a,b}$ \cite{FSSGalois}; Proposition
\ref{prop:galoissign} is that phenomenon for our two filters. What is specific here is that at
minimal level the sign collapses to a \emph{weight-independent} character --- there is one
$\gamma_t$, not one sign per weight --- and that Corollary \ref{cor:galoisquad} identifies it, in
both branches, as trivial or as the quadratic character of $\mathbb Q(\sqrt{-t})$ according to the
parity of $n$.

The arithmetic half of that corollary is older still, and saying so places it. Reading a quadratic
character off a matrix of roots of unity whose square is a scalar --- so that $\sqrt{\pm t}$ appears
and the Galois group acts on it by a sign --- is Schur's linear-algebra derivation of the sign of the
quadratic Gauss sum \cite{Schur21,Murty}, and thence of quadratic reciprocity. Our $D_t^2=(-t)^n$ is the same first step:
the sine matrix squares to $\tfrac t4 I$ exactly as Schur's $(\zeta^{rs})$ squares to $n$ times a
permutation.

We also checked whether the rest of Schur's argument buys us anything, and it does not, which is
worth recording so that it is not tried twice. What his multiplicity count delivers is the sign of
the \emph{trace} --- the Gauss sum itself; transported here it would give the sign of $D_t$
individually, from the multiplicities of $\pm\sqrt t/2$ as eigenvalues of the sine matrix. We never
need that sign. Everything in this paper uses $D_t$ through a \emph{ratio}: $\gamma_t(k)$ is
$\sigma_k(D_t)/D_t$, and $\epsilon_t$ is, after Corollary \ref{cor:oddsign}, essentially
$D_t(\zeta^2)/D_t(\zeta)$ up to $(-1)^{m'}$. The ratio is exactly what the Galois action hands over
for free, and computing the two signs separately in order to divide them would be strictly more work
for the same answer --- and would still need the Gauss sum sign as an input. So the antecedent is
real and the technique is not a tool for us.
\end{remark}

\noindent
The constant $\epsilon_t$ is then not a loose end either, and identifying it upgrades Proposition
\ref{prop:oddfilter} from an absolute value to a sign.

\begin{corollary}[the odd filter in closed form, sign included]\label{cor:oddsign}
Let $t=2m'+1$ and $A=2(\eta+\rho_{B_{m'}})$, and let $\delta(A)$ be the determinant of the signed
permutation of $\{1,\dots,m'\}$ given by the folded classes of $A$ modulo $t$, read --- as in Lemma
\ref{lem:T} --- \emph{against the decreasing order} $m',m'-1,\dots,1$, and $0$ when the classes are
not a permutation. Then
\begin{equation}\label{eq:oddsign}
\tau^B_t(\eta)=\epsilon_t\,\delta(A),
\qquad
\epsilon_t=\Bigl(\tfrac{-2}{t}\Bigr)^{(t+3)/2}(-1)^{m'(m'-1)/2},
\end{equation}
and $\epsilon_t=+1$ unless $t\equiv5\pmod8$, where it is $-1$.
\end{corollary}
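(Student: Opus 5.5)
Since Proposition~\ref{prop:galoissign} already shows that on the regular locus $\tau^B_t=\epsilon_t\,\delta$ for a single sign $\epsilon_t$ depending on $t$ alone, all that remains is to compute that sign at one point. The reference order (decreasing $m',\dots,1$, as in Lemma~\ref{lem:T}) only fixes which signed permutation $\delta$ counts as the identity. Changing it would rescale $\delta$ by a global sign and leave the proportionality intact. The natural point is $\eta=0$. There $\tau^B_t(0)$ is the value of the trivial character, so it equals $1$. Hence
\[
\epsilon_t=\delta(A_0)^{-1}=\delta(A_0),\qquad A_0=2\rho_{B_{m'}}=(2m'-1,2m'-3,\dots,1).
\]

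The key step is to recognise $A_0$ as a Galois translate of the base point. Write $b=(m',m'-1,\dots,1)$ for the base point, so $b_i=m'-i+1$. Then
\[
A_{0,i}=2b_i-1=-(t-2b_i+1)=-2(m'+1-b_i)=-2i\pmod t .
\]
So, modulo $t$, $A_0=(-2)\cdot(1,2,\dots,m')$: multiplication by the unit $-2$ applied to the \emph{reversed} base point. By multiplicativity of $\det$ on signed permutations, exactly as in the proof of Proposition~\ref{prop:galoissign}, this gives
\[
\delta(A_0)=\gamma_t(-2)\cdot\sgn(\text{reversal of }m')=\gamma_t(-2)\,(-1)^{m'(m'-1)/2}.
\]
Proposition~\ref{prop:galoissign} with Pan's theorem gives
$\gamma_t(k)=\sgn(\sigma_k)\bigl(\tfrac kt\bigr)=\bigl(\tfrac kt\bigr)^{(t+1)/2}\bigl(\tfrac kt\bigr)=\bigl(\tfrac kt\bigr)^{(t+3)/2}$.
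At $k=-2$ this is exactly the formula \eqref{eq:oddsign}.

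The last clause is a case check modulo $8$. We have $\bigl(\tfrac{-2}{t}\bigr)=+1$ iff $t\equiv1,3\pmod 8$, and the exponent $(t+3)/2=m'+2$ has the parity of $m'$.
\begin{itemize}
\item $t\equiv1$: $m'\equiv0\pmod4$, so both factors are $+1$.
\item $t\equiv3$: $m'\equiv1\pmod4$, so both factors are $+1$.
\item $t\equiv5$: $m'\equiv2\pmod4$. The symbol is $-1$, but it is raised to an even power; the triangular factor is $(-1)^{1}=-1$. The product is $-1$.
\item $t\equiv7$: $m'\equiv3\pmod4$. The symbol is $-1$ raised to an odd power, and $m'(m'-1)/2\equiv3$ is odd. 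The two $-1$'s give $+1$.
\end{itemize}

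The only delicate point is the bookkeeping of conventions. The folding sign in $\delta$ must be the same $\varepsilon$ as in $\gamma_t$. Also, composing "reverse, then multiply by $-2$" must multiply determinants, rather than conjugate them by the reference order. I would verify this directly at $t=5$: there $A_0=(3,1)$ folds to $(-2,1)$. Read against $(2,1)$, this has determinant $-1=\epsilon_5$, as the formula predicts.
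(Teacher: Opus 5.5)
Your proposal is correct and follows the paper's proof essentially line for line. You evaluate at $\eta=0$, where $\tau^B_t=1$, and write $A_\rho\equiv(-2)\cdot(1,\dots,m')$. You then read this as $\gamma_t(-2)$ against the increasing order, times the sign $(-1)^{m'(m'-1)/2}$ of the reversal, and finish with the case check modulo $8$. The only cosmetic difference is how you obtain $\gamma_t(-2)=\bigl(\tfrac{-2}{t}\bigr)^{(t+3)/2}$: you get it directly from Proposition~\ref{prop:galoissign} and Pan's theorem, whereas the paper quotes Corollary~\ref{cor:galoisquad}, which gives the same value.
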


\begin{proof}
By the proposition, $\tau^B_t=\epsilon_t\delta$ with $\epsilon_t=\delta(A_\rho)$, the value at
$\eta=0$, where $\tau^B_t=1$ because the trivial representation has character $1$ at every point.
Now $A_\rho=(t-2,t-4,\dots,1)$, whose $j$-th entry is $t-2j\equiv-2j$, so
$A_\rho\equiv(-2)\cdot(1,2,\dots,m')$. Read against the \emph{increasing} order this gives exactly
$\gamma_t(-2)=\left(\tfrac{-2}{t}\right)^{(t+3)/2}$ by Corollary \ref{cor:galoisquad}; reading it
against the decreasing order of Lemma \ref{lem:T} composes with the reversal of $m'$ letters and so
multiplies by $(-1)^{m'(m'-1)/2}$. Evaluating both factors modulo $8$ leaves $-1$ exactly at
$t\equiv5$.
\end{proof}

\begin{remark}[the reading order is not cosmetic]\label{rem:convention}
Taking the classes against the increasing order instead multiplies $\delta$ by
$(-1)^{m'(m'-1)/2}$, and therefore moves the exceptional residue of $\epsilon_t$ from $t\equiv5$ to
$t\equiv7$ modulo $8$: with the wrong order in hand one has a statement about a convention wearing
the clothes of a statement about $t$. The character $\gamma_t$ is unaffected, being the determinant
of a permutation of $\{1,\dots,m'\}$ onto itself, whose sign does not depend on how the set is
listed.
\end{remark}

\noindent
Checked against the character evaluation, not against the formula: the closed form
\eqref{eq:oddsign} agrees on every weight tested at $t=3,5,7,9$, the constant coming out
$+1,-1,+1,+1$; the closed value of $\epsilon_t$ was fitted on odd $t\le31$ and then \emph{predicted}
correctly on the held-out range $33\le t\le61$; and the two classical inputs hold on all $788$ pairs
(unit, modulus) over odd $t\le61$ \stverif. The constant is easy to miss because $t=5$ is the
smallest modulus at which it is not $+1$.

\noindent
Both halves were measured against a filter computed from the character rather than from the closed
formula, which is the only way the control can fail: the closed formula agrees with the character
evaluation in every case tested, $\gamma_t$ is a character in every case, and \eqref{eq:galoissign}
holds on every pair (unit, weight), including on the subset where $\tau\neq0$, which is the only
place the statement has content \stverif.

\begin{remark}[what it buys, computationally]\label{rem:galoiscost}
One evaluation per orbit, plus the character $\gamma_t$. Since each locus is a union of
$(\ZZ/t)^\times$-orbits, survival has to be decided at one representative of each; and by
Proposition \ref{prop:galoissign} the \emph{value} on the rest of the orbit is then the
representative's value times $\gamma_t$, so nothing is lost. The saving is up to $\varphi(t)$ and
the bound is attained as soon as the action is free: at $t=5,7,11$ and rank $\le3$ the measured
saving is exactly $\varphi(t)=4,6,10$, and at composite $t$ it is smaller at small rank and climbs
to $\varphi(t)$ --- at $t=12$, the factors are $2.50$, $3.64$, $4.00$ at ranks $1,2,3$ \stverif.
The mechanism is a routine use of a classical symmetry; what is specific to us is the explicit sign
$\gamma_t$ for these two filters. We record it because it is the reason the tables in
\S\ref{sec:verif} could be computed over the range they cover.
\end{remark}

\subsection{Verification}
Lemma~\ref{lem:T} was checked against two independent computations of $\tau^C_t$: one from the weight
multiplicities of the irreducible symplectic character (Freudenthal, hence not through the
bialternant) and one from the bialternant itself \stverif.

\begin{center}
\rowcolors{2}{hband}{white}
\begin{tabular}{c c r c c c c c}
\toprule
$t$ & $m$ & weights & routes agree & $(T)$ failures & $|\tau|=1$ & sign exact & decoys err \\
\midrule
\multicolumn{8}{l}{\emph{even} $t$, $m=(t-2)/2$}\\
4 & 1 & 15 & yes & $0+0$ & yes & $8/8$ & $0$ / $4$ \\
6 & 2 & 66 & yes & $0+0$ & yes & $16/16$ & $8$ / $16$ \\
8 & 3 & 120 & yes & $0+0$ & yes & $18/18$ & $26$ / $21$ \\
10 & 4 & 126 & yes & $0+0$ & yes & $16/16$ & $54$ / $32$ \\
12 & 5 & 126 & yes & $0+0$ & yes & $8/8$ & $48$ / $28$ \\
\midrule
\multicolumn{8}{l}{\emph{odd} $t$, $m'=(t-1)/2$}\\
3 & 1 & 15 & yes & $0+0$ & yes & $10/10$ & $0$ / $5$ \\
5 & 2 & 66 & yes & $0+0$ & yes & $25/25$ & $12$ / $10$ \\
7 & 3 & 120 & yes & $0+0$ & yes & $27/27$ & $36$ / $9$ \\
9 & 4 & 126 & yes & $0+0$ & yes & $16/16$ & $54$ / $8$ \\
\bottomrule
\end{tabular}
\end{center}

\noindent
``$(T)$ failures'' counts the two directions separately (a weight predicted to vanish that does not,
and one predicted not to vanish that does); both are zero throughout, on $780$ weights. The two
decoys are: dropping the folding, i.e.\ requiring only that the $c_j$ be distinct and nonzero; and,
for even $t$, moving the wall from $\{0,t/2\}$ to $\{0\}$, while for odd $t$ \emph{adding} a wall at
$(t+1)/2$, which is not one. Both err on a substantial fraction from rank $2$ on, so the condition of
Lemma~\ref{lem:T} is not equivalent to a weaker one --- with one exception that the table shows and
that we name rather than average away: at rank $1$, that is at $t=3$ and $t=4$, the first decoy errs
$0$ times, because with a single $c_j$ there is nothing for ``distinct'' to say. Those two rows are
untested by it, not confirmed by it. The sign predicted at random agrees on
$27$ of $66$ (even) as chance requires.

\begin{figure}[t]
\centering
\includegraphics[width=\textwidth]{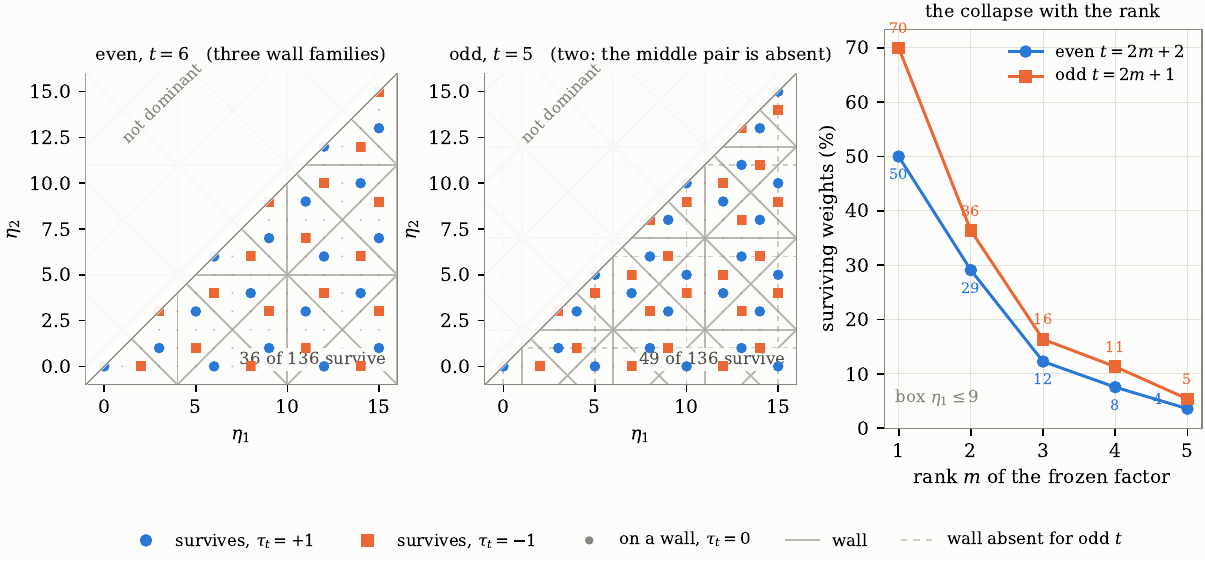}
\caption{The two filters drawn rather than described, and they are \emph{not} the same rule.
\emph{Left:} Lemma \ref{lem:T}, even order. Its walls are lines in the $\eta$-plane: two families
from $a_i\equiv0$, two from $a_i\equiv t/2$ and two from $a_1\equiv\pm a_2$, and a weight dies when it
meets a line. \emph{Centre:} Proposition \ref{prop:oddfilter}, odd order of the same rank, in the
coordinates $A_j=2(\eta_j+\rho_j)$ that type $B$ requires; the middle pair of families does not exist
there and is drawn as ghosts, so that the parity difference is seen and not asserted. Drawing this
panel with the even rule gives the \emph{same} count of
survivors, $49$ of $136$, while disagreeing on $66$ of the $136$ points; the total is exactly the
statistic that fails to notice, which is why the panel is computed from the type-$B$ bialternant
directly rather than from any closed form. \emph{Right:} the survival rate against the rank of the
frozen factor, in a fixed box, from the regularity criterion of Lemma \ref{lem:regular}, which is
valid in both parities. The script checks on every run that the exact odd values stay in
$\{0,\pm1\}$ and that their support agrees with that criterion.}
\label{fig:filter}
\end{figure}

\begin{figure}[htbp]
\centering
\includegraphics[width=\linewidth]{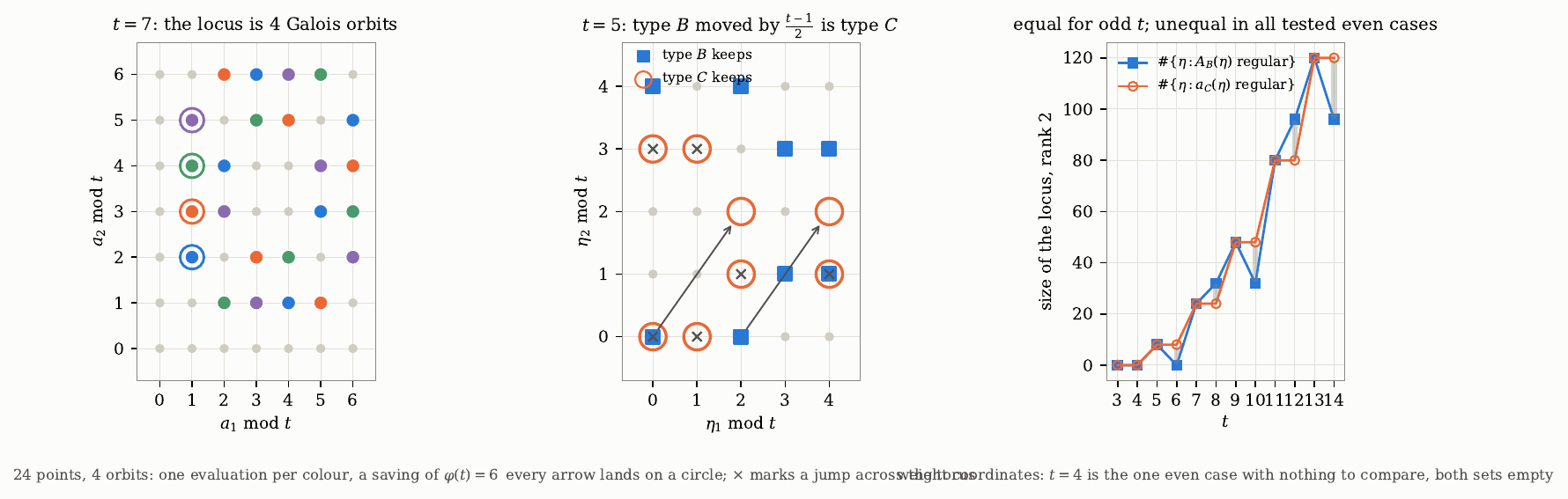}
\caption{The filter seen through its Galois symmetry. \textbf{Left}: at $t=7$ and rank two the
regular locus in exponent coordinates is a union of four $(\mathbb Z/t)^\times$-orbits, one colour
each, never a partial orbit; the ringed point of each colour is the only one that has to be
evaluated. \textbf{Centre}: the two rules the paper uses, at $t=5$ and rank two, in weight
coordinates. Squares are what the type-$B$ rule keeps and circles what the type-$C$ rule keeps ---
different sets, which is the disagreement of Figure \ref{fig:filter} --- and every arrow of the
translation by $\tfrac{t-1}{2}$ lands on a circle. The sets agree; the \emph{values} on them need
not, by Proposition \ref{prop:galoissign}. \textbf{Right}: why this is about parity. The centre and
right panels are drawn in \emph{weight} coordinates, that is, they show the pullbacks
$\mathcal P^B_{t,n}$ and $\mathcal P^C_{t,n}$ of \eqref{eq:pullbacks} and not the exponent loci
\eqref{eq:twoloci}; for odd $t$ the two pictures agree, for even $t$ they do not, and the counts
here are the weight-coordinate ones. Those have equal size for all odd $t$ by Proposition
\ref{prop:galois}(ii) and unequal size in every even case drawn, so no translation can exist in
those. Computed by \texttt{fig\_galois.py}, which refuses to draw if a non-unit preserves the set or
if any translation other than $\tfrac{t-1}{2}$ works.}
\label{fig:galois}
\end{figure}

\begin{remark}[the odd filter is not this one]\label{rem:notodd}
An earlier version of this paper read \eqref{eq:tau} at odd $t$ as well, with $m'$ in place of $m$.
That is wrong, and one weight shows it. At $t=5$, $m'=2$, $\eta=(1,0)$ the symplectic reading gives
$\spc_{(1,0)}(\xi,\xi^2)=\xi+\xi^{-1}+\xi^{2}+\xi^{-2}=-1$, because the five fifth roots of unity sum
to zero; but the odd object of \S\ref{sec:odd} evaluates the \emph{odd orthogonal} character, whose
eigenvalue set includes the fixed point $1$, giving
$\mathrm{o}_{(1,0)}(1,\xi^{\pm1},\xi^{\pm2})=1+(-1)=0$. The two differ by exactly that fixed point,
which the reduction assigns to the torsion block when $t$ is odd and not when it is even. Proposition
\ref{prop:oddfilter} is the correct odd statement, and it has two species of wall where
Lemma \ref{lem:T} has three --- Figure \ref{fig:filter} draws the three and shows the absent family
in ghost. The reason for the missing family is not that $t/2$ fails to be an integer but the one
given in Remark \ref{rem:BC}: the wall is $2a\equiv0$, and it is distinct from $a\equiv0$ only when
$2$ is not invertible modulo $t$.
\end{remark}

\section{The composition}\label{sec:comp}

By the analysis of the companion paper \cite{PaperI} the object with all pairs free is a virtual character of the symplectic group
of the corresponding rank, so we may write, with $R=r+m$,
\begin{equation}\label{eq:branch}
\Phi_{2,R}\;=\;\sum_{\eta,\mu} B_{\eta,\mu}\,\spc_\eta(y)\,\spc_\mu(z),
\qquad B_{\eta,\mu}\in\ZZ,
\end{equation}
the branching of $\Sp_{2R}$ to $\Sp_{2m}\times\Sp_{2r}$. Two things about \eqref{eq:branch} should be
kept apart, because only one of them is ours. That the $B_{\eta,\mu}$ \emph{exist} and are integers
is not a computational finding: $\Sp_{2m}\times\Sp_{2r}$ is a reductive subgroup of maximal rank in
$\Sp_{2R}$, restriction is a ring map on representation rings, and the characters $\spc_\eta\otimes
\spc_\mu$ are a basis. That argument is all we use, and it needs nothing beyond the definitions;
the general Young-diagrammatic machinery for restrictions of classical groups to reductive subgroups
of maximal rank is Koike--Terada \cite{KoikeTerada} \stext, which we cite as the framework and not as
a source for the two particular pairs occurring here. Integrality rather than positivity is all we may assert,
because $\Phi_{2,R}$ is a \emph{virtual} character --- see Remark \ref{rem:virtual}. What is verified
below is our concrete algorithm for computing the $B$, not the decomposition itself. (The rank-one
branching algebra with a standard monomial basis, $\Sp_{2n}\downarrow\Sp_{2n-2}$, is Kim--Yacobi
\cite{KimYacobi}; ours is the branching to a \emph{product}, and we do not claim their construction
covers it.) Specialising $y$ as in Proposition~\ref{prop:red} and applying
Lemma~\ref{lem:T} termwise gives the formula this paper is organised around.

\begin{equation}\label{eq:A}
\Phi_{t,r}=\sum_{\mu} A_\mu\,\spc_\mu(z),
\qquad
A_\mu=\sum_{\eta} B_{\eta,\mu}\,\tau^C_t(\eta)\in\ZZ ,
\end{equation}
with $\tau^C_t(\eta)\in\{0,\pm1\}$ given by Lemma~\ref{lem:T}. Equivalently, the whole object is the
composite
\[
R_{\mathrm{virt}}(\Sp_{2R})\ \xrightarrow{\ \text{branch}\ }\
R(\Sp_{2r})\otimes R(\Sp_{t-2})\ \xrightarrow{\ 1\otimes\mathcal T_t\ }\ R(\Sp_{2r}),
\]
where $\mathcal T_t(\spc_\eta)=\tau^C_t(\eta)$, and $A_\mu$ is the value at the torsion element of
$M_\mu=\sum_\eta B_{\eta,\mu}[\spc_\eta]$. We call $M_\mu$ the \emph{virtual multiplicity class} on
this branch rather than the multiplicity space: the $B_{\eta,\mu}$ take both signs here, so $M_\mu$
is a class in the Grothendieck group and not a module. On the odd branch, for a single $\Lambda$, the
same object is a genuine multiplicity space.

\keybox{\emph{The factorisation in one sentence.} Everything inherited from the unspecialised object
sits in the branching coefficients $B$, everything created by the root of unity sits in the filter
$\mathcal T_t$, and the vanishing of $\Phi_{t,r}$ is the vanishing of the composite on every $\mu$.}

\noindent
Formula \eqref{eq:A} separates the two halves cleanly: everything inherited from the unspecialised
object sits in $B$, everything created by the torsion sits in $\mathcal T_t$, and the vanishing locus
of $\Phi_{t,r}$ is the locus where the composite is zero on every $\mu$.

\subsection{Verification}
Equation \eqref{eq:A} is formal once the coefficients $B_{\eta,\mu}$ of \eqref{eq:branch} are known.
The existence and integrality of \eqref{eq:branch} are theoretical, for the reason given just above:
restriction is a map of representation rings and the characters are a basis. What is checked here is
our concrete peeling algorithm for computing those coefficients, and the composite it feeds. It was
checked on $13$ forms -- nine with $t=4$ (so $\Sp_6\supset\Sp_2\times\Sp_4$) and four with $t=6$
(so $\Sp_8\supset\Sp_4\times\Sp_4$), four of them vanishing \stverif:

\begin{center}
\rowcolors{2}{hband}{white}
\begin{tabular}{l c}
\toprule
control & result \\
\midrule
the peeling reconstructs $\Phi_{2,R}$ exactly & remainder $0$ in $13/13$ \\
$B_{\eta,\mu}\in\ZZ$ & $13/13$ \\
\eqref{eq:A} against $\Phi_{t,r}$ computed independently & identical monomial by monomial, $13/13$ \\
decoy: no filter ($\tau\equiv1$) & disagrees $13/13$ \\
decoy: the $\tau$ of the wrong $t$ & disagrees $10/13$, \emph{ties in $3$} \\
\bottomrule
\end{tabular}
\end{center}

\noindent
We report the three ties rather than the ten disagreements alone: on those three forms the wrong
filter reproduces the object, so they are untested by that decoy, not confirmed by it. We proposed
the level threshold familiar from affine fusion as the explanation and then measured it, and it is
\emph{not} the explanation --- Problem \ref{prob:threshold} says why, and replaces it with what the
measurement does give: the wrong filter reproduces the object exactly when it agrees with the right
one \emph{pointwise on the support of} $B$, in $13$ of $13$ \stverif.

\section{The odd case, and why the two parities differ}\label{sec:odd}

\noindent
This section proves Theorem \ref{thm:odd}, and it is worth saying where each clause is settled:
(i) is Proposition \ref{prop:determinant}, (ii) Lemma \ref{lem:c1p} and Corollary
\ref{cor:unimodular}, (iii) Lemma \ref{lem:permanent} with Lemmas \ref{lem:laminar} and
\ref{lem:sdr}, (iv) Theorem \ref{thm:cancel}, and (v) Proposition \ref{prop:graph} with Corollary
\ref{cor:forest}. Everything before Proposition \ref{prop:transversal} is the reduction that makes
those statements possible.

Proposition \ref{prop:red} is uniform in the parity of $t$, but \eqref{eq:branch} is not: it is built
from $\Phi_{2,R}$, and for odd $t$ the reduction lands on $\Phi_{1,R'}$ instead, with
$R'=r+m'$ and $m'=(t-1)/2$. This section supplies the missing half. It turns out to be not a
translation of the even case but its opposite, and the reason is a single sign.

\subsection{The evaluation point, and its determinant}
The alphabet
\[
p_t=\bigl(1,\zeta,\dots,\zeta^{t-1},\,z_1^{\pm1},\dots,z_r^{\pm1}\bigr)
\]
is stable under $x\mapsto x^{-1}$, so it is a torus element of the \emph{orthogonal} group $O(N)$.
Its determinant is the product of the whole alphabet, and the free pairs contribute $1$:
\begin{equation}\label{eq:det}
\det p_t=\prod_{k=0}^{t-1}\zeta^{k}=\zeta^{\binom t2}=(-1)^{t+1}.
\end{equation}
The orbit block is the permutation matrix of a $t$-cycle, so \eqref{eq:det} is the sign of that
cycle. Verified exactly for $2\le t\le 11$ \stverif.

\begin{remark}[the parity dichotomy is \eqref{eq:det}]\label{rem:virtual}
For odd $t$ we have $\det p_t=+1$: the point lies in the identity component $SO(N)$, the restriction
$GL_N\downarrow SO_N$ is an ordinary restriction of an honest representation, and the multiplicities
in the first step are \emph{nonnegative}. For even $t$ we have $\det p_t=-1$: the point lies in the
other component of $O(N)$, the character there is not that of a restriction --- it is a twining, in
the sense of Jantzen and Kumar--Lusztig--Prasad \cite{Jantzen,KLP} --- and the symplectic expansion
is \emph{virtual}, with coefficients of both signs. Measured: no negative coefficient in $129$ odd
forms, against $12$ of $35$ even forms carrying one, least witness $\beta=(8,7,6,5,4,2,1,0)$
\stverif.

What \eqref{eq:det} controls should be stated carefully, because the tempting stronger sentence is
false. It decides \emph{ordinary against twined branching}, hence whether the object entering the
filter is a genuine character or a virtual one. It does not by itself forbid cancellation in the
final sum: even with $B_{\eta,\mu}\ge0$, the specialisation
$A_\mu=\sum_\eta B_{\eta,\mu}\tau^{B}_t(\eta)$
has signs from the filter and may cancel collectively. Whether it does is measured in
\S\ref{sec:oddverif}, and localised in \S\ref{sec:oddlocal}.
\end{remark}

\begin{proposition}[parity dichotomy]\label{prop:parity}
The parities of $t$ differ by two distinct structural mechanisms, and every structural difference in the
branching-and-filter factorisation of \S\S\ref{sec:filter}--\ref{sec:odd} follows from one of the
two. (A third mechanism, of a different kind, appears in \S\ref{sec:square}: it is combinatorial
rather than group-theoretic, and Corollary \ref{cor:selection} is independent of both switches.)
\begin{center}
\rowcolors{2}{hband}{white}
\small
\begin{tabular}{l l l}
\toprule
 & $t$ odd & $t$ even \\
\midrule
component of $O(N)$ containing $p_t$ & $SO(N)$ & $O(N)\setminus SO(N)$ \\
branching & $B_{R'}\to B_{m'}\times D_r$ & twining $\to C_m\times C_r$ \\
input to the filter & genuine character & possibly virtual \\
torsion block & $B_{m'}$, order $t=h+1$ & $C_m$, order $t=h+2$ \\
is the torsion element principal? & yes & no, for $m\ge2$ \\
$2$ invertible modulo $t$? & yes & no \\
do $B$ and $C$ see the same walls? & yes, via $\sigma_2$ & no: $a_i=t/2$ \\
\bottomrule
\end{tabular}
\end{center}
The first switch is \eqref{eq:det}; the second is invertibility of $2$ modulo $t$, and it is
Remark \ref{rem:BC}. The last row is Proposition \ref{prop:galois}: for odd $t$ the two
\emph{nonvanishing loci} are exchanged by the Galois element $\sigma_2$, which is why the two types
see the same walls; the loci are translates rather than equal, and the values still differ.
\end{proposition}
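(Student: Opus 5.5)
The table is the content of the proposition, so the plan is to prove it row by row and to show that each row is a consequence of one of two facts. The first is $\det p_t=(-1)^{t+1}$, which is \eqref{eq:det}. The second is whether $2\in(\ZZ/t)^\times$. Both are elementary. For the first, the product of all $t$-th roots of unity is $\zeta^{\binom t2}$, and $\zeta^{t/2}=-1$ for even $t$ while $\zeta^{t(t-1)/2}=1$ for odd $t$. For the second, $2$ is a unit modulo $t$ exactly when $t$ is odd. So the work is in the reduction, not in the two switches.

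The rows tied to the first switch come next. For odd $t$, $p_t\in SO(N)$. By Proposition \ref{prop:red} the orbit splits as $\{1\}$ together with $m'$ pinned pairs, so the point lies in the maximal torus of $SO_{2R'+1}$, and restricting to $SO_{2m'+1}\times SO_{2r}$ is an ordinary restriction of honest representations. That gives the branching row and the ``genuine character'' row, with nonnegative multiplicities. For even $t$, $\det p_t=-1$, so the point lies in the non-identity component. The character there is a twining, and the composition \eqref{eq:branch} built from $\Phi_{2,R}$ expands over $\Sp_{2m}\times\Sp_{2r}$ with integer coefficients of both signs. The minimal witness quoted in Remark \ref{rem:virtual} shows that ``possibly virtual'' cannot be sharpened to ``genuine''.

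The torsion-block row follows from counting. The frozen block has order $t$. The Coxeter number is $h=2m'$ for $B_{m'}$, giving $t=h+1$, and $h=2m$ for $C_m$, giving $t=h+2$. The principality row comes from Remark \ref{rem:kostant}. In the odd case the element is Weyl-conjugate to $\rho^\vee(e^{2\pi i/t})$; equivalently, its eigenvalues, all the $t$-th roots, coincide with those of the principal element of order $h+1$. In the even case the congruence argument there forces all the $b_i$ to have the same parity, which contradicts $\{|b_i|\}=\{1,\dots,m\}$ once $m\ge2$. That argument runs only because $2\mid t$, so this row hangs on the second switch.

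The last two rows use the second switch directly. The extra wall $2a_i\equiv0$ of $\mathcal R^C$ differs from $a_i\equiv0$ precisely when $2$ is not invertible, and Proposition \ref{prop:galois}(ii)--(iii) supplies the $\sigma_2$ translation in the odd case and the rank-two cardinality obstruction $16\ne8$ in the even case.

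The main obstacle is the completeness claim, that \emph{every} structural difference follows from one of the two switches. This is not a theorem about an enumerated class of objects, so I would prove it as a closed statement: the differences in \S\S\ref{sec:filter}--\ref{sec:odd} are exactly the rows listed, and each row has been traced to a switch above. One point needs care, namely that the ``principal'' row uses only $2\mid t$ and not the component. I would make that explicit so that the attribution of each row is unambiguous.
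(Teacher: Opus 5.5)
Your row-by-row collation is the paper's own route. The proposition has no separate proof: it rests on \eqref{eq:det}, Remarks \ref{rem:virtual}, \ref{rem:kostant}, \ref{rem:BC} and Proposition \ref{prop:galois}, and \S\ref{sec:attr} lists it as verified rather than proved. Your arguments for the individual rows are sound. The step you single out as needing care, however, is false as you state it. The principal row does not use ``only $2\mid t$ and not the component''. The contradiction in Remark \ref{rem:kostant} runs through the long simple root $2e_m$. That root is present because the torsion block is of type $C$, and the type of the block is the branching row, i.e.\ the first switch.

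Change the type alone and the verdict flips. At the same even order $t=2m+2$, $\rho^\vee_{B_m}=(m,\dots,1)$ makes $\rho^\vee(\xi)\in SO_{2m+1}$ an element with spectrum $1,\xi^{\pm1},\dots,\xi^{\pm m}$, and it is principal by construction. Change the parity alone and it flips as well. At odd $t=2n+1$ the type-$C$ element with coordinates $b=(1,\dots,n)$ has $b_i-b_{i+1}\equiv-1\equiv 2n=2b_n$, so every simple root takes the value $\zeta^{-1}$ and the element is principal. So the even ``no'' needs type $C$ \emph{and} $2\mid t$. The odd ``yes'' uses type $B$ and $t=h+1$, and never the invertibility of $2$. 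Writing your attribution into the proof would assert something false; the correct statement is that this row is produced by the two switches jointly.

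Second, you cannot close completeness by declaring that the differences in \S\S\ref{sec:filter}--\ref{sec:odd} are exactly the seven rows, because those sections record more:
\begin{itemize}
\item one forbidden residue class against two, a chirality factor $2$ against $1$, and two different divisors (Proposition \ref{prop:eventransversal} and the table after it);
\item the even filter living on $|\eta|$ even (Corollary \ref{cor:evensize});
\item the (L1)--(L3) localisation existing only at odd $t$.
\end{itemize}
The paper traces several of these in place: the forbidden classes to Remark \ref{rem:BC}, Corollary \ref{cor:evensize} to ``the same fact about $2$ and $t$'', and the localisation to the first switch. The chirality and the divisors come from the free factor being $D_r$ rather than $C_r$, which is again the branching row. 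The signs $\gamma_t$ and $\epsilon_t$ are governed by $t$ modulo $4$ and $8$, so they must be set aside explicitly as refinements rather than structural differences, as \S\ref{sec:closing} does. A completeness argument has to enumerate and trace these, not assert that the table is exhaustive.
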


\subsection{The pipeline}
Two fixed points of $x\mapsto x^{-1}$ are available when $t$ is even, namely $\pm1$, and they are
spent in the Littlewood step. When $t$ is odd there is exactly one, and it goes with the torsion:
$(1,\zeta^{\pm1},\dots,\zeta^{\pm m'})$ are $2m'+1=t$ numbers, a torus element of $SO_t$, while the
$2r$ free entries are a torus element of $SO_{2r}$. Hence the odd analogue of \eqref{eq:branch} is
the branching to the maximal-rank subgroup
\begin{equation}\label{eq:oddbranch}
B_{R'}\ \longrightarrow\ B_{m'}\times D_r,
\qquad\text{that is}\qquad
SO_{2R'+1}\ \downarrow\ SO_{2m'+1}\times SO_{2r},
\end{equation}
again in the range of \cite{KoikeTerada}, and with no twining anywhere: writing $\Phi_{1,R'}=\sum_
\Lambda a^B_\Lambda\,\mathrm{o}_\Lambda(w)$ for the Littlewood restriction and branching each
$\mathrm{o}_\Lambda$ through \eqref{eq:oddbranch}, the composite is
\begin{equation}\label{eq:oddA}
\Phi_{t,r}=\sum_\mu A^D_\mu\,\mathrm{o}^{D_r}_\mu(z),
\qquad
A^D_\mu=\sum_{\Lambda,\eta}a^B_\Lambda\,B^{\mathrm{odd}}_{\Lambda;\eta,\mu}\,\tau^B_t(\eta),
\end{equation}
where $\tau^B_t(\eta)=\mathrm{o}^{B_{m'}}_\eta(1,\zeta^{\pm1},\dots,\zeta^{\pm m'})$ is the odd
torsion filter: the character of $SO_{2m'+1}$ at the element whose eigenvalues are \emph{all} the
$t$-th roots of unity.

One caveat, and it is not a defect of \eqref{eq:oddbranch}. Our $\Phi$ is invariant under each
$z_i\mapsto z_i^{-1}$ separately, hence under $O(2r)$ and not merely $SO(2r)$; so the weights $\mu$
and $\mu^*=(\mu_1,\dots,-\mu_r)$ carry equal coefficients and all statements about a highest weight
are statements about $\mu^+=(\mu_1,\dots,|\mu_r|)$. The disconnectedness does not disappear when $t$
is odd; it moves from the torsion side to the free side, where it is harmless.

\subsection{The odd filter is a theorem of others}
Here the odd case is in better shape than the even one. Its torsion element is regular \emph{and}
conjugate to the principal element of order $t=h+1$, where $h=2m'$ is the Coxeter number of
$B_{m'}$. That identification is immediate and we give it rather than measure it: in type $B_{m'}$
the half-sum of positive coroots is $\rho^\vee=(m',m'-1,\dots,1)$, so the principal element
$\rho^\vee(\zeta)$ has, in the natural $(2m'+1)$-dimensional representation, the eigenvalues
$\zeta^{\pm m'},\dots,\zeta^{\pm1},1$; and since $t=2m'+1$ those exhaust $\zt$ exactly once each.
So our torsion element \emph{is} the principal element of order $t$, up to the ordering of the torus
coordinates. The multiset $\{\alpha(g)\}$ was in addition computed for $t\le11$
\stverif. Since $t>h$ forces $\Phi_{0,t}=\varnothing$, Theorem 4.1 of \cite{NPP} applies verbatim. Its
constant is $1$ here for two independent reasons: in the odd type-$B$ case it is already $1$ in
\cite{NPP}, and in any case $\Phi_{0,t}=\varnothing$ makes the relevant group a torus, so no
classification is needed at all; \cite{Polo} completes that classification in general. It therefore
gives, with no work of ours:

\begin{proposition}[the odd filter, after {\cite{NPP}}]\label{prop:oddfilter}
$\tau^B_t(\eta)\neq0$ if and only if $\Phi_{\eta,t}=\{\alpha:t\mid\langle\eta+\rho,\alpha^\vee
\rangle\}$ is empty, and in that case $|\tau^B_t(\eta)|=1$.
\end{proposition}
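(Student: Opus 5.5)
The plan is to prove the statement directly by the type-$B$ bialternant, exactly as Lemma \ref{lem:T} was proved for type $C$, and to read the condition $\Phi_{\eta,t}=\varnothing$ as regularity. Citing Theorem 4.1 of \cite{NPP} would also suffice, since the torsion element is the principal element of order $t=h+1$ and that theorem applies verbatim. The direct route is self-contained, however, and it is the one Lemma \ref{lem:regular} and Corollary \ref{cor:oddsign} need.

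First I would write $\mathrm{o}^{B_{m'}}_\eta$ as the ratio $\det\bigl(x_i^{\eta_j+\rho_j}-x_i^{-(\eta_j+\rho_j)}\bigr)/\det\bigl(x_i^{\rho_j}-x_i^{-\rho_j}\bigr)$ with half-integral $\rho_{B_{m'}}$. I evaluate it at $x_i=\zeta^{i}$, $1\le i\le m'$, choosing a square root $\omega$ of $\zeta$ of order $t$. This is possible because $t$ is odd, so $\omega=\zeta^{(t+1)/2}$. The half-integers then become the integers $A_j=2(\eta_j+\rho_j)$, and entry $(i,j)$ is $\omega^{iA_j}-\omega^{-iA_j}$, i.e.\ $\pm2\sqrt{-1}\sin(2\pi iA_j/t)$ up to relabelling. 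This is the same move used in Lemma \ref{lem:regular}.

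Next I translate the root condition. The coroots of $B_{m'}$ are $e_i\pm e_j$ and $2e_i$, so $t\mid\langle\eta+\rho,\alpha^\vee\rangle$ means one of two things. Either $(\eta+\rho)_i\pm(\eta+\rho)_j\equiv0$, which is $A_i\equiv\mp A_j$ after doubling, using that $2$ is invertible. Or $2(\eta+\rho)_i=A_i\equiv0 \pmod t$. So $\Phi_{\eta,t}=\varnothing$ is exactly the condition that the $A_j$ are nonzero mod $t$ and pairwise distinct up to sign.

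Then I run the column argument. If $A_j\equiv0$, column $j$ vanishes. If $A_i\equiv\pm A_j$, two columns are equal or opposite. In both cases the numerator is zero, while the denominator is nonzero by the $\eta=0$ case of the converse below. Conversely, if the $A_j$ avoid these walls, their $m'$ folded classes $\min(c,t-c)$ are distinct elements of $\{1,\dots,m'\}$. There are exactly $m'=(t-1)/2$ nonzero classes modulo sign, so they exhaust the set, and the numerator columns are a signed permutation of the sine-matrix columns indexed by $1,\dots,m'$. The same holds for the denominator, since $\eta=0$ is regular: its entries are $t-2j$. Both matrices are therefore signed column permutations of the invertible discrete sine matrix $D_t$ of Corollary \ref{cor:galoisquad}, whose square is $(-t)^{m'}\neq0$, so the ratio is $\pm1$.

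The only step needing care is the choice of $\omega$ and the half-integral bookkeeping. The $\mathrm{o}_\eta$ ratio has odd powers of $x^{1/2}$ in both numerator and denominator, and one must check that a consistent choice of square root is made in both. A consistent choice leaves the ratio unchanged, since it is a genuine polynomial in $x^{\pm1}$. I would do this by multiplying numerator and denominator by the same monomial so that all exponents become integers in $\omega=\zeta^{(t+1)/2}$.
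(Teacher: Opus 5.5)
Your argument is correct, but it is not the route the paper takes.

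\emph{The paper's route.} The paper computes nothing here. It identifies the torsion element with the principal element $\rho^\vee(\zeta)$ of order $t=h+1$: in the natural representation its eigenvalues $\zeta^{\pm m'},\dots,\zeta^{\pm1},1$ exhaust $\mu_t$. It observes that $t>h$ forces $\Phi_{0,t}=\varnothing$, and then applies Theorem 4.1 of \cite{NPP} verbatim, with constant $1$ because the relevant group is a torus.

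\emph{Your route.} You prove the statement from the type-$B$ bialternant. This is the column argument of Lemma \ref{lem:T} and Lemma \ref{lem:auxC}, transported to type $B$, and each step holds:
\begin{itemize}
\item the choice $\omega=\zeta^{(t+1)/2}$, applied consistently to numerator and denominator, turns the half-integral exponents into $A_j=2(\eta_j+\rho_j)$;
\item invertibility of $2$ turns the coroot conditions into $A_i\not\equiv0$ and $A_i\not\equiv\pm A_j$;
\item because there are exactly $m'$ nonzero folded classes for $m'$ columns, numerator and denominator are both signed column permutations of one invertible sine matrix.
\end{itemize}
One notational slip: $D_t$ in Corollary \ref{cor:galoisquad} is a determinant built on $\zeta$, while yours is built on $\omega$. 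Your determinant is the Galois conjugate $\sigma_{(t+1)/2}(D_t)=\pm D_t$, so it is still nonzero and the argument is unaffected.

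\emph{What your route buys.} It is self-contained and elementary. It settles the non-formal half of Lemma \ref{lem:regular} without external input. It also delivers the sign directly as the determinant of a signed permutation, which is the shape $\tau^B_t=\epsilon_t\,\delta$ of Corollary \ref{cor:oddsign}.

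\emph{What it costs.} It uses $t=h+1$ essentially. At larger orders a regular vector no longer exhausts the folded classes, and the ratio of two sine determinants need not be a unit.

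\emph{What the citation buys.} \cite{NPP} covers principal elements of every order. It also computes the nonvanishing value in general, as the dimension of a representation of another group. The price is that the odd filter rests on an external theorem, which is why the paper marks it as cited rather than proved.
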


\noindent
In the coordinates of $B_{m'}$ the condition reads: with $A_j=2\eta_j+2(m'-j)+1$, no $A_j\equiv0$ and
the classes $\min(A_j,t-A_j)$ pairwise distinct modulo $t$. There are two species of wall, not the
three of Lemma \ref{lem:T}, because the wall $2a\equiv0$ with $a\not\equiv0$ requires an even
modulus. We checked Proposition \ref{prop:oddfilter} before invoking it: the criterion decides
vanishing on $10$ of $10$ weights at $t=3$, $72$ of $72$ at $t=5$ and $406$ of $406$ at $t=7$, with a
decoy at modulus $t+1$ agreeing only $7$, $29$ and $208$ times \stverif.

\begin{remark}[the parity dichotomy, second switch: a root-length discrepancy]\label{rem:BC}
\begin{sloppypar}
Read through Lemma \ref{lem:regular}, the difference between the parities is a statement about the
lengths of roots under duality, and about $2$. The condition given by the \emph{long} root $2e_i$ of
$C_m$ is $2a_i\equiv0$; passing to the dual $C_m^\vee=B_m$ replaces that long root by the
short root $e_i$ and the condition by $a_i\equiv0$. For odd $t$ the two agree, because $2$ is
invertible modulo $t$; for even $t$ the first admits the extra solution $a_i=t/2$ and the second does
not. This is a root-length discrepancy under $B/C$ duality, not a distinction between finite and
affine walls: in the original group the offending condition is the regularity wall of a finite root.
\end{sloppypar}

This is what separates our filter from the criterion of \cite{NPP}, whose centraliser lives in the
dual group. Measured over the same $24\,809$ weights, the dual condition decides vanishing correctly
in every odd case and fails $1\,344$ times in the even ones, always at a weight with some $a_i=t/2$
\stverif; Figure \ref{fig:walls3d} draws one such case and shows the failures sitting on the three
planes $a_i=t/2$. The smallest is $\Sp_4$ at $t=6$ with $\eta=(1,0)$: there $\eta+\rho=(3,1)$ satisfies no
coroot condition, so the dual centraliser is a maximal torus, and yet
$\Theta=\xi+\xi^2+\xi^{-1}+\xi^{-2}=0$.

Two remarks on how to read that. First, it bears only on the converse: it is consistent with, and says
nothing against, the implication ``larger centraliser $\Rightarrow$ zero'' of Question 8.1 of
\cite{NPP}; what it excludes is a formula for the nonvanishing case depending only on the dimension of
the \emph{dual} centraliser. Second, passing to a simply connected cover of the dual does not repair
it: a central isogeny leaves the root system of the centraliser unchanged, so it cannot turn a short
root back into a long one.
\end{remark}

\begin{figure}[!ht]
\centering
\includegraphics[width=0.86\textwidth]{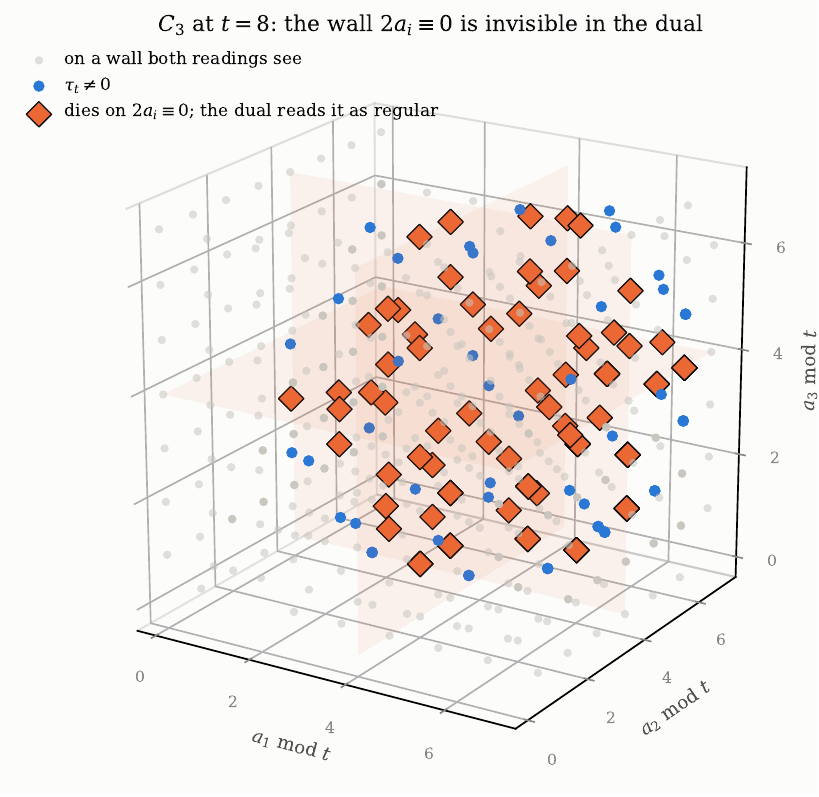}
\caption{The two readings of the filter, drawn in type $C_3$ at $t=8$. Each point is a weight placed
at its shifted coordinate $a=\eta+\rho$ in the cube of residues modulo $t$; the colour is decided by
computing the Weyl numerator exactly and comparing it with the two regularity conditions. Blue:
$\tau^C_t\neq0$. Grey: killed by a wall both the group and its dual see. Orange: killed by
$2a_i\equiv0$, which only the original group sees --- these are exactly the weights where the dual
criterion gives the wrong answer, and they land on the three faint planes $a_i=t/2$ without being put
there. On the $560$ points of this window, Lemma \ref{lem:regular} is right $560$ times, the dual
reading is wrong $96$; and no orange point lies off a plane. When $t$ is odd the orange set is empty,
because $2$ is then invertible modulo $t$. Computed by \texttt{fig\_walls3d.py}.}
\label{fig:walls3d}
\end{figure}

\subsection{Verification}\label{sec:oddverif}
The pipeline \eqref{eq:oddA} was checked end to end on nine forms, at $(t,r)=(3,2)$, $(5,2)$ and
$(3,3)$ \stverif: the composite reproduces $\Phi_{t,r}$ coefficient by coefficient in $9$ of $9$;
the Littlewood peeling in $B_{R'}$ leaves remainder $0$ with integer coefficients in $9$ of $9$; the
chirality $A^D_\mu=A^D_{\mu^*}$ holds in $9$ of $9$; and three decoys --- the filter at a $(t+2)$-th
root, no filter at all, and the torsion placed in $C_{m'}$ instead of $B_{m'}$ --- each disagree in
$9$ of $9$. The third decoy is the one that matters: it is the reading in which the fixed point $1$
is left out of the torsion block, and the data reject it.

\subsection{The odd conjecture localises}\label{sec:oddlocal}
On the odd side the statement can be taken apart, and the pieces are of different kinds. Write, for a
single $\Lambda$,
\begin{equation}\label{eq:cLmu}
c(\Lambda,\mu)\;=\;\sum_\eta B^{\mathrm{odd}}_{\Lambda;\eta,\mu}\,\tau^B_t(\eta),
\qquad\text{so that}\qquad
A^D_\mu=\sum_\Lambda a^B_\Lambda\,c(\Lambda,\mu).
\end{equation}
The inner sum involves no Littlewood step: it is the branching \eqref{eq:oddbranch} followed by the
fusion projection of Theorem \ref{thm:fusion}, applied to one irreducible. Then
$|A^D_{\mu^+_{\max}}|=1$ would follow from three statements, and we separate them because only the
last is combinatorial:
\begin{enumerate}
\item[(L1)] $c(\Lambda,\mu)\in\{0,\pm1\}$ for \emph{every} pair $(\Lambda,\mu)$ --- that is, the
multiplicity space of a single $\Lambda$ is already primitive after projection --- genuine here,
since for odd $t$ and a single $\Lambda$ the character is not virtual;
\item[(L2)] at $\mu=\mu^+_{\max}$, exactly one $\Lambda$ has $c(\Lambda,\mu)\neq0$;
\item[(L3)] that $\Lambda$ occurs in the Littlewood restriction with multiplicity $a^B_\Lambda=1$.
\end{enumerate}
\noindent
(L1) is proved in full below --- its numerator first, in Proposition \ref{prop:transversal}, and
then the division, in \S\ref{sec:invert}; (L2) and (L3) are not, and by the end of this
subsection they will have been sharpened into two named statements, Conjectures \ref{conj:tie} and
\ref{conj:L3}, each about a smaller object than the one it came from.

\noindent
All three hold on everything we have computed \stverif. Over the $123$ nonvanishing forms of
Observation \ref{obs:oddunit}, together with a third configuration at $(t,r)=(3,3)$: $c$ takes only
the values $0,\pm1$ on all $1\,735$ pairs $(\Lambda,\mu)$ that occur; exactly one $\Lambda$
contributes at the top weight in $82$ of $82$ forms at $(3,2)$, $41$ of $41$ at $(5,2)$ and $9$ of
$9$ at $(3,3)$; and the reconstruction \eqref{eq:cLmu} returns $A^D$ in every one. Of those pairs,
$999$ have surviving $\eta$ whose contributions cancel to zero --- so the cancellation the conjecture
is about happens \emph{inside a single $\Lambda$}, which is a far smaller object than the whole
composite. That is where we would attack it.

One caveat about that population, because it makes (L3) look better tested than it was. On
$\beta_i\le9$ \emph{every} $a^B_\Lambda$ equals $1$ --- all $476$ of them --- so (L3) cannot fail
there for any choice of $\Lambda$ whatever, and the agreement measures the box rather than the
statement. Multiplicity-freeness is not a general fact: at $t=3$, $r=2$ and $\beta_i\le13$ only
$47\%$ of the $70\,685$ coefficients are $1$, and they run up to $23$, the smallest witness being
$\beta=(10,7,4,3,2,1,0)$ with $a^B_{(2,0,0)}=2$ \stverif. Re-measured where it can fail --- $852$
shapes at $t=3$, $r=2$, $\beta_i\le12$, of which $78\%$ carry some $a^B_\Lambda>1$ --- (L3) holds in
$678$ of the $678$ shapes with a unique contributing $\Lambda$, and the contributor has
$a^B_\Lambda=1$ even on shapes whose largest available multiplicity is $13$. A second configuration
behaves the same: at $t=5$, $r=2$, $\beta_i\le12$ there are $368$ shapes, $52\%$ of them carrying
some $a^B_\Lambda>1$ and the largest available multiplicity being $4$, and (L3) holds in $323$ of the
$323$ shapes with a unique contributor \stverif. The remaining shapes --- $174$ and $45$ --- are the
ties of Remark \ref{rem:tie}.

\begin{remark}[why this is the odd side's advantage]
Nothing of the kind is available for even $t$. There $a_\Lambda$ takes both signs, so (L3) has no
meaning as stated and the sum \eqref{eq:cLmu} is not a sum of nonnegative multiples of $0,\pm1$. The
localisation is a consequence of the first switch of Proposition \ref{prop:parity}, and it is the
concrete sense in which the odd half of Conjecture \ref{conj:H} is the easier one.
\end{remark}

\begin{remark}[a route to (L1) that does not work, and why we record it]\label{rem:L1route}
Since $c(\Lambda,\cdot)$ collects the coefficients of $\mathrm{o}_\Lambda$ evaluated at the point
$p_t$ itself, the natural attack is Littlewood's: expand the Weyl numerator of $B_{R'}$ by Laplace
along the $m'$ rows carrying the roots of unity. Each term is a minor on the frozen block times its
complementary minor on the free one, and the frozen minor is a $B_{m'}$ Weyl numerator at the torsion
element --- so by the type-$B$ Weyl-numerator criterion underlying Proposition \ref{prop:oddfilter},
namely that two columns coincide as soon as $A_i\equiv\pm A_j$ and one vanishes as soon as
$A_i\equiv0$, it vanishes unless the corresponding $m'$-subset $S$ of the shifted exponents is
regular modulo $t$, and is $\pm$ the denominator otherwise. Since the exponents
are distinct, distinct $S$ give distinct complements and hence distinct weights: the map $S\mapsto\mu$
is injective, which we checked on $67$ shapes with no exception. That would give $c\in\{0,\pm1\}$ with
a closed formula.

It is wrong, and the smallest shape says so. Take $\Lambda=0$ at $t=3$, $R'=3$: the shifted exponents
are $(5,3,1)$, the regular singletons are $\{5\}$ and $\{1\}$, and they predict weights $(0,0)$ and
$(1,1)$. But $\Lambda=0$ is the trivial representation, whose restriction meets only $\mu=(0,0)$. The
predicted support is too big on $66$ of $67$ shapes \stverif. The step that fails is the
normalisation: we assumed the \emph{denominator} $N_\rho(p_t)$ reduces to a single Laplace term, and
$\Lambda=0$ is precisely the case showing it does not. Taken in product form the denominator carries
cross factors between the two blocks --- one $z_j^{\,t}-1$ per free variable --- which do not
distribute over the sum. A correct proof has to divide by them, and we do not have it. We record the
route because it isolates what a proof must handle, and because the injectivity it establishes is
probably still a step of one.
\end{remark}

\noindent
That obstruction turns out to have a name, and naming it is more useful than removing it.

\subsection{The obstruction is an equal-rank pair}\label{sec:oddgkrs}

\begin{proposition}[the cross factor is the relative denominator of an equal-rank pair]
\label{prop:crossden}
Let $t=2m'+1$ be odd and $R'=m'+r$ with $r\ge2$, so that $B_{R'}\supset B_{m'}\times D_r$ is an
inclusion of \emph{equal rank} with $D_r$ a genuine semisimple factor --- simple for $r\ge3$, while
$D_2\cong A_1\times A_1$ is semisimple but not simple, and $r=2$ is a case we use throughout. (At $r=1$, $D_1$ is the torus
$SO_2$ and every statement below is to be read with $W(D_1)$ trivial and the two half-spin
characters the two characters $z^{\pm\mu}$; nothing breaks, but the chirality language degenerates,
so we assume $r\ge2$ throughout the rest of \S\ref{sec:odd}.) The positive roots of $B_{R'}$ not
belonging to the subgroup are exactly
$e_i\pm f_j$ $(1\le i\le m',\,1\le j\le r)$ and $f_j$ $(1\le j\le r)$, where $e$ and $f$ are the
coordinates of the frozen and free blocks. Evaluated with the frozen block at the torsion element
$y_i=\zeta^i$ and the free block at $z$, their product is
\begin{equation}\label{eq:crossden}
\prod_{j=1}^{r}\Bigl[(1-z_j)\prod_{i=1}^{m'}(1-\zeta^{i}z_j)(1-\zeta^{-i}z_j)\Bigr]
=\prod_{j=1}^{r}\bigl(1-z_j^{\,t}\bigr),
\end{equation}
which is the relative spinor denominator of the pair after the same specialisation, up to a monomial
and a sign that are both explicit:
\begin{equation}\label{eq:crosssign}
\prod_{j=1}^{r}\bigl(1-z_j^{\,t}\bigr)
\;=\;(-1)^{r}\Bigl(\prod_{j=1}^{r}z_j^{\,t/2}\Bigr)\,\Delta_t,
\qquad
\Delta_t=\prod_{j=1}^{r}\bigl(z_j^{t/2}-z_j^{-t/2}\bigr).
\end{equation}
\end{proposition}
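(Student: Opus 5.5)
The plan has three steps: list the roots, evaluate the product, then rewrite it in symmetric form. Only the first needs care.

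\emph{Step 1: the roots.} Write the root system of $B_{R'}$ in coordinates $e_1,\dots,e_{m'},f_1,\dots,f_r$. Its positive roots are $\varepsilon_a\pm\varepsilon_b$ for $a<b$, together with the short roots $\varepsilon_a$. The subgroup $B_{m'}\times D_r$ contributes the roots $e_i\pm e_k$, $e_i$ and $f_j\pm f_l$. So the complement consists of the mixed roots $e_i\pm f_j$ and the short roots $f_j$. One of the mixed sign choices is $e_i-f_j$, which is positive; its negative $f_j-e_i$ would read $-(e_i-f_j)$, so the set is $\{e_i\pm f_j\}\cup\{f_j\}$ for any ordering placing the frozen block first. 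This is only bookkeeping, but it is the one place a convention enters. If the free block were ordered first, the complement would be $f_j\pm e_i$ together with $f_j$, which changes the product by a monomial and a sign. I would fix the ordering explicitly before going on.

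\emph{Step 2: the product.} Evaluate each complementary root $\alpha$ through the factor $(1-e^{-\alpha})$, or equivalently $(1-e^{\alpha})$ up to inverting $z$. At $y_i=\zeta^i$, the roots belonging to a single $j$ give $(1-z_j)\prod_{i=1}^{m'}(1-\zeta^iz_j)(1-\zeta^{-i}z_j)$, after choosing $1-e^{\alpha}$ for $f_j$ and the reciprocal form for the mixed roots. Because $t=2m'+1$, the exponents $0,\pm1,\dots,\pm m'$ run over $\mathbb Z/t$ exactly once. This is the same observation that identified the principal element earlier in this section. The bracket is therefore $\prod_{k=0}^{t-1}(1-\zeta^kz_j)=1-z_j^{\,t}$, by the factorisation of $X^t-1$ at $X=z_j^{-1}$ multiplied through by $z_j^t$. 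Taking the product over $j$ gives \eqref{eq:crossden}.

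\emph{Step 3: the symmetric form.} Apply the one-line identity $1-z^t=-z^{t/2}(z^{t/2}-z^{-t/2})$ in each factor. Multiplying over $j$ gives \eqref{eq:crosssign} with the sign $(-1)^r$. Here $z^{t/2}$ is a formal half-integral monomial, which is harmless in the spinor setting because $\Delta_t$ is exactly the Adams image $\psi^t$ of the rank-one Weyl denominator $\prod_j(z_j^{1/2}-z_j^{-1/2})$. To justify calling this the \emph{relative spinor denominator}, I would check that the unspecialised relative denominator $\prod_{\alpha}(e^{\alpha/2}-e^{-\alpha/2})$ over the complementary roots specialises to the same product up to the monomial $\prod e^{\alpha/2}$. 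That monomial is computed by the same grouping: the $e_i$-parts cancel within each pair $e_i\pm f_j$, and only $\prod_j z_j^{(2m'+1)/2}=\prod_j z_j^{t/2}$ survives.

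I expect the main obstacle to be keeping the sign and monomial conventions consistent between the ``$1-e^{\alpha}$'' and ``$e^{\alpha/2}-e^{-\alpha/2}$'' forms, and between the orientations of $e_i-f_j$. All the mathematical content is the orbit identity in Step 2. I would settle the conventions by checking the smallest case, $t=3$ and $r=2$, directly before writing the general line.
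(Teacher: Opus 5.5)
Your proposal is correct and follows the paper's proof step for step: the same root complement, the same collapse $\prod_{k=0}^{t-1}(1-\zeta^{k}z_j)=1-z_j^{\,t}$ because $0,\pm1,\dots,\pm m'$ exhaust $\ZZ/t$, and the same one-coordinate identity $1-z^{t}=-z^{t/2}(z^{t/2}-z^{-t/2})$ collecting $(-1)^r$. Your extra check that this is the specialised relative spinor denominator goes slightly beyond what the paper verifies, but it contains a bookkeeping slip to fix when you settle conventions: the displayed bracket is $\prod_\beta(1-e^{\beta})$ over $\beta\in\{f_j,\,f_j+e_i,\,f_j-e_i\}$, so only $e_i-f_j$ takes the reciprocal form, and it is in the pairs $f_j\pm e_i$, not $e_i\pm f_j$, that the $e_i$-parts cancel to leave $\prod_j z_j^{t/2}$ --- for the pairs $e_i\pm f_j$ the $f_j$-parts cancel instead, and $\prod_\alpha e^{\alpha/2}$ specialises to $\zeta^{rm'(m'+1)/2}\prod_j z_j^{1/2}$.
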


\begin{proof}
The listed roots are the complement of $\{e_i\pm e_j,\,e_i\}\cup\{f_i\pm f_j\}$ in the positive roots
of $B_{R'}$, which is the assertion about which roots occur. For the product, fix $j$. The exponents
appearing are $\zeta^{0}$ and $\zeta^{\pm i}$ for $1\le i\le m'$, and since $t=2m'+1$ these are the
$t$ distinct $t$-th roots of unity, each once. Hence the bracket is
$\prod_{k=0}^{t-1}(1-\zeta^{k}z_j)=1-z_j^{\,t}$. For \eqref{eq:crosssign}, one coordinate at a time:
$z^{t/2}\bigl(z^{t/2}-z^{-t/2}\bigr)=z^{t}-1$, so $1-z^{t}=-z^{t/2}\bigl(z^{t/2}-z^{-t/2}\bigr)$, and
taking the product over $j$ collects $(-1)^r$.
\end{proof}

\begin{remark}[what this says about (L1)]\label{rem:gkrs}
The factor that defeats the Laplace expansion of Remark \ref{rem:L1route} is therefore not a
normalisation accident. It is the relative denominator of an equal-rank pair, and equal-rank pairs
have a character formula built for them: Gross, Kostant, Ramond and Sternberg \cite{GKRS} show that
the restricted character of an irreducible of $G$, multiplied by the difference of the two half-spin
characters of $\mathfrak g/\mathfrak h$, equals an alternating multiplicity-free sum of characters
of $H$ indexed by a set of Weyl-coset representatives; the multiplet in the numerator is the one
computed by Kostant's cubic Dirac operator \cite{Kostant99}, with the loop-group extension in
\cite{Landweber}. After the frozen specialisation, that relative spinor denominator specialises, up
to an overall monomial and sign, to $\prod_j(z_j^{t/2}-z_j^{-t/2})$, which is \eqref{eq:crossden}.

Carrying it out. Two vectors are needed and they must not be confused, because $\rho_{B_{R'}}$ is
half-integral. Write
\begin{equation}\label{eq:uV}
u:=\Lambda+\rho_{B_{R'}},
\qquad
V:=2u=2(\Lambda+\rho_{B_{R'}})\in\ZZ^{R'} .
\end{equation}
The Weyl-coset combinatorics below is that of $u$; every statement about residues modulo $t$ is
about $V$, for the reason given in Lemma \ref{lem:regular} --- in type $B$ the vector one may reduce
modulo $t$ is $2(\eta+\rho)$, not $\eta+\rho$. In particular the frozen block of a representative is
$A_w=V|_{S(w)}=2(\eta_w+\rho_{B_{m'}})$, which is the vector Lemma \ref{lem:T} and Corollary
\ref{cor:oddsign} take. Let $W^1$ be the set of $w$ with $w(u)$
strictly $H$-dominant, so that $\eta_w$ and $\mu_w$ are its two blocks shifted back by the
respective $\rho$'s. This set does not depend on $\Lambda$, and is the usual set of minimal coset
representatives: $u$ is $G$-regular, hence $H$-regular, so each $W_H$-orbit inside $W_G\cdot u$
contains exactly one strictly $H$-dominant element and $|W^1|=|W_G|/|W_H|$ \stverif. Applying the fusion projection $q^B_t$ of Theorem \ref{thm:fusion} to the
$B_{m'}$ factor kills every $w$ with $\tau^B_t(\eta_w)=0$ and leaves
\begin{multline}\label{eq:gkrsfilt}
\Bigl(\sum_\mu c(\Lambda,\mu)\,\chi^{D_r}_\mu\Bigr)\cdot\Delta_t
\;=\;s_r\sum_{w\in W^1}(-1)^{\ell(w)}\tau^B_t(\eta_w)\,\chi^{D_r}_{\mu_w}
\;=:\;s_r\sum_\mu\nu(\Lambda,\mu)\,\chi^{D_r}_\mu ,\\
\Delta_t=\prod_{j=1}^{r}\bigl(z_j^{t/2}-z_j^{-t/2}\bigr).
\end{multline}
\emph{The sign $s_r$ is named and carried from here on}, because it is not the same object as the
one in Proposition \ref{prop:determinant} and an anonymous $\pm$ in both places let an earlier
version of this paper identify them. Half of it is now a theorem rather than a measurement:
Proposition \ref{prop:crossden} turns the cross factor into $\Delta_t$ at the cost of exactly
$(-1)^r$ by \eqref{eq:crosssign}, so the \emph{cyclotomic} part of $s_r$ is $(-1)^r$. What is not
settled analytically is the residual convention factor --- the order in which \cite{GKRS} takes the
difference $S^+-S^-$ of the half-spin characters --- and the measurement says that factor is $+1$
here. Checked on $245$ dominant $\Lambda$ at $(t,r)=(3,2),(5,2),(3,3),(7,2)$: \eqref{eq:gkrsfilt}
holds in every case with
\[
s_r=(-1)^r\quad\text{throughout, that is } +1 \text{ at } r=2 \text{ and } -1 \text{ at } r=3,
\]
so $s_r$ is constant within each $r$ and \emph{not} constant across them, while a decoy dilating by
$t+2$ and a decoy dropping the sign $\prod_j\varepsilon_j$ hold in none \stverif. So the reframing is
a computation and not an analogy, and the sign in it is now $(-1)^r$ times a convention we have
measured and not derived.

What it settles. GKRS is multiplicity-free with signs and $\tau^B_t\in\{0,\pm1\}$ by Proposition
\ref{prop:oddfilter}, so the only way the numerator of \eqref{eq:gkrsfilt} could leave $\{0,\pm1\}$
is two representatives sharing a $\mu_w$ --- the projection $q^B_t$ forgets $\eta_w$, and
multiplicity-freeness is a statement about the \emph{pair}. That cannot happen.

\subsection{The numerator is a signed transversal count}\label{sec:oddnum}

\begin{lemma}[the coset representative is remembered by its $D_r$ block]\label{lem:muinj}
For $B_{R'}\supset B_{m'}\times D_r$ the map $w\mapsto\mu_w$ is injective on $W^1$. Consequently
$\nu(\Lambda,\mu)\in\{0,\pm1\}$ for every $\Lambda$ and $\mu$.
\end{lemma}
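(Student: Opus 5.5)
The plan is to make the coset combinatorics explicit in coordinates and read injectivity off directly. Write $u=\Lambda+\rho_{B_{R'}}$. Since $u$ is $G$-regular, its entries are distinct positive half-integers $u_1>\dots>u_{R'}>0$. The Weyl group $W_G$ of $B_{R'}$ consists of the signed permutations of the $R'$ coordinates. A point $w(u)$ is therefore a signed arrangement of the multiset $\{u_1,\dots,u_{R'}\}$ into the frozen positions ($m'$ of them) and the free positions ($r$ of them).

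Strict $H$-dominance for $H=B_{m'}\times D_r$ pins $w(u)$ down as follows.
\begin{itemize}
\item On the $B_{m'}$ block, the entries must be strictly decreasing and strictly positive.
\item On the $D_r$ block, the entries must satisfy $x_1>\dots>x_{r-1}>|x_r|$. Only the sign of the last entry is free.
\end{itemize}
Consequently an element $w\in W^1$ is determined by two pieces of data: the subset $S(w)\subset\{1,\dots,R'\}$ of indices of $u$ placed in the frozen block (of size $m'$), and the sign $\pm$ on the smallest entry of the complement. This also recovers the count $|W^1|=2\binom{R'}{m'}=|W_G|/|W_H|$, a check I would include.

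Injectivity then follows by reading $w$ back from its $D_r$ block. From $\mu_w$ one recovers $\mu_w+\rho_{D_r}$, which is the $D_r$ block of $w(u)$. Its absolute values form the set $\{u_k:k\notin S(w)\}$. Because the $u_k$ are pairwise distinct, this set determines the complement of $S(w)$, and hence $S(w)$ itself. The sign of the last coordinate then determines the remaining binary choice. So $\mu_w$ determines $w$.

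Two subtleties need checking here.
\begin{itemize}
\item The $D_r$ block must never contain $0$; otherwise the sign ambiguity would collapse two representatives. This holds because all the $u_k$ are half-integers. This is where the type-$B$ half-integrality of $\rho$ is used, rather than the integral vector $V$.
\item $\rho_{D_r}$ is a fixed vector, so subtracting it is injective.
\end{itemize}
I expect the main obstacle to be nothing deeper than stating this bookkeeping cleanly, including the degenerate chirality at the last coordinate.

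For the consequence, expand \eqref{eq:gkrsfilt}:
\[
\nu(\Lambda,\mu)=\sum_{w\in W^1,\ \mu_w=\mu}(-1)^{\ell(w)}\tau^B_t(\eta_w).
\]
By injectivity, the sum has at most one term. By Proposition \ref{prop:oddfilter} (or Corollary \ref{cor:oddsign}), $\tau^B_t(\eta_w)\in\{0,\pm1\}$. Hence $\nu(\Lambda,\mu)\in\{0,\pm1\}$.

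It is also worth remarking why Remark \ref{rem:L1route} failed where this argument succeeds. There the injectivity was of $S\mapsto\mu$ without the GKRS division. Here the statement is about the numerator only, so no division by $\Delta_t$ is involved, and the problem of the obstruction is deferred to \S\ref{sec:invert}.
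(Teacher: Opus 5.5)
Your proposal is correct and follows the paper's proof essentially verbatim. Both parametrise $W^1$ by the subset $S$ together with a $D_r$ chirality, giving $|W^1|=2\binom{R'}{m'}$; both recover $S$ from the absolute values of $\mu_w+\rho_{D_r}$ and the chirality from the sign of its last entry, which is nonzero; and both conclude that each $\nu(\Lambda,\mu)$ is a single term in $\{0,\pm1\}$. One nuance: the nonvanishing of that last entry needs only $u_k>0$, which is what the paper uses, so your remark contrasting the half-integrality of $u$ with the integral vector $V$ is beside the point, since $V=2u$ has odd, hence nonzero, entries as well.
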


\begin{proof}
Recall $u=\Lambda+\rho_{B_{R'}}$ from \eqref{eq:uV}, whose coordinates are strictly decreasing and
strictly positive. Strict $H$-dominance forces the first block of $w(u)$ to be $m'$ of those taken with
positive signs in decreasing order, and the second block to be the complementary $r$ coordinates in
decreasing order of absolute value with all signs positive except possibly the last. So $w$ is
exactly the datum of a subset $S\subset\{1,\dots,R'\}$ with $|S|=m'$ together with a $D_r$
chirality, and $|W^1|=2\binom{R'}{m'}$, which is
$|W(B_{R'})|/|W(B_{m'})||W(D_r)|$ as it must be. Now $\mu_w+\rho_{D_r}$ is the complementary block,
so its absolute values recover $S^{\mathrm c}$ and hence $S$; and its last coordinate is
$\pm u_{\min S^{\mathrm c}}\neq0$, so its sign recovers the chirality. Hence $\mu_w$ determines $w$.
The consequence is immediate: distinct $w$ contribute to distinct $\mu$, so each $\nu(\Lambda,\mu)$
is a single term $(-1)^{\ell(w)}\tau^B_t(\eta_w)\in\{0,\pm1\}$.
\end{proof}

\noindent
As a control: $|W^1|=2\binom{R'}{m'}$
and $w\mapsto\mu_w$ injective on all of $W^1$ in $147$ of $147$ weights over five $(t,r)$, while the
decoy $w\mapsto\eta_w$ is injective in $0$ of $147$ \stverif.

Combining the lemma with the closed form \eqref{eq:oddsign} removes representation theory from the
numerator altogether. Partition $\{1,\dots,R'\}$ by the folded class of $V_i$ modulo $t$ --- the
\emph{doubled} shifted exponent of \eqref{eq:uV}, never $u_i$, which is half-integral; because
$t=2m'+1$ there are exactly $m'$ nonzero folded classes, namely $1,\dots,m'$, and one class $0$.
Write $n_j=\#\{i:\ V_i\ \text{folds to class}\ j\}$ for $1\le j\le m'$, and write $\delta_S$ for the
determinant $\delta\bigl(V|_S\bigr)$ of Corollary \ref{cor:oddsign}, read --- as there --- against
the decreasing order $m',m'-1,\dots,1$.

\begin{proposition}[the numerator is a signed transversal count]\label{prop:transversal}
With the notation above:
\begin{enumerate}
\item[\rm(i)] $\tau^B_t(\eta_w)\neq0$ if and only if $S(w)$ is a \emph{transversal}: it contains
exactly one index from each nonzero class and none from class $0$.
\item[\rm(ii)] Consequently $\displaystyle|\mathrm{supp}\,\nu(\Lambda,\cdot)|
=2\prod_{j=1}^{m'}n_j$, counted among \emph{unfolded} $D_r$ weights, where the two chiralities of a
transversal are two distinct points.
\item[\rm(iii)] In particular $\nu\equiv0$ if and only if some $n_j=0$ --- some nonzero residue
class is missed by $V=2(\Lambda+\rho_{B_{R'}})$.
\item[\rm(iv)] And the value is explicit: at the point indexed by $S$ and a chirality it is
\[
\epsilon_t\,\det(w)\,\delta_S\ \in\ \{\pm1\},
\qquad
\epsilon_t=\Bigl(\tfrac{-2}{t}\Bigr)^{(t+3)/2}(-1)^{m'(m'-1)/2},
\]
the constant and the reading order being those of Corollary \ref{cor:oddsign}.
\item[\rm(v)] Let $S_{\min}$ be the transversal taking, in each nonzero class, the index with the
\emph{largest} label --- equivalently the smallest value of $V$. Then the alternant index of
$(S_{\min},+)$ \emph{dominates coordinatewise} the index of every other point of
$\mathrm{supp}\,\nu(\Lambda,\cdot)$, strictly in at least one coordinate. In particular it is the
unique maximiser of $\langle w_0,\cdot\rangle$ for \emph{every} $w_0$ with all coordinates positive.
\end{enumerate}
\end{proposition}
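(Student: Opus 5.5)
The plan is to read every clause through the parametrisation of $W^1$ in Lemma \ref{lem:muinj}: a representative $w$ is a pair $(S,\pm)$ with $S\subset\{1,\dots,R'\}$, $|S|=m'$. The frozen block of $w(u)$ is $u|_S$ in decreasing order, all signs positive. Doubling, the type-$B$ exponent vector is $A_w=2(\eta_w+\rho_{B_{m'}})=V|_S$. This is the only place where the distinction between $u$ and $V$ of \eqref{eq:uV} matters, and I would state it first.

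For (i), apply Lemma \ref{lem:regular} (equivalently Proposition \ref{prop:oddfilter}) to $A_w=V|_S$. Then $\tau^B_t(\eta_w)\ne0$ if and only if no entry of $V|_S$ is $\equiv0$ and the folded classes of its entries are pairwise distinct. Since $t=2m'+1$ there are exactly $m'$ nonzero folded classes and $|S|=m'$. So "pairwise distinct and nonzero" is the same as "one index from each nonzero class, none from class $0$", which is the transversal condition.

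For (ii), a transversal is a choice of one index in each class $j$, giving $\prod_j n_j$ choices. Each comes with two chiralities, and by the injectivity of $w\mapsto\mu_w$ in Lemma \ref{lem:muinj} these $2\prod_j n_j$ representatives give distinct unfolded $D_r$ weights. Each of them carries a nonzero value by (i). Part (iii) is then immediate: the count vanishes exactly when some $n_j=0$.

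For (iv), the value at $(S,\pm)$ is the single term $(-1)^{\ell(w)}\tau^B_t(\eta_w)$ from \eqref{eq:gkrsfilt}. Here $(-1)^{\ell(w)}=\det(w)$, and Corollary \ref{cor:oddsign} applied to $A_w=V|_S$ gives $\tau^B_t(\eta_w)=\epsilon_t\,\delta(V|_S)=\epsilon_t\,\delta_S$, read against the same decreasing order.

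Part (v) is the only clause with real content, and I expect it to be the main obstacle. The alternant index of $(S,\pm)$ is $\mu_w+\rho_{D_r}$ (or its doubling; the statement is insensitive to the factor $2$). This is the vector of values $u_i$, $i\in S^{\mathrm c}$, sorted decreasingly, with the sign $\pm$ on the last coordinate. Fix a transversal $S$, and for each class $j$ let $i_j\in S$ and $\ell_j\in S_{\min}$ be the chosen indices. By choice of $S_{\min}$ we have $u_{i_j}\ge u_{\ell_j}$. Then $S_{\min}^{\mathrm c}$ is obtained from $S^{\mathrm c}$ by removing the elements $\ell_j$ (those with $\ell_j\ne i_j$) and inserting the $i_j$. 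This replaces some entries of a finite set of positive reals by entries at least as large.

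The elementary fact I would then prove is that such a replacement raises the decreasingly sorted vector coordinatewise, and strictly somewhere when some replacement is strict. The argument is that the $k$-th largest entry is $\max_{|T|=k}\min T$, which is monotone under entrywise increase; strictness follows by looking at the position of a replaced entry. Since the $u_i$ are distinct, $S\ne S_{\min}$ forces a strict replacement. This gives $(S_{\min},+)\ge(S,+)$, strictly when $S\ne S_{\min}$.

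For the negative chirality, the last coordinate of $(S,-)$ is $-u_{\min}<0$, while that of $(S_{\min},+)$ is positive. The first $r-1$ coordinates are compared as before, so $(S_{\min},+)$ strictly dominates $(S,-)$ for every $S$, including $S=S_{\min}$.

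Finally, if $X\ge Y$ coordinatewise with at least one strict coordinate, then $\langle w_0,X\rangle>\langle w_0,Y\rangle$ for every $w_0$ with all coordinates positive. This yields the unique-maximiser statement.

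The care needed in (v) is bookkeeping rather than depth. The point to check is that "largest label" in each class really is "smallest $V$", which holds because $u$ is strictly decreasing.
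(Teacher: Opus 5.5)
Your proposal is correct and follows the paper's proof essentially step for step. Parts (i)--(iv) are read off the parametrisation of $W^1$ in Lemma \ref{lem:muinj} together with the closed form of the odd filter, and (v) is the same exchange argument on the sorted complement. The only difference is cosmetic: you make all the class replacements at once and justify the monotonicity of the order statistics through $\max_{|T|=k}\min T$, whereas the paper chains single-class exchanges; the chirality comparison is identical in both.
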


\begin{proof}
By Corollary \ref{cor:oddsign}, $\tau^B_t(\eta_w)\neq0$ exactly when the $m'$ folded classes of
$V|_{S(w)}$ are a permutation of $\{1,\dots,m'\}$. Since those are \emph{all} the nonzero classes
available, that says precisely that $S(w)$ meets each of them once and avoids class $0$, which is
(i). For (ii), a transversal is a choice of one index from each of the $m'$ nonzero classes, giving
$\prod_j n_j$ subsets, and each carries two chiralities; by Lemma \ref{lem:muinj} distinct $w$ land
on distinct $\mu_w$, so no cancellation occurs and the support has exactly that size. (iii) is
immediate, and (iv) restates \eqref{eq:oddsign} with $\det(w)=(-1)^{\ell(w)}$.

For (v) we argue by exchange. Index alternants, as throughout, by the doubled vector
$2(\mu+\rho_{D_r})$, so that the point of $\nu$ attached to $(S,\text{chirality})$ has index
$V|_{S^{\mathrm c}}$ sorted decreasingly, with the last entry carrying the chirality. Let $S$ be a
transversal differing from $S_{\min}$ in one class, say $S$ takes the index $i$ there and $S_{\min}$
takes $i'$. The coordinates of $V$ are strictly decreasing, so the largest label carries the
smallest value and $V_{i'}<V_i$. Passing from $S$ to $S_{\min}$ therefore removes $V_{i'}$ from the
complement and inserts $V_i>V_{i'}$: the complement's multiset gains a strictly larger element in
place of a smaller one, so every order statistic of the sorted complement weakly increases and at
least one strictly. Any transversal reaches $S_{\min}$ by finitely many such single-class exchanges,
so the sorted complement of $S_{\min}$ dominates coordinatewise that of every other transversal.
Finally the two chiralities of a fixed $S$ differ only in the sign of the last entry, which is
nonzero because no $V_i$ vanishes, so the positive one dominates the negative one coordinatewise as
well. A functional with all coordinates positive is therefore strictly maximised there, and
uniquely.
\end{proof}

\begin{remark}[the chirality, and why the half-spin difference is the right divisor]\label{rem:chirality}
Two of the objects in \eqref{eq:gkrsfilt} behave oppositely under the outer automorphism $\sigma$ of
$D_r$ that negates the last coordinate, and the mismatch is the point rather than an accident.
The numerator is \emph{anti}-invariant: the two chiralities of one transversal are two coset
representatives differing by a single sign change, so their lengths differ in parity and
$\nu(\Lambda,\mu^*)=-\nu(\Lambda,\mu)$. The divisor is anti-invariant too, since $\sigma$ flips the
sign of the last factor of $\prod_j(z_j^{t/2}-z_j^{-t/2})$. Applying $\sigma$ to
\eqref{eq:gkrsfilt} therefore gives $c^\sigma\Delta_t=c\,\Delta_t$, and $\Delta_t$ is not a zero
divisor, so
\[
c(\Lambda,\mu^*)=c(\Lambda,\mu):
\]
the \emph{quotient} is invariant. Verified on $654$ chirality pairs over four $(t,r)$, with no
exception \stverif.

That is the structural reason the half-spin difference, and not some other multiple of it, is what
divides here: it is exactly anti-invariant, so quotienting by it lands the object in the invariants,
which is where $A^D$ and the statements (L1)--(L3) live. It also fixes a bookkeeping point. Part
(ii) counts an unfolded support; $\mu^+_{\max}$, (L2) and Remark \ref{rem:tie} live on the folded
weights $\mu^+=(\mu_1,\dots,|\mu_r|)$, where a chirality pair is one point. The counts differ by the
factor $2$ and the two must not be compared without folding first.
\end{remark}

\begin{remark}[coordinatewise is not the root order]\label{rem:notdominance}
Coordinatewise domination is strictly weaker than dominance in the root order of $D_r$, and the gap
is exactly the chirality. If $x$ and $x^*$ differ only in the sign of the last coordinate then
$x-x^*=2x_r\,e_r$, and $e_r$ is not a nonnegative combination of the positive roots $e_i\pm e_j$ of
$D_r$: one has $2e_r=(e_{r-1}+e_r)-(e_{r-1}-e_r)$, with a negative coefficient. So $(\mu_1,\dots,
\mu_{r-1},a)$ and $(\mu_1,\dots,\mu_{r-1},-a)$ are two dominant $D_r$ weights that part (ii) counts
separately and that the root order leaves \emph{incomparable}, however positive the functional
preferring the first. What (v) gives is the unique coordinatewise maximum, which is what the
back-substitution of \eqref{eq:gkrsfilt} consumes; a dominance-order statement would need the two
chiralities folded to the $O(2r)$-invariant datum $\mu^+$ first, and we do not prove one.
Coordinatewise domination was checked against the honest support in $1246$ of $1246$ pairs over five
$(t,r)$, strict in every one \stverif.
\end{remark}

\noindent
Checked as a control on $301$ weights over $(t,r)=(3,2),(5,2),(7,2),(3,3),(5,3)$: (ii), (iii), the
support point by point, and the values point by point all hold in $301$ of $301$, while a decoy
counting classes modulo $t+2$ holds in $68$ and a decoy dropping the chirality factor $2$ in $24$
\stverif. Part (v) likewise: the top is at $S_{\min}$ with positive chirality, and unique, in $277$
of $277$ shapes with $\nu\neq0$, while the decoy taking the \emph{smallest} label in each class
gives the top in $32$ and the decoy with negative chirality in $0$ \stverif.

Part (v) is the one with a practical edge: the highest weight of the numerator is read off
$V=2(\Lambda+\rho)$ directly --- fold its residues, take the largest label in each class --- with no
search over $W^1$ and no character evaluated. That is the half of (L2) which was a search turned
into a formula.

\begin{remark}[the tie is the cancellation]\label{rem:tie}
Part (v) prices each $\Lambda$ separately, so it is natural to ask whether the composite obeys the
same rule: is $\mu^+_{\max}(\beta)$ the largest of the tops of the individual $c(\Lambda,\cdot)$,
over the $\Lambda$ with $a^B_\Lambda\neq0$? The answer is a clean biconditional, and it locates the
one thing that can go wrong.

Over the $132$ shapes at $(t,r)=(3,2),(5,2),(3,3)$: the prediction is correct in $105$ and wrong in
$27$, and it is correct \emph{exactly} on the $105$ where the largest top is attained by a single
$\Lambda$ --- the two agree in $132$ of $132$ \stverif. When two $\Lambda$ share the top, their
contributions cancel there and $\mu^+_{\max}$ drops strictly below the prediction; the smallest
witness is $\beta=(8,7,6,3,2,1,0)$ at $t=3$, $r=2$, where $\Lambda=(2,2,2)$ and $\Lambda=(2,2,0)$
both predict $(2,2)$ and the true value is $(2,1)$. A separate statistic, over all $132$ and not
only over the $105$: the $\Lambda$ realising the maximum is the one contributing at $\mu^+_{\max}$
in $117$ of $132$. Two decoys: predicting with the opposite transversal $S_{\max}$ is right in $39$
of $132$, and the older guess that the dominance-largest $\Lambda$ is the contributor in $104$ of
$132$ \stverif.

So (L2) splits in two. Where the top is attained once, it is not a search at all: the contributing
$\Lambda$ is the argmax of a vector read off $V=2(\Lambda+\rho)$. Where it is attained twice, the
question is whether the tie always cancels, and that --- not the extremal coefficient --- is what a
proof of (L2) has to settle.
\end{remark}

\noindent
Both halves deserve a name, because both are now precise statements about small objects rather than
about $A^D$. Write $T(\Lambda)$ for the top weight of $c(\Lambda,\cdot)$ delivered by Proposition
\ref{prop:transversal}(v), $c_{\mathrm{top}}(\Lambda)=c(\Lambda,T(\Lambda))\in\{\pm1\}$ for its value
there, and
\[
M(\beta)=\max\{\,T(\Lambda)\;:\;a^B_\Lambda\neq0\,\}
\]
for the largest of them, the maximum taken in the order used by \eqref{eq:mumax}.

\begin{conjecture}[the tie cancels]\label{conj:tie}
If $M(\beta)$ is attained by more than one $\Lambda$, then
\[
\sum_{\Lambda\,:\,T(\Lambda)=M(\beta)} a^B_\Lambda\,c_{\mathrm{top}}(\Lambda)\;=\;0 .
\]
\end{conjecture}

\noindent
This is exactly the content of the biconditional above: it holds in the $27$ of $132$ shapes where
the tie occurs, and in those shapes $\mu^+_{\max}$ drops strictly below $M(\beta)$ \stverif.

\emph{What it gives, and what it does not.} Together with Proposition \ref{prop:transversal}(v) it
gives (L2) \emph{on the shapes where $M(\beta)$ is attained once} --- there $\mu^+_{\max}=M(\beta)$,
the contributing $\Lambda$ is the argmax, and there is nothing left to prove. It does \emph{not} give
(L2) in general, and an earlier version of this paper claimed that it did. The gap is visible in an
abstract two-line model: let $c_1,c_2$ share the top $M$ with $c_1(M)=+1$ and $c_2(M)=-1$, so
Conjecture \ref{conj:tie} holds and $M$ cancels; nothing above prevents $c_1(m)=c_2(m)=+1$ at the
next weight $m<M$, and then the true $\mu^+_{\max}$ is $m$ with \emph{two} contributing $\Lambda$, so
(L2) fails. Closing that would need the statement iterated down the extremal layers --- cancel the
tied top, truncate, take the maximum of what is left, and demand the same thing again until a layer
survives --- and we do not have the iterated statement. We therefore keep (L2) as the conjecture it
is, and Conjecture \ref{conj:tie} as what it is: the exact account of the non-tie case, and the
reason the naive prediction fails in the other $27$.

\begin{conjecture}[extremal multiplicity one]\label{conj:L3}
If $M(\beta)$ is attained by a unique $\Lambda^*$, then $a^B_{\Lambda^*}=1$.
\end{conjecture}

\noindent
This is (L3) \emph{where $M(\beta)$ is attained once}, which is where it is not vacuous. It holds in
$678$ of the $678$ such shapes at $t=3$,
$r=2$, $\beta_i\le12$ --- a population in which $78\%$ of the shapes carry some $a^B_\Lambda>1$, up
to $13$ --- and in $323$ of the $323$ at $t=5$, $r=2$, $\beta_i\le12$, where $52\%$ do, up to $4$
\stverif. It is not the whole of (L3), for the same reason Conjecture \ref{conj:tie} is not the whole
of (L2): if $M(\beta)$ is a tie that cancels, the true top is lower, and Conjecture \ref{conj:L3}
says nothing about the multiplicity of whoever contributes there. Neither conjecture is a statement about the filter: both are
about the Littlewood restriction coefficients $a^B_\Lambda$ and about which $\Lambda$ is extremal, so
the natural setting for an attack is a branching rule fine enough to see multiplicities outside the
stable range. Jang and Kwon give one --- a formula for the branching $GL_n\downarrow O_n$
generalising Littlewood's, in terms of Littlewood--Richardson tableaux with flag conditions that
vanish in the stable range \cite{JangKwon} --- and it is precisely outside that range that our
$a^B_\Lambda$ exceed $1$.

Figure \ref{fig:transversal} is the proposition drawn: the residues of $V=2(\Lambda+\rho)$ sorted into
their folded classes, a transversal picked out of them, and --- on the right --- a shape where one
class is unhit and the numerator is therefore empty.

\begin{figure}[htbp]
\centering
\includegraphics[width=\linewidth]{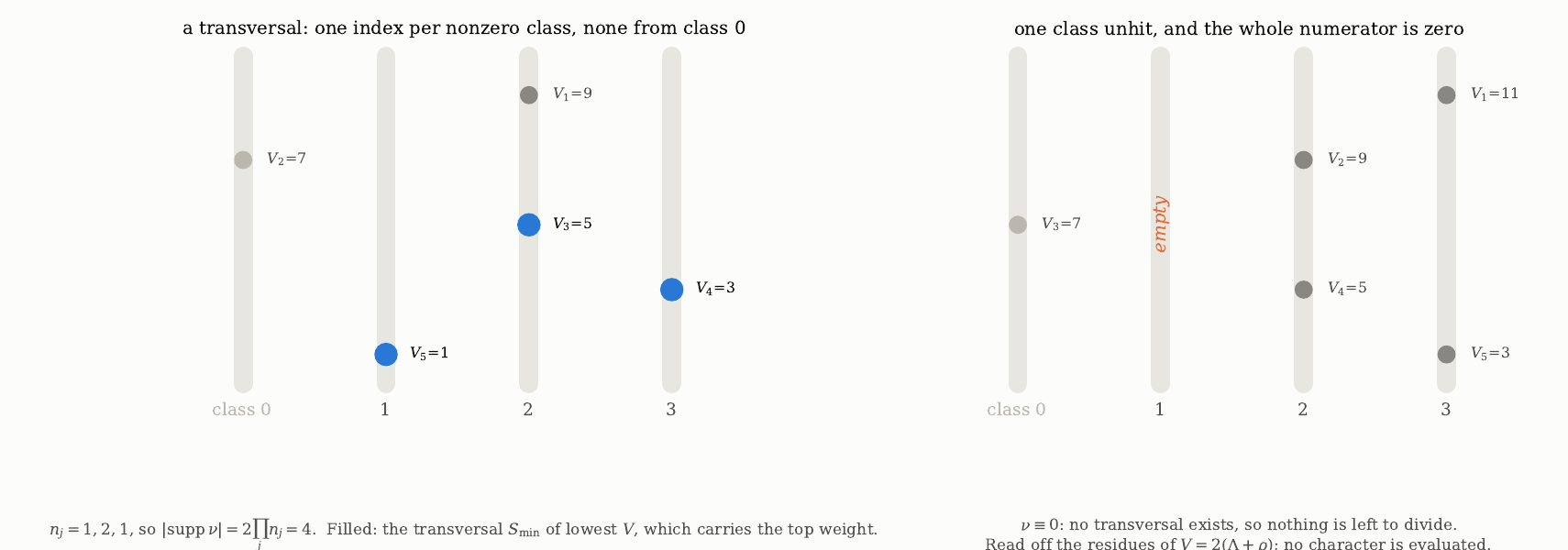}
\caption{Proposition \ref{prop:transversal}, drawn, at $t=7$ and $r=2$, so $m'=3$ and $R'=5$: four
folded classes $0,1,2,3$ and the five doubled shifted exponents $V_i=2(\Lambda+\rho)_i$ to place
among them.
\textbf{Left}, $\Lambda=0$: a summand of $\nu$ is a choice of one index from each \emph{nonzero}
class and none from class $0$. Here $n=(1,2,1)$, so there are $\prod_j n_j=2$ transversals and
$|\mathrm{supp}\,\nu|=4$ once the two chiralities are counted, which is part (ii). Filled is
$S_{\min}$, the transversal of lowest $V$; by part (v) it is the one carrying the top weight, and
in the picture it is simply the lowest dot in each column. \textbf{Right}, $\Lambda=(1,1,1,1,1)$:
class $1$ is unhit, no transversal exists, and $\nu\equiv0$ --- part (iii), a vanishing criterion
read off the residues with no character evaluated. Computed by \texttt{fig\_transversal.py}, which
refuses to draw unless $|\mathrm{supp}\,\nu|$ from an honest enumeration of $W^1$ agrees with
$2\prod_j n_j$ on both shapes, the highlighted set is admissible while a non-transversal is not, the
right-hand numerator really is empty, and $S_{\min}$ really attains the maximum.}
\label{fig:transversal}
\end{figure}

\subsection{The same reading at even $t$}\label{sec:evennum}

\begin{proposition}[the same reading for even $t$, and the parity inside the numerator]
\label{prop:eventransversal}
Let $t=2m+2$ be even, $R=m+r$, and let $V^{\mathrm{ev}}=\Lambda+\rho_{C_R}$ be the shifted exponent
vector of $\Sp_{2R}$, whose classes modulo $t$ are folded as in Lemma \ref{lem:T}. Then:
\begin{enumerate}
\item[\rm(i)] $\Sp_{2R}\supset\Sp_{2m}\times\Sp_{2r}$ has equal rank, its complementary positive
roots are exactly $e_i\pm f_j$ --- with no $f_j$ alone --- and freezing $y_i=\xi^i$ gives the
specialised relative denominator $\prod_{j}(1-z_j^{\,t})/(1-z_j^{2})$.
\item[\rm(ii)] $W^1$ is the set of subsets $S\subset\{1,\dots,R\}$ with $|S|=m$, so
$|W^1|=\binom Rm$ and there is \emph{no} chirality; and $w\mapsto\mu_w$ is injective.
\item[\rm(iii)] $\tau^C_t(\eta_w)\neq0$ if and only if $S$ is a transversal of the classes
$1,\dots,m$ avoiding \emph{both} forbidden classes $0$ and $t/2$; hence
$|\mathrm{supp}\,\nu|=\prod_{j=1}^{m}n_j$, with no factor $2$, and $\nu\equiv0$ exactly when some
$n_j=0$.
\end{enumerate}
\end{proposition}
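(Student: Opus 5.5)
The plan is to run the odd-case argument of Propositions \ref{prop:crossden} and \ref{prop:transversal} and Lemma \ref{lem:muinj} again in type $C$, changing only the root data. The three parts are handled in order.

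For (i), list the positive roots of $C_R$ in coordinates $e_1,\dots,e_m,f_1,\dots,f_r$. The subgroup $C_m\times C_r$ takes $e_i\pm e_j$, $2e_i$, $f_i\pm f_j$ and $2f_j$. What remains is exactly $e_i\pm f_j$. There is no $f_j$ alone, because $C_R$ has no short roots $e_k$; the long roots $2e_k$ all lie in the subgroup. Equal rank is immediate, since both sides have rank $R$. For the specialised product, fix $j$ and use the eigenvalues $\xi^{\pm1},\dots,\xi^{\pm m}$, with $t=2m+2$. These are all the $t$-th roots of unity except $\pm1$, so
\[
\prod_{i=1}^{m}(1-\xi^{i}z_j)(1-\xi^{-i}z_j)=\prod_{k\ne0,t/2}(1-\zeta^kz_j)=\frac{1-z_j^{t}}{(1-z_j)(1+z_j)}=\frac{1-z_j^t}{1-z_j^2}.
\]
This is the same cyclotomic count as \eqref{eq:crossden}, with the two fixed points removed instead of one.

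For (ii), take $u=\Lambda+\rho_{C_R}$. It is integral, strictly decreasing and strictly positive. Strict $H$-dominance of $w(u)$ forces both blocks to consist of coordinates of $u$ with positive signs, in decreasing order. The reason is that dominance for $C_r$, unlike $D_r$, requires the last coordinate to be strictly positive, since $2f_r$ is a simple root. So $w$ is just the subset $S$ of indices used by the first block, and $|W^1|=\binom Rm$. This agrees with $|W(C_R)|/|W(C_m)||W(C_r)|=2^RR!/(2^mm!\,2^rr!)$. Injectivity follows because $\mu_w+\rho_{C_r}$ lists $u|_{S^{\mathrm c}}$, which recovers $S^{\mathrm c}$ and hence $S$. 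There is no sign left to record, so there is no chirality.

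For (iii), apply Lemma \ref{lem:T} to $\eta_w$, whose shifted vector is $a=\eta_w+\rho_{C_m}=u|_S$. This is the point to check carefully: $\rho_{C_R}$ restricted to the first block and re-sorted is not $\rho_{C_m}$. However, by definition $\eta_w=w(u)|_{\text{first block}}-\rho_{C_m}$, so $\eta_w+\rho_{C_m}=u|_S$ exactly. By Lemma \ref{lem:T}, $\tau^C_t(\eta_w)\ne0$ if and only if no entry of $u|_S$ lies in class $0$ or $t/2$ and the folded classes are pairwise distinct. There are $m$ entries and only $m$ admissible classes, $1,\dots,m$, so this says exactly that $S$ is a transversal avoiding both forbidden classes. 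The count of transversals is $\prod_j n_j$. By the injectivity in (ii), distinct $S$ give distinct $\mu$, so nothing cancels, and the support of $\nu$ has exactly this size with no factor $2$. Finally, $\nu\equiv0$ if and only if the product $\prod_j n_j$ vanishes, that is, if and only if some $n_j=0$.

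The main obstacle is the dominance bookkeeping in (ii): confirming that the type-$C$ strict dominance condition removes the sign freedom that produced the chirality in $D_r$. The identification $a=u|_S$ in (iii) rests on that same normalisation, so I would verify both directly from the simple roots $e_i-e_{i+1}$ and $2e_m$.
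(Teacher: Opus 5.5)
Your proposal is correct and follows the paper's proof essentially step for step. It uses the same root-complement and cyclotomic count in (i), the same observation in (ii) that type-$C$ strict dominance removes all sign freedom, and the same application of Lemma \ref{lem:T} to $V^{\mathrm{ev}}|_S$ in (iii), where your explicit check that $\eta_w+\rho_{C_m}=u|_S$ makes precise a step the paper leaves implicit.
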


\begin{proof}
(i) The positive roots of $C_R$ are $e_i\pm e_j$ and $2e_i$; those of the subgroup are the same
within each block, so the complement is $\{e_i\pm f_j\}$ and nothing else --- there is no $2e_i$
crossing the blocks, which is the single difference from type $B$, where $f_j$ itself is
complementary. Freezing $y_i=\xi^i$ for $1\le i\le m$, the exponents $\xi^{\pm i}$ run over every
$t$-th root of unity except $\xi^0=1$ and $\xi^{t/2}=-1$, since $t=2m+2$; hence per free variable
$\prod_{i=1}^m(1-\xi^iz)(1-\xi^{-i}z)=\prod_{k=0}^{t-1}(1-\xi^kz)/\bigl((1-z)(1+z)\bigr)
=(1-z^{\,t})/(1-z^{2})$.

(ii) Both blocks are of type $C$, so both Weyl groups contain all sign changes and strict
$H$-dominance forces every entry of $w(V^{\mathrm{ev}})$ positive; $w$ is therefore the datum of the
subset $S$ alone, and $|W(C_R)|/|W(C_m)||W(C_r)|=2^RR!/(2^mm!\,2^rr!)=\binom Rm$ as it must be. The
complementary block recovers $S^{\mathrm c}$ and hence $S$, so $w\mapsto\mu_w$ is injective.

(iii) By Lemma \ref{lem:T} the frozen block survives exactly when the folded classes of
$V^{\mathrm{ev}}|_S$ are a permutation of $\{1,\dots,m\}$. Modulo $t=2m+2$ the folded classes are
$0,1,\dots,m,t/2$, so there are $m$ admissible ones and two forbidden; $S$ must meet each admissible
class once and neither forbidden one. Counting and the vanishing criterion follow as in
Proposition \ref{prop:transversal}, without the chirality factor by (ii).
\end{proof}

\noindent
Checked on $112$ weights over $(t,r)=(4,2),(6,2),(8,2),(6,3)$: the support, the values and the
vanishing criterion all agree with the transversal reading in $112$ of $112$, $|W^1|=\binom Rm$ in
$112$ of $112$, and the coordinatewise top is at $S_{\min}$ in $101$ of the $101$ shapes with
$\nu\neq0$; a decoy using the odd rule --- one forbidden class instead of two --- is right in only
$72$ \stverif. The identity itself was checked separately, below.

So the numerator carries its own parity dichotomy, and it is not the one the rest of the paper turns
on:
\[
\begin{array}{lcc}
 & t\ \text{odd} & t\ \text{even}\\[2pt]
\text{forbidden residue classes} & 0 & 0,\ t/2\\
\text{chirality} & 2 & 1\\
|\mathrm{supp}\,\nu| & 2\prod_j n_j & \prod_j n_j\\
\text{divisor per coordinate} & z^{t/2}-z^{-t/2} &
\bigl(z^{t/2}-z^{-t/2}\bigr)/\bigl(z-z^{-1}\bigr)
\end{array}
\]
This is a dichotomy \emph{inside} the equal-rank numerator, and it owes nothing to the virtuality of
$a_\Lambda$; the two forbidden classes against one are Remark \ref{rem:BC} said in the language of
transversals.

\begin{remark}[what the even side still does not have]\label{rem:eventransversal}
Before the deficit, the credit: the identity itself goes through as well. Writing $x=\mu+\rho_{C_r}$ for the alternant index, the
even denominator acts on it by the $(t/2)^r$ shifts with exponents $t/2-1,t/2-3,\dots,-(t/2-1)$ in
each coordinate, straightened by $W(C_r)$; and
\[
\Bigl(\sum_\mu c(\Lambda,\mu)\,\chi^{C_r}_\mu\Bigr)\cdot\prod_{j=1}^{r}\frac{1-z_j^{\,t}}{1-z_j^{2}}
\;=\;\sum_{w\in W^1}(-1)^{\ell(w)}\tau^C_t(\eta_w)\,\chi^{C_r}_{\mu_w}
\]
holds on all $266$ weights tested at $(t,r)=(4,2),(6,2),(8,2),(6,3),(10,2)$, with the overall sign
$+1$ throughout and no exception; a decoy using the odd denominator, one using the exponents of
$t+2$, and one straightening by $W(D_r)$ hold in $0$, $0$ and $5$ of the $222$ nontrivial cases
\stverif.

That comparison is worth making explicit, because it is a difference between the parities that has
nothing to do with virtuality. The odd denominator
$\prod_j(z_j^{t/2}-z_j^{-t/2})$ contributes \emph{two} terms per coordinate, with opposite signs: it
is a difference of half-spin characters, and dividing by it is a finite-difference problem. The even
one contributes $t/2$ terms per coordinate, \emph{all with sign} $+1$: it is
$\prod_j(z_j^{t/2}-z_j^{-t/2})/(z_j-z_j^{-1})$, a symmetric sum rather than a difference. Inverting
the second is a strictly larger problem than inverting the first, and it grows with $t$.

What the even side does \emph{not} have is the reduction. (L1) is a statement about a single
$\Lambda$, and for a single $\Lambda$ the even character is genuine, so the numerator behaves; but
the even conjecture is not a per-$\Lambda$ statement, because $a_\Lambda$ takes both signs there and
the localisation (L1)--(L3) has no meaning as stated. So the even side gains the numerator and the
identity, and still has two obstructions where the odd side has one.
\end{remark}

Two things follow that are worth saying out loud. First, \emph{the numerator of} (L1) \emph{is
combinatorics}: a signed count of transversals of a partition of $R'$ residues into $m'+1$ classes,
with no character and no branching anywhere in it. Second, (iii) is a vanishing criterion that costs
no computation --- read the residues of $V=2(\Lambda+\rho)$ modulo $t$, and if any class $1,\dots,m'$ is
unhit, the whole numerator is zero. The lemma also explains the injective
subset-to-weight correspondence that the failed Laplace route of Remark \ref{rem:L1route} had
already found: it is the same subset/complement bookkeeping, seen from the side where it is a
statement about Weyl cosets rather than about minors.

What remains. Exactly the division, and nothing else. It is not the na\"\i ve one: reading $c$ off a
single corner of the $2^r$ shifts works in only $121$ of the $245$ cases. The shifted supports
collide in $131$ of them; in the $114$ where they do not the corner reading is automatic, and it
survives in seven more by accident. It is, however, a terminating back-substitution from the top weight downwards,
using \eqref{eq:gkrsfilt} alone and no branching: on all $245$ cases that recursion has zero
remainder, reproduces $c$ exactly, and never produces a coefficient of absolute value greater than
$1$ \stverif. So (L1) is now the single statement that dividing a $\{0,\pm1\}$ numerator by
$\prod_j(z_j^{t/2}-z_j^{-t/2})$ cannot create a coefficient of absolute value $2$. Figure
\ref{fig:division} is that statement drawn: the numerator is a handful of points and the quotient is
a large signed staircase whose finite difference it is, and the whole difficulty is that the
staircase never doubles back on itself. We do not have
that proof, and claim nothing from it --- though Proposition \ref{prop:divided} below replaces the
recursion by a closed formula and says precisely what is left. The observation that GKRS applies
here, the argument of Lemma \ref{lem:muinj} and the suggestion to attack the division with
divided-difference operators are all due to a correspondent, and we are grateful for them.
\end{remark}

\subsection{Inverting the division}\label{sec:invert}

\begin{remark}[the denominator is an Adams image]\label{rem:adams}
One structural fact about the operator that remains, recorded because it may be the handle. The
specialised denominator is
\[
\prod_{j=1}^{r}\bigl(z_j^{t/2}-z_j^{-t/2}\bigr)=\psi^t\Bigl(\prod_{j=1}^{r}
\bigl(z_j^{1/2}-z_j^{-1/2}\bigr)\Bigr),
\]
the $t$-th Adams image of the unspecialised half-spin difference of $D_r$ --- the relative
denominator of $B_r\supset D_r$. So the division in \eqref{eq:gkrsfilt} is not division by an
arbitrary polynomial: it is the inversion of the $t$-th Adams image of the relative
spinor-denominator factor that the GKRS/Dirac formalism supplies. When this remark was written that was the only description of the operator we had which was not a
product formula, and whether it made the back-substitution transparent was open. It does: the rest
of this subsection is what came of taking it seriously.
\end{remark}

\noindent
The back-substitution is not merely terminating: it has a closed form, and the form is a divided
difference.

\begin{proposition}[the division inverts in closed form]\label{prop:divided}
Index the characters of $D_r$ by the doubled shifted weights $X=2(\mu+\rho_{D_r})$, and extend any
function on dominant regular $X$ to all of $\ZZ^r$ by $W(D_r)$-antisymmetry, writing $\tilde\nu$ for
the extension of $\nu$ and $0$ on the walls; write $\tilde c$ for the same extension of $c$, so that
the displayed formulas below hold at every $X\in\ZZ^r$ and not only on the dominant chamber, and
$c$ is recovered from $\tilde c$ by straightening. Then:
\begin{enumerate}
\item[(i)] $\Delta_t=\prod_j(z_j^{t/2}-z_j^{-t/2})$ is the Weyl denominator of the root system
$A_1^{\,r}$ with roots $\alpha_j=t f_j$, and equals $\psi^t(\Delta_1)$. Its Weyl group is the group
of sign changes, which is abelian, so multiplication by $\Delta_t$ is a product of $r$ commuting
rank-one operators $D_{t,j}=T_{-te_j}-T_{+te_j}$, where $T_v$ is translation by $v$, acting on
functions by $(T_vf)(X)=f(X+v)$.
\item[(ii)] In these coordinates multiplication by $\Delta_t$ is
$a_X\mapsto\sum_{\varepsilon\in\{\pm1\}^r}(\prod_j\varepsilon_j)\,a_{X+t\varepsilon}$, and it is
inverted by
\begin{equation}\label{eq:divided}
\tilde c(X)\;=\;s_r\!\!\sum_{k\in(2\ZZ_{\ge0}+1)^r}\!\!\tilde\nu\bigl(X+tk\bigr),
\end{equation}
a sum with finitely many nonzero terms, with $s_r$ the sign of \eqref{eq:gkrsfilt}. The operator
identity itself carries no sign; $s_r$ enters only because \eqref{eq:gkrsfilt} defines $\nu$ up to
it. Being a global sign it changes no support, no fibre and no cancellation below, and we keep it
visible rather than absorbed so that \eqref{eq:divided} can be composed with \eqref{eq:gkrsfilt}
without a second convention.
\item[(iii)] Call $\{X+tk:k\ \text{odd}\}$ the \emph{fibre} of $X$: it is the set of points whose
contributions \eqref{eq:divided} collects at $X$. Consequently (L1) is equivalent to
\[
\sum_{Y\in\text{fibre of }X}\tilde\nu(Y)\ \in\ \{0,\pm1\}\qquad\text{for every }X,
\]
and in particular it holds whenever every fibre meets $\mathrm{supp}\,\tilde\nu$ at most once.
\end{enumerate}
\end{proposition}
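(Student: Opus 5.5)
The plan is to pass from characters to alternants and work coefficientwise on $\ZZ^r$. Multiplying both sides of \eqref{eq:gkrsfilt} by the Weyl denominator of $D_r$ turns each $\chi^{D_r}_\mu$ into the alternant $\sum_{w\in W(D_r)}\det(w)\,z^{w(\mu+\rho_{D_r})}$. In the doubled coordinates $X=2(\mu+\rho_{D_r})$ its coefficient function is exactly the $W(D_r)$-antisymmetric extension described in the statement, vanishing on the walls. The first thing to check is that $\Delta_t$ is $W(D_r)$-\emph{invariant}. Permutations permute its factors. An even number of sign changes flips an even number of the odd factors $z_j^{t/2}-z_j^{-t/2}$. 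So multiplying an antisymmetric alternant by $\Delta_t$ gives an antisymmetric alternant again. Hence \eqref{eq:gkrsfilt} becomes an identity of finitely supported antisymmetric functions on $\ZZ^r$: $\Delta_t\cdot\tilde c=s_r\,\tilde\nu$, with $\tilde c$ and $\tilde\nu$ the coefficient functions. This is where $s_r$ enters, and the only place.

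For (i) I would expand the product directly. Multiplying $z^{X/2}$ by $z_j^{\pm t/2}$ shifts $X$ by $\pm t e_j$, so on coefficient functions multiplication by $z_j^{t/2}-z_j^{-t/2}$ is $D_{t,j}=T_{-te_j}-T_{+te_j}$. The $D_{t,j}$ are translations in different coordinates, so they commute. Each factor is the rank-one Weyl denominator for the root $t f_j$, which gives the identification with $A_1^{\,r}$ and with $\psi^t(\Delta_1)$ by substituting $z\mapsto z^t$. Expanding the product of the $r$ factors gives the formula in (ii), $a_X\mapsto\sum_\varepsilon(\prod_j\varepsilon_j)\,a_{X+t\varepsilon}$, with the signs fixed by the orientation of $D_{t,j}$.

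For (ii) I would invert one coordinate at a time. In rank one, put $f(X)=\sum_{k\ge1\ \text{odd}}g(X+tk)$. Then $f(X-t)-f(X+t)$ telescopes: the first sum runs over $g(X+tj)$ for even $j\ge0$, the second over even $j\ge2$, so the difference is $g(X)$. When $g$ has finite support, so does $f$ in the direction $+\infty$, and the sum has finitely many nonzero terms. Uniqueness among finitely supported solutions then follows. If $D_{t,j}f=0$ with $f$ finitely supported and nonzero, take a support point with maximal $j$-th coordinate; the term $f(X+te_j)$ vanishes there, which forces $f=0$. Applying the rank-one inverse in each coordinate gives \eqref{eq:divided}, with $k\in(2\ZZ_{\ge0}+1)^r$. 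Since $\tilde c$ is finitely supported (it comes from a character), it coincides with this inverse. Restricting to dominant $X$ and straightening recovers $c$. Part (iii) is then a rereading: $c(\Lambda,\mu)=\pm\sum_{Y}\tilde\nu(Y)$ over the fibre of $X$. So (L1) is exactly the condition that each fibre sum lies in $\{0,\pm1\}$. Since $\tilde\nu\in\{0,\pm1\}$ by Lemma \ref{lem:muinj} and antisymmetry, a fibre meeting the support at most once suffices.

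The main obstacle I expect is bookkeeping rather than analysis. Three things have to be matched exactly: the direction of the shifts and their signs (whether $T_{-t}-T_{t}$ or its negative, and hence whether the fibre runs over $+tk$ or $-tk$); the coefficient extraction of the alternant in the doubled, half-integral-safe coordinates; and the invariance of $\Delta_t$ under $W(D_r)$ rather than $W(B_r)$. A global sign slip here would change nothing in the support statements. It would, however, break the later composition with Proposition \ref{prop:determinant}. So I would pin the sign on the trivial case $\Lambda=0$, $r=1$ first.
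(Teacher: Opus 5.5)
Your proposal is correct and is essentially the paper's proof: pass to alternant coefficients, expand $\Delta_t$ into the commuting operators $T_{-te_j}-T_{+te_j}$, invert each one by telescoping over odd shifts, take the product, and read (iii) off the result. The one difference is how you identify the progression sum with $\tilde c$: you use injectivity of $D_t$, where the paper invokes divisibility of $\nu$ by $\Delta_t$. A priori that sum is only known to vanish once some coordinate is large, not to be finitely supported, so your uniqueness step does not apply as stated. Either state the injectivity for functions whose support is bounded above in every coordinate (your maximal-coordinate argument works verbatim there), or apply the telescoping directly to $D_t\tilde c=s_r\tilde\nu$ as a left inverse, which needs only that $\tilde c$ is finitely supported.
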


\begin{proof}
(i) is the definition: the positive roots of the system are the $\alpha_j$, pairwise orthogonal, so
$\prod_j(1-e^{-\alpha_j})$ is its denominator and $\Delta_t$ is that product times the monomial
$\prod_j z_j^{t/2}$; the factorisation of $D_t=\prod_j D_{t,j}$ is the expansion of the product, and
$\psi^t(\Delta_1)=\Delta_t$ because the Adams operation raises each $z_j$ to the $t$-th power.

(ii) The displayed action on alternants is the expansion of the product, straightened by $W(D_r)$;
it factors as $\prod_j(T_{-te_j}-T_{+te_j})$, where $T_v$ is translation by $v$, because the sum over
$\varepsilon$ factors coordinatewise. Each factor is inverted by $\sum_{k\ge0}T_{+t(2k+1)e_j}$, since
\[
(T_{-te_j}-T_{+te_j})\sum_{k\ge0}T_{t(2k+1)e_j}
=\sum_{k\ge0}T_{2kte_j}-\sum_{k\ge1}T_{2kte_j}=\mathrm{id},
\]
the telescoping being legitimate because $\tilde\nu$ has finite support. Taking the product over $j$
gives \eqref{eq:divided}. Nothing in this last step is new --- inverting a centred difference
operator on functions of finite support is elementary --- and we claim nothing for it; what the
proposition contributes is the identification of the specialised denominator with such an operator,
and part (iii). Finiteness of the sum is the same finiteness of support; that $c$ vanishes
below the support of $\nu$ --- so that \eqref{eq:divided} defines a Laurent polynomial and not a
formal series --- is the statement that each full progression sums to zero, which is exactly the
divisibility of $\nu$ by $\Delta_t$ already known from \eqref{eq:gkrsfilt}.

(iii) is \eqref{eq:divided} read backwards: $c(X)$ \emph{is} the fibre sum, so bounding $c$ and
bounding that sum are the same statement. The clause after it is the case that costs nothing: by
Lemma \ref{lem:muinj}, $\tilde\nu$ takes values in $\{0,\pm1\}$, so a fibre meeting the support once
contributes $\pm1$ and one missing it contributes $0$. Only a fibre meeting the support at least
twice can produce $|c(X)|\ge2$, and then only if the values fail to cancel far enough.

We state (iii) as the bound and not as ``meets once, or sums to zero''. That second form is
\emph{strictly stronger} than (L1) --- a fibre with three hits of signs $+1,+1,-1$ sums to $+1$ and
so satisfies (L1) while violating it --- and it is Theorem \ref{thm:cancel}, where it belongs.
An earlier version of this paper called the two equivalent, which was simply false; the proof above
never used the stronger form.
\end{proof}

\begin{remark}[what to call it, and what not to]\label{rem:name}
We are deliberately not calling $G_t=D_t^{-1}$ the operator $\partial_{w_0}$ of \cite{HLS}. It is
related to it --- both are division by a Weyl denominator after antisymmetrisation, and \cite{HLS}
is the reference for that formalism in general --- but $\partial_{w_0}$ is defined as a composite of
isobaric divided differences, whereas $G_t$ is the inverse of a centred difference operator on
functions of finite support, and the two definitions do not coincide term by term. What is true is
that the mechanism is the Demazure/Weyl one of $A_1^{\,r}$ \emph{transported by the Adams operation}:
on the image of $\psi^t$ one has $\delta^{(t)}_\alpha(\psi^t u)=\psi^t(\delta_\alpha u)$, and the
denominator passes from $1-e^{-\alpha}$ to $1-e^{-t\alpha}$, which is where the step $2t$ of the
progressions comes from. A fair descriptive name is therefore \emph{the Adams-transformed, or
$t$-dilated, Demazure/Weyl operator}, and \eqref{eq:divided} is its coefficientwise inverse; that
name is ours and is not a bibliographic attribution. We have not found a settled term for it in the
literature, and would rather leave the description than borrow a label.

One caveat belongs with the name. Calling $\{tf_j\}$ a root system is geometrically accurate, but as
a root datum on the \emph{same} character lattice it needs care: for $t>1$ the operator
$\delta_{t\alpha}$ is not an integral endomorphism of the whole character ring, because divisibility
by $1-e^{-t\alpha}$ is automatic only on the appropriate congruence sublattice --- in particular on
the image of $\psi^t$ --- and not for an arbitrary monomial of the original lattice. In our
coordinates this is the same fact as the finiteness clause in Proposition \ref{prop:divided}(ii):
the progression sum is defined pointwise for any finitely supported $\tilde\nu$, but it has finite
support precisely when each full progression sums to zero, that is, precisely when $\nu$ is
divisible. This entire framing, including the objection to the loose use of $\partial_{w_0}$ and the
lattice caveat, we owe to a correspondent.

The same correspondent located the continuation of \cite{HLS} that its introduction announces.
Landweber and Sjamaar \cite{LS13} take $H\subset G$ closed of \emph{maximal rank} and a $G$-space
$X$, study $K^*_H(X)$ as a module over $K^*_G(X)$, prove a relative duality --- a nonsingular
bilinear pairing --- and construct multiplets analogous to those of \cite{GKRS}, with induction
developed through twisted $\mathrm{Spin}^c$ structures. It is the maximal-rank theory the earlier
paper promised, and it is exactly the setting of our pair $B_{R'}\supset B_{m'}\times D_r$. Whether
our coefficient-level inverse $G_t$ is the specialisation of their relative twisted induction after
the torsion projection is a question we can state but not answer; we record it as Problem
\ref{prob:LS}.
\end{remark}

\begin{remark}[our numerator is a GKRS multiplet, and where their theorem stops]\label{rem:LSgap}
More than the setting matches. Landweber and Sjamaar write the relative antisymmetriser in the
opposite order, $J^{\mathrm{op}}_M=\sum_{w\in W^H}\det(w)\,w^{-1}$, write $\mathsf{D}_M$ for the
twisted $\mathrm{Spin}^c$ Dirac operator of $M=G/H$, prove $J_G=J_HJ^{\mathrm{op}}_M$
and deduce the operator identity
\begin{equation}\label{eq:LS44}
j_H^*\bigl(e(\mathsf{D}_M)^*\bigr)\,\partial_G=\partial_H\circ J^{\mathrm{op}}_M
\end{equation}
\cite[(4.4)]{LS13}; and for the multiplet $a_w=\partial_H(w^{-1}a)$ they prove
$\sum_{w\in W^H}\det(w)f^*(a_w)=0$ \cite[Thm.~4.2.2]{LS13}, where $f^*$ forgets equivariance. Their
proof is one line: $f^*j_H^*(e(\mathsf{D}_M)^*)=\prod_{\alpha\in R^+_M}(1-1)=0$.

That vanishing is visible in our data. Forgetting equivariance means evaluating a character at the
identity, so their theorem predicts $\sum_\mu\nu(\Lambda,\mu)\dim\chi^{D_r}_\mu=0$, and it holds in
$256$ of $256$ shapes, on supports of size $2$ to $8$ and with dimensions up to $3\,640$; the decoy
that shuffles the signs of $\nu$ on the same support gives $0$ in only $63$ \stverif. So our
$\nu$ \emph{is} a GKRS multiplet in their sense, and \eqref{eq:LS44} is the identity our
factorisation should be read against: the sign sits outside and the filter inside a single relative
operator, which is why no one factor of ours flips uniformly.

But their theorem is not our conjecture, and it is worth being exact about the distance. Three
statements, of which the first is theirs, the second is ours, and the third is the naive hybrid.
Call a fibre \emph{multi-hit} when it meets the support twice or more; (b) is read, as Theorem
\ref{thm:cancel} states it, at a dominant regular coefficient index:
\[
\begin{array}{lll}
\text{(a)} & \textstyle\sum_{\text{all }\mu}\nu(\Lambda,\mu)\dim\chi^{D_r}_\mu=0
 & \text{\cite[Thm.~4.2.2]{LS13}; $256/256$ here}\\[2pt]
\text{(b)} & \textstyle\sum_{\mu\in\text{fibre}}\nu(\Lambda,\mu)=0\ \text{on each multi-hit fibre}
 & \text{Theorem \ref{thm:cancel}; $333/333$ here}\\[2pt]
\text{(c)} & \textstyle\sum_{\mu\in\text{fibre}}\nu(\Lambda,\mu)\dim\chi^{D_r}_\mu=0
 & \text{false: $90$ of $333$ \stverif}
\end{array}
\]
Neither (a) nor (b) implies the other, and (c) --- the fibrewise version of their weighting --- is
simply not true, which is the cleanest evidence that (b) is a genuine refinement and not a corollary
waiting to be extracted. What would settle it is a coefficientwise refinement of \eqref{eq:LS44} for
the $\psi^t$-dilated operator; that is Problem \ref{prob:LS}, and the suggestion is a
correspondent's.
\end{remark}

\noindent
Formula \eqref{eq:divided} agrees with the terminating back-substitution on all $256$ shapes at
$(t,r)=(3,2),(5,2),(7,2),(3,3)$, multiplying back by $\Delta_t$ returns $\nu$ in $256$ of $256$, and
the largest coefficient produced is $1$; the decoy using the even progression $k\in2\ZZ_{>0}$ agrees
in $0$ of $256$, the decoy dropping the straightening sign in $57$, and the decoy keeping only the
term $k=(1,\dots,1)$ in $102$ \stverif. That last number is the same phenomenon as the failure of the
corner reading: the surviving term is $k=(1,\dots,1)$ for only $320$ of the $1\,016$ progressions
that carry one, and $25$ distinct $k$ occur.

\subsection{What cancels, and what does not explain it}\label{sec:whatcancels}

\begin{remark}[what \eqref{eq:divided} moves, and what it does not]\label{rem:divided}
The gain is a change of category. Before, (L1) was a statement about an algorithm --- a recursion
with zero remainder --- and one could only say that it had not misbehaved. Now it is a statement
about a signed count along an arithmetic progression of step $2t$ in a set described by Proposition
\ref{prop:transversal}: no characters, no branching, no division.

What the measurement says about the remaining half is sharper than we expected, and we record it
because it is where a proof would have to bite. Over the same $256$ shapes there are $1\,349$
progressions meeting $\mathrm{supp}\,\tilde\nu$: $1\,016$ meet it once, $322$ meet it twice and $11$
meet it four times, and \emph{every} one of the latter $333$ sums to exactly zero. Never three
terms; never a partial cancellation \stverif. So the missing statement is not an inequality but an
identity: a progression that meets the support more than once contributes nothing.

Three mechanisms for that identity were tested and are false or insufficient. The contributing $k$
do not form a product of independent binary choices --- a box --- failing in $51$ of the $333$, the
smallest witness being $\{(1,5),(3,3)\}$ at $t=3$, which moves diagonally. An involution exchanging
one index between the transversal $S$ and its complement reverses the sign in only $166$ of $266$
pairs, and the contributing set is connected by such exchanges in only $197$ of $333$. Pure
$W(D_r)$-antisymmetry --- the progression striking the same canonical point twice with opposite
straightening signs --- is genuine but accounts for only $43$ of the $333$ \stverif. The other $290$
pair two genuinely distinct support points, and in every witness we examined the two differ by
exactly $2t$ in a single coordinate with opposite values of $\nu$; the smallest is $t=3$,
$\Lambda=(2,1,0)$, $X=(6,-4)$, with $\tilde\nu(9,-1)=-1$ and $\tilde\nu(9,5)=+1$.

A shift by $2t$ reversing a sign looks like an affine reflection at level $t$, and $\tau^B_t$ is a
fusion projection (Theorem \ref{thm:fusion}), so the natural guess is that the identity is affine
folding. That guess is wrong for a reason worth recording, which a correspondent supplied in two
lines: in the affine Weyl group of $A_1$ with walls at $0$ and $t$ one has $s_0(x)=-x$ and
$s_1(x)=2t-x$, so $s_1s_0(x)=x+2t$. The translation by $2t$ is a product of \emph{two} reflections;
its affine sign is $+1$. The visible displacement therefore cannot itself supply the $-1$, and any
affine mechanism has to act on a richer object than the weight --- on the pair $(w,k)$ --- with the
sign coming from the finite straightening. We also tested the folding directly in both places it
could live. On the free block it fails
pointwise: $\tilde\nu(X+2te_j)=-\tilde\nu(X)$ is false off the support --- in the same witness
$\tilde\nu(9,11)=0$ and not $-1$. On the frozen block, where the folding really does live --- $\delta(A)$
\emph{is} the determinant of the element of $W(B_{m'})\ltimes t\ZZ^{m'}$ returning $A$ to the alcove
--- it explains only part of the cancellation. Writing the contribution of a term as
$\det(w)\cdot\tau^B_t(\eta_w)$, with $\det(w)$ the honest Weyl determinant (the straightening sign
times the shuffle sign, each of which separately depends on a convention) and with
$\tau^B_t(\eta_w)=\delta(A_w)$ the folding sign of its frozen block, every cancelling pair reverses \emph{exactly one} of the two factors --- $313$ of
$313$ --- but it is $\det$ in $251$ of them and $\delta$ in only $62$ \stverif. A \emph{cancelling
pair} here is an unordered pair of terms of one fibre landing on two \emph{distinct} support points
whose contributions are opposite, and the count is not one of those above: $281$ of the $322$
two-term fibres give one such pair each --- the other $41$ have both terms on the same canonical
point --- and the $11$ four-term fibres give $32$ between them, two, three or four each, so
$281+32=313$ \stverif. Affine folding is
therefore a part of the mechanism and not the mechanism. A useful by-product of the same
measurement: in the $322$ progressions with two terms the pair always cancels --- $281$ of $281$,
the other $41$ being those whose two terms land on the same canonical point, where the cancellation
is the $W(D_r)$-antisymmetry already accounted for --- and every non-cancelling pair lives in one of
the $11$ progressions with four terms, where the matching is a different one.

What does close, and is the same correspondent's suggestion, is a much narrower move than the
exchanges we had been testing: replace, inside a \emph{single} folded class, the chosen index by
another index of that class. Applied literally --- same class, doubled shifted exponents differing by
exactly $2t$, chirality unchanged --- it accounts for $175$ of the $313$ cancelling pairs. Coupling
the chirality to the folding sign closes it completely:
\[
\begin{array}{ll}
\text{same class, same folding sign, chirality unchanged} & 208\\
\text{same class, \emph{opposite} folding sign, chirality reversed} & 58\\
\text{chirality reversed alone, transversal unchanged} & 47
\end{array}
\]
which is $313$ of $313$ \stverif. The second row is the one that matters for how the paper is
organised: it pairs the two chiralities of \emph{different} transversals, so folding to $\mu^+$
before dividing would identify exactly the two points between which part of the sign lives. That is
why the numerator is kept on unfolded $D_r$ weights throughout, and only the quotient descends to
$\mu^+$. Where the $-1$ comes from is also now measured: among the pairs realised by the literal
toggle it is the Weyl determinant in $167$ of $175$ and the folding sign in only $8$ \stverif{} ---
the correspondent's prediction, that the sign would come from the representative and not from the
jump. What is still missing is that these moves assemble into a \emph{fixed-point-free
sign-reversing involution} on the incidence set
$I_X=\{(w,k):X+tk=2(\mu_w+\rho_{D_r}),\ k\ \text{odd}\}$, which is the shape the statement should
finally take.

What was wrong with our earlier attempts was not the candidate but the decomposition. Each of them
sums over the \emph{subsets} $S$ and then tries to account for the sign separately, in factors ---
straightening, shuffle, folding --- of which two depend on a convention. An element of $W^1$ is not a
subset: it is a signed permutation of $V$, distributing the $R'$ values between $m'$ frozen slots and
$r$ free ones. Summed that way the whole thing is a signed sum over perfect matchings, and a signed
sum over perfect matchings is a determinant.
\end{remark}

\subsection{The quotient is a determinant}\label{sec:det}

\begin{proposition}[the quotient is a determinant]\label{prop:determinant}
Fix $\Lambda$ and $X$, and let $M=M(\Lambda,X)$ be the $R'\times R'$ integer matrix with rows indexed
by $V_1>\dots>V_{R'}$ and columns by the slots:
\begin{itemize}
\item for $1\le p\le m'$, the $p$-th \emph{frozen} slot asks for the folded class $m'-p+1$, in the
decreasing order of Lemma \ref{lem:T}; its entry in row $i$ is the folding sign $\varepsilon$ when
$V_i$ folds to that class, and $0$ otherwise;
\item for $1\le j\le r$, the $j$-th \emph{free} slot asks for $X_j$; its entry in row $i$ is
$\sum\varepsilon$ over those $\varepsilon\in\{\pm1\}$ with $\varepsilon V_i=X_j+tk$ for some odd
$k\ge1$, and $0$ if there is none.
\end{itemize}
Then
\begin{equation}\label{eq:detsign}
\tilde c(\Lambda,X)\;=\;\kappa_{t,r}\,\epsilon_t\,\det M(\Lambda,X),
\end{equation}
for every regular $X\in\ZZ^r$, with $\tilde c$ the $W(D_r)$-antisymmetric extension of Proposition
\ref{prop:divided}; for \emph{dominant} $X=2(\mu+\rho_{D_r})$ the left-hand side is $c(\Lambda,\mu)$
itself, which is the only case Theorem \ref{thm:odd} uses. We keep the tilde wherever $X$ is not
assumed dominant, because Remark \ref{rem:domneeded} turns on exactly that difference.
with $\epsilon_t$ the constant of Corollary \ref{cor:oddsign} and $\kappa_{t,r}\in\{\pm1\}$ a sign
depending on $(t,r)$ alone. \emph{$\kappa_{t,r}$ is not $s_r$}, and the two must not be written with
the same symbol: $s_r$ normalises $\nu$ against $\Delta_t$ in \eqref{eq:gkrsfilt}, while
$\kappa_{t,r}$ normalises the determinant against the closed form of Proposition
\ref{prop:transversal}, which already carries $\epsilon_t$ and never passes through $\Delta_t$.
What the proof below gives is the \emph{existence} of such a $\kappa_{t,r}$ and its independence of
$(\Lambda,X)$; its numerical value is determined experimentally, and the measurement is below. No
part of the statement marked as proved rests on that computation.
\end{proposition}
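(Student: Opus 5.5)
The plan is to start from the closed inverse \eqref{eq:divided} and expand it until it has the shape of a Leibniz expansion. By Proposition \ref{prop:divided}(ii),
\[
\tilde c(X)=s_r\sum_{k\ \mathrm{odd}}\tilde\nu(X+tk).
\]
By Lemma \ref{lem:muinj} and Proposition \ref{prop:transversal}(iv), each $\tilde\nu(Y)$ is a single term $\epsilon_t\det(w)\,\delta_{S(w)}$, summed over the $w\in W^1$ and the $W(D_r)$-straightenings that carry the complement block of $w(u)$ to $Y$. So $\tilde c(X)$ is a signed sum over the incidence set $I_X=\{(w,k)\}$ of Remark \ref{rem:divided}, with all $W(D_r)$ images included.

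Next I would rewrite each term as a signed bijection from the rows $V_1>\dots>V_{R'}$ to the $R'$ slots:
\begin{itemize}
\item the frozen slot $p$ receives the unique index of $S$ folding to class $m'-p+1$, with its folding sign $\varepsilon$;
\item the free slot $j$ receives the index $i$ with $\varepsilon V_i=X_j+tk_j$.
\end{itemize}
The point is that a pair $(w,k)$, together with its straightening element, is exactly the same datum as a permutation $\pi\in S_{R'}$ plus a sign $\varepsilon_i$ for each row, such that every entry $M_{i\pi(i)}$ used is nonzero.

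Three facts make this exact.

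\emph{Frozen columns.} A frozen column has nonzero entries precisely on the rows of its class, so the frozen part of $\pi$ is a transversal (Proposition \ref{prop:transversal}(i)). The product of its entries together with the permutation sign gives $\delta_S$, read against the decreasing order, because that is how Corollary \ref{cor:oddsign} defines it.

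\emph{Free columns.} The sum over $\varepsilon$ in a free column entry records the choice of sign for that coordinate. Over $D_r$ weights one must allow all sign patterns and then fix the chirality; I would check that the full group $\{\pm1\}^r\rtimes S_r$ occurring in \eqref{eq:divided} matches the sum of signed entries. Each $D_r$ straightening sign is then the permutation sign of the free block times $\prod\varepsilon$.

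\emph{Total sign.} The Weyl determinant of the $B_{R'}$ element $w$, composed with the straightening, is the sign of the signed permutation of $R'$ letters. That equals $\operatorname{sgn}\pi\cdot\prod_i\varepsilon_i$, the product of the sign changes.

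Summing over $\pi$ then yields $\det M$ up to a global sign. That sign collects $s_r$, the fixed reordering of slots (frozen before free, decreasing class order), and the sign of the base point $u$. None of these depend on $(\Lambda,X)$, and this defines $\kappa_{t,r}$. The factor $\epsilon_t$ stays outside.

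The main obstacle is the sign bookkeeping in the free block. Two things are at stake:
\begin{itemize}
\item showing that a row can never meet a free column through both values of $\varepsilon$ in a way that double-counts. This is harmless, since an entry equal to a sum is still linear in the determinant expansion;
\item showing that $W(D_r)$ evenness of the sign changes is compatible with summing all of $\{\pm1\}^r$. Here I would use that $\Delta_t$ is anti-invariant (Remark \ref{rem:chirality}), so the odd-parity terms appear automatically through the antisymmetric extension $\tilde\nu$.
\end{itemize}
I would first verify that this bijection between $I_X$ and the nonzero Leibniz terms respects signs termwise at $r=1$ (where $D_1$ is trivial). I would then treat the general $r$ by commuting the $r$ independent factors $D_{t,j}$ of Proposition \ref{prop:divided}(i), one column at a time.
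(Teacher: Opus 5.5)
This is essentially the paper's own proof. You expand $\det M$ by Leibniz with a choice of $\varepsilon$ in each free entry, match those elementary summands bijectively with the incidence data $(w,k)$ of \eqref{eq:divided}, and split the permutation sign into three parts: the shuffle, the class arrangement (which $\delta_S$ absorbs) and the free-block arrangement (the straightening); the fixed ordering conventions then supply the global $\kappa_{t,r}$.

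Two small corrections. First, the closing induction on $r$ is unnecessary: your ``total sign'' paragraph already handles all $r$ at once, and the $W(D_r)$-straightening sign does not split coordinate by coordinate in any case. Second, the sign patterns with an odd number of minus signs are not accounted for by the anti-invariance of $\Delta_t$; they come from the negative-chirality representatives in $W^1$, that is, from $\nu(\Lambda,\mu^*)=-\nu(\Lambda,\mu)$.
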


\begin{proof}
Expand $\det M$ over permutations, and then expand each free entry by its defining sum over
$\varepsilon$ --- both signs can realise the same slot, so the elementary summands are indexed by a
permutation \emph{together with} a choice of $\varepsilon$ at each free slot, and it is those that
correspond to the data $(w,k)$. A permutation contributing a nonzero term assigns to each frozen
slot a row whose folded class is the class that slot asks for, and to each free slot a row and a sign
realising $X_j+tk$ with $k$ odd; the rows so assigned to the frozen slots form a set $S$ with
$|S|=m'$ whose folded classes are exactly $1,\dots,m'$, which is the admissibility of Proposition
\ref{prop:transversal}(iii), and the product of the frozen entries is $\prod\varepsilon$. The sign of
the permutation is the shuffle sign times the sign of the class arrangement, so the two together are
$\delta(A)$ times the shuffle, which is the coefficient of $\nu$ at the corresponding weight up to
$\epsilon_t$; the free entries record precisely which term $\tilde\nu(X+tk)$ of \eqref{eq:divided}
that weight contributes, with the straightening sign supplied by the permutation. Summing over
permutations is therefore summing \eqref{eq:divided} over $k$ and over the transversals at once.

That correspondence is bijective and sign-preserving up to one discrepancy, and it is the discrepancy
that is $\kappa_{t,r}$: the columns of $M$ are listed in a fixed order --- the frozen slots by
decreasing class, then the free slots by index --- while \eqref{eq:divided} lists the same data in
the Weyl-coset order. The two orderings differ by a permutation that depends on the two conventions
alone, hence on $(t,r)$ alone and not on $w$, $k$, $\Lambda$ or $X$. Its sign is $\kappa_{t,r}$, and
that it is a single sign independent of $(\Lambda,X)$ is what is proved here; which sign it is, is
not.
\end{proof}

\noindent
Verified over twelve configurations, $3\le t\le11$ and $2\le r\le4$, on every pair $(\Lambda,X)$ at
which $M$ has no identically zero column --- we call those the \emph{live} pairs --- the others give $\det M=0=c$, checked on a sample of
$526$ of them, and counting them would inflate the score with agreements that cost nothing. On those
$37\,330$ pairs the identity holds in $37\,330$, and $\kappa_{t,r}=+1$ in every one of the twelve
\stverif{} --- so on the range tested $c=\epsilon_t\det M$ on the nose, including the value
$\epsilon_5=-1$ that Corollary \ref{cor:oddsign} predicts and $\epsilon_9=\epsilon_{11}=+1$. That
$\kappa_{t,r}\equiv+1$ is a measurement and not a proof, and it says nothing about $s_r$, which is
$(-1)^r$: the two are normalised against different objects, as the statement of Proposition
\ref{prop:determinant} spells out. Further, $|\det M|\in\{0,1\}$ throughout; and every entry
of $M$ lies in $\{0,\pm1\}$. That last fact is \emph{not} because each slot is realised by at most one
sign: both signs realise the same free slot in $377$ of the $315\,250$ free entries examined, always
in a row whose $V_i$ is divisible by $t$, and there the two contributions cancel and the entry is $0$
\stverif. An entry $\pm2$ could not arise in any case, since the two signs enter with opposite
signs; we record the count because the absence of a $\pm2$ proves nothing about uniqueness, and an
earlier version of this sentence said that it did. Of the two decoys only the first discriminates.
Asking the free slots for an even $k$ agrees in $0$ of the $11\,012$ places where
there is anything to agree about; dropping the folding signs agrees in $11\,012$ of $11\,819$
\stverif{} --- that is, it \emph{ties} on $93\%$ of them, so by the rule of \S\ref{sec:verif} the
folding signs are untested by it there and confirmed by it nowhere. We report the number rather
than the difference for that reason, and we do not have the explanation: a decoy that could fail
would have to change which permutations survive and not only their signs, and we have not built
one. Figure
\ref{fig:determinant} draws the two readings of the same sum side by
side, on the smallest shape where the distinction between them has content.

\begin{theorem}[unfolded cancellation]\label{thm:cancel}
For every $\Lambda$ and every \emph{dominant} regular coefficient index $X=2(\mu+\rho_{D_r})$,
$X_1>\dots>X_{r-1}>|X_r|$,
\[
\sum_{k\in(2\ZZ_{\ge0}+1)^r}\tilde\nu\bigl(X+tk\bigr)=
\begin{cases}
\tilde\nu(Y) & \text{if the fibre of $X$ meets $\mathrm{supp}\,\tilde\nu$ exactly once, at $Y$},\\[2pt]
0 & \text{if it meets it not at all, or more than once.}
\end{cases}
\]
\end{theorem}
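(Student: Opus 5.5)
The plan is to pass from the fibre sum to a determinant, bound that determinant by total unimodularity, and then control its parity with a permanent. By Proposition \ref{prop:divided}(ii) the left-hand side equals $s_r\,\tilde c(\Lambda,X)$, and at a dominant regular $X$ this is $s_r\,c(\Lambda,\mu)$. Proposition \ref{prop:determinant} turns it into $s_r\kappa_{t,r}\epsilon_t\det M(\Lambda,X)$. Lemma \ref{lem:c1p} and Corollary \ref{cor:unimodular} then give $\det M\in\{0,\pm1\}$. So the fibre sum is $0$ or $\pm1$ before any cancellation argument is made, and what remains is to decide \emph{which}. The two easy cases are immediate. If the fibre misses $\mathrm{supp}\,\tilde\nu$ the sum is empty. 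If it meets it once, at $Y$, the sum is the single term $\tilde\nu(Y)$, which lies in $\{\pm1\}$ by Lemma \ref{lem:muinj}.

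For the multi-hit case I would bring in the unsigned companion $N=N(\Lambda,X)$ of Lemma \ref{lem:permanent}. It has the same support pattern as $M$, except that a free entry realised by both signs counts $2$ rather than $0$. I would first check, through the bijection in the proof of Proposition \ref{prop:determinant}, that the nonzero elementary summands of $\operatorname{per}N$ are exactly the elements of the incidence set $I_X=\{(w,k)\}$. This gives $|I_X|=\operatorname{per}N$. Since every entry of $M$ is congruent to the corresponding entry of $N$ modulo $2$ (the doubled entries $2\equiv0$), we get $\det M\equiv\operatorname{per}N\pmod 2$.

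The key claim is then that $\operatorname{per}N$ is odd only when it equals $1$. Granting it, $|I_X|>1$ forces $\operatorname{per}N$ even, hence $\det M$ even. With $\det M\in\{0,\pm1\}$ this gives $\det M=0$, so the fibre sum vanishes. One bookkeeping point needs care here: a fibre may meet the support once as a \emph{set} yet carry two incidences, namely the same canonical point reached twice with opposite straightening signs. In that case $|I_X|=2$ and the sum is $0$, which is why I would phrase the dichotomy in terms of $|I_X|$ and reconcile it with the statement's ``meets once'' by showing that a single incidence means a single support point and conversely. This is the place where dominance of $X$ first enters: for non-dominant $X$ the antisymmetric extension can produce coincidences that break the correspondence (Remark \ref{rem:domneeded}).

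The main obstacle is the parity statement, which I would prove through Lemmas \ref{lem:laminar} and \ref{lem:sdr}. The frozen columns partition the rows by folded class, so a transversal is a choice of one row per class. Because $X_1>\dots>X_{r-1}>|X_r|$, the rows able to fill free slot $j$ form sets that should be nested as $j$ varies, since a larger $X_j$ admits fewer $V_i$ with $V_i\ge X_j+t$ in the right progression. With laminar column supports, the systems of distinct representatives can be counted greedily, and any two columns with a nontrivial exchange contribute a fixed-point-free swap involution. I would try to show that whenever two distinct SDRs exist, some pair of rows is interchangeable in a way that pairs up all SDRs, making the count even. Carrying this out requires handling the interaction between the class-partition columns and the nested free columns, which is the hardest step.
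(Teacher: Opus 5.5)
Your skeleton is the paper's own proof. The chain is: the fibre sum equals $s_r\tilde c=s_r\kappa_{t,r}\epsilon_t\det M$; total unimodularity bounds it; $|I_X|=\operatorname{per}N$; $\operatorname{per}N\equiv\det M\pmod 2$. Everything then rests on the claim that $\operatorname{per}N$ is odd only when it equals $1$. That claim is the real content of the theorem, and your route to it has two gaps.

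The first gap is that $N$ is not always a $0/1$ matrix, as you note yourself. A count of systems of distinct representatives only computes the permanent of a $0/1$ matrix, so laminar counting cannot reach a block that carries a $2$. Such blocks do occur: in the class-$0$ block, when $X_r<0$, the column of $X_r$ is $\mathbf 1+P$ with $P=\{i:V_i<|X_r|\}$. You need a separate argument here, and it uses dominance again.
\begin{itemize}
\item Since $X_r$ is the only coordinate that can be negative, this is the only column with doubled entries.
\item Every other class-$0$ column is a suffix $\{V_i>X_j\}$ with $X_j>|X_r|$. It is therefore confined to the $n-p$ rows above $|X_r|$, where $p=|P|$.
\item Expanding that column by multilinearity gives $\operatorname{per}=A+B$, with $\mathbf 1$, respectively $P$, in its place.
\item If $p\ge2$, the $n-1$ suffixes have no system of distinct representatives, so $A=B=0$.
\item If $p=1$, the suffixes fill rows $2,\dots,n$. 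Both $\mathbf 1$ and $P$ are then forced onto row $1$, so $A=B$.
\end{itemize}
Either way the block permanent is even; this is Lemma \ref{lem:doubled}.

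The second gap is in the $0/1$ case itself. The swap involution you hope for is not constructed, and the column structure you would build it on is wrong. The free columns do not form one chain nested as $j$ varies.
\begin{itemize}
\item A free column lives in the single class block $b_j$ fixed by $X_j \bmod t$, so different $j$ are usually in different blocks.
\item Inside a block of class $c\ge1$, ordered by $w_i$, a free column is a suffix $\{w_i>u_j\}$ when $\tau_j=+1$ and a prefix $\{w_i<u_j\}$ when $\tau_j=-1$. The frozen column is the whole block.
\item What dominance buys is that a prefix and a suffix are disjoint. The condition $u^-\le u^+$ amounts to $X_{j^-}+X_{j^+}\ge0$, and $X_a+X_b>0$ for $a\ne b$. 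This is the laminarity of Lemma \ref{lem:laminar}.
\end{itemize}
You then count outright rather than pair. Take a square block with $a$ full columns, nested prefixes of lengths $k_1\le\dots\le k_p$, and nested suffixes of lengths $m_1\le\dots\le m_s$. It has $a!\prod_i(k_i-i+1)\prod_l(m_l-l+1)$ systems of distinct representatives. The parity comes from squareness:
\begin{itemize}
\item if $a\ge2$, the factor $a!$ is even;
\item if $a\le1$, the relations $a+p+s=n$ and $k_p+m_s\le n$ force $k_p-p\in\{0,1\}$;
\item $d_i=k_i-i$ drops by at most one per step, so any $d_i\ge1$ makes some factor equal to $2$;
\item the same holds for the suffixes (Lemma \ref{lem:sdr}).
\end{itemize}
Without that count, ``two representatives can always be swapped'' only restates what is to be proved.

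Finally, your bookkeeping worry is unfounded. The map $k\mapsto X+tk$ is injective and $\mathrm{supp}\,\tilde\nu\subset\ZZ^r$, so two $W(D_r)$-conjugate hits are two hits. Each hit determines its $w$ by Lemma \ref{lem:muinj}, so $|I_X|$ is simply the number of hits. Dominance does not enter there: Remark \ref{rem:domneeded} has a non-dominant $X$ with $|I_X|=\operatorname{per}N=3$ hits. Dominance enters only through laminarity and the doubled column.
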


\begin{proof}
By Lemma \ref{lem:permanent}, $|I_X|=\operatorname{per}N$, a product of block permanents.
If some entry of $N$ equals $2$ then $|I_X|$ is even by Lemma \ref{lem:doubled}. Otherwise
every block of $N$ is a $0/1$ matrix, laminar by Lemma \ref{lem:laminar}, so by Lemma
\ref{lem:sdr} its permanent is odd only if it equals $1$; hence $|I_X|$ is either even or
equal to $1$. If $|I_X|=1$ the sum is its single term, and if $|I_X|=0$ it is empty. If $|I_X|$
is even then $\det M$ is even by Lemma \ref{lem:permanent} and the congruence
$N\equiv M\pmod 2$ --- a doubled entry has exactly two realisations and the two signs cancel,
so $N_{i,j}\equiv M_{i,j}$ in every entry --- and $\operatorname{per}\equiv\det\pmod2$;
with Corollary \ref{cor:unimodular} this forces $\det M=0$, and the sum vanishes by
Proposition \ref{prop:determinant}.
\end{proof}

\noindent
This is the statement (L1) should be given, and it is stronger than (L1): the bound
$|c(\Lambda,\mu)|\le1$ is the corollary obtained by combining it with $\tilde\nu\in\{0,\pm1\}$, which
is Lemma \ref{lem:muinj}. Stating it as an identity rather than as an inequality is a correspondent's
recommendation, and the data are unambiguous about which is the true shape: the multiple
intersections cancel \emph{exactly}, never partially, in $333$ of $333$ \stverif. Note also that
Theorem \ref{thm:cancel} lives on unfolded $D_r$ weights by necessity, not by convenience; the
folded statement is its corollary, obtained after the division and not before.

\begin{lemma}[$M$ is sign-equivalent to an interval matrix]\label{lem:c1p}
Let $t=2m'+1\ge3$ and let $M=M(\Lambda,X)$ be the matrix of Proposition \ref{prop:determinant}.
For each row $i$ let $c_i\in\{0,1,\dots,m'\}$ and $\sigma_i\in\{\pm1\}$ be the unique pair with
$\sigma_iV_i\equiv c_i\pmod t$, and set $w_i:=\sigma_iV_i$; for each column $j$ set $\tau_j:=+1$ if
$j$ is frozen, and otherwise let $b_j\in\{0,\dots,m'\}$ and $\tau_j\in\{\pm1\}$ be the unique pair
with $X_j\equiv\tau_jb_j\pmod t$, with $\tau_j:=+1$ when $b_j=0$. Let $\pi$ order the rows by
$(c_i,w_i)$ increasing lexicographically. Then
\[
\widehat M_{\pi(i),j}:=\sigma_i\,\tau_j\,M_{i,j}
\]
is a $0/1$ matrix whose ones are consecutive in every column. Consequently $M$ is totally
unimodular: a $0/1$ matrix with the consecutive-ones property is totally unimodular, which is
Fulkerson--Gross \cite[\S8]{FG65}, where it is deduced from the standard sufficient conditions
for total unimodularity; they add that it \emph{can also} be proved directly by induction on the
number of rows, without giving that argument there.
\end{lemma}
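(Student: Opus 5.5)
The plan is to compute the entries of $M$ column by column, using the explicit description in Proposition \ref{prop:determinant}. For each column I will show that $\sigma_i\tau_jM_{i,j}\in\{0,1\}$, and that the rows where it equals $1$ form a set of the shape ``fixed class $c$, and $w_i$ in an interval''. Under the lexicographic order by $(c_i,w_i)$ such a set is a consecutive block of rows. Total unimodularity then follows from Fulkerson--Gross \cite[\S8]{FG65}, together with the elementary fact that permuting rows and multiplying rows or columns by $\pm1$ preserves total unimodularity.

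\emph{Frozen columns.} The $p$-th frozen slot asks for the class $c=m'-p+1\ge1$. Its entry in row $i$ is the folding sign $\varepsilon$ when $V_i$ folds to $c$, and that sign is exactly $\sigma_i$. Since $\tau_j=+1$, we get $\widehat M=\sigma_i^2=1$ precisely on the rows with $c_i=c$, and $0$ elsewhere. These rows are an initial segment of the lexicographic order once the first coordinate is fixed, so they are consecutive.

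\emph{Free columns with $b_j\neq0$.} A sign $\varepsilon$ contributes only if $\varepsilon V_i\equiv X_j\equiv\tau_jb_j\pmod t$. Since $\pm V_i$ both fold to $c_i$, this forces $c_i=b_j$. Because $t$ is odd and $b_j\not\equiv0$, we have $b_j\not\equiv-b_j$, so $\varepsilon$ is forced to be $\tau_j\sigma_i$. Hence the entry is $0$ or $\tau_j\sigma_i$, and $\widehat M_{\pi(i),j}\in\{0,1\}$.

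It equals $1$ exactly when $\tau_jw_i-X_j\in t(2\ZZ_{\ge0}+1)$, and this condition splits into a parity part and a threshold part.
\begin{itemize}
\item[] \emph{Parity.} Every $V_i=2u_i$ is even, so every $w_i=\pm V_i$ is even. An even integer congruent to $c$ modulo the odd number $t$ is determined modulo $2t$. So all $w_i$ in class $c$ are congruent modulo $2t$, and the requirement that the quotient be odd is constant on the class.
\item[] \emph{Threshold.} The condition $\tau_jw_i\ge X_j+t$ reads $w_i\ge X_j+t$ if $\tau_j=+1$, and $w_i\le -X_j-t$ if $\tau_j=-1$. In either case it selects an upper or a lower segment of the class-$c$ rows ordered by $w_i$.
\end{itemize}
So the ones in the column are consecutive, or the column is zero.

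\emph{Free columns with $b_j=0$.} I expect this to be the only delicate step, because here both signs can realise the slot. Only rows with $c_i=0$ can contribute, and for them $\sigma_i=\tau_j=+1$, so $w_i=V_i\equiv0\pmod t$. Being even, $V_i\equiv0\pmod{2t}$, so the parity test is the same for $+V_i$ and $-V_i$. If it fails for the class, the column is zero. If it holds, the entry is
\[
[\,V_i\ge X_j+t\,]-[\,-V_i\ge X_j+t\,].
\]
The second event implies the first, because $V_i>0$. Hence the entry is the indicator of $X_j+t\le V_i<-X_j-t$, which lies in $\{0,1\}$ and describes an interval in $w_i=V_i$ within class $0$; so these ones are consecutive too. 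This is consistent with the paper's observation that doubly realised entries occur only when $t\mid V_i$ and cancel to $0$.

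Finally, $\widehat M$ is obtained from $M$ by a row permutation and by multiplying rows and columns by $\pm1$, so $M$ is totally unimodular if $\widehat M$ is. Since $\widehat M$ is a $0/1$ matrix with the consecutive-ones property in every column, it is an interval matrix, and \cite{FG65} gives that it is totally unimodular.
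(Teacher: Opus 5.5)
Your argument follows the paper's own proof step for step. Frozen columns become class indicators. A free column lives in the single class $b_j$, with the sign forced to $\varepsilon=\sigma_i\tau_j$ when $b_j\neq0$. The condition $k\ge1$ cuts a prefix or a suffix of that class, class $0$ is a nested difference, and Fulkerson--Gross finishes. The conclusion is right, but two computations along the way are wrong and need correcting.

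\emph{The parity of $V_i$.} $V_i$ is odd, not even. Since $\rho_{B_{R'}}$ is half-integral, so is $u=\Lambda+\rho_{B_{R'}}$, and $V_i=2u_i$ is odd; for instance $V=(9,5,1)$ at $t=3$, $\Lambda=(2,1,0)$. In particular, in class $0$ one has $V_i\equiv t$, not $0$, modulo $2t$. Your parity step still works, because it only uses that all $w_i$ share one parity. The paper's Step~0 goes further: $X_j$ is even, since the coefficient indices sit one odd step $t$ from the odd support of $\nu$. Hence $\varepsilon V_i-X_j$ is always odd and the parity requirement is vacuous, so your branch ``if it fails, the column is zero'' never occurs. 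What your version buys is only independence from the parity of $X_j$.

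\emph{The case $b_j=0$.} Here you negated the wrong event. The complement of $\{-V_i\ge X_j+t\}$ is $\{V_i>-X_j-t\}$, not $\{V_i<-X_j-t\}$. So the entry is $[V_i\ge X_j+t]\,[V_i>-X_j-t]$, which under the congruence and the parity equals $[V_i>|X_j|]$. That is a suffix of the class-$0$ block, exactly the paper's Step~5, and not the bounded interval $X_j+t\le V_i<-X_j-t$. Your interval is empty whenever $X_j\ge0$. At $t=3$, $V_1=9$, $X_1=6$ (the top panel of Figure~\ref{fig:determinant}) it gives $0$ where the entry is $+1$. Consecutiveness survives, since a half-line is also an interval. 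But the correct shape, a suffix, is the one Lemma~\ref{lem:laminar}(ii) uses afterwards.
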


\begin{proof}
\emph{Step 0: the parity condition is automatic.} $V_i=2(\Lambda+\rho_{B_{R'}})_i$ is odd and
$X_j$ is even, since $X$ sits one step of $\Delta_t$ below the support of $\nu$, which is odd. Hence
$\varepsilon V_i-X_j$ is odd; as $t$ is odd, the quotient $k$ is odd whenever it is an integer. So
the free entry of Proposition \ref{prop:determinant} is
\begin{equation}\label{eq:freeentry}
M_{i,j}=\sum_{\varepsilon\in\{\pm1\}}\varepsilon\,
[\varepsilon V_i>X_j]\,[\varepsilon V_i\equiv X_j\ (\mathrm{mod}\ t)] .
\end{equation}

\emph{Step 1: folding is a coordinate.} Since $t$ is odd, $v\mapsto-v$ on $\ZZ/t$ has the single
fixed point $0$ and orbits $\{c,-c\}$, $c=1,\dots,m'$; so $(c_i,\sigma_i)$ is well defined, with
$\sigma_i=+1$ when $c_i=0$, and $V_i=\sigma_iw_i$ with $w_i\equiv c_i$ and $w_i$ odd.

\emph{Step 2: frozen columns are class blocks.} The $p$-th frozen entry is $\sigma_i$ exactly when
$c_i=m'-p+1$, so after signing the row by $\sigma_i$ it is the indicator of that class, a
consecutive block in the order $(c_i,w_i)$.

\emph{Step 3: a free column lives in one class.} The congruence in \eqref{eq:freeentry} reads
$c_i\equiv\pm b_j \pmod t$, and $0\le c_i,b_j\le m'$ with $2m'<t$ forces $c_i=b_j$. If $b_j\neq0$
then $c_i\not\equiv-c_i$, so exactly one $\varepsilon$ survives, namely $\varepsilon=\sigma_i\tau_j$.

\emph{Step 4: $b_j\neq0$ gives a half-line.} The single surviving term is
$\sigma_i\tau_j[\tau_jw_i>X_j]$, so $\widehat M_{\pi(i),j}=[\tau_jw_i>X_j]$, whose support is
$\{w_i>X_j\}$ or $\{w_i<-X_j\}$ inside the block of class $b_j$: a suffix or a prefix of that block.

\emph{Step 5: $b_j=0$ gives a nested difference, hence again a half-line.} Here only rows with
$c_i=0$ contribute, for them $\sigma_i=+1$ and $w_i=V_i>0$, and \emph{both} signs pass the
congruence, so $M_{i,j}=[V_i>X_j]-[V_i<-X_j]$. The two sets are nested: $V_i<-X_j$ forces
$X_j<-V_i<V_i$. Hence $M_{i,j}=[V_i>|X_j|]$, with no tie since $V_i$ is odd and $X_j$ even --- a
half-line again, and $\tau_j=+1$.

\emph{Step 6.} $\widehat M$ is a $0/1$ matrix with the consecutive-ones property in columns, i.e.\ an
interval matrix, hence totally unimodular; and $M$ is obtained from $\widehat M$ by reversing the
row permutation and the row and column signings, all of which preserve total unimodularity. The hat
is not decoration here: $N$ denotes the unsigned companion of Lemma \ref{lem:permanent}, a different
matrix, which may carry an entry $2$.
\end{proof}

\begin{remark}[what the circuit does \emph{not} do]\label{rem:nocircuit}
The blocks of $M$ carry a graphic matroid, and explicitly so --- Proposition \ref{prop:graph} below
builds the graph --- so it is tempting to read Theorem
\ref{thm:cancel} as the classical cancellation of a determinant expansion along its first cycle
\cite{Zeilberger85,KasselLevy}. That reading does not apply, and we record the obstruction so that it is not tried
again: passing from a column to its difference vector is a row operation, which preserves the
determinant but not the population of nonzero terms, and in our blocks it leaves at most one term.
Neither the girth of the block nor the rank of its cycle space determines $|I_X|$ --- both fail on
our data. What the circuit controls is only whether the signed sum can carry an odd residue, and
that is exactly what Lemma \ref{lem:permanent} delivers by other means.
\end{remark}

\begin{corollary}[unimodularity]\label{cor:unimodular}
$M(\Lambda,X)$ is totally unimodular: every square submatrix has determinant in
$\{0,\pm1\}$. In particular $\det M\in\{0,\pm1\}$ and, by Proposition
\ref{prop:determinant} at a dominant index, $|c(\Lambda,\mu)|\le1$; that is \emph{(L1)}.
\end{corollary}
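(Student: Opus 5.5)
The corollary follows from Lemma \ref{lem:c1p} in two moves: transport total unimodularity from $\widehat M$ back to $M$, then read off the bound on $c$ through Proposition \ref{prop:determinant}.

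\emph{Step 1: total unimodularity of $\widehat M$.} By Lemma \ref{lem:c1p}, $\widehat M$ is a $0/1$ interval matrix: its ones are consecutive in every column. We cite Fulkerson--Gross for the fact that such matrices are totally unimodular. For completeness we also sketch the direct argument. Take a square submatrix $Q$ of $\widehat M$; its columns are still intervals in the inherited row order. If some column of $Q$ is zero, $\det Q=0$. Otherwise, order the columns by the starting row of their intervals. Among the columns starting at the first row, subtract the shortest from each of the others. Each difference is again an interval, and the determinant is unchanged. After this, the first row of $Q$ has a single one, so we expand along it and recurse on a smaller interval matrix.

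\emph{Step 2: transport to $M$.} $M$ is obtained from $\widehat M$ by three operations: undoing the row permutation $\pi$, multiplying row $i$ by $\sigma_i$, and multiplying column $j$ by $\tau_j$. Each square submatrix of $M$ is therefore a square submatrix of $\widehat M$ with its rows permuted and its rows and columns multiplied by signs. Its determinant thus changes only by a factor $\pm1$. Hence every square minor of $M$ lies in $\{0,\pm1\}$; in particular $\det M\in\{0,\pm1\}$.

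\emph{Step 3: the bound on $c$.} Fix a dominant regular index $X=2(\mu+\rho_{D_r})$. There $\tilde c(\Lambda,X)=c(\Lambda,\mu)$, and Proposition \ref{prop:determinant} gives $c(\Lambda,\mu)=\kappa_{t,r}\epsilon_t\det M$ with $\kappa_{t,r},\epsilon_t\in\{\pm1\}$. So $|c(\Lambda,\mu)|=|\det M|\le1$, which is (L1) for every pair $(\Lambda,\mu)$.

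No step is a real obstacle, since all the combinatorial content sits in Lemma \ref{lem:c1p}. The only point to watch is that Proposition \ref{prop:determinant} is applied only at dominant indices, so that the conclusion concerns $c$ itself rather than its antisymmetric extension $\tilde c$.
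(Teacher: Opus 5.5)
Your proposal is correct and follows the paper's own route. Lemma \ref{lem:c1p} with Fulkerson--Gross makes $\widehat M$ totally unimodular, the signed row permutation and the column signing carry this back to $M$, and Proposition \ref{prop:determinant} at a dominant index gives (L1). Your direct induction sketch, which the paper leaves to the citation, is the standard argument and is sound once you also note that if no column of $Q$ meets its first row, that row is zero and $\det Q=0$.
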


\begin{remark}[the neighbouring unimodularity, and why it is a different one]\label{rem:hi}
Hidaka--Itoh \cite{HI24} \stext{} study the Schur polynomials at \emph{all primitive} $n$th roots of
unity and reach a regular matroid there too. The route is theirs and we claim nothing for it: they
reduce the evaluation to unimodular vector systems and, in the key case, to a \emph{network matrix\nocorr};
at the matroid level they record that the relevant tensor product is cographic, of a complete
bipartite graph --- a reading they are explicit about being \emph{weaker} than their own theorem,
which asserts the unimodularity of the system itself. For the structural background they cite
Danilov--Grishukhin \cite{DG99}, who describe a maximal unimodular system as an amalgam of
\emph{components} --- the graphic root systems $A_n$ on one side and cographic systems attached to
non-planar $3$-connected cubic graphs on the other --- which is the classification the next sentence
appeals to. The two matroids are not the same one,
and the difference is structural rather than one of range: theirs is \emph{cographic}, of $K_{m,n}$,
and
ours is \emph{graphic}: block by block it is the cycle matroid of an explicit multigraph, obtained by
sending each interval column $\mathbf1_{[a,b]}$ to the oriented edge $e_a-e_{b+1}$ of Proposition
\ref{prop:graph}. A cographic matroid is graphic only when the
graph is planar, and $K_{m,n}$ with $m,n\ge3$ is not planar, so the two occupy different
graphic/cographic regimes of regular-matroid theory. (We do not claim either system is a
\emph{component} in \cite{DG99}'s technical sense; that is their word for the indecomposable pieces
of a maximal unimodular system, and we have not checked it for ours.) There is also a number that separates
them: their theorem assumes that $n$ has at most two distinct odd prime factors, and therefore does
not cover $n=105=3\cdot5\cdot7$ --- the smallest $n$ it excludes, not a point at which it stops,
since it still covers arbitrarily large $n$ with at most two --- whereas Lemma \ref{lem:c1p} asks
nothing of $t$ and holds at $t=105$ \stverif.
\end{remark}

\begin{proposition}[the block graph]\label{prop:graph}
Fix $\Lambda$ and $X$ with $t$ odd, and put $M(\Lambda,X)$ in the interval form of Lemma
\ref{lem:c1p}. Let $B$ be one of its blocks, with rows indexed by $1,\dots,n$, and let
\[
\partial:\ZZ^{n}\longrightarrow\ZZ^{n+1},\qquad
\partial(x_1,\dots,x_n)=\bigl(x_1,\;x_2-x_1,\;\dots,\;x_n-x_{n-1},\;-x_n\bigr).
\]
Then $\partial$ is injective and identifies $\ZZ^n$ with the root lattice
\[
A_n=\Bigl\{y\in\ZZ^{n+1}:\textstyle\sum_i y_i=0\Bigr\},
\]
the inverse being $x_k=y_1+\dots+y_k$; and on the indicator of an interval it gives
\begin{equation}\label{eq:boundary}
\partial\,\mathbf1_{[a,b]}=e_a-e_{b+1}.
\end{equation}
Write $\alpha:=1$, $\omega:=n+1$ and $v_2,\dots,v_n$ for the remaining coordinates. By Lemma
\ref{lem:laminar} every column of $B$ is a proper prefix $[1,k]$, a proper suffix $[k+1,n]$ or the
full interval $[1,n]$, so \eqref{eq:boundary} sends them to the edges $\alpha v_{k+1}$,
$v_{k+1}\omega$ and $\alpha\omega$ respectively, while a zero column becomes a loop. So the columns
of $B$ are the oriented edges of a multigraph $G_B$ on $\{\alpha,v_2,\dots,v_n,\omega\}$ whose
non-loop part satisfies
\[
G_B\setminus\{\text{loops}\}\ \subseteq\ K_{2,n-1}+\alpha\omega ,
\]
with parallel edges allowed; the loops are exactly the zero columns. The column matroid of $B$ is the
cycle matroid of $G_B$; in particular
\[
B\ \text{has linearly independent columns}\iff G_B\ \text{is a forest.}
\]
\end{proposition}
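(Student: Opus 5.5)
The plan is to work entirely inside a single block $B$ of the interval form $\widehat M$ of Lemma \ref{lem:c1p}, whose rows are one folded class ordered by $w_i$, and to prove the claims in order. First comes the linear algebra of $\partial$. Injectivity and the description of the image are immediate from the telescoping inverse $x_k=y_1+\dots+y_k$. Each coordinate sum of $\partial x$ telescopes to $0$, and conversely a vector $y$ with zero sum is hit by the partial-sum vector, since the last coordinate $-x_n=-(y_1+\dots+y_n)=y_{n+1}$ is automatic. The formula \eqref{eq:boundary} is a one-line check: $\mathbf1_{[a,b]}$ has consecutive differences $+1$ at position $a$ and $-1$ at position $b+1$, with the conventions $x_0=x_{n+1}=0$ built into $\partial$.

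Second, the shape of the columns. Here I invoke Lemma \ref{lem:laminar} as stated: every nonzero column of the block is a proper prefix, a proper suffix, or all of $[1,n]$. This is also what Steps 2, 4 and 5 of Lemma \ref{lem:c1p} make plausible. A frozen column is the whole class block. A free column with $b_j\neq0$ is $\{w_i>X_j\}$ or $\{w_i<-X_j\}$, which is a suffix or a prefix in the increasing $w$-order. A free column with $b_j=0$ is $\{V_i>|X_j|\}$, a suffix. Substituting $a=1$ or $b=n$ into \eqref{eq:boundary} then gives the edges $\alpha v_{k+1}$, $v_{k+1}\omega$ and $\alpha\omega$. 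Since $v_{k+1}$ ranges over $v_2,\dots,v_n$, every non-loop edge joins $\{\alpha,\omega\}$ to a middle vertex or is $\alpha\omega$, which is exactly the containment in $K_{2,n-1}+\alpha\omega$. A zero column maps to $0$, and I record it as a loop by convention.

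Third, the matroid statement. Because $\partial$ is an injective linear map, it preserves linear dependence among columns over $\mathbb Q$. So the column matroid of $B$ equals the matroid of the vectors $e_a-e_{b+1}$ together with zero vectors. That is the column matroid of the signed incidence matrix of $G_B$, with loops giving zero columns, and it is classically the cycle matroid of $G_B$. Independence of all columns then means there is no circuit, which means the graph has no cycles and no loops, so $G_B$ is a forest; here I count a loop as a cycle, consistent with the statement.

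The main obstacle is the reliance on Lemma \ref{lem:laminar}. If that lemma is not available in exactly the prefix/suffix/full form, I would rederive it from Steps 4--5 of Lemma \ref{lem:c1p}, checking that a half-line inside the class block is never an interior interval. The one delicate point in that check is that the block order is by $w_i$ and not by $V_i$ when $\tau_j=-1$.
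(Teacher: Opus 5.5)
Your proof is correct and follows the paper's argument step for step. The telescoping inverse gives injectivity and the identification with $A_n$, \eqref{eq:boundary} is read off directly, and injectivity of $\partial$ gives $\ker B=\ker(\partial B)$, where $\partial B$ is the signed incidence matrix of $G_B$ and so carries the cycle matroid.

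One refinement concerns your citation. The prefix/suffix/full classification needs only parts (i)--(ii) of Lemma \ref{lem:laminar}, that is Steps 3--5 of Lemma \ref{lem:c1p}, and these do not use dominance; only the laminarity (iii) does. So your fallback derivation is the better citation for a proposition stated for arbitrary $X$. Also, the subtlety it guards against is that one class can contain rows of both signs $\sigma_i$, so $w$-order and $V$-order differ; this has nothing specifically to do with $\tau_j=-1$.
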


\begin{proof}
The coordinates of $\partial x$ telescope to $0$, so $\operatorname{im}\partial\subseteq A_n$;
$x_1=y_1$ and $x_k=x_{k-1}+y_k$ recover $x$ from $y$, which gives both injectivity and surjectivity
onto $A_n$. Identity \eqref{eq:boundary} is immediate. Since $\partial$ is injective,
$\ker B=\ker(\partial B)$, and $\partial B$ is the oriented incidence matrix of $G_B$ with one vertex
per coordinate; the kernel of an oriented incidence matrix is the cycle space of its graph, which is
the classical statement that a graphic matroid's circuits are the graph's cycles. Hence the columns
of $B$ are dependent exactly when they contain a cycle. \emph{We do not assume $B$ square}: the
blocks of $M$ need not be, and only $M$ itself is --- see Corollary \ref{cor:forest}.
\end{proof}

\begin{remark}[which circuits occur, and which ones matter]\label{rem:cycles}
The graph makes the classification of the singular blocks a single statement instead of a list. A
loop is a zero column; a pair of parallel edges is a repeated column; the triangle
$\alpha\,v_{k+1}\,\omega\,\alpha$ is the relation $\mathbf1=\mathbf1_{[1,k]}+\mathbf1_{[k+1,n]}$
between the full column and a complementary prefix--suffix pair. The remaining short cycle,
$\alpha\,v_{k+1}\,\omega\,v_{l+1}\,\alpha$ with $k\ne l$ --- two \emph{different} complementary
pairs, with no full column involved --- \emph{may} occur, and when it does it is a genuine circuit:
those four edges are a minimal dependence among themselves. What it is not is a new \emph{criterion},
and that is what closes the list. In a block of folded class $0$ there are no prefixes at all by
Lemma \ref{lem:laminar}(ii), so no complementary pair exists and the four-cycle cannot occur; in a
block of class $c\ge1$ the frozen column is present, so every two-edge path $\alpha\,v\,\omega$
already lies in a triangle, and the block is singular before the four-cycle is looked at. There is nothing
longer to check: the circuits of $G_B$ have sizes $1$, $2$, $3$ or $4$. Sizes $1$ and $2$ are the
loops and the parallel pairs, which the multigraph carries and the underlying simple graph does not;
and in the simple graph underlying the non-loop part, which sits inside $K_{2,n-1}+\alpha\omega$,
only triangles and four-cycles survive, because every path alternates between the two terminals
$\alpha,\omega$ and the interior vertices have degree $2$.
\end{remark}

\begin{corollary}[the coefficient is a forest condition]\label{cor:forest}
For every $\Lambda$ and every regular $X$,
\[
\tilde c(\Lambda,X)\neq0\iff\det M\neq0\iff\text{every }G_B\text{ is a forest},
\]
and if moreover $X$ is \emph{dominant}, $X_1>\dots>X_{r-1}>|X_r|$ --- so that
$\tilde c(\Lambda,X)=c(\Lambda,\mu)$ --- these are equivalent to $|I_X|=1$.
\end{corollary}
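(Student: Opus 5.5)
The plan is to chain three equivalences, each resting on a result already stated. First, $\tilde c(\Lambda,X)\neq0\iff\det M\neq0$. This is immediate from Proposition \ref{prop:determinant}: it gives $\tilde c(\Lambda,X)=\kappa_{t,r}\epsilon_t\det M(\Lambda,X)$ for every regular $X$, and both $\kappa_{t,r}$ and $\epsilon_t$ are signs, hence nonzero.

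Second, $\det M\neq0\iff$ every $G_B$ is a forest. I would work in the interval form of Lemma \ref{lem:c1p}, where $\det M=\pm\det\widehat M$ because row permutations and row and column signings only change the sign. I would then use the block structure of $\widehat M$. Rows are grouped by folded class $c_i$, and by Steps 2--5 of Lemma \ref{lem:c1p} every column is supported inside a single class block. So $\widehat M$ is, up to a column permutation, block-diagonal in the sense that each column lives in exactly one row block.

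For such a matrix, the square matrix $\widehat M$ is nonsingular exactly when each row block $B$ (of size $n_B\times k_B$) has linearly independent columns and $\sum_B k_B=\sum_B n_B$. Independence already forces $k_B\le n_B$ for each block, and the two totals agree because $M$ is square. Hence every $k_B=n_B$, and nonsingularity reduces to column independence of each block. This is the point flagged in the proof of Proposition \ref{prop:graph}: blocks need not be square, only $M$ is, and the counting argument is what bridges that gap. Proposition \ref{prop:graph} then turns column independence of $B$ into $G_B$ being a forest. Zero columns appear as loops, so a block with a zero column is correctly recorded as singular.

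Third, for dominant $X$, we need $\det M\neq0\iff|I_X|=1$. I would take this from the proof of Theorem \ref{thm:cancel}. By Lemmas \ref{lem:permanent}, \ref{lem:doubled}, \ref{lem:laminar} and \ref{lem:sdr}, $|I_X|=\operatorname{per}N$ is either even or equal to $1$. If it is even, then $\det M\equiv\operatorname{per}N\equiv0\pmod 2$, and Corollary \ref{cor:unimodular} forces $\det M=0$. If $|I_X|=1$, then the fibre sum equals the single term $\tilde\nu(Y)=\pm1$ by Lemma \ref{lem:muinj}, so $c\neq0$ by Proposition \ref{prop:divided}, and hence $\det M\neq0$. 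Since $|I_X|=0$ gives $c=0$, and $\tilde c=c(\Lambda,\mu)$ at a dominant index, the equivalence closes.

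The step I expect to need care is the block decomposition in the second part. I must check that a free column with $b_j$ equal to some class $c$ really has all its support inside that class block, including the case $b_j=0$ in Step 5. I must also check that no column straddles two blocks after the row permutation $\pi$; otherwise the reduction from $\det M$ to per-block independence fails.
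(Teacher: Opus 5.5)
Your proof is correct and follows the paper's own argument. The first equivalence comes from Proposition~\ref{prop:determinant}. The second uses the same block-diagonal counting combined with Proposition~\ref{prop:graph}: independence forces $\#\mathrm{cols}\le\#\mathrm{rows}$ in each block, and squareness of $M$ forces equality. The last goes through $|I_X|=\operatorname{per}N$, the even-or-one dichotomy, $\operatorname{per}N\equiv\det M\pmod 2$ and total unimodularity; your check that every column of $\widehat M$ lies in a single class block is exactly Steps 2--5 of Lemma~\ref{lem:c1p}.

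The only variation is in the direction $|I_X|=1\Rightarrow\det M\neq0$. You read off the single fibre term directly via Proposition~\ref{prop:divided} and Lemma~\ref{lem:muinj}, whereas the paper runs the parity congruence in both directions. Either route works.
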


\begin{remark}[the dominance is not decoration]\label{rem:domneeded}
The last equivalence is the only one that needs it, and it needs it. At $t=3$, $r=2$, $m'=1$ with
$\Lambda=(4,2,0)$, so $V=2(\Lambda+\rho_{B_3})=(13,7,1)$, take the regular but \emph{not} dominant
index $X=(4,-10)$. Then
\[
M=\begin{pmatrix}1&1&0\\1&1&-1\\1&0&-1\end{pmatrix},\qquad \det M=-1,\qquad \operatorname{per}N=3 ,
\]
so every $G_B$ is a forest and $\tilde c\neq0$, while the fibre is hit three times: the sum over it is
$\pm1$ and not $0$. What fails first is not Theorem \ref{thm:cancel} but Lemma \ref{lem:laminar} ---
the columns with supports $\{1,2\}$ and $\{2,3\}$ overlap without nesting, which is exactly the
prefix--suffix overlap dominance forbids. Swept over three $\Lambda$ at $t=3,5,7$ there are ten such
$X$ off the dominant chamber and none on it \stverif.
\end{remark}

\begin{proof}
By \eqref{eq:detsign} and since $\kappa_{t,r}\epsilon_t$ is a unit, $\tilde c\ne0$ iff
$\det M\ne0$. As $M$
is block diagonal, $\det M\ne0$ forces every block to be square and nonsingular, so each has
independent columns and each $G_B$ is a forest by Proposition \ref{prop:graph}. Conversely, if every
$G_B$ is a forest then every block has independent columns, hence $\#\mathrm{cols}(B)\le
\#\mathrm{rows}(B)$ for each $B$; since $M$ is square, $\sum_B\#\mathrm{rows}(B)=
\sum_B\#\mathrm{cols}(B)$, so equality holds block by block and every $B$ is square and nonsingular,
giving $\det M\ne0$. For the last equivalence, Corollary \ref{cor:unimodular} gives
$\det M\in\{0,\pm1\}$, so $\det M\ne0$ iff $\det M$ is odd; by Lemma \ref{lem:permanent} and
$N\equiv M\pmod 2$ this holds iff $\operatorname{per}N=|I_X|$ is odd, and by Theorem
\ref{thm:cancel} an odd fibre count equals $1$.
\end{proof}

\begin{lemma}[the fibre is a permanent]\label{lem:permanent}
Let $N=N(\Lambda,X)$ be obtained from $M(\Lambda,X)$ by replacing each entry by the \emph{number} of
its elementary realisations rather than their signed sum: $N_{i,p}=[\,c_i=m'-p+1\,]$ for a frozen
slot, and $N_{i,m'+j}=\#\{\varepsilon\in\{\pm1\}:\varepsilon V_i=X_j+tk,\ k\ \text{odd}\ \ge1\}$ for
a free one. Then
\[
|I_X|\;=\;\operatorname{per}N(\Lambda,X).
\]
\end{lemma}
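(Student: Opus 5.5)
The plan is to expand the permanent exactly as the proof of Proposition \ref{prop:determinant} expands the determinant, only without signs. By definition $\operatorname{per}N=\sum_\pi\prod_q N_{\pi(q),q}$, the sum over bijections $\pi$ from slots to rows. Each factor $N_{i,q}$ counts elementary realisations. For a frozen slot it counts the single condition $c_i=m'-p+1$. For a free slot it counts the signs $\varepsilon$ with $\varepsilon V_i=X_j+tk$, $k$ odd, $k\ge1$. Distributing the product therefore shows that $\operatorname{per}N$ is the number of pairs (bijection $\pi$, choice of a realising $\varepsilon_j$ at each free slot). No cancellation is possible, since every summand is a nonnegative integer. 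So the whole statement reduces to exhibiting a bijection between these elementary data and the incidence set $I_X=\{(w,k):X+tk=2(\mu_w+\rho_{D_r}),\ k\ \text{odd}\}$.

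\emph{From data to incidences.} Given $(\pi,\varepsilon)$, the rows assigned to frozen slots form a set $S$ of size $m'$. Its members meet each class $1,\dots,m'$ exactly once, because the frozen slots ask for pairwise distinct classes and class $0$ is never asked for. Hence $S$ is a transversal in the sense of Proposition \ref{prop:transversal}(i), and the frozen half of $w$ is determined by $S$, by Lemma \ref{lem:muinj}.

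The rows assigned to the free slots are exactly $S^{\mathrm c}$. Slot $j$ receives the signed value $\varepsilon_jV_{\pi(m'+j)}=X_j+tk_j$. This defines a vector $Y=X+tk$, with $k$ odd and positive. Its coordinates are, up to order and signs, the entries of $V|_{S^{\mathrm c}}$, so $Y$ is a $W(B_r)$-image of the doubled complementary block. Straightening $Y$ gives the point $2(\mu_w+\rho_{D_r})$ for a unique $w\in W^1$: the chirality is fixed by the parity of the number of negative signs, and the representative by Lemma \ref{lem:muinj}. This produces $(w,k)\in I_X$.

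\emph{From incidences to data.} Conversely, $(w,k)$ determines $S$ and the vector $X+tk$. Its coordinates are signed values $\pm V_i$ with $i\in S^{\mathrm c}$, and these values are distinct and nonzero, being odd. So each free coordinate identifies a unique row and a unique sign. That recovers $\pi$ on the free slots and $\varepsilon$. On the frozen slots, $\pi$ is forced by the classes, since $S$ is a transversal.

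The main obstacle is the straightening step, and the issue is the sign. Here $I_X$ is indexed by points where the antisymmetric extension $\tilde\nu$ is evaluated, which need not be dominant. I must check that \eqref{eq:divided} counts a term $(w,k)$ for each signed placement $Y=X+tk$ of the values $V|_{S^{\mathrm c}}$, rather than only for dominant placements. In other words, $I_X$ should be read as pairs of a transversal and a signed arrangement whose straightened form lies in $W^1$ with the correct chirality.

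I would fix this reading explicitly, taking $w$ to run over the $W(D_r)$-translates that $\tilde\nu$ sees. Then the chirality constraint is automatic, because $\tilde\nu(Y)$ is nonzero only when $Y$ is a $W(D_r)$-image of some $\mu_w$, and the sign parity of $Y$ selects $w$. Once this is in place the map is a bijection, and $|I_X|=\operatorname{per}N$ follows. Positivity of $k$ is built into both sides, so nothing further is needed.
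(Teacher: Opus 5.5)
Your proposal is correct and follows essentially the paper's own proof: count the elementary summands (a bijection plus a choice of realising sign at each free slot), send each to $(w,k)$ via $k_j=(\varepsilon_jV_{i_j}-X_j)/t$, get injectivity because the distinct positive $V_i$ make $\varepsilon V_i$ determine $(i,\varepsilon)$, and get surjectivity because the transversal forces the frozen assignment. The point you flagged is not a gap: reading $I_X$ as the hits of the fibre on $\mathrm{supp}\,\tilde\nu$ up to $W(D_r)$, with the chirality read off the parity of negative entries, is exactly how the paper's surjectivity step (``if $\tilde\nu(X+tk)\neq0$ \dots'') uses $I_X$.
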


\begin{proof}
Call an \emph{elementary summand} a pair $(\sigma,\varphi)$ with $\sigma$ a bijection rows
$\to$ slots all of whose entries are nonzero and $\varphi$ a choice, at each free slot, of one of the
realisations of that entry. By definition of the permanent,
$\operatorname{per}N=\sum_\sigma\prod_iN_{i,\sigma(i)}$ counts exactly those pairs. The proof of
Proposition \ref{prop:determinant} already puts them in bijection with the data $(w,k)$; we record
the map and check it is bijective as a map of sets. Given $(\sigma,\varphi)$, let $i_j$ be the row of
the $j$-th free slot and $\varepsilon_j$ the chosen sign, and set
$k_j:=(\varepsilon_jV_{i_j}-X_j)/t$, odd and $\ge1$. Nonvanishing of the frozen entries says the
remaining rows meet each nonzero folded class once and avoid class $0$, i.e.\ they form a
transversal, so $\tilde\nu$ does not vanish at $X+tk$ by Proposition \ref{prop:transversal}(i), and
$(w,k)\in I_X$ with $w$ unique by Lemma \ref{lem:muinj}. The map is injective: the $V_i$ are strictly
positive and pairwise distinct, so the value $\varepsilon V_i$ determines $(i,\varepsilon)$, whence
$\sigma$ on the free slots and $\varphi$; and each remaining row folds to exactly one class, so it
can only go to the one frozen slot asking for it. It is surjective: if $\tilde\nu(X+tk)\neq0$ then
the coordinates of $X+tk$ have $r$ pairwise distinct absolute values among the $V_i$ --- an
alternant with a repeated absolute value vanishes --- carried by a transversal, and the transversal
admits exactly one bijective assignment to the frozen slots.
\end{proof}

\begin{lemma}[what a column is, and that the blocks are laminar]\label{lem:laminar}
In the notation of Lemma \ref{lem:c1p}, let $X$ be a \emph{dominant} regular coefficient index,
$X=2(\mu+\rho_{D_r})$ with $X_1>\dots>X_{r-1}>|X_r|$, and order the rows of the class-$c$ block by $w_i$
increasing.
\begin{enumerate}
\item[\rm(i)] For $c\ge1$ the frozen column of that class is the all-ones column, and a free column
is the suffix $\{w_i>u_j\}$ if $\tau_j=+1$ and the prefix $\{w_i<u_j\}$ if $\tau_j=-1$, where
$u_j:=\tau_jX_j$.
\item[\rm(ii)] For $c=0$ every nonzero column is a suffix.
\item[\rm(iii)] Every block is \emph{laminar}: any two columns are nested or disjoint.
\end{enumerate}
\end{lemma}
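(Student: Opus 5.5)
Parts (i) and (ii) follow from Steps 2--5 of the proof of Lemma \ref{lem:c1p}, re-read with $X$ dominant; (iii) is a case check on the resulting column shapes.

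\emph{Part (i).} Inside the class-$c$ block with $c\ge1$, Step 2 shows that the signed frozen column of class $c$ is the indicator of the whole block, so it is the all-ones column. Step 4 shows that a free column $j$ with $b_j=c$ becomes, after signing, $[\tau_jw_i>X_j]$. For $\tau_j=+1$ this is $\{w_i>X_j\}=\{w_i>u_j\}$. For $\tau_j=-1$ it is $\{-w_i>X_j\}=\{w_i<-X_j\}=\{w_i<u_j\}$. With the rows ordered by $w_i$ increasing, the first set is a suffix and the second a prefix. Free columns with $b_j\neq c$ are zero on this block by Step 3.

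\emph{Part (ii).} Step 5 gives $M_{i,j}=[V_i>|X_j|]$ with $w_i=V_i$. This is a suffix in the $w$-order, and there is no frozen column for class $0$.

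\emph{Part (iii).} The all-ones column is nested with everything, and two suffixes, or two prefixes, are always nested. In class $0$ this already finishes the argument. The only pairs left are a prefix $\{w_i<u_j\}$ with $\tau_j=-1$ and a suffix $\{w_i>u_{j'}\}$ with $\tau_{j'}=+1$, both in the same class $c\ge1$. They are disjoint exactly when no $w_i$ lies in $(u_{j'},u_j)$, and this holds whenever $u_j\le u_{j'}$. So the main step is to prove $-X_j\le X_{j'}$, that is, $X_j+X_{j'}\ge0$.

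This is where dominance enters. If neither index is $r$, then $X_j,X_{j'}>|X_r|\ge0$ and the sum is positive. If one of them is $r$, say $j'=r$, then $X_j>|X_r|\ge -X_r$, so again $X_j+X_r>0$; the case $j=r$ is symmetric. Equality is excluded because $X$ is regular, and $j\ne j'$ because the two columns have opposite $\tau$.

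The other thing to check is that $u_j$ and $u_{j'}$ are comparable in the same integer scale as the $w_i$. They are: they are honest integers, not residues, and ties are impossible because $w_i$ is odd while $X$ is even (Step 0). So the disjointness argument goes through as stated. The point to stress is that only this prefix--suffix case uses dominance, which matches the failure at the non-dominant $X=(4,-10)$ in Remark \ref{rem:domneeded}. There $X_1+X_2<0$ and the supports $\{1,2\}$ and $\{2,3\}$ overlap without nesting.
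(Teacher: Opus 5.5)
Your proof is correct and follows the paper's argument step for step. Parts (i) and (ii) are read off Steps 2--5 of Lemma~\ref{lem:c1p}. In (iii) the only possible crossing is a prefix against a suffix, and its disjointness reduces to $X_j+X_{j'}\ge0$, which dominance gives for any two distinct indices. Your two additions are small sharpenings of the paper's wording, not a different route. The first is that the two columns are automatically distinct because their $\tau$'s differ. The second is that disjointness means ``no $w_i$ in $(u_{j'},u_j)$'', which $u_j\le u_{j'}$ guarantees, rather than being equivalent to that inequality.
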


\begin{proof}
(i) All rows of the block fold to $c$, so the frozen entry is $\sigma_i$ in each of them and becomes
$1$ after signing. For a free column, Step 3 of Lemma \ref{lem:c1p} leaves the single sign
$\varepsilon=\sigma_i\tau_j$, and Step 0 makes the parity condition vacuous, so the entry is nonzero
exactly when $k=\tau_j(w_i-u_j)/t\ge1$, which is $w_i>u_j$ for $\tau_j=+1$ and $w_i<u_j$ for
$\tau_j=-1$. (ii) is Step 5 of Lemma \ref{lem:c1p}. For (iii), two prefixes are nested and so are two
suffixes, and the all-ones column contains everything; so the only possible crossing is a prefix
$\{w<u^-\}$ against a suffix $\{w>u^+\}$, and these are disjoint precisely when $u^-\le u^+$, that
is when $X_{j^-}+X_{j^+}\ge0$. Now $X$ is enumerated dominantly, $X_1>\dots>X_{r-1}>|X_r|$, so at
most one coordinate is negative and for $m\le r-1$ one has $X_m\ge X_{r-1}>|X_r|\ge-X_r$; hence
$X_a+X_b>0$ for any two distinct indices.
\end{proof}

\begin{lemma}[laminar systems of distinct representatives]\label{lem:sdr}
Let a laminar system consist of $a$ copies of the full interval, $p$ nested proper prefixes
$[1,k_1]\subseteq\dots\subseteq[1,k_p]$ and $s$ nested proper suffixes of lengths
$m_1\le\dots\le m_s$, on $n=a+p+s$ points, the prefixes and suffixes being disjoint. Then the number
of systems of distinct representatives is
\[
a!\;\prod_{i=1}^{p}(k_i-i+1)\;\prod_{l=1}^{s}(m_l-l+1),
\]
and it is odd only if it equals $1$. \emph{The product is not new. For a family satisfying Hall's
condition, the number of systems of distinct representatives is bounded below by $r!$ with $r$ the
size of the smallest set (M.~Hall \cite[Theorem 2]{Hall48}), and $\prod_k(|F_k|-k+1)$ for a family
ordered by increasing size is the standard sharpening of that bound. What the laminar hypothesis buys
is that here the bound is an \emph{equality}, and what the paper uses is the parity that follows from
it.}
\end{lemma}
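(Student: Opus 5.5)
The count is obtained by a greedy assignment that processes the prefixes from the shortest up and the suffixes from the shortest up. The independence of the choices comes directly from laminarity. After that, the parity clause follows from a small-descent argument on the sequence of factors.

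\emph{The count.} First list the prefixes as $[1,k_1]\subseteq\dots\subseteq[1,k_p]$ and choose their representatives in that order. When the $i$-th prefix is reached, the representatives of the first $i-1$ prefixes are $i-1$ distinct points of $[1,k_{i-1}]\subseteq[1,k_i]$. So exactly $k_i-i+1$ points remain available, whichever earlier choices were made. Next do the same for the suffixes, ordered by length $m_1\le\dots\le m_s$; this gives $m_l-l+1$ choices at step $l$. Since every prefix is disjoint from every suffix, these choices never interact. The $p+s$ points used so far leave exactly $n-p-s=a$ points. These are assigned to the $a$ copies of the full interval in $a!$ ways, since that interval contains everything. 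Multiplying gives the formula.

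One care point is a factor that is $\le0$. Write $f_i:=k_i-i+1$. Because $k_{i+1}\ge k_i$, we have $f_{i+1}\ge f_i-1$, so the first nonpositive factor met along the sequence is exactly $0$. In that case the greedy procedure has genuinely run out of points, and the true count is $0$, in agreement with the product. The same holds for $g_l:=m_l-l+1$.

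\emph{The parity.} Suppose the count is odd, hence nonzero. Then $a\le1$, since otherwise $a!$ is even. Also every $f_i,g_l\ge1$; in particular $k_p\ge p$ and $m_s\ge s$. Since the prefixes and suffixes are disjoint inside $n=a+p+s$ points, $k_p+m_s\le p+s+a\le p+s+1$. Therefore $f_p+g_s=(k_p-p+1)+(m_s-s+1)\le a+2\le3$, and so $f_p=1$ or $g_s=1$. If either last factor equals $2$, the count is even; so both last factors equal $1$.

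Now use the descent bound $f_{i+1}\ge f_i-1$. If some $f_i\ge2$, the sequence must pass from $f_i$ down to $f_p=1$ in steps of at most one. So some $f_j$ with $j\ge i$ equals exactly $2$, and the product is even. Hence every $f_i=1$, and likewise every $g_l=1$. Together with $a\le1$, which gives $a!=1$, the count equals $1$.

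The only step needing real attention is this last one: the observation that the factors can drop by at most one per step, which forces a factor $2$ whenever any factor exceeds $1$. Everything else is bookkeeping. I would also state the zero case explicitly, so that "odd" cleanly excludes it.
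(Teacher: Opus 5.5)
Your proof is correct and follows the same route as the paper: a greedy product whose prefix and suffix parts are independent because the two chains are disjoint, then the observation that $k_i-i$ drops by at most one per step, so any factor above $1$ forces a factor exactly $2$. The differences are presentational. You replace the paper's case split $a=0$ versus $a=1$ with the single bound $f_p+g_s\le a+2\le 3$, and you justify the zero case (the first nonpositive factor is exactly $0$), which the paper leaves implicit; take $k_0=m_0=0$ so that this bound still makes sense when $p=0$ or $s=0$.
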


\begin{proof}
The prefixes take points below the suffixes, so the two greedy counts are independent and the $a$
full columns take the $a$ remaining points in $a!$ orders; each nested chain contributes the
classical greedy product. For the parity, an odd value forces $a\le1$. Write $K=k_p$ and
$M_s=m_s$; then $p\le K$, $s\le M_s$ and $K+M_s\le n$. If $a=0$ then $p+s=n$ forces $p=K$ and
$s=M_s$; put $d_i=k_i-i\ge0$, so $d_p=K-p=0$, and monotonicity of $k$ gives $d_{i+1}\ge d_i-1$: the
sequence decreases by at most one at a time, so if some $d_{i_0}\ge1$ it must take the value $1$
somewhere between $i_0$ and $p$, and the corresponding factor is $2$. Hence all $d_i=0$ and the
product is $1$; likewise for the suffixes. If $a=1$ then $p+s=n-1\le K+M_s\le n$; when $K+M_s=n-1$
the same argument applies, and when $K+M_s=n$ either $p=K-1$ or $s=M_s-1$, and then $d_p=1$
respectively $m_s-s=1$, giving an even factor. Finally $a\ge2$ makes $a!$ even.
\end{proof}

\begin{lemma}[doubled entries force an even fibre]\label{lem:doubled}
If some entry of $N(\Lambda,X)$ equals $2$, then $|I_X|$ is even.
\end{lemma}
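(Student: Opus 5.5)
We need to show that if some entry of $N(\Lambda,X)$ equals $2$, then $|I_X|=\operatorname{per}N$ is even. The plan is to exhibit a fixed-point-free involution on the elementary summands that count $\operatorname{per}N$.

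\emph{Where a $2$ can sit.} A frozen entry of $N$ is an indicator, so a $2$ lies in a free column $j$ and a row $i$. Both signs $\varepsilon=\pm1$ must then realise $\varepsilon V_i=X_j+tk$ with $k$ odd and $\ge1$. By Step~3 of Lemma~\ref{lem:c1p}, when $b_j\neq0$ exactly one sign passes the congruence. Hence $b_j=0$: the row has $c_i=0$ and $V_i\equiv0\pmod t$, and column $j$ has $X_j\equiv0\pmod t$. The paper's count confirms this: such entries occur only in rows with $V_i$ divisible by $t$.

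\emph{The involution.} By Lemma~\ref{lem:permanent}, $|I_X|$ counts the elementary summands $(\sigma,\varphi)$: a bijection $\sigma$ from rows to slots with nonzero entries, together with a choice $\varphi$ of realisation at each free slot. I would define an involution $\iota$ on these. If some free slot is assigned a row whose $N$-entry there is $2$, take the first such slot in the fixed column order and flip the sign $\varphi$ chosen there. The set of ``doubled'' slots depends only on $\sigma$, so the first one is canonical and $\iota$ is an involution without fixed points on the summands that use a doubled entry. Those summands therefore contribute an even number.

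\emph{The remaining summands.} What is left are the $\sigma$ that avoid every doubled entry, counted by the permanent of $N$ with its $2$'s replaced by $0$. This is the main obstacle: that count need not be even in general, so I need more structure.

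I would use the block decomposition by folded class. Columns with $b_j=0$ see only class-$0$ rows, and by Lemma~\ref{lem:laminar}(ii) each nonzero such column is a suffix $\{V_i>X_j+t\}$; frozen slots never take class-$0$ rows. So in the class-$0$ block the $a$ columns where $X_j\equiv0$ are exactly the columns capable of a $2$.

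By Step~5 of Lemma~\ref{lem:c1p}, an entry in a $b_j=0$ column equals $2$ precisely when both $V_i>X_j$ and $V_i>-X_j$, i.e.\ when $V_i>|X_j|$, with both $k$ positive. Replacing $2$ by $0$ then leaves only the rows satisfying exactly one of the two inequalities, so the modified block is no longer laminar in an obvious way.

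I would aim to show that in the class-$0$ block every row available to a doubled column is doubled there. Dominance makes at most one $X_j$ negative, and for $X_j>0$ the condition ``exactly one inequality holds'' cannot occur: $V_i>0>-X_j$ always, so only $V_i>X_j$ is tested. Thus every nonzero entry in such a column is already $2$. Consequently, once a block contains a $2$, every $\sigma$ assigns that column a doubled entry, the residual set of summands is empty, and $\operatorname{per}N$ is even.

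The delicate case is the single negative coordinate $X_r<0$ with $X_r\equiv0\pmod t$. There I would compare the two realisations directly, using $|X_r|$ together with the fact that $V_i$ is odd and $X_j$ is even, to show the same conclusion: either every nonzero entry in that column is $2$, or the doubled and undoubled rows pair off under $\iota$.

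If that pairing argument fails, my fallback is to reduce the count modulo $2$. Since $\operatorname{per}\equiv\det\pmod 2$ and $N\equiv M\pmod 2$, the parity of $\operatorname{per}N$ equals the parity of $\det M$. The class-$0$ block of $M$ is a laminar suffix system by Lemma~\ref{lem:laminar}, and I would show it is singular whenever a doubled column exists, for instance by Proposition~\ref{prop:graph}, via a loop or a parallel pair in $G_B$.
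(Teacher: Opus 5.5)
There is a genuine gap, and it starts with a sign. The sign $\varepsilon=-1$ realises the free slot $j$ in row $i$ when $-V_i=X_j+tk$ with $k\ge1$, that is, when $V_i<-X_j$, not when $V_i>-X_j$. This is the second term of $M_{i,j}=[V_i>X_j]-[V_i<-X_j]$ in Step 5 of Lemma \ref{lem:c1p}. The simplified form $M_{i,j}=[V_i>|X_j|]$ given there marks where the \emph{signed} entry is $1$, which is exactly where the entry is not doubled.

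The correct count is $N_{i,j}=[V_i>X_j]+[V_i<-X_j]$. Because $V_i>0$, it can equal $2$ only when $X_j<0$. So your key claim is reversed:
\begin{itemize}
\item in a column with $X_j>0$ no entry is ever $2$;
\item by dominance, the only slot that can carry a $2$ is $j=r$ with $X_r<0$;
\item on the class-$0$ rows that column is $N_{i,r}=1+[V_i<|X_r|]$, with ones above $|X_r|$ and twos below.
\end{itemize}
The ``delicate case'' you defer is therefore the only case. Neither branch of the dichotomy you propose for it holds. Not every nonzero entry of that column is $2$. And $\iota$ moves only summands that already use a doubled entry, so it cannot pair off anything else.

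The missing idea is to read along the row rather than down the column. Suppose $N_{i,r}=2$, so $c_i=0$ and $V_i<|X_r|$. Then every other entry of row $i$ vanishes:
\begin{itemize}
\item the frozen entries vanish, because every frozen slot asks for a nonzero class;
\item the entries in slots with $b_j\ne0$ vanish by Step 3;
\item in the remaining class-$0$ slots, $N_{i,j}=[V_i>X_j]+[V_i<-X_j]=0$, because dominance gives $X_j>|X_r|>V_i>0$.
\end{itemize}
So the $2$ is the only nonzero entry of its row. Every bijection $\sigma$ must use it, your residual set is empty after all, and expanding along that row gives $\operatorname{per}N=2\operatorname{per}N'$, with $N'$ the minor deleting row $i$ and slot $r$.

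The same row is a zero row of $M$, since $M_{i,r}=[V_i>|X_r|]=0$. That is what would make your fallback go through: $\det M=0$, and $\operatorname{per}N\equiv\det M\pmod 2$. As written, though, the fallback is only a plan.

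The paper argues column-wise instead. It writes the doubled column as $\mathbf1+P$, splits the block permanent by multilinearity into $A+B$, and shows $A=B=0$ when $|P|\ge2$ and $A=B$ when $|P|=1$. The row expansion reaches the same conclusion in one line, because the rows of $P$ are precisely the rows whose only nonzero entry is the $2$.
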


\begin{proof}
By Step 3 of Lemma \ref{lem:c1p} an entry admits both signs only in class $0$, and there
$N_{i,j}=[V_i>X_j]+[V_i<-X_j]$, whose second term needs $X_j<0$. Dominance leaves at most one
negative coordinate, so at most one column of the block carries doubled entries, and that column is
$\mathbf1+P$ with $P$ the prefix $\{V_i<|X_r|\}$. By multilinearity of the permanent in the columns,
$\operatorname{per}N_{\text{block}}=A+B$ with $A$ the block with that column replaced by $\mathbf1$
and $B$ with it replaced by $P$; both are $0/1$, and both are laminar because $X_j>|X_r|$ makes $P$
disjoint from every suffix. Let $p=|P|$. The other $n-1$ columns are suffixes contained in the $n-p$
rows above $|X_r|$. If $p\ge2$ they cannot be given distinct representatives there, so $A=B=0$. If
$p=1$ they occupy exactly the rows $2,\dots,n$, so in $A$ the all-ones column is forced onto row $1$
and in $B$ the column $P$ is; the two counts coincide, $A=B$, and
$\operatorname{per}N_{\text{block}}=2A$. In both cases the block permanent is even, hence so is
$|I_X|=\prod_{\text{blocks}}\operatorname{per}N_{\text{block}}$ by Lemma \ref{lem:permanent}.
\end{proof}

\noindent
Corollary \ref{cor:unimodular} implies (L1) at once, since $\det M$ is itself such a minor, and it
is the last ingredient of Theorem \ref{thm:cancel}. The order is worth naming because it is the
reverse of the expected one. Unimodularity alone does \emph{not} give the identity: a fibre with
three hits summing to $+1$ would satisfy the bound and violate it. What it gives is the vanishing
half on every fibre carrying an \emph{even} number of hits, where a sum of an even number of
$\pm1$'s is even and $|c|\le1$ forces $0$. So everything turns on the parity of $|I_X|$, and that
parity is not a property of $M$: Lemma \ref{lem:permanent} identifies $|I_X|$ with a permanent,
Lemmas \ref{lem:laminar} and \ref{lem:sdr} make it odd only when it is $1$, and what forces the
laminarity is the dominance with which the $X$ are enumerated. This
is the third form (L1) has taken in this paper --- a bound on branching multiplicities, then a
statement that a back-substitution never doubles, now a unimodularity statement about an explicit
$0/{\pm}1$ matrix --- and it is the first form for which there is a body of technique to appeal to:
total unimodularity has classical characterisations, Ghouila-Houri's among them.

It is also the first form that can be tested \emph{exhaustively}, for a reason worth recording
because it is what makes the evidence more than a sample. The entries of $M$ depend on $V$ and $X$
only through residues and orderings, so the same matrix recurs across enormous numbers of pairs
$(\Lambda,X)$; and total unimodularity is invariant under $M\mapsto PMQ$ with $P,Q$ signed
permutation matrices, since every minor of $PMQ$ is $\pm$ a minor of $M$. Quotienting by both --- by
equality of matrices, then by signed row and column permutations --- collapses the problem
drastically. Over the twelve configurations of \S\ref{sec:verif}, $37\,330$ live pairs
$(\Lambda,X)$ give $9\,561$ distinct matrices; and calling two matrices the same \emph{pattern}
when one is $PMQ$ for the other, those fall into between $16$ and $88$ patterns per configuration.
That last count is reported for ten of the twelve: the canonical form costs $R'!\,2^{R'}$ and is not
computed at $(t,r)=(9,2)$ and $(11,2)$, where $R'=6$ and $7$. Nothing below rests on it --- total
unimodularity is exhausted over the $9\,561$ \emph{distinct} matrices in all twelve, and the
patterns only say how few tests that reduces to.
(We say \emph{pattern} and not \emph{class} because \emph{class} already means a folded residue
class throughout this section.) Every one of the $9\,561$ is totally unimodular, checked over all of its minors and not sampled
\stverif. Scaling residues by a unit permutes the folded classes with signs, which is a signed
permutation of the frozen columns, so the Galois action of Proposition \ref{prop:galoissign} is of
the kind total unimodularity cannot see; we have not checked that it carries the set of matrices
occurring here to itself, and nothing above depends on its doing so.

This is the evidence the unimodularity conjecture was originally formulated from; Lemma
\ref{lem:c1p} now proves it, so the statement is a theorem and not a conjecture. The exhaustive
computation is kept as an independent check of the proof and of its implementation --- what it
verifies is that no matrix occurring in those twelve configurations fails, which is now a
consequence rather than the evidence for it.

\begin{remark}[the nonzero-determinant content of (L1) at $t=3$, $r=2$]\label{rem:threematrices}
The reduction is concrete enough to be worth exhibiting on the smallest case. At $t=3$ and $r=2$ the
matrices are $3\times3$ --- one frozen slot, two free ones --- and the $1\,191$ live pairs
$(\Lambda,X)$ with $\beta_i\le6$ fall into sixteen patterns. Thirteen of them are singular. The whole
content of (L1) there is carried by the other three, which up to signed row and column permutations
are
\[
M_1=\begin{pmatrix}-1&-1&-1\\-1&-1&0\\-1&0&0\end{pmatrix},\qquad
M_2=\begin{pmatrix}-1&-1&0\\-1&0&-1\\-1&0&0\end{pmatrix},\qquad
M_3=\begin{pmatrix}-1&-1&0\\-1&0&0\\0&0&-1\end{pmatrix},
\]
each with $|\det|=1$. (Only the absolute value is an invariant of the class: an odd signed
permutation reverses the sign.) They occur in $56$, $56$ and $351$ of the live pairs respectively,
so in $463$ altogether --- exactly the number of $X$ at which $c\neq0$ in that box, as it must be.
Witnesses: $M_1$ at $\Lambda=(1,1,0)$, $X=(4,-2)$; $M_2$ at $\Lambda=(1,1,0)$, $X=(4,2)$; $M_3$ at
$\Lambda=0$, $X=(2,0)$, all three with $c=+1$. $M_1$ is the full triangular staircase, the textbook
totally unimodular matrix; $M_3$, the commonest and the one containing the trivial weight, is a
staircase with one detached block.

One caveat about the thirteen we set aside. They carry no content for (L1), which asks only about
$\det M$ itself, but they are not irrelevant to Corollary \ref{cor:unimodular}: total unimodularity
is a statement about \emph{every} minor, and a singular matrix can perfectly well contain a
$2\times2$ minor of determinant $2$. That is why the sweep behind the conjecture tests all minors of
all $9\,561$ matrices and not only the nonsingular patterns.

Enlarging the box to $\beta_i\le12$ adds exactly one pattern and removes none, and the one it adds is
the constant matrix with every entry $-1$, of rank one and determinant $0$ \stverif. We do not read
more into the list than it says: sixteen patterns at one $(t,r)$ is not a proof, and the count grows
with $r$ --- $49$ at $(3,3)$ and $88$ at $(3,4)$. But it does say what a case analysis would have to
cover, and that is not something the earlier formulations of (L1) could say at all.
\end{remark}

\begin{figure}[htbp]
\centering
\includegraphics[width=\textwidth]{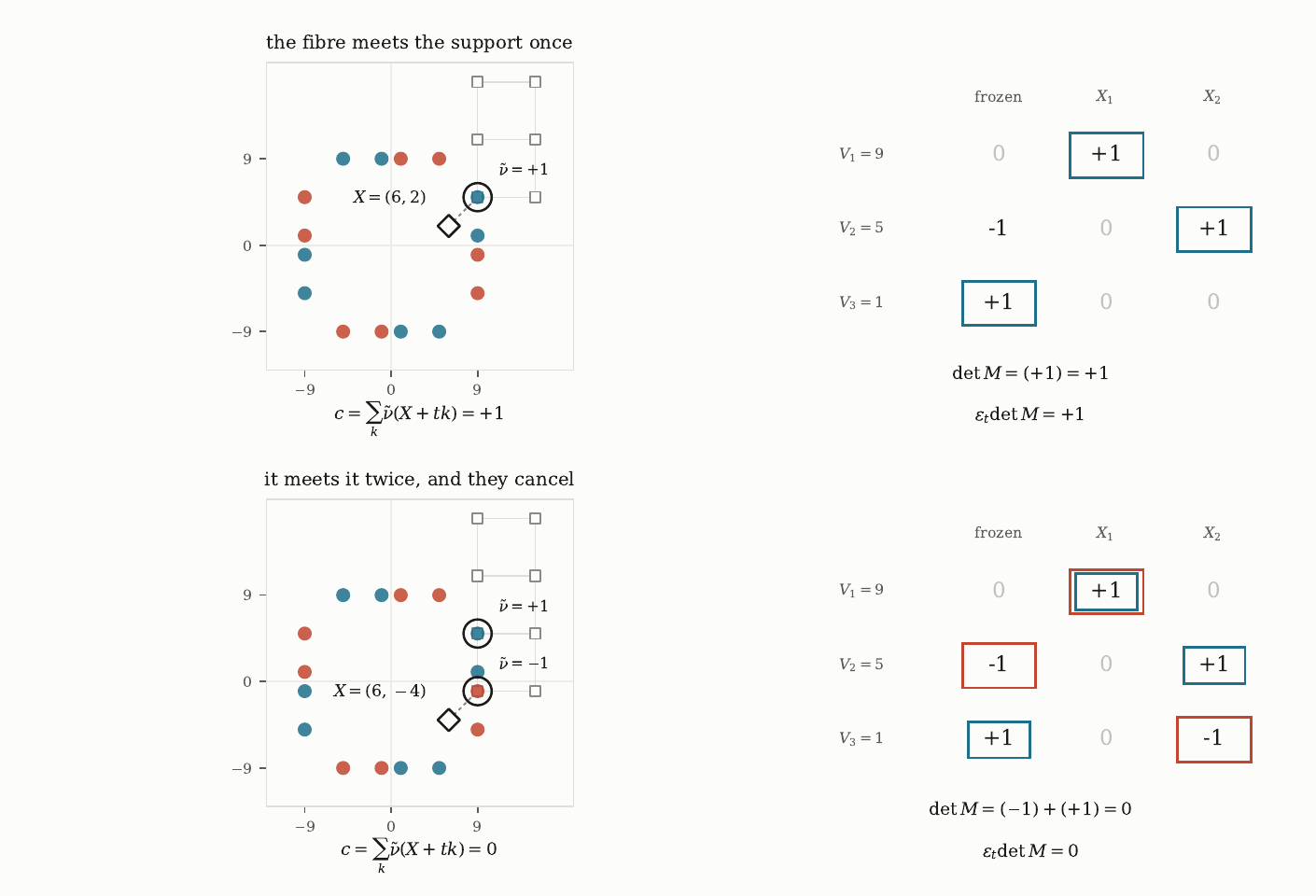}
\caption{Proposition \ref{prop:divided} and Proposition \ref{prop:determinant} are the same sum read
two ways, at $t=3$, $r=2$, $\Lambda=(2,1,0)$, so $V=(9,5,1)$ and the matrix is $3\times3$.
\textbf{Left}: the plane of doubled $D_r$ weights. Discs are $\mathrm{supp}\,\tilde\nu$, blue for
$+1$ and red for $-1$; open squares are the fibre $\{X+tk:k\text{ odd}\}$, a lattice of step $2t$
starting at $X+t(1,1)$; rings mark where the fibre meets the support. \textbf{Right}: the matrix
$M(\Lambda,X)$, with each nonzero term of its determinant expansion outlined in the colour of that
term's sign --- nested where two terms share a cell, as they do below. \textbf{Top}, $X=(6,2)$: the
fibre meets the support once, one permutation survives, and $c=\epsilon_t\det M=+1$. \textbf{Bottom},
$X=(6,-4)$: it meets the support twice, two permutations survive with opposite signs, and both
readings give $0$ --- which is Theorem \ref{thm:cancel} in the smallest case where it has
content. The script refuses to draw unless the two readings agree, the outlined terms are exactly the
nonzero ones and sum to $\det M$, the number of intersections is the one claimed, and no label
overlaps another label or a plotted point, all measured in pixels.}
\label{fig:determinant}
\end{figure}

\begin{figure}[htbp]
\centering
\includegraphics[width=\linewidth]{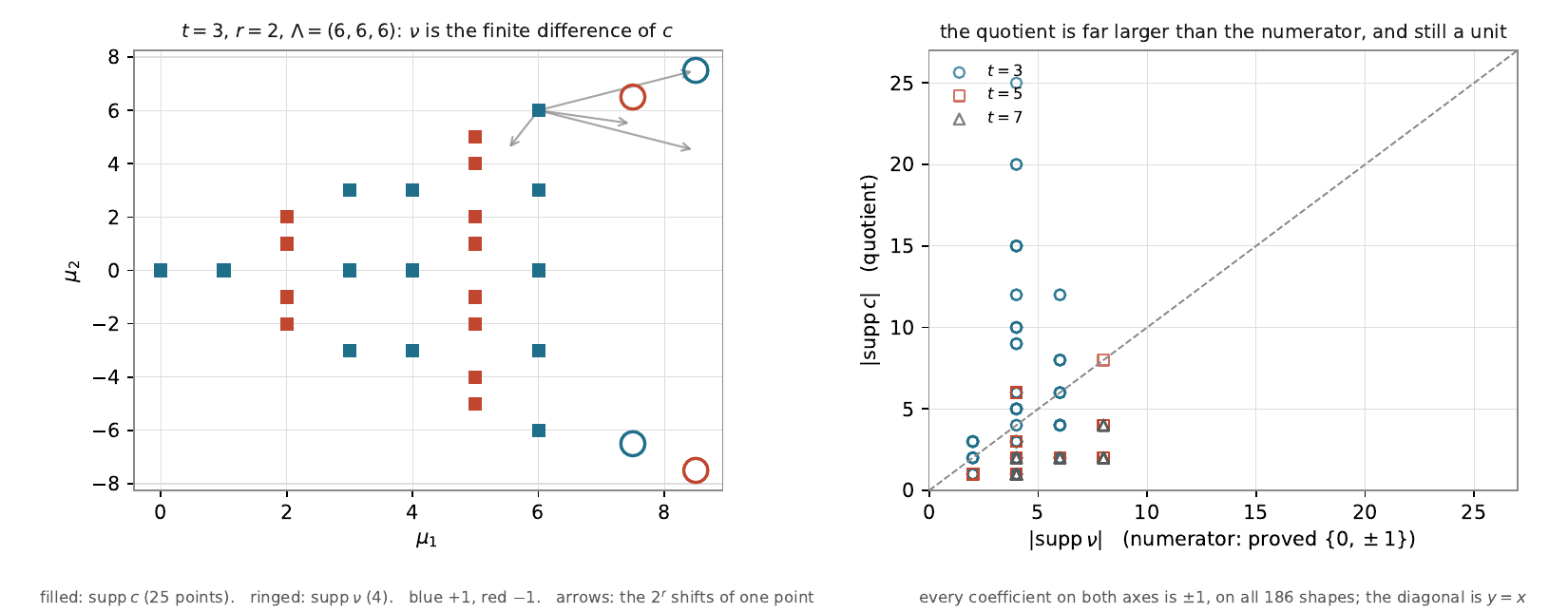}
\caption{The division of (L1), drawn --- the step that was the last difficulty until Corollary
\ref{cor:unimodular} closed it. \textbf{Left}: a single shape, $t=3$, $r=2$,
$\Lambda=(6,6,6)$, in the $D_2$ weight plane. Filled squares are $\mathrm{supp}\,c$ ($25$ points),
rings are $\mathrm{supp}\,\nu$ ($4$), blue is $+1$ and red is $-1$, and the arrows are the $2^r$
shifts of \eqref{eq:gkrsfilt} applied to one point. The numerator is the finite difference of the
quotient, and the quotient is the signed staircase one gets by summing back. \textbf{Right}: the
same comparison over $186$ shapes at $t=3,5,7$. The numerator never exceeds $8$ terms while the
quotient reaches $25$, and \emph{every} coefficient on both axes is $\pm1$; the dashed line is
$y=x$. By Lemma \ref{lem:muinj} the horizontal axis is $\{0,\pm1\}$ by theorem; the vertical axis is
(L1). Computed by \texttt{fig\_division.py} in integer arithmetic from the closed form of
$\tau^B_t$, with no branching and no character ring; it refuses to draw unless
$c\cdot\Delta_t=\nu$ exactly on every shape, every coefficient is a unit, and the drawn $c$ agrees
with the one Sage computes by branching.}
\label{fig:division}
\end{figure}

\begin{observation}[extremal primitivity, odd case]\label{obs:oddunit}
Over the full population of shapes with $\max\beta\le9$ at $(t,r)=(3,2)$ and $\max\beta\le10$ at
$(5,2)$ --- $129$ shapes, of which $6$ vanish --- every nonvanishing $\Phi_{t,r}$ has a
\emph{unique} maximal $\mu^+$ for the Weyl majorisation order of Conjecture \ref{conj:A}, and
$|A^D_{\mu^+}|=1$ in all $123$ \stverif. No shape has two maximal weights for that order. The order
matters and is not the root order: see the remark after Conjecture \ref{conj:A}.
\end{observation}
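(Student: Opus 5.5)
Since Observation \ref{obs:oddunit} is a statement about a finite population, I would establish it by exhaustive computation. The computation would be organised so that as much of it as possible runs through results already proved, and only the genuinely conjectural part is left to enumeration. The population is explicit: all strictly decreasing $\beta\in\ZZ^{N}$ with $\beta_N=0$ and $\max\beta\le9$ at $(t,r)=(3,2)$, $N=7$, and with $\max\beta\le10$ at $(5,2)$, $N=9$. I would enumerate these shapes first and confirm the count of $129$. For each shape I would decide vanishing independently, by evaluating \eqref{eq:def} in exact cyclotomic arithmetic, and confirm that exactly $6$ vanish.

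For the nonvanishing ones I would compute $A^D_\mu$ through the localisation \eqref{eq:cLmu}, $A^D_\mu=\sum_\Lambda a^B_\Lambda\,c(\Lambda,\mu)$, in three steps.
\begin{enumerate}
\item The Littlewood coefficients $a^B_\Lambda$ come from peeling $\Phi_{1,R'}$ in $B_{R'}$, with zero remainder checked. This is the only step that touches the character ring.
\item For each $\Lambda$ with $a^B_\Lambda\ne0$, the numerator $\nu(\Lambda,\cdot)$ is read off $V=2(\Lambda+\rho_{B_{R'}})$ by the transversal rule of Proposition \ref{prop:transversal}(i),(iv). This involves no character evaluation, and Corollary \ref{cor:oddsign} supplies the sign.
\item The quotient $c(\Lambda,\cdot)$ comes from the closed inversion \eqref{eq:divided}, cross-checked against $\kappa_{t,r}\epsilon_t\det M(\Lambda,X)$ of Proposition \ref{prop:determinant}.
\end{enumerate}
By Corollary \ref{cor:unimodular} each $c(\Lambda,\mu)$ already lies in $\{0,\pm1\}$, so the quantity being measured is only how the $\Lambda$-sum assembles. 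As an independent control, I would reassemble $\Phi_{t,r}=\sum_\mu A^D_\mu\,\mathrm{o}^{D_r}_\mu(z)$ and compare it monomial by monomial with the direct evaluation.

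Next I would fold to $\mu^+=(\mu_1,\dots,|\mu_r|)$, which is legitimate by Remark \ref{rem:chirality}. On the support of $A^D$ I would then compute the maximal elements for the Weyl majorisation order of Conjecture \ref{conj:A}, with that order's ties handled explicitly. For each shape I would record the number of maximal elements (the claim is exactly one) and $|A^D_{\mu^+}|$ there (the claim is $1$). Alongside this I would log which mechanism delivers the answer, so that the observation is tied to structure rather than left as a bare count. On the shapes where $M(\beta)$ is attained by a single $\Lambda$, Proposition \ref{prop:transversal}(v) already places the top at $T(\Lambda^*)$ with $|c_{\mathrm{top}}|=1$, so what remains to check there is $a^B_{\Lambda^*}=1$, which is Conjecture \ref{conj:L3}. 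On tied shapes one has to verify Conjecture \ref{conj:tie} and then repeat the argument one layer down.

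I expect two obstacles. The first is the order itself: Remark \ref{rem:notdominance} shows that coordinatewise domination, the root order and majorisation can disagree on chirality pairs. The uniqueness claim is order-sensitive, so I would also run decoys in the root order to confirm that it genuinely fails to give a unique maximum there, and hence that the order of Conjecture \ref{conj:A} is the one doing the work. The second is that on $\beta_i\le9$ every $a^B_\Lambda$ equals $1$, as noted after (L3). In this box the multiplicity half is therefore automatic, and I would state plainly that the observation tests (L2) and the tie cancellation but does not test (L3).
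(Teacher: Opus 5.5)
Your proposal is correct and takes essentially the paper's approach. Observation \ref{obs:oddunit} is a finite statement, and the paper supports it in the same way: an exhaustive computation on the same $84+45=129$ shapes, with $A^D$ cross-checked against the localisation \eqref{eq:cLmu} and against a reconstruction of $\Phi_{t,r}$, read in the Weyl majorisation order, and with the same caveat that every $a^B_\Lambda=1$ on this box, so (L3) is not tested there. Routing $c(\Lambda,\mu)$ through Proposition \ref{prop:transversal} and \eqref{eq:divided} instead of through branching is only an implementation choice; since $s_r$ and $\kappa_{t,r}$ are global signs, they cannot affect the support or the value of $|A^D_{\mu^+}|$.
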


\noindent
That is evidence for the \emph{odd} half of Conjecture \ref{conj:H}, and the four populations
collected under \S\ref{sec:unit} are mostly even; it is also a far wider population than the seven
forms of \S\ref{sec:where}, which are all at $t=4$. We state it as an observation.

\section{Both filters are minimal-level fusion quotients}\label{sec:fusion}

The two filters were built separately, one proved here and one cited, and they act on different
groups. This section says what they have in common, and it is not an analogy: each is the quotient
map onto a fusion ring at the lowest level its type admits. The even fusion ring, and the
\emph{tensor sector} of the odd one, both reduce to the rank-one ring $\ZZ$ --- the qualification
matters, because the full odd ring does not. That is where the values $0,\pm1$ come from, and why
every surviving weight lands on the same point.

We use the presentation of Andersen and Stroppel \cite{AndersenStroppel} \stext. For $\mathfrak{sp}_{2n}$ at an
even order $\ell$ the alcove is $\{\sum m_i\omega_i:\sum m_i<\ell/2-n\}$, and the fusion ring is
$\ZZ[\chi(\omega_1),\dots,\chi(\omega_n)]$ modulo the ideal generated by the $\chi(k\omega_1+\omega_i)$
at level $k$; for $\mathfrak{so}_{2n+1}$ at an odd order the alcove is
$\{2m_1+\dots+2m_{n-1}+m_n\le\ell-2n\}$, with level-$1$ generators $\chi(\omega_i)$, $i<n$, and
$\chi(2\omega_n)$.

\begin{theorem}[minimal fusion realisation of the torsion filters]\label{thm:fusion}
Substituting our parameters puts both cases at the minimal level. The even alcove consists of the
single point $0$; the odd alcove is $\{0,\omega_{m'}\}$, but only $0$ lies in the tensor sector
relevant to $SO_{2m'+1}$. The parameters are:
\[
t=2m+2,\ C_m:\quad \tfrac{\ell}{2}-n=1,\ \text{level }0;
\qquad
t=2m'+1,\ B_{m'}:\quad \ell-2n=1,\ \text{level }1 .
\]
In the even case the alcove is $\{0\}$ and the fusion ideal is generated by
$\chi(\omega_1),\dots,\chi(\omega_m)$. In the odd case the alcove is $\{0,\omega_{m'}\}$, but
$\omega_{m'}=(\tfrac12,\dots,\tfrac12)$ is the spin weight and does not lie in the weight lattice of
$SO_{2m'+1}$, so the tensor character ring sees only the point $0$, and the ideal is generated
by the $\chi(\omega_i)$ with $i<m'$ together with $\chi(2\omega_{m'})$.

All of those generators vanish at the respective torsion element. Hence the evaluation homomorphisms
factor through the minimal fusion quotient, which is $\ZZ$ on a single class:
\[
\tau^C_t:\ R(\Sp_{2m})\twoheadrightarrow\mathcal F_0(C_m)\cong\ZZ,
\qquad
\tau^B_t:\ R(SO_{2m'+1})\twoheadrightarrow\mathcal F_1(B_{m'})^{\mathrm{tens}}\cong\ZZ .
\]
\end{theorem}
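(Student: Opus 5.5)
The proof has three steps: compute the alcove, check that the fusion generators vanish at the torsion element, and identify the quotient.

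\emph{Step 1: the alcove.} Substitute the parameters into the Andersen--Stroppel description. For $C_m$ take $\ell=t=2m+2$ and $n=m$; the alcove condition $\sum m_i<\ell/2-n=1$ forces every $m_i=0$, so the alcove is $\{0\}$ and the level is $k=0$. The ideal is then generated by $\chi(0\cdot\omega_1+\omega_i)=\chi(\omega_i)$ for $1\le i\le m$. For $B_{m'}$ take $\ell=t=2m'+1$ and $n=m'$; the condition $2m_1+\dots+2m_{m'-1}+m_{m'}\le 1$ admits exactly $0$ and $\omega_{m'}$. Since $\omega_{m'}=(\tfrac12,\dots,\tfrac12)$ is not integral, it is not a weight of $SO_{2m'+1}$. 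Its class in the fusion ring is spinorial: it is not reached from $R(SO_{2m'+1})$, because tensor characters are stable under the lattice and the fusion product respects the centre grading. So the tensor sector is $\ZZ\cdot[0]$.

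\emph{Step 2: the generators vanish.} The cleanest way is not to compute the characters directly but to invoke the regularity criterion already proved. By Lemma \ref{lem:regular}, with the explicit values from Lemma \ref{lem:T} and Corollary \ref{cor:oddsign}, it suffices to show that $\omega_i+\rho$ is singular modulo $t$ in the relevant coordinates.

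In type $C$, $a=\omega_i+\rho_{C_m}=(m+1,\dots,m-i+2,\,m-i,\dots,1)$. Its first entry is $m+1=t/2$, which lies on the long-root wall, so $\tau^C_t(\omega_i)=0$ for every $i\ge1$.

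In type $B$, $A=2(\omega_i+\rho_{B_{m'}})$ has first entry $2+2m'-1=t$, which is $\equiv0$, so $\tau^B_t(\omega_i)=0$ for $i<m'$. For $2\omega_{m'}=(1,\dots,1)$ we get $A_1=2+2m'-1=t\equiv0$ as well.

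This step is mechanical and should be checked coordinatewise, including the edge cases $i=1$ and $i=m$ (respectively $i=m'-1$).

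\emph{Step 3: the quotient.} Both $\tau^C_t$ and $\tau^B_t$ are ring homomorphisms, since they are evaluations at a group element. They kill the generators of the fusion ideal, so they factor through the fusion quotient. In the even case that quotient is $\ZZ$, spanned by the image of the single alcove class. In the odd case it is the tensor sector, also $\ZZ\cdot[0]$ by Step 1. Surjectivity onto $\ZZ$ follows from $\tau(0)=1$, because the trivial character takes the value $1$ everywhere.

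The main obstacle is the odd tensor-sector claim. One must justify that restricting the fusion ring to tensor characters yields a subring whose alcove basis is $\{0\}$ alone. I would argue this via the $\ZZ/2$ grading by the centre of $\mathrm{Spin}_{2m'+1}$: the Andersen--Stroppel ideal is homogeneous for this grading, so the quotient splits into an even part and an odd part, and $\omega_{m'}$ lies in the odd part.
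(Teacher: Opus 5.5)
Your proof is correct. Its skeleton is the paper's: read off the alcove, show that the ideal generators vanish at the torsion element, and let the evaluation homomorphism descend. Both substantive steps, however, go differently.

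\emph{The vanishing of the generators.} The paper never uses the Weyl character formula here. It writes the generators through exterior powers of the natural module: $\chi(\omega_i)=e_i-e_{i-2}$ in type $C$, and $\chi(\omega_i)=e_i$ for $i<m'$ with $\chi(2\omega_{m'})=e_{m'}$ in type $B$. It then reads the $e_i$ of the spectrum off $\prod_\alpha(1+\alpha u)$, which is $(1-u^t)/(1-u^2)$ for even $t$ and $1+u^t$ for odd $t$. You instead note that the shifted generator puts its first coordinate exactly on the upper wall of the alcove: $a_1=m+1=t/2$ in type $C$, and $A_1=2m'+1=t$ in type $B$.

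Your route buys two things the paper's computation does not show directly. The same observation proves that these generators have fusion image zero, which the paper states without displaying the coordinates (``each of them lies on a wall at $\ell=t$''). It also shows that in type $C$ every generator dies on the long-root wall $2a\equiv0$ of Remark \ref{rem:BC}. In exchange, the paper's route is independent of Lemma \ref{lem:T} and of \cite{NPP}. You need only the formal direction, a zero column in the Weyl numerator. Citing Corollary \ref{cor:oddsign}, which rests on the \cite{NPP} converse through Proposition \ref{prop:galoissign}, brings in more than your argument uses.

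\emph{The tensor sector.} The paper argues by generation. The weights $\omega_1,\dots,\omega_{m'-1},2\omega_{m'}$ generate the dominant monoid of $SO_{2m'+1}$, so their characters together with $1$ generate $R(SO_{2m'+1})$. Since they all map to $0$, the image is $\ZZ[0]$, and no knowledge of the quotient is needed. Your $P/Q\cong\ZZ/2$ grading also works: an ideal generated by tensor characters is automatically homogeneous, and $[0]$ and $[\omega_{m'}]$ lie in different degrees. It explains \emph{why} the spin point is invisible, but it takes the alcove basis of the quotient as an input, whereas the paper's argument does not.
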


\begin{proof}
The generators are characters of exterior powers of the natural module, on which the torsion element
acts with eigenvalue set $\mu_t$ minus its fixed points in the even case and all of $\mu_t$ in the
odd one. The elementary symmetric functions of the spectrum are therefore explicit:
\[
\prod_{\alpha}(1+\alpha u)=\frac{1-u^{t}}{1-u^{2}}=1+u^2+\dots+u^{t-2}\ \ (t\ \text{even}),
\qquad
\prod_{\alpha}(1+\alpha u)=1+u^{t}\ \ (t\ \text{odd}),
\]
that is, $e_i=1$ for even $i$ and $0$ for odd $i$ in the even case, and $e_i=0$ for $0<i<t$ in the
odd one. In type $C_m$ the fundamental character is $\chi(\omega_i)=e_i-e_{i-2}$, which vanishes for
every $i$ because consecutive even indices carry the same value; in type $B_{m'}$ it is
$\chi(\omega_i)=e_i$ for $i<m'$ and $\chi(2\omega_{m'})=e_{m'}$, all of which vanish outright.

For the odd case we do not appeal to a minimality statement about generators --- Andersen and
Stroppel warn that at odd orders the natural generating sets need not be minimal --- and none is
needed. Spell it out instead. The monoid of dominant weights of $SO_{2m'+1}$ is generated by
$\omega_1,\dots,\omega_{m'-1}$ and $2\omega_{m'}$; hence the corresponding characters
$\chi(\omega_1),\dots,\chi(\omega_{m'-1}),\chi(2\omega_{m'})$ generate the tensor character ring, by
the usual triangularity of a product of characters against the dominance order on highest weights.
Each of them lies on a wall at $\ell=t$, so each has fusion image zero. A generating set with zero
image leaves only the constants, so the image of the tensor sector is $\ZZ[0]$.

Finally, evaluation at a group element is a ring homomorphism, so it descends to the quotient by the
ideal these generate. The values then lie in $\{0,\pm1\}$ not merely because the target is $\ZZ$ ---
a homomorphism to $\ZZ$ may take any value --- but because the folding of \cite{AndersenStroppel}
sends a singular weight to $0$ and a regular one to $(-1)^{\ell(w)}$ times the class of its alcove
representative, and here the relevant tensor sector contains exactly one alcove point.
\end{proof}

\noindent
Computed for $3\le t\le11$: every tensor generator vanishes, in both types --- and the spin weight is
excluded on the lattice, not on the value, which is the distinction the computation must respect.
Against a decoy evaluating the same characters at an element of order $t+1$, none of them
vanishes \stverif. Figure \ref{fig:alcove} draws both alcoves from their inequalities and shows the
odd one genuinely carrying two points, the second of them crossed out because it is spinorial.

\begin{figure}[!ht]
\centering
\includegraphics[width=0.94\textwidth]{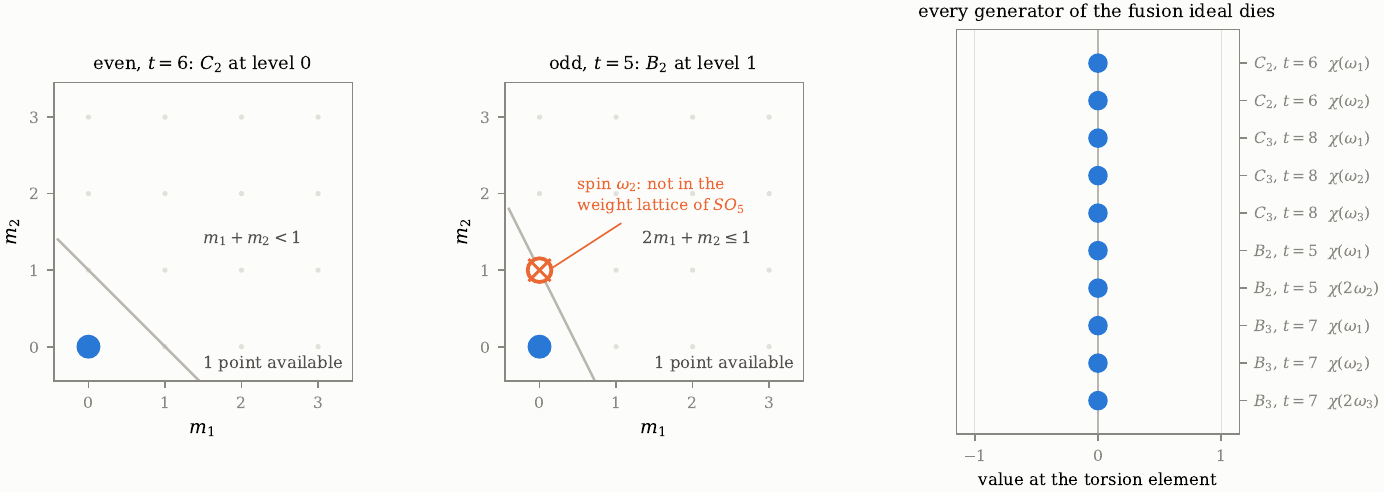}
\caption{The two minimal alcoves, drawn from the defining inequality and not from the conclusion, in
fundamental-weight coordinates at rank two. \emph{Left:} even $t$, where $\sum m_i<\ell/2-n=1$ leaves
only the origin. \emph{Centre:} odd $t$, where $2m_1+m_2\le\ell-2n=1$ leaves two points --- so the
odd fusion ring is \emph{not} rank one. The second is the spin weight $\omega_2$, whose coefficient
must be even for a weight of $SO_5$, so it is unavailable to tensor characters: drawn hollow and
crossed. That is exactly the qualification ``tensor sector'' in Theorem \ref{thm:fusion}.
\emph{Right:} the generators of the fusion ideal evaluated at the torsion element, computed from the
generating function of the spectrum; the script asserts they all vanish before drawing anything.
Computed by \texttt{fig\_alcove.py}.}
\label{fig:alcove}
\end{figure}

\begin{remark}[what this explains, and what is not ours]
The fusion ring, its presentation and the singular-dies/regular-folds mechanism are
\cite{AndersenStroppel}; we claim none of it. What is ours is the identification: that the torsion
evaluation appearing in \eqref{eq:A} and \eqref{eq:oddA} \emph{is} the quotient map at the minimal
level, for both parities. Given that, three facts recorded earlier stop being observations.
A singular weight dies and a regular one folds with a sign because that is what the quotient does;
the values lie in $\{0,\pm1\}$ because the tensor sector reaches a single alcove point and the
folding contributes only a sign; and all surviving weights fold to the same place because, for
tensor characters, there is only one place. It also
disposes of a question we had asked: in the odd case the alcove appears to have two points, and we
had proposed to measure which one each $\eta$ reaches. There is nothing to measure --- the second
point is the spin weight, and our characters are tensorial.
\end{remark}

\section{The highest surviving weight}\label{sec:top}

Let $N_\beta$ denote the numerator of \eqref{eq:def} and $\Newt(N_\beta)$ its Newton polytope.
Newton polytopes are additive under products \cite{Ostrowski} and $\Phi=N_\beta/N_\delta$, so
$\Newt(N_\beta)=\Newt(\Phi)\oplus\Newt(N_\delta)$. \emph{Whenever the three are orbit polytopes for
$W(C_r)$} --- which for $\Newt(N_\delta)$ is Proposition \ref{prop:newtden} below and for the other
two is Conjecture \ref{conj:A}, not a theorem --- a Minkowski sum of orbit polytopes with dominant
vertices is the orbit polytope of the sum, and the dominant vertex of $\Newt(\Phi)$ is then the
difference of the two dominant vertices. The
subtracted one is not a datum: it can be computed once and for all.

\begin{proposition}[the denominator's polytope, entirely]\label{prop:newtden}
Let $t\ge2$, $r\ge1$ and $N=t+2r$. Then, as a polytope in $\mathbb R^r$:
\begin{enumerate}
\item[\rm(i)] $\Newt(N_\delta)$ is the \emph{zonotope} generated by the positive root \emph{directions} of $C_r$,
those of the long roots with radius $t+1$ and those of the short roots with radius $1$ (the radius is
attached to the direction $e_k$, not to the root $2e_k$):
\[
\Newt(N_\delta)=\bigoplus_{k=1}^{r}\bigl[-(t+1)e_k,\,(t+1)e_k\bigr]
\ \oplus\bigoplus_{k<l}\Bigl(\bigl[-(e_k-e_l),\,e_k-e_l\bigr]\oplus
\bigl[-(e_k+e_l),\,e_k+e_l\bigr]\Bigr).
\]
\item[\rm(ii)] Equivalently it is the orbit polytope
$\mathrm{conv}\bigl(W(C_r)\cdot(N-1,N-3,\dots,N-2r+1)\bigr)$, and it has exactly $2^r r!$ vertices,
one for each element of $W(C_r)$.
\item[\rm(iii)] In particular $\mathrm{top}\,\Newt(N_\delta)=(N-1,\,N-3,\,\dots,\,N-2r+1)$.
\end{enumerate}
\end{proposition}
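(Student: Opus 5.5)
The plan is to compute $N_\delta$ as a product of binomial-type factors and then apply additivity of Newton polytopes \cite{Ostrowski}. Since $\delta=(N-1,\dots,0)$, the numerator $N_\delta=\det(x_i^{\delta_j})$ is the Vandermonde $\pm\prod_{i<j}(x_i-x_j)$ evaluated at $x=(\zt,z_1^{\pm1},\dots,z_r^{\pm1})$. We group the factors by the blocks the two indices lie in.

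\emph{Both indices in the frozen block.} The factor is a nonzero constant, since the $t$-th roots of unity are distinct.

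\emph{One frozen index and one free entry $z_j^{\pm1}$.} We get
\[
\prod_{k=0}^{t-1}(\zeta^k-z_j)(\zeta^k-z_j^{-1})=\pm(1-z_j^{\,t})(1-z_j^{-t}),
\]
which is a monomial-free Laurent polynomial with Newton polytope $[-te_j,te_j]$.

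\emph{The internal pair $z_j-z_j^{-1}$.} It contributes $[-e_j,e_j]$. Together with the previous case, the direction $e_j$ receives radius $t+1$.

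\emph{Two distinct free pairs $j<l$.} The four factors $(z_j^{\pm1}-z_l^{\pm1})$ have as Newton polytopes the four segments joining $\pm e_j$ to $\pm e_l$. Their midpoints sum to $0$. Their half-directions are $\tfrac12(e_j-e_l)$ twice and $\tfrac12(e_j+e_l)$ twice. The Minkowski sum of the four is therefore $[-(e_j-e_l),e_j-e_l]\oplus[-(e_j+e_l),e_j+e_l]$.

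Nonzero constants and signs do not affect Newton polytopes, so additivity gives (i) exactly as displayed. There is no stray monomial shift: every factor is centred, which is the role of the reciprocal pairing.

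For (ii) we use the standard zonotope argument. Let $Z=\bigoplus_{\alpha>0}[-v_\alpha,v_\alpha]$, where $v_\alpha$ is the root direction scaled by a radius depending only on the root length. Then $Z$ is $W(C_r)$-invariant, because $W$ permutes the $\pm v_\alpha$. A generic linear functional $\lambda$ is maximised on $Z$ at the single point $\sum_\alpha\operatorname{sgn}\langle\lambda,\alpha\rangle v_\alpha$. Writing $\lambda=w\lambda^+$ with $\lambda^+$ dominant, this point equals $w\bigl(\sum_{\alpha>0}v_\alpha\bigr)$, again using that the radii are $W$-invariant. Hence the vertices of $Z$ are exactly the orbit $W\cdot P$ with $P=\sum_{\alpha>0}v_\alpha$, and $Z=\mathrm{conv}(W\cdot P)$.

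Computing $P$ coordinatewise: coordinate $k$ receives $t+1$ from $e_k$, then $+1$ from each of $e_k\pm e_l$ with $l>k$, and $+1-1=0$ from each of $e_l\pm e_k$ with $l<k$. This gives $P_k=t+1+2(r-k)=N-2k+1$, i.e.\ $P=(N-1,N-3,\dots,N-2r+1)$.

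The entries of $P$ are strictly decreasing and positive, so $P$ is regular for $C_r$. Its stabiliser in $W(C_r)$ is therefore trivial, which gives exactly $2^rr!$ vertices. Part (iii) follows because $P$ is the dominant element of its orbit.

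The only step needing care is the vertex identification in (ii). There one must check that the sign pattern $\operatorname{sgn}\langle\lambda,\alpha\rangle$ for a non-dominant $\lambda$ really reproduces $w(P)$ with unequal radii. This holds because $w^{-1}$ permutes the positive roots up to sign while preserving root length, hence preserving the radius attached to each direction. Everything else is bookkeeping of the Vandermonde factors.
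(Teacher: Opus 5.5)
Your proof is correct and follows essentially the same route as the paper: factor the Vandermonde, use Ostrowski additivity to assemble the zonotope segment by segment, and use the $W(C_r)$-equivariant correspondence between vertices of the zonotope and chambers of the Coxeter arrangement, with the dominant vertex coming out as $t+1+2(r-k)=N-2k+1$. The differences are only in presentation: you group the frozen--free factors via $\prod_k(\zeta^k-z_j)(\zeta^k-z_j^{-1})=2-z_j^{t}-z_j^{-t}$, where the paper sums the $2t$ binomial segments directly, and you write out the vertex--chamber bijection as the explicit sign-vector maximiser $\sum_\alpha\operatorname{sgn}\langle\lambda,\alpha\rangle v_\alpha=w(P)$, where the paper cites simple transitivity of $W(C_r)$ on chambers.
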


\begin{proof}
$N_\delta$ is the Vandermonde determinant of the alphabet, so $N_\delta=\pm\prod_{i<j}(x_i-x_j)$
and, by additivity \cite{Ostrowski}, $\Newt(N_\delta)$ is the Minkowski sum of the segments
$\Newt(x_i-x_j)$. Each factor is a binomial, so each segment is read off directly. The
$\binom t2$ factors $(\zeta^a-\zeta^b)$ have $z$-degree $0$ on both monomials and contribute a
point. For a fixed $k$, the $t$ factors $(\zeta^a-z_k)$ give $[0,e_k]$, the $t$ factors
$(\zeta^a-z_k^{-1})$ give $[-e_k,0]$, and $(z_k-z_k^{-1})$ gives $[-e_k,e_k]$; summing,
$[-(t+1)e_k,(t+1)e_k]$. For a pair $k<l$ the four factors $(z_k^{\pm1}-z_l^{\pm1})$ give the
segments $\mathrm{conv}\{e_l,e_k\}$, $\mathrm{conv}\{-e_l,e_k\}$, $\mathrm{conv}\{-e_k,e_l\}$ and
$\mathrm{conv}\{-e_k,-e_l\}$, whose half-vectors are $\tfrac12(e_k-e_l)$ twice and
$\tfrac12(e_k+e_l)$ twice; their sum is the stated pair of segments. That is (i).

For (ii), the generator directions are exactly the positive roots of $C_r$, so the hyperplane
arrangement dual to them is the Coxeter arrangement of $C_r$. The vertices of a zonotope are in
bijection with the chambers of that arrangement, and that bijection is $W(C_r)$-equivariant because
$W(C_r)$ permutes the generators up to sign. A Coxeter group acts \emph{simply transitively} on the
chambers of its own arrangement, so the vertex set is a single free orbit, of size
$|W(C_r)|=2^rr!$. (Stability alone would not give this: a stable set of size $|W|$ need not be one
orbit.)
It remains to name the dominant vertex, and that is a count in the direction $w=(r,r-1,\dots,1)$:
each factor contributes the exponent maximising $w$. The $t$ factors $(\zeta^a-z_k)$ contribute
$+1$ to coordinate $k$ and the $t$ factors $(\zeta^a-z_k^{-1})$ contribute $0$, since $w_k>0>-w_k$;
$(z_k-z_k^{-1})$ contributes $+1$; and for each $l\neq k$ the four cross factors contribute $+2$ to
the larger of $w_k,w_l$ and $0$ to the smaller. Summing, coordinate $k$ receives
$t+1+2(r-k)=N-2k+1$, which is (iii), and (ii) follows.
\end{proof}

\noindent
Three named sources for the dominant vertex, and none of them a $\rho$: the orbit contributes $t$,
the pair $z_k$ contributes $1$ against itself, and the other free pairs contribute $2$ each. All
three parts were checked in the $44$ configurations $2\le t\le12$, $1\le r\le4$: the zonotope equals
the orbit polytope in $44$ of $44$, the vertex count is $2^rr!$ in $44$ of $44$, and the dominant
vertex is the stated one in $44$ of $44$. Against the \emph{expanded} $N_\delta$ wherever expanding
it terminates ($N\le8$) the polytope agrees in $9$ of $9$ --- so no cancellation over the cyclotomic
field shrinks it, which is what \cite{Ostrowski} predicts and the place one would least believe it.
Three decoys: giving the long generators radius $t$ fails in $44$ of $44$, dropping them
altogether --- the roots of $D_r$ --- fails in $44$ of $44$, and giving the short generators radius
$2$ fails in all $33$ configurations with $r\ge2$, the $r=1$ cases having no short roots and so
testing nothing \stverif.

\begin{remark}[what this says about Conjecture \ref{conj:A}]\label{rem:orbitpolytope}
Conjecture \ref{conj:A} asks that $\Newt(\Phi_{t,r})$ be an orbit polytope for $W(C_r)$. Part (ii)
says that its \emph{denominator} provably is one. That does not prove the conjecture --- the
numerator is where the cancellation lives, and a quotient of orbit polytopes need not be one --- but
it does say that the conjecture asks of $\Phi$ only what the neighbouring object satisfies outright,
and it supplies the shape of an answer: a zonotope over the root system.
\end{remark}

Hence the law, whose subtracted vector depends only on $(t,r)$ and simplifies:

\begin{equation}\label{eq:mumax}
\mu_{\max}(\beta)=\mathrm{top}\,\Newt(N_\beta)-2\rho_{C_r}-(t-1)(1,\dots,1),
\end{equation}
where $2\rho_{C_r}=(2r,2r-2,\dots,2)$ --- but see Remark \ref{rem:coincidence} before reading that
decomposition structurally.

\keybox{\emph{The shift is the denominator, and nothing else.} What has to be subtracted is
$\mathrm{top}\,\Newt(N_\delta)$: the raw vector $(N-1,N-3,\dots,N-2r+1)$, which depends on $N$ and
$r$ alone. Setting $\beta=\delta$ makes $\Phi\equiv1$ and exhibits it directly.}

\noindent
Indeed, since $N=t+2r$, the $k$-th entry is $t+2(r-k)+1=(t-1)+2(r-k+1)$, which is why it can be
written as $2\rho_{C_r}+(t-1)(1,\dots,1)$. Verified in $332$ of $332$ forms, against a decoy
subtracting the other natural vector $(N-1,N-2,\dots,N-r)$, which fails \stverif.

\begin{remark}[the name is a coincidence of coordinates, and we say so because we believed otherwise]\label{rem:coincidence}
It is tempting --- we did it --- to read $2\rho_{C_r}+(t-1)(1,\dots,1)$ structurally: the sum of the
positive roots of the free \emph{symplectic} factor, plus an offset contributed by the frozen block.
That reading is wrong, and the odd case refutes it. There the free factor is $D_r$, so the
structural reading predicts $2\rho_{D_r}+(t-1)(1,\dots,1)$, which differs from
$2\rho_{C_r}+(t-1)(1,\dots,1)$ by the uniform vector $(2,\dots,2)$ and is therefore easy to separate.
Measured on the full populations: the $C_r$ shift is right in $122$ of $122$ and $303$ of $303$ even
forms \emph{and} in $82$ of $82$ and $41$ of $41$ odd ones, while the $D_r$ shift is right in none of
them \stverif. So \eqref{eq:mumax} is a statement about the Vandermonde denominator in exponent
coordinates, valid at both parities, and the coincidence with a sum of positive roots is arithmetic.
The gain is that the law transfers to the odd case unchanged, and that what remains to be proved is
only the claim that no cancellation reaches the top vertex.
\end{remark}

\begin{figure}[t]
\centering
\includegraphics[width=\textwidth]{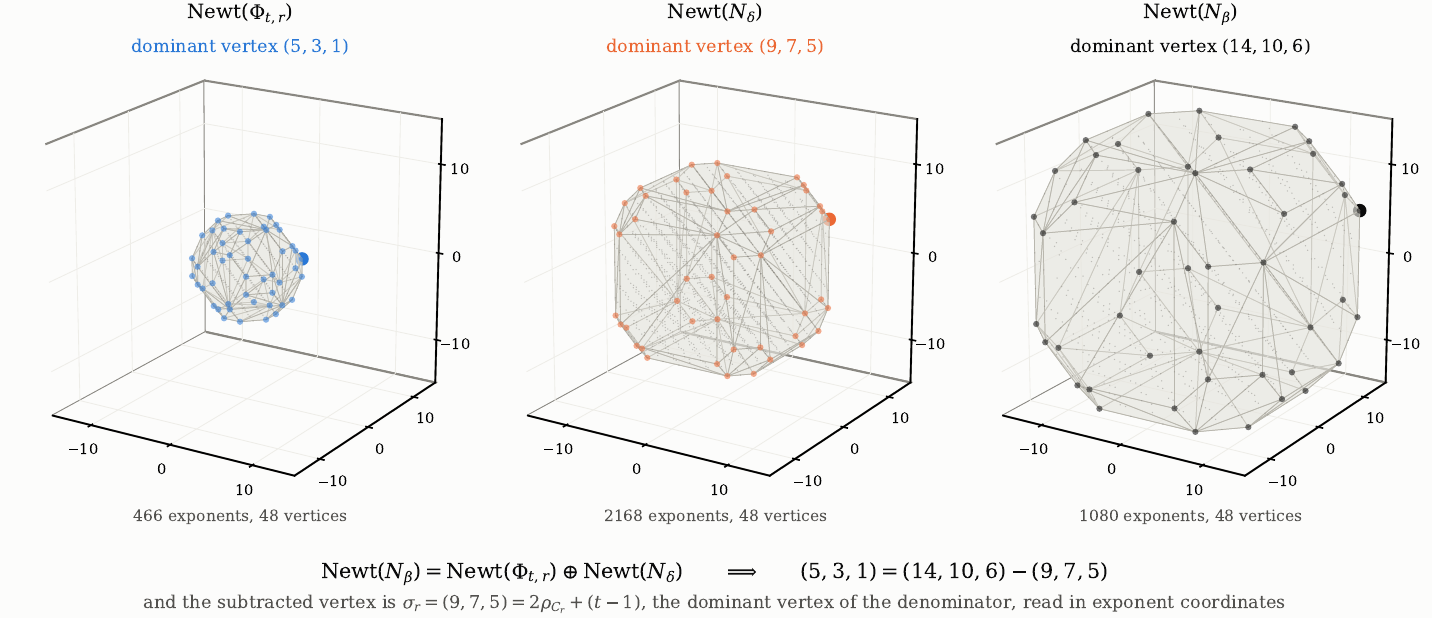}
\caption{The law \eqref{eq:mumax} as three solids and one subtraction of vertices, at $r=3$. By
Ostrowski's additivity \cite{Ostrowski} $\Newt(N_\beta)=\Newt(\Phi_{t,r})\oplus\Newt(N_\delta)$, and all three are
orbit polytopes of $W(C_r)$ --- which the figure shows rather than assumes, and shows by the right
test: for each of the three hulls the script checks that the vertex set \emph{equals}
$W(C_3)\cdot v_{\mathrm{dom}}$ as a set, not merely that it has the same cardinality $48=2^3\cdot3!$,
and it does in $3$ of $3$. Being orbit polytopes, each is determined by
its dominant vertex, and the Minkowski identity descends to the subtraction printed underneath. The
hulls are computed from the exponents that actually occur, not from the law; drawing
$\mathrm{conv}(W(C_r)\mu_{\max})$ instead would be drawing the statement rather than the data.}
\label{fig:law3d}
\end{figure}

\begin{remark}
Equation \eqref{eq:mumax} is not a closed form in $\beta$: evaluating it still requires knowing which
vertex of $\Newt(N_\beta)$ survives cancellation. What it buys is that the denominator is out of the
way and the shift has a name. It is also not yet a theorem --- it rests on Conjecture \ref{conj:A}
below, and Remark \ref{rem:AB} says exactly how. Figure \ref{fig:law3d} shows the three polytopes
involved on one shape, and the subtraction of vertices that the shift performs; the vertex set of
each is exactly one $W(C_3)$-orbit, of size $48=2^3\cdot3!$, which is what Conjecture \ref{conj:A}
asserts in general.
\end{remark}

\section{The extremal primitivity conjecture}\label{sec:unit}

Everything above is a factorisation into parts that are separately understood. One statement is left
over, and it is the reason for this paper.

The two halves of what we measure are usually stated together, and they should not be: one of them
may be much more accessible than the other, and only the first is what \eqref{eq:mumax} rests on.

\noindent
One convention, because the next statement is the only one written for both parities at once and the
two branches expand in different bases. \emph{From here on $A_\mu$ denotes the folded coefficient
support of $\Phi_{t,r}$}: on the even branch it is the $A_\mu$ of \eqref{eq:A} in the $C_r$ basis,
and on the odd branch the $A^D_\mu$ of \eqref{eq:oddA} with the chirality folded, which is legitimate
because $A^D_\mu=A^D_{\mu^*}$ by Remark \ref{rem:chirality}. Both then live on $W(C_r)$-dominant
$\mu$, which is where $\Newt(\Phi_{t,r})$ lives too.

\begin{conjecture}[a single orbit polytope]\label{conj:A}
If $\Phi_{t,r}\neq0$ then $\{\mu:A_\mu\neq0\}$ has a unique maximal element $\mu_{\max}$ for the
\emph{Weyl majorisation} order $\mu\preceq\mu'\iff\mu\in\mathrm{conv}\bigl(W(C_r)\,\mu'\bigr)$;
equivalently, $\Newt(\Phi_{t,r})$ is the orbit polytope $\mathrm{conv}\bigl(W(C_r)\,\mu_{\max}\bigr)$.
\end{conjecture}

\noindent
Which order is meant has to be said, because with the root order the statement would be false on our
own data. The root lattice of $C_r$ is $\{x\in\ZZ^r:\sum_ix_i\ \text{even}\}$, so weights with
$|\mu|$ of different parities lie in different cosets and are incomparable; and at odd $t$ the
support meets both parities --- in $55$ of $82$ shapes at $t=3$ and $8$ of $41$ at $t=5$
(\S\ref{sec:comb}) --- so it has at least two maximal elements there. Weyl majorisation has no such
obstruction, and it is the order the polytope formulation uses. We are grateful to a correspondent
for insisting on the distinction; elsewhere in this paper ``dominance'' means the root order, and
this is the one place it does not.

\begin{conjecture}[extremal primitivity]\label{conj:H}
With $\mu_{\max}$ as in Conjecture \ref{conj:A}: the class $M_{\mu_{\max}}$ at the highest surviving
weight --- virtual on the even branch and a genuine multiplicity space on the odd one, as in
\S\ref{sec:comp} --- is \emph{primitive} in the minimal fusion quotient of Theorem
\ref{thm:fusion}, that is $q_t\bigl([M_{\mu_{\max}}]\bigr)=\pm[0]$. Explicitly, in the two parities,
\[
\sum_{\eta} B_{\eta,\mu_{\max}}\,\tau^C_t(\eta)\;=\;\pm1\quad(t\ \text{even}),
\qquad
\sum_{\Lambda,\eta} a^B_\Lambda\,B^{\mathrm{odd}}_{\Lambda;\eta,\mu^+_{\max}}\,
\tau^B_t(\eta)\;=\;\pm1\quad(t\ \text{odd}),
\]
the sign being unconstrained: the conjecture is primitivity and not positivity, and both signs occur
in our data.
\end{conjecture}

\noindent
We say \emph{primitive} rather than \emph{unit} deliberately. After Theorem \ref{thm:fusion} the
target is a rank-one ring, so ``unit'' would invite the reading that $M_{\mu_{\max}}$ is an
invertible object --- a simple current --- which is not what is claimed and not what is proved. What
is claimed is that its class is $\pm$ the generator.

The two parities are not equally far from it, and the reader who wants the shortest route should go
back to \S\ref{sec:odd}: for odd $t$ the conjecture localises to (L1)--(L3), and (L1) is now
settled entire --- Proposition \ref{prop:transversal} makes its numerator a signed count of
transversals with values in $\{0,\pm1\}$, Proposition \ref{prop:divided} performs the division in
closed form, and Corollary \ref{cor:unimodular} bounds the quotient --- so that what remains towards
Conjecture \ref{conj:H} is (L2) and (L3). Nothing of the kind is available for even $t$.

\subsection{A map of the four conjectures}\label{sec:map}
This paper states four conjectures, and they are not four problems: three of them are upstream of
one. Two further statements are listed with them and are no longer conjectures --- they are the two
shapes (L1) took, the identity that explains the bound and the sufficient condition that gives it
outright, and both are proved above. They stay on the map because what closed them is what makes the
remaining four legible: the difficulty was never in $M$, and the four that are left are the ones
stated about representation-theoretic quantities rather than about an explicit finite object. A
reader deciding where to spend effort should have that laid out rather than inferred, so here it
is.

\begingroup\small
\rowcolors{2}{hband}{white}
\renewcommand{\arraystretch}{1.25}
\begin{center}
\begin{tabular}{@{}p{0.24\linewidth}p{0.40\linewidth}p{0.28\linewidth}@{}}
\toprule
\textbf{Statement} & \textbf{What it says} & \textbf{Status}\\
\midrule
Conj.~\ref{conj:H}, extremal primitivity & the class at $\mu_{\max}$ is $\pm$ the generator of the
fusion quotient & \emph{the reason for this paper}; open at both parities\\
Conj.~\ref{conj:A}, single orbit polytope & $\Newt(\Phi_{t,r})$ is one $W(C_r)$-orbit polytope &
open; \eqref{eq:mumax} rests on it, and it is what makes $\mu_{\max}$ well defined\\
Conj.~\ref{conj:tie}, the tie cancels & when the top is attained twice, the contributions cancel &
open; with Prop.~\ref{prop:transversal}(v) it gives (L2) \emph{only} where the top is attained
once\\
Conj.~\ref{conj:L3}, extremal multiplicity one & the unique extremal $\Lambda$ has
$a^B_\Lambda=1$ & open; it is (L3) \emph{only} where the top is attained once\\
Thm.~\ref{thm:cancel}, unfolded cancellation & at a dominant index, a fibre meeting
$\mathrm{supp}\,\tilde\nu$ more than once sums to zero & \emph{proved}; it gives (L1)\\
Cor.~\ref{cor:unimodular}, unimodularity & $M(\Lambda,X)$ is totally unimodular &
\emph{proved}, Lemma \ref{lem:c1p}; it gives (L1) outright\\
\bottomrule
\end{tabular}
\end{center}
\endgroup

\noindent
The implications run one way. At odd $t$, Conjecture \ref{conj:H} localises into (L1)--(L3)
(\S\ref{sec:oddlocal}), and
\[
\text{\ref{cor:unimodular}}\ \Longrightarrow\ \text{(L1)},
\qquad
\text{\ref{thm:cancel}}\ \Longrightarrow\ \text{(L1)},
\qquad
\text{Prop.~\ref{prop:transversal}(v)}+\text{\ref{conj:tie}}\ \Longrightarrow\ \text{(L2)}^{\,\circ},
\qquad
\text{\ref{conj:L3}}=\text{(L3)}^{\,\circ},
\]
and (L1)$\,\wedge\,$(L2)$\,\wedge\,$(L3) gives Conjecture \ref{conj:H} at odd $t$. \emph{The circle
is not decoration}: $(\text{L2})^{\circ}$ and $(\text{L3})^{\circ}$ are (L2) and (L3) restricted to
the shapes where $M(\beta)$ is attained by a single $\Lambda$, and on the rest --- $27$ of our $132$
--- the two last arrows do not close. When the tied top cancels, the true $\mu^+_{\max}$ drops and
neither conjecture speaks about the layer it drops to; the counter-model is in the paragraph after
Conjecture \ref{conj:tie}. So the honest reading of the last two arrows is that they dispose of the
non-tie case and leave the tie case exactly where (L2) and (L3) already were. Conjecture
\ref{conj:A} sits to one side: it is what \eqref{eq:mumax} needs, and \ref{conj:H} refers to the
$\mu_{\max}$ it produces, but neither implies the other. At even $t$ there is no localisation at all
(Remark \ref{rem:eventransversal}), and \ref{conj:H} stands alone.

The two that are now settled were settled for the reason the map predicted: they were the only ones
stated about an explicit finite object rather than about a representation-theoretic quantity, the
only ones with an existing body of technique behind them, and the only ones we could test
exhaustively rather than sample. (L1) appeared in three shapes in this paper and they were never
three problems: (L1) is the bound, Theorem \ref{thm:cancel} is the identity that explains it, and
Corollary \ref{cor:unimodular} is the sufficient condition that arrives with a toolkit. The route
that closed them runs the other way round from the one we expected --- not from a cancellation
mechanism on the fibre, but from a permanent, its parity, and the dominance of the enumeration.

\begin{remark}[the conjecture is extremal, and that is measured]\label{rem:extremal}
Conjecture \ref{conj:H} is not the top instance of a global bound, because the global bound is false.
The tempting route to one runs as follows. Yacobi shows that in the rank-one branching
$\Sp_{2n}\downarrow\Sp_{2n-2}$ each multiplicity space is canonically an irreducible module for a
product of copies of $SL_2$ \cite{Yacobi}; for a \emph{single} irreducible $\Lambda$ that makes the
character at our element a product of values in $\{0,\pm1\}$, so the individual contribution
$\epsilon(\Lambda,\mu)$ lies in $\{0,\pm1\}$. The invalid strengthening is to expect the same bound
after summing, since our object is virtual: $A_\mu=\sum_\Lambda a_\Lambda\,\epsilon(\Lambda,\mu)$
with $a_\Lambda$ of both signs at even $t$ --- a point made precisely in \S\ref{sec:comb}. Nothing in
\cite{Yacobi} bounds that sum, and the data rule it out. We tested exactly that, in exactly the case
where the theorem applies --- $t=4$, so $m=1$ --- and it fails: over seven forms and $57$ weights
with nonzero coefficient, nine carry $|A_\mu|=2$ \stverif. The reconstruction control passed on all
seven, so the count is of the coefficients themselves and not of an artefact.

We record this because it is the difference between a conjecture and a corollary. What is bounded is
the coefficient at the highest surviving weight, and the bound already fails one step below it; so
whatever explains Conjecture \ref{conj:H} has to see the extremal weight in particular, and cannot be
a statement about multiplicity spaces in general.
\end{remark}

\begin{remark}[what rests on which]\label{rem:AB}
Equation \eqref{eq:mumax} needs Conjecture \ref{conj:A} and nothing more. Ostrowski's additivity
gives $\Newt(N_\beta)=\Newt(\Phi_{t,r})\oplus\Newt(N_\delta)$ for any $\beta$; the passage from that
to a subtraction of dominant vertices is exactly the statement that the three polytopes are orbit
polytopes with a single dominant vertex each, which for the middle one is Conjecture \ref{conj:A}.
So \eqref{eq:mumax} is not a theorem as it stands: it is a consequence of \ref{conj:A}, and the
verification table records it as measured for that reason. Conjecture \ref{conj:H} is independent of
it and is where the difficulty is concentrated.
\end{remark}

Conjecture~\ref{conj:H} is not a restatement of a computation. The individual $B_{\eta,\mu_{\max}}$
are not small. Over the sixteen forms of \S\ref{sec:top16} the largest single summand
$|B_{\eta,\mu_{\max}}\tau^C_t(\eta)|$ is $798$, and the running sums travel as far as $455$ from zero
before returning; the filter supplies only signs, and every one of the sixteen lands on $\pm1$
(Figure~\ref{fig:collapse}). Nor does it follow from
Lemma~\ref{lem:T}: $A_\mu$ is an integer combination of values in $\{0,\pm1\}$ and is a priori an
arbitrary integer. Neither the branching nor the filter, taken alone, constrains it.

\medskip\noindent
\textbf{Evidence.} Uniqueness of $\mu_{\max}$ and $|A_{\mu_{\max}}|=1$ hold in every case we have
computed \stverif: $396$ of $396$ in the symplectic expansion computed in the type-$C$ bialternant;
$332$ of $332$ in the population where \eqref{eq:mumax} was checked; $13$ of $13$ in the branching
verification of \S\ref{sec:comp}; and $16$ of $16$ in the stratum of \S\ref{sec:top16} below, where
$\mu_{\max}$ was obtained twice by unrelated machinery and agreed.

\medskip\noindent
\textbf{A remark on what a unit means here.} Every weight that survives the filter has, by the proof,
folded classes exhausting $\{1,\dots,\mathrm{rk}\}$, hence folds to the same alcove point --- and by
Theorem \ref{thm:fusion} that point is the only one there is, in either parity, on the tensor
sector. So the conjecture is not that the coefficient is small; it is that the extremal class ---
virtual for even $t$, a genuine multiplicity space for odd $t$ --- is \emph{primitive} in the
rank-one quotient, that is $q_t([M_{\mu_{\max}}])=\pm[0]$. An earlier version of this remark took the
two points of the odd alcove for an open question and proposed to measure which one each $\eta$
reaches. Theorem \ref{thm:fusion} settles
it: the second point is the spin weight, our characters are tensorial, and there is nothing to
measure.

\begin{figure}[t]
\centering
\includegraphics[width=\textwidth]{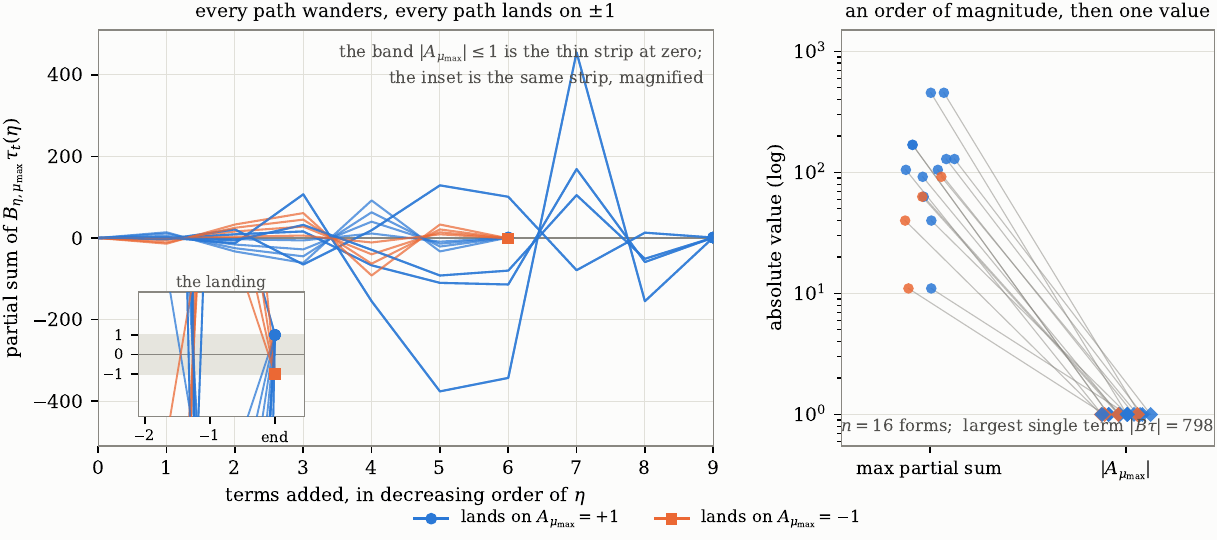}
\caption{Conjecture \ref{conj:H} seen from inside. \emph{Left:} for each of the sixteen forms of
\S\ref{sec:top16}, the running partial sum of $B_{\eta,\mu_{\max}}\tau^C_t(\eta)$ as the surviving
$\eta$ are added in decreasing order. The paths travel as far as $455$ from zero and the largest
single summand is $798$; the filter contributes only signs; and every path lands inside the band
$|A|\le1$, magnified in the inset. \emph{Right:} the same fact as two scales --- how far each sum
travels against where it ends, on a logarithmic axis. The left column spans two orders of magnitude
and the right column is a single value. Nothing in the two factors separately forces this: the
branching knows nothing of the root of unity, and the filter knows nothing of the branching.}
\label{fig:collapse}
\end{figure}

\section{The stratum of the failing candidates}\label{sec:top16}

The expansion \eqref{eq:A} also settles a question left by the naive approach. Expanding
\eqref{eq:def} by Laplace over the frozen rows produces a candidate for the top weight; the candidate
is correct except on a small set of forms, on which the whole top orbit cancels and one must descend.
At $t=6$, $r=2$ and $\beta$ bounded by $13$ there are exactly $16$ such forms, and on all $16$ the
descent is by the same vector $\Delta=(1,1)$, whereas at $t=4$ the descent takes five different
values.

Computing the branching for all $16$ shows what causes it \stverif. In every one of the $16$, the
highest $\eta$ in the block of the failing candidate \emph{survives} the filter -- no wall kills it --
and there are between $5$ and $9$ surviving $\eta$ in that block which cancel among themselves to give
$A=0$. No form has zero survivors.

\keybox{The descent $\Delta=(1,1)$ is not caused by the torsion filter. It is caused by cancellation
inside the branching, among weights that the filter lets through.}

\noindent
Two further facts from the same computation. The weight $\mu_{\max}$ obtained here from the branching
agrees in $16/16$ with the first nonvanishing level of the unrelated Laplace machinery, which is an
independent check of both. And all $16$ forms share the same pair of weights, candidate $(5,2)$ and
maximum $(4,1)$: the interesting statement is not that $\Delta=(1,1)$ but that the failures
concentrate on a single pair.

\section{The residue}\label{sec:residue}

\subsection{One constraint the locus satisfies for free}
Before measuring what is left, we record a property of the vanishing locus that costs nothing and
that we did not put there. Every step of the factorisation is a map of representation rings ---
Littlewood restriction, branching to a subgroup, and, by Theorem \ref{thm:fusion}, a fusion quotient
--- so the composite $s_\lambda\mapsto\Phi_{t,r}(\lambda)$ is a ring homomorphism, and its kernel is
an ideal.

\begin{proposition}[zeros come with their neighbours]\label{prop:ideal}
If $\Phi_{t,r}(\lambda)=0$ then $\sum_{\rho=\lambda+\square}\Phi_{t,r}(\rho)=0$. In particular
$\lambda$ cannot have exactly one non-vanishing neighbour $\lambda+\square$.
\end{proposition}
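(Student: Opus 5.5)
The plan is to use Pieri's rule together with the fact that $\Phi_{t,r}$ is an evaluation. Since $\Phi_{t,r}(\lambda)$ is by \eqref{eq:def} the value of $s_\lambda$ at the point $x$, the map $s_\lambda\mapsto\Phi_{t,r}(\lambda)$ is evaluation at a point, hence a ring homomorphism $\mathrm{ev}_x$ from symmetric polynomials in $N$ variables to Laurent polynomials in $z$. The fusion-quotient factorisation is not needed; the proposition follows from this homomorphism property and one Pieri product. Concretely, I would multiply by $s_{(1)}=e_1$ and use
\[
s_\lambda\,s_{(1)}\;=\;\sum_{\rho=\lambda+\square}s_\rho ,
\]
where the sum runs over the partitions obtained by adding one box, restricted to at most $N$ rows because we work in $N$ variables. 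Applying $\mathrm{ev}_x$ gives
\[
\Phi_{t,r}(\lambda)\,s_{(1)}(x)=\sum_{\rho=\lambda+\square}\Phi_{t,r}(\rho).
\]
If $\Phi_{t,r}(\lambda)=0$, the left side vanishes identically in $z$, and that is the first claim.

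Two bookkeeping points must be checked against the normalisation $\beta_N=0$ fixed in the introduction. First, adding a box in row $N$ would make $\lambda_N=1$, and the corresponding $\beta$ would no longer be normalised. In $N$ variables, however, $s_\rho$ with $\ell(\rho)\le N$ is still a legitimate Schur polynomial. So the sum should be read over all $\rho$ with at most $N$ rows, with $\Phi_{t,r}(\rho)$ meaning $s_\rho(x)$. If the paper insists on normalised $\beta$, one can strip the column $\rho_N=1$ at the cost of the factor $\prod_i x_i=(-1)^{t+1}$. That is a unit, so vanishing is unaffected, but the sign in the sum could change. I expect this is the only delicate point, and the statement as worded (a sum of values) should be read with $\Phi(\rho)=s_\rho(x)$ unnormalised. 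Second, $s_{(1)}(x)=\sum_{\zeta\in\zt}\zeta+\sum_j(z_j+z_j^{-1})$ is generally nonzero, but the argument never divides by it, so its value is irrelevant to the first claim.

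The \emph{in particular} is then immediate. If exactly one neighbour $\rho_0$ had $\Phi_{t,r}(\rho_0)\neq0$, the sum would equal that single nonzero value, contradicting the identity. The main obstacle is therefore not mathematical. It is the convention mismatch between the normalised indexing by $\beta$ and adding a box in the last row, which I would resolve by stating explicitly that neighbours are taken in $N$ variables, with the unit factor $(-1)^{t+1}$ accounted for where a normalisation is imposed.
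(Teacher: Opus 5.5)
Your proof is correct and is the paper's own argument: Pieri's rule $s_\lambda s_{(1)}=\sum_{\rho=\lambda+\square}s_\rho$ combined with multiplicativity of $s_\lambda\mapsto\Phi_{t,r}(\lambda)$. You justify that multiplicativity directly as evaluation at a point, rather than through the factorisation into representation-ring maps, and your remark that $\Phi_{t,r}(\rho)$ must be read as $s_\rho(x)$ when the box lands in row $N$ (renormalising would attach the sign $(-1)^{t+1}$ to those terms) is a correct clarification that the paper leaves implicit.
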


\begin{proof}
Pieri gives $s_\lambda s_{(1)}=\sum_{\rho}s_\rho$, the sum over $\rho=\lambda+\square$, and
$\Phi_{t,r}$ is multiplicative, so
$\sum_\rho\Phi_{t,r}(\rho)=\Phi_{t,r}(\lambda)\,\Phi_{t,r}((1))=0$. If exactly one summand were
nonzero the sum could not vanish.
\end{proof}

\noindent
Measured over the shapes of size at most $11$ or $12$ in four configurations: the identity holds in
all $21$, $32$, $5$ and $19$ vanishing shapes, and the count of non-vanishing neighbours is never
$1$ --- the distributions run over $\{0,2,3,4\}$ --- whereas among non-vanishing shapes the value $1$
does occur, $6$, $8$, $2$ and $3$ times respectively \stverif. The constraint is therefore about the
locus and not about the count. In two configurations there are also vanishing shapes all of whose
neighbours vanish, $7$ at $t=6$ and $3$ at $t=5$.

\subsection{How much is left}
Over the population of all strictly decreasing $\beta$ with entries at most $13$, the classical
criterion accounts for most of the vanishing.

\begin{center}
\rowcolors{2}{hband}{white}
\begin{tabular}{l r r l}
\toprule
& forms & vanishing & accounted for by \\
\midrule
$\beta$ with an empty residue class mod $t$ & $275$ & $275$ & the classical core criterion \\
occupied, vanishing already at $t=2$ & --- & $9$ & the reduction of \S\ref{sec:red} \\
occupied, vanishing by cancellation & --- & $\mathbf{12}$ & \textbf{nothing} \\
\midrule
total ($t=4$, $r=2$, $N=8$, $1716$ forms) & $1716$ & $296$ & \\
\bottomrule
\end{tabular}
\end{center}

\noindent
At $t=6$, $r=2$ ($N=10$, $715$ forms) the corresponding residue is $12$ forms out of $491$ occupied.
So the open problem, on the range we can compute, is a set of about a dozen forms per configuration:
the classical criterion sees $93\%$ of the zeros and the reduction a few more.

\subsection{What is true of the residue}
Two statements survive testing against the full non-vanishing population \stverif.

First, a parity condition: $|\lambda|\equiv N(N-1)/2 \pmod 2$, which holds in $21$ of $21$ at $N=8$
(where the residue is even) and $12$ of $12$ at $N=10$ (where it is odd). It is necessary and far
from sufficient: $718$ and $216$ non-vanishing forms satisfy it.

Second, a concentration: at $t=6$ the occupied population carries $36$ distinct $t$-cores and
\emph{$33$ of them contain no vanishing form at all}. The three that do give
$P(\text{vanish}\mid\text{core})=28.6\%$, against $10.1\%$ for the same conditioning on $|\lambda|$,
so the signal is real; but it does not decide.

\begin{figure}[t]
\centering
\includegraphics[width=\textwidth]{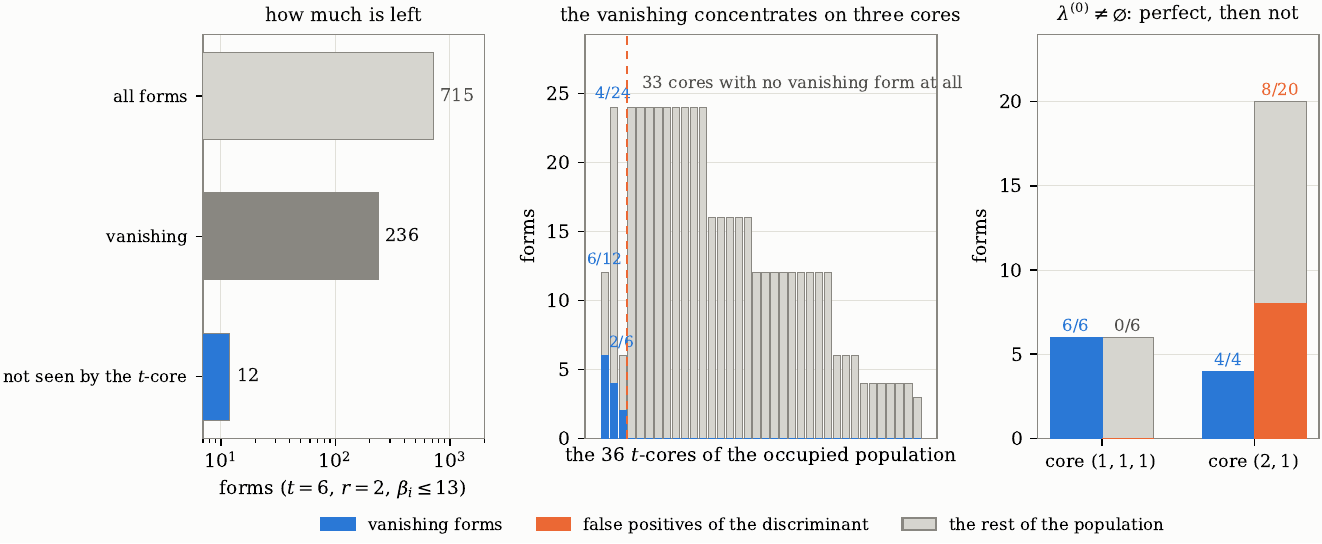}
\caption{Where the open problem is, and what does not find it. \emph{Left:} the funnel, on a
logarithmic scale. Of $715$ forms, $236$ vanish, and all but twelve of those are seen by the
classical core criterion. \emph{Centre:} the occupied population carries $36$ distinct $t$-cores and
$33$ of them contain no vanishing form at all --- a sharp restriction --- but the three that do are
far from pure, which is why the core is necessary and not sufficient. \emph{Right:} the best
discriminant we found, and the stratum where it dies. Inside the core $(1,1,1)$ the six vanishing
forms are exactly the six whose quotient component at the self-paired residue is non-empty; inside
$(2,1)$ the same condition holds for all four vanishing forms and for eight of the twenty others.
Both strata are drawn at the same scale, so the failure is as visible as the success.}
\label{fig:residue}
\end{figure}

\subsection{What is not true of it}
We list the discriminants that fail, with the false-positive count that kills each, because a
vanishing criterion is exactly the kind of statement that is easy to fit on twelve objects. Figure
\ref{fig:residue} draws the funnel: how much each layer of the criterion removes, the three cores of
thirty-six that survive it, and the discriminant that is perfect on one stratum and wrong on the
next.

\begin{center}
\rowcolors{2}{hband}{white}
\begin{tabular}{l c r l}
\toprule
candidate & covers & false $+$ & verdict \\
\midrule
$t$-core empty & $6/21$ & $156$ & no \\
$\ell(\lambda)>r+1$ & $20/21$ & $1337$ & no \\
$\lambda$ an odd full-height rectangle & $\mathbf{0/21}$ & $3$ & covers none \\
$|\lambda|$ even & $21/21$ & $718$ & necessary, not sufficient \\
balanced residue profile & $1/21$ & $51$ & no \\
$\lambda^{(0)}\neq\varnothing$ (see below) & $10/12$ & $118$ & no \\
core among the three \emph{and} $\lambda^{(0)}\neq\varnothing$ & $10/12$ & $8$ & best, still no \\
\bottomrule
\end{tabular}
\end{center}

\noindent
The last two deserve a word, since within one stratum they look decisive. Fix the core $(1,1,1)$ at
$t=6$: there are $12$ occupied forms, $6$ vanishing and $6$ not, all with the same residue profile.
The component of the $t$-quotient at the self-paired residue $q=0$ is nonempty in all six vanishing
forms and empty in all six others -- a split that has probability $1/\binom{12}{6}\approx0.1\%$ at
random. It nevertheless fails in the next stratum: at the core $(2,1)$ the same condition holds for
all four vanishing forms but also for eight of the twenty non-vanishing ones. We record it as a dead
candidate.

\begin{remark}[what would settle this]
With a residue of a dozen forms, no conjunction of two features can be resolved: there are more
degrees of freedom in the candidate rules than there are data. What is needed is a larger population,
not more features.
\end{remark}

\section{The same coefficient, computed the other way}\label{sec:square}

The composite \eqref{eq:A} processes the free pairs first and the root of unity last. The companion
paper does the opposite: it processes the orbit first, leaving a Schur function in the free alphabet.
Both compute the same $A_\mu$, so the two orders of processing must agree, and the agreement is an
identity between objects of different provenance.

\subsection{The other order}
Adjoining a set of variables splits a Schur function as $s_\lambda(X\cup Y)=\sum_\nu
s_{\lambda/\nu}(X)\,s_\nu(Y)$. With $X=\zt$ and $Y$ the free reciprocal alphabet this gives
\begin{equation}\label{eq:eps}
\Phi_{t,r}\;=\;\sum_\nu \varepsilon^{(t)}_{\lambda,\nu}\,s_\nu(z_1^{\pm1},\dots,z_r^{\pm1}),
\qquad
\varepsilon^{(t)}_{\lambda,\nu}=s_{\lambda/\nu}(\zt),
\end{equation}
and then, restricting each $GL_{2r}$-character to $\Sp_{2r}$ with multiplicities $b_{\nu\mu}$,
\begin{equation}\label{eq:square}
\boxed{\ \sum_\nu \varepsilon^{(t)}_{\lambda,\nu}\,b_{\nu\mu}
\;=\;\sum_\eta B_{\eta,\mu}\,\tau^C_t(\eta)\ }
\qquad\text{for every }\mu.
\end{equation}
On the left, a root-of-unity evaluation followed by a Littlewood restriction; on the right, a
symplectic branching followed by a torsion folding. Neither side knows about the other.

\begin{proposition}[the two factorisations agree]\label{prop:square}
Let $t$ be even. Then \eqref{eq:square} holds for every $\mu$.
\end{proposition}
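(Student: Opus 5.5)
Both sides of \eqref{eq:square} are the coefficient of $\spc_\mu(z)$ in one and the same function of $z$, namely $\Phi_{t,r}$. So the proof is a uniqueness-of-expansion argument, and there is no need to compare the two sums term by term.

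\emph{Left-hand side.} The coproduct identity $s_\lambda(X\cup Y)=\sum_\nu s_{\lambda/\nu}(X)s_\nu(Y)$ is an identity of symmetric functions in two independent alphabets. It may therefore be specialised at $X=\zt$ and $Y=(z_1^{\pm1},\dots,z_r^{\pm1})$, which gives \eqref{eq:eps} as an identity of Laurent polynomials in $z$. Only finitely many $\nu\subseteq\lambda$ occur, and $\ell(\nu)$ is unrestricted. Each $s_\nu(z^{\pm1})$ is then the character of $GL_{2r}$ restricted to $\Sp_{2r}$; we take this as the definition of the $b_{\nu\mu}$. For $\ell(\nu)>2r$ we have $s_\nu=0$ in $2r$ variables, so those terms vanish and the sum is finite. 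Collecting terms writes $\Phi_{t,r}$ as $\sum_\mu\bigl(\sum_\nu\varepsilon^{(t)}_{\lambda,\nu}b_{\nu\mu}\bigr)\spc_\mu(z)$.

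\emph{Right-hand side.} This is \eqref{eq:A}. We start from the companion paper's expansion \eqref{eq:branch} of $\Phi_{2,R}$, branch to $\Sp_{2m}\times\Sp_{2r}$, and specialise $y=(\xi,\dots,\xi^m)$ by Proposition~\ref{prop:red}. This replaces each $\spc_\eta(y)$ by $\tau^C_t(\eta)$, through the definition \eqref{eq:tau}. The sums are again finite, so $\Phi_{t,r}=\sum_\mu A_\mu\spc_\mu(z)$ with $A_\mu$ the right side of \eqref{eq:square}.

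\emph{Uniqueness.} The symplectic characters $\spc_\mu(z)$, for $\mu$ running over partitions of length $\le r$, are linearly independent over $\mathbb{C}$ as Laurent polynomials in $z$. This follows from the bialternant: $\spc_\mu\cdot\prod$ (denominator) is a $W(C_r)$-alternant whose leading monomial $z^{\mu+\rho_{C_r}}$ is distinct for distinct $\mu$. Both expansions have coefficients in $\mathbb{Z}[\zeta]$ and represent the same function $\Phi_{t,r}$, so they agree coefficientwise, and \eqref{eq:square} follows for every $\mu$.

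\emph{Main obstacle.} I expect the only real difficulty to be bookkeeping: the two sides must genuinely expand the \emph{same} normalised object. There are two points to check. First, the $\beta_N=0$ normalisation: the sign $(-1)^{c(t+1)}$ at even $t$ has to be fixed identically on both sides. Since both start from the same $\lambda=\beta-\delta$, this holds. Second, the even-$t$ twining of Remark~\ref{rem:virtual}: the $B_{\eta,\mu}$ are the coefficients of a virtual character, which is harmless here, since uniqueness of expansion only needs integrality and a basis. The hypothesis that $t$ is even is used solely to identify the right-hand side with \eqref{eq:A}, rather than with its odd analogue \eqref{eq:oddA} in the $D_r$ basis.
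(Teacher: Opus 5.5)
Your proposal is correct and is the paper's proof: both read the two sides of \eqref{eq:square} as the coefficient of $\spc_\mu$ in the same element $\Phi_{t,r}$, the left via \eqref{eq:eps} followed by restriction and the right via \eqref{eq:A}, and then use linear independence of the irreducible symplectic characters. Your additions --- specialising the coproduct identity, and justifying independence by the distinct leading monomials $z^{\mu+\rho_{C_r}}$ of the alternants --- fill in steps the paper leaves implicit without changing the argument.
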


\begin{proof}
Both sides are the coefficient of $\spc_\mu$ in the same element $\Phi_{t,r}$ of the representation
ring of $\Sp_{2r}$: the left from \eqref{eq:eps} followed by the Littlewood restriction, the right
from \eqref{eq:A}. The irreducible characters $\spc_\mu$ are linearly independent, so the expansion
is unique and the two coefficients coincide.
\end{proof}

\noindent
The identity is therefore not experimental, and we record the computation only as an implementation
control: five forms at $t=4$, $r=2$, each side independently reconstructing $\Phi_{t,r}$ monomial by
monomial, with a decoy using the $\varepsilon$ of a different order disagreeing on all five
\stverif. What the control tests is our two pipelines, not the statement.

This is what makes the composite of \S\ref{sec:comp} more than a bookkeeping device. It is not that
we may compose two known steps; it is that the two natural ways of splitting the same evaluation
commute.

\subsection{The signs on the left are a determinant of zeros and ones}
The coefficients $\varepsilon^{(t)}_{\lambda,\nu}$ take the values $0,\pm1$ --- a fact the companion
paper records --- and there is a reason, which also makes them instant to compute.

\begin{lemma}\label{lem:epsdet}
On the full orbit, $\sum_{k\ge0} h_k(\zt)\,z^k=\prod_{j}(1-\zeta^j z)^{-1}=(1-z^t)^{-1}$, so
\[
h_k(\zt)=1\ \text{ for }\ k\in t\ZZ_{\ge0},
\qquad
h_k(\zt)=0\ \text{ otherwise},
\]
the second clause including every $k<0$, where $h_k=0$ by convention. Hence, by the Jacobi--Trudi
formula for skew shapes,
\[
\varepsilon^{(t)}_{\lambda,\nu}
\;=\;\det\Bigl(h_{\lambda_i-\nu_j-i+j}(\zt)\Bigr)
\;=\;\det\Bigl(\mathbf 1\bigl[\ \lambda_i-\nu_j-i+j\in t\ZZ_{\ge0}\ \bigr]\Bigr),
\]
a determinant of a matrix of zeros and ones.
\end{lemma}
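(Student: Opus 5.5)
The plan has two parts, the generating function for $h_k$ and then skew Jacobi--Trudi, and both are routine. First, compute the complete homogeneous generating function on the full orbit. Since $\zt$ is the set of roots of $x^t-1$, we have the polynomial identity $\prod_{j=0}^{t-1}(1-\zeta^j z)=1-z^t$. This follows from $\prod_j(x-\zeta^j)=x^t-1$: substitute $x=1/z$ and multiply by $z^t$, using $\prod_j(-\zeta^j)=(-1)^t\zeta^{\binom t2}=(-1)^t(-1)^{t+1}=-1$, which is \eqref{eq:det} again. Inverting gives $\sum_k h_k(\zt)z^k=(1-z^t)^{-1}=\sum_{q\ge0}z^{qt}$. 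Reading off coefficients, $h_k(\zt)=1$ for $k\in t\ZZ_{\ge0}$ and $h_k(\zt)=0$ for every other $k\ge0$. The clause for $k<0$ is the standard convention $h_k=0$, so the case analysis is complete without further work.

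Second, apply the skew Jacobi--Trudi identity $s_{\lambda/\nu}=\det\bigl(h_{\lambda_i-\nu_j-i+j}\bigr)_{i,j=1}^{\ell}$, with $\ell\ge\ell(\lambda)$. This is a polynomial identity in the ring of symmetric functions, so it survives specialisation to the alphabet $\zt$. It holds for every $\nu$, including $\nu\not\subseteq\lambda$, where both sides vanish. That case is consistent with \eqref{eq:eps}, where the branching sum over $\nu$ automatically only sees $\nu\subseteq\lambda$. Substituting the values from the first step gives the indicator matrix $\mathbf 1[\lambda_i-\nu_j-i+j\in t\ZZ_{\ge0}]$ entrywise. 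Its determinant is therefore $\varepsilon^{(t)}_{\lambda,\nu}=s_{\lambda/\nu}(\zt)$ by the definition in \eqref{eq:eps}.

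There is no genuine obstacle. The one point to state carefully is that evaluation at $\zt$ is a ring homomorphism $\Lambda\to\ZZ[\zeta]$, so Jacobi--Trudi may be specialised termwise, and that the $h_k$ are evaluated at the finite alphabet of $t$ variables rather than formally. The latter is harmless, since the generating-function computation is itself done in $t$ variables. I would also remark, without proof here, that the resulting $0/1$ matrix explains why these coefficients are cheap to compute. Their $\{0,\pm1\}$-valuedness, recorded in \cite{PaperI}, is not a consequence of this lemma alone.
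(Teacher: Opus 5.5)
Your proposal is correct and follows the paper's argument: the generating function $\prod_j(1-\zeta^jz)^{-1}=(1-z^t)^{-1}$ with the convention $h_k=0$ for $k<0$, then the specialised skew Jacobi--Trudi identity. You also rightly note that the $\{0,\pm1\}$ bound does not follow from the shape of the determinant.

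Two cosmetic points. The sign $\prod_j(-\zeta^j)=-1$ is not needed once you substitute $x=1/z$. Your side claim for $\nu\not\subseteq\lambda$ needs the matrix size $\ell\ge\max(\ell(\lambda),\ell(\nu))$, not just $\ell\ge\ell(\lambda)$; otherwise it fails, e.g.\ at $\lambda=(1)$, $\nu=(1,1)$, $\ell=1$.
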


\noindent
The non-negativity is not decoration, and dropping it makes the identity false. Take $t=2$ and
$\lambda=\nu=(1,1,1)$, so that $\lambda/\nu=\varnothing$ and $\varepsilon^{(2)}_{\lambda,\nu}=1$. The
Jacobi--Trudi indices $\lambda_i-\nu_j-i+j=j-i$ are
$\bigl(\begin{smallmatrix}0&1&2\\-1&0&1\\-2&-1&0\end{smallmatrix}\bigr)$; with $h_k=0$ for $k<0$ the
matrix is upper triangular with unit diagonal and the determinant is $1$, whereas reading
``$t$ divides $k$'' at $k=-2$ as well makes the first and third rows equal and the determinant $0$.
We are grateful to a correspondent for the example.

The bound $\varepsilon^{(t)}_{\lambda,\nu}\in\{0,\pm1\}$ does \emph{not} follow from the shape of that
determinant: a determinant of zeros and ones is an arbitrary integer in general, and already at size
$3$ it takes the value $2$. It follows instead from the skew form of Littlewood's theorem
\cite{LR34}: $\varepsilon^{(t)}_{\lambda,\nu}=s_{\lambda/\nu}(\zt)$ is the sign of a tiling of
$\lambda/\nu$ by $t$-ribbons, and is zero when no tiling exists. Lemma~\ref{lem:epsdet} is therefore
a computational device --- it makes the coefficient instant to evaluate --- and not the source of the
bound. What the ribbon reading adds, and what we use below, is that it fixes the \emph{size} of any
move between surviving $\nu$: the step has exactly $t$ cells.

\subsection{A selection rule, and why only even \texorpdfstring{$t$}{t} has one}
Composing the two constraints carried by the factors of \eqref{eq:square} costs nothing and gives a
vanishing criterion that needs no computation at all.

\begin{corollary}[parity selection rule]\label{cor:selection}
Let $t$ be even. Then $A_\mu=0$ for every $\mu$ with $|\mu|\not\equiv|\lambda| \pmod 2$.
\end{corollary}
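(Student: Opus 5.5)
We use the identity \eqref{eq:square} to see the parity constraint from two sides, and each side supplies one half of it. The left-hand side is the clean one. First, $\varepsilon^{(t)}_{\lambda,\nu}=s_{\lambda/\nu}(\zt)$ vanishes unless $\lambda/\nu$ tiles by $t$-ribbons, by the skew Littlewood theorem recalled after Lemma~\ref{lem:epsdet}. Equivalently, by Lemma~\ref{lem:epsdet} itself, $s_{\lambda/\nu}(\zt)$ is homogeneous of degree $|\lambda|-|\nu|$ in the alphabet. Since $\zeta\mapsto\zeta^{-1}$ fixes $\zt$, and $h_k(\zt)=0$ unless $t\mid k$, every nonzero term has $|\lambda|-|\nu|\in t\ZZ$. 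So $\varepsilon^{(t)}_{\lambda,\nu}\neq0$ forces $|\nu|\equiv|\lambda|\pmod t$, and hence, because $t$ is even, $|\nu|\equiv|\lambda|\pmod 2$.

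The second step handles the Littlewood restriction $b_{\nu\mu}$ from $GL_{2r}$ to $\Sp_{2r}$. The central element $-1\in\Sp_{2r}$ acts on $s_\nu$ by $(-1)^{|\nu|}$ and on $\spc_\mu$ by $(-1)^{|\mu|}$. Restriction commutes with this central character, so $b_{\nu\mu}\neq0$ forces $|\mu|\equiv|\nu|\pmod2$. One can also read this off directly: the Littlewood rule removes partitions with even columns, hence of even size.

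Composing the two steps, every nonzero term $\varepsilon^{(t)}_{\lambda,\nu}b_{\nu\mu}$ on the left of \eqref{eq:square} has $|\mu|\equiv|\nu|\equiv|\lambda|\pmod2$. So the left side vanishes whenever $|\mu|\not\equiv|\lambda|$. By Proposition~\ref{prop:square} it equals $A_\mu$, which gives the corollary.

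The only delicate point is the first step, namely the claim that the degree drop is a multiple of $t$ rather than merely of $2$. I would justify this directly from the generating function $(1-z^t)^{-1}$ in Lemma~\ref{lem:epsdet}. Every Jacobi--Trudi entry $h_{\lambda_i-\nu_j-i+j}(\zt)$ is nonzero only when its index lies in $t\ZZ_{\ge0}$. Along any permutation the indices sum to $|\lambda|-|\nu|$, so a nonzero term of the determinant forces $t\mid |\lambda|-|\nu|$.

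It is worth saying why the argument is tied to even $t$. The step from $t\mid(|\lambda|-|\nu|)$ to $2\mid(|\lambda|-|\nu|)$ uses $2\mid t$. At odd $t$ that step fails, which is consistent with the mixed parities observed in the support at $t=3,5$.

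As a cross-check I would also run the argument from the right side of \eqref{eq:square}. There Corollary~\ref{cor:evensize} gives $|\eta|$ even on the support of $\tau^C_t$, and the central element of $\Sp_{2R}\supset\Sp_{2m}\times\Sp_{2r}$ gives $|\eta|+|\mu|\equiv|\Lambda|$. This route is weaker, however, because the parity of the virtual $\Lambda$ coming from $\Phi_{2,R}$ would still need tracking, so I would rely on the left-side route.
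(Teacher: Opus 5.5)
Your proof is correct and follows the paper's argument. You read the coefficient off the left side of \eqref{eq:square}, use that $\varepsilon^{(t)}_{\lambda,\nu}\neq0$ forces $t\mid|\lambda|-|\nu|$ and that $b_{\nu\mu}\neq0$ forces $|\nu|\equiv|\mu|\pmod 2$, and conclude through Proposition~\ref{prop:square} because $t$ is even. Your two justifications are slightly more self-contained than the paper's appeal to ribbon tilings and even-column removal: the Jacobi--Trudi index sum gives the first constraint, and the central element $-1\in\Sp_{2r}$ gives the second without any stable-range caveat. The aside that $\zeta\mapsto\zeta^{-1}$ fixes $\zt$ plays no role; the symmetry behind your homogeneity remark is $x\mapsto\zeta x$.
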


\begin{proof}
By the ribbon reading above, $\varepsilon^{(t)}_{\lambda,\nu}\neq0$ forces $t$ to divide
$|\lambda|-|\nu|$; and $b_{\nu\mu}\neq0$ forces $|\nu|-|\mu|$ to be even, since the Littlewood
restriction $GL_{2r}\downarrow\Sp_{2r}$ removes a partition with even columns. Hence every
contribution to $A_\mu=\sum_\nu\varepsilon^{(t)}_{\lambda,\nu}b_{\nu\mu}$ has
$|\mu|=|\lambda|-tk-2j$, and for even $t$ the right-hand side has the parity of $|\lambda|$.
\end{proof}

\noindent
For odd $t$ the same computation gives nothing, because $tk$ can be odd. That is a third way in which
the parity of $t$ decides, independent of the two of Proposition \ref{prop:parity}, and the data
separate the two cases sharply: over the full populations every even form has all its surviving
weights in one parity class, and that class is $|\lambda|$'s --- $122$ of $122$ at $t=2$, $303$ of
$303$ at $t=4$, $50$ of $50$ at $t=6$ --- whereas odd forms carry both classes at once in $55$ of
$82$ shapes at $t=3$ and $8$ of $41$ at $t=5$ \stverif.

\begin{remark}[the same rule seen from the other side]
\emph{At $t=4$}, Corollary \ref{cor:selection} and the checkerboard of Lemma \ref{lem:checker} are one
statement read through the two sides of \eqref{eq:square} --- the corollary holds for every even $t$,
the checkerboard only where $E^{(4)}$ is defined. The checkerboard says $E^{(4)}_{\Lambda,\mu}\neq0$ forces
$|\Lambda|+|\mu|$ even, which is the branching-side form; the corollary derives the same parity
constraint from the ribbon and Littlewood steps on the orbit side. That they agree is a consistency
check on the commuting square at the level of parity, and it is the only part of \eqref{eq:square}
we can verify without computing either side.
\end{remark}

\section{The branching side made combinatorial}\label{sec:comb}

\emph{For the whole of \S\ref{sec:comb} and \S\ref{sec:where} we specialise to $t=4$, equivalently
$m=1$.} The frozen factor is then $\Sp_2$, which reduces the branching step to the rank-one
restriction $\Sp_{2R}\downarrow\Sp_{2R-2}$, and lets Lemma \ref{lem:T} be applied one factor at a time. Nothing
below is claimed for $t\ge6$: there the frozen factor is $\Sp_{t-2}$ and the rank-one argument does
not reach it. The notation $E^{(4)}$ carries the specialisation in its name.

With that fixed, the right-hand side of \eqref{eq:square} can be pushed further, and the push removes
the characters altogether.

\subsection{From weights to a parity}
Write $\Phi_{2,R}=\sum_\Lambda a_\Lambda\,\spc_\Lambda$ and let $M_{\Lambda,\mu}$ be the multiplicity
space of $\mu$ in $\Lambda$ for $\Sp_{2R}\downarrow\Sp_{2R-2}$. Yacobi shows that $M_{\Lambda,\mu}$
carries a canonical irreducible action of $(SL_2)^R$ under which it is
$F_{r_1}\boxtimes\dots\boxtimes F_{r_R}$, and --- this is the point for us --- that restricting that
action to the \emph{diagonal} $SL_2\subset(SL_2)^R$ recovers the natural action of the $Sp_2$ that
centralises $\Sp_{2R-2}$ \cite{Yacobi} \stext. Our torsion element lives in that diagonal. The same
multiplicity space carries a second irreducible structure --- a twisted Yangian action, from which
Molev builds a Gelfand--Tsetlin-type basis \cite{Molev} \stext{} --- and we use neither as a basis: only
the weight structure of the first, which is what the character evaluation sees. Since the character of a
tensor product on a diagonal is the product of the characters, and since each factor is evaluated by
Lemma \ref{lem:T} at $m=1$,
\begin{equation}\label{eq:parity}
A_\mu\;=\;\sum_{\Lambda}a_\Lambda\,\epsilon(\Lambda,\mu),
\qquad
\epsilon(\Lambda,\mu)=
\begin{cases}
0 & \text{if some } r_i \text{ is odd},\\
(-1)^{\frac12\sum_i r_i} & \text{if all } r_i \text{ are even},
\end{cases}
\end{equation}
where the $r_i$ are Yacobi's, and $\mu$ must doubly interlace $\Lambda$ or the multiplicity space is
zero. Verified against the character route on seven forms \stverif, with the decoy that omits the
rearrangement disagreeing on all seven.

\begin{remark}[a precondition that is not decoration]
The double interlacing $\Lambda^+_i\ge\mu_i\ge\Lambda^+_{i+2}$ is not a side condition one may carry
loosely: dropping it makes \eqref{eq:parity} accumulate contributions from multiplicity spaces that
are zero, and the identity fails outright. We record this because it cost us a run, and because the
local form below has exactly the same domain.
\end{remark}

\subsection{The parity is local}
Yacobi's $r_i$ are defined by a global rearrangement: sort the $2R$ numbers
$\{\Lambda_1,\dots,\Lambda_R,0\}\cup\{\mu_1,\dots,\mu_{R-1}\}$ into a weakly decreasing sequence
$s_0\ge s_1\ge\dots\ge s_{2R-1}$ and put $r_i=s_{2i-2}-s_{2i-1}$ for $1\le i\le R$, so that the
first is $r_1=s_0-s_1$. Under the interlacing the
rearrangement is not needed, and the reason is four inequalities.

\begin{lemma}[the parity is local]\label{lem:local}
Suppose $\Lambda_i\ge\mu_i\ge\Lambda_{i+2}$ for $1\le i\le R-1$, with $\Lambda_{R+1}=0$. Then
\begin{equation}\label{eq:local}
r_i=\min(\Lambda_i,\mu_{i-1})-\max(\Lambda_{i+1},\mu_i),
\qquad \mu_0=+\infty,\ \mu_R=0,\ \Lambda_{R+1}=0 .
\end{equation}
\end{lemma}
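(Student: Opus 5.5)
The plan is to determine the sorted sequence $s_0\ge s_1\ge\dots\ge s_{2R-1}$ explicitly under the double interlacing. Once the positions are known, \eqref{eq:local} becomes a direct reading of $r_i=s_{2i-2}-s_{2i-1}$. The multiset being sorted is $\{\Lambda_1,\dots,\Lambda_R,0\}\cup\{\mu_1,\dots,\mu_{R-1}\}$, which has $2R$ elements. The claim to establish is that the pairs $(s_{2i-2},s_{2i-1})$ are exactly
\[
\bigl(\min(\Lambda_i,\mu_{i-1}),\ \max(\Lambda_{i+1},\mu_i)\bigr),
\]
with the conventions $\mu_0=+\infty$, $\mu_R=0$ and $\Lambda_{R+1}=0$. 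Note that $\mu_R=0$ and $\Lambda_{R+1}=0$ stand for the same extra entry $0$. It is counted once, because at $i=R$ only one of the two appears as an actual element; the other is a placeholder for the max.

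\emph{Step 1: group the multiset into consecutive blocks.} For each $i$, consider the two-element set $P_i=\{\Lambda_i,\mu_{i-1}\}$ for $i\ge2$, and $P_1=\{\Lambda_1\}$. I would show that every element of $P_i$ is $\ge$ every element of $P_{i+1}$. This needs four inequalities:
\begin{itemize}
\item $\Lambda_i\ge\Lambda_{i+1}$, by dominance;
\item $\mu_{i-1}\ge\mu_i$, by dominance;
\item $\Lambda_i\ge\mu_i$, by the hypothesis;
\item $\mu_{i-1}\ge\Lambda_{i+1}$, which is the hypothesis at index $i-1$.
\end{itemize}
These are the "four inequalities" the paper alludes to.

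\emph{Step 2: read off the sorted order.} By Step 1, sorting can be done block by block. The sorted list is therefore the concatenation of the blocks $P_1,P_2,\dots$, each internally sorted, with the final $0$ appended in the last block. Pair positions need care, because $P_1$ has one element while the later blocks have two. The pairing $(s_{2i-2},s_{2i-1})$ straddles blocks: $s_{2i-2}$ is the smaller element of $P_i$ and $s_{2i-1}$ is the larger element of $P_{i+1}$. I would verify this by induction on $i$. For $i=1$, $s_0=\Lambda_1=\min(\Lambda_1,\mu_0)$ and $s_1=\max(\Lambda_2,\mu_1)$. The count of elements preceding each block ($1,3,5,\dots$) places the larger element of $P_{i+1}$ at the odd index $2i-1$ and its smaller element at $2i$. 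The last block is $\{\Lambda_R,\mu_{R-1}\}$ followed by $0$. It gives $s_{2R-2}=\min(\Lambda_R,\mu_{R-1})$ and $s_{2R-1}=0=\max(\Lambda_{R+1},\mu_R)$.

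\emph{Step 3: conclude.} Subtracting gives \eqref{eq:local}. Ties between equal entries do not matter, since only values enter $r_i$. The main obstacle is the off-by-one bookkeeping in Step 2: the straddling of the pairs across blocks and the treatment of the single extra $0$ at both ends of the conventions. I would check that step on $R=2$ explicitly before writing the induction.
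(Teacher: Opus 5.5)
Your proposal is correct and follows essentially the same route as the paper: the same four comparisons (two from $\Lambda$ and $\mu$ being weakly decreasing, two from the two halves of the interlacing) show that the blocks $\{\Lambda_1\},\{\Lambda_2,\mu_1\},\dots,\{\Lambda_R,\mu_{R-1}\},\{0\}$ are sorted against one another, and $r_i=s_{2i-2}-s_{2i-1}$ is then read off, with each pair straddling two consecutive blocks. When you write it up, state explicitly that the final $0$ comes last because $\Lambda_R,\mu_{R-1}\ge0$; the paper also uses this only tacitly.
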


\begin{proof}
It suffices to identify the sorted sequence. We claim that for $1\le i\le R-1$,
\[
\min(\Lambda_i,\mu_{i-1})\ \ge\ \max(\Lambda_{i+1},\mu_i),
\]
which unpacks into exactly four comparisons: $\Lambda_i\ge\Lambda_{i+1}$ and $\mu_{i-1}\ge\mu_i$
because both are partitions, and $\Lambda_i\ge\mu_i$ and $\mu_{i-1}\ge\Lambda_{i+1}$ because those
are the two halves of the interlacing hypothesis. Hence
\[
\Lambda_1\ \ge\ \{\Lambda_2,\mu_1\}\ \ge\ \{\Lambda_3,\mu_2\}\ \ge\ \dots\ \ge\
\{\Lambda_R,\mu_{R-1}\}\ \ge\ 0,
\]
each brace being a pair whose two members both lie between the maximum of the previous brace and the
minimum of the next. A weakly decreasing sequence is determined by its values, so
$s_0=\Lambda_1$, the pair $\{s_{2i-1},s_{2i}\}$ is $\{\max(\Lambda_{i+1},\mu_i),
\min(\Lambda_{i+1},\mu_i)\}$ for $1\le i\le R-1$, and $s_{2R-1}=0$. Reading off
$s_{2i-2}-s_{2i-1}$ gives \eqref{eq:local}, the first and last cases being
$r_1=s_0-s_1=\Lambda_1-\max(\Lambda_2,\mu_1)$ and $r_R=s_{2R-2}-s_{2R-1}=\min(\Lambda_R,\mu_{R-1})-0$.
\end{proof}

\noindent
So survival is $R$ local parity tests and the sign is a local product. The hypothesis is not a
convenience: on all $5368$ interlacing pairs in a box the two definitions agree, and on the $6732$
pairs outside they disagree \emph{every time} \stverif{} --- the interlacing is the exact boundary of
validity, which is what one expects of a statement whose proof uses it four times.

\subsection{A checkerboard}

\noindent
One convention first, so that the second statement of the lemma has a domain. The entry
$E^{(4)}_{\Lambda,\mu}$ is defined by \eqref{eq:parity} when $\mu$ doubly interlaces $\Lambda$; we
extend it by
\[
E^{(4)}_{\Lambda,\mu}:=0 \qquad\text{whenever $\Lambda$ is not a dominant weight or $\mu$ does not
doubly interlace it,}
\]
which is the usual zero convention outside the domain and is consistent with the boundary measurement
just reported. With it, $\Lambda+e_k$ may be written freely.

\begin{lemma}\label{lem:checker}
$\sum_i r_i\equiv|\Lambda|+|\mu| \pmod 2$. Consequently
$E^{(4)}_{\Lambda,\mu}:=\epsilon(\Lambda,\mu)\neq0$ forces $|\Lambda|+|\mu|$ even, and
$E^{(4)}_{\Lambda,\mu}\,E^{(4)}_{\Lambda+e_k,\mu}=0$ for every $k$, with the zero convention above
covering the $k$ for which $\Lambda+e_k$ leaves the domain.
\end{lemma}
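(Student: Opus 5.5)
We read off the parity of $\sum_i r_i$ from Yacobi's global definition rather than from Lemma \ref{lem:local}, so that the first statement holds on the whole interlacing domain without a case split. By definition the $r_i$ come from sorting the $2R$ numbers $\{\Lambda_1,\dots,\Lambda_R,0\}\cup\{\mu_1,\dots,\mu_{R-1}\}$ into $s_0\ge\dots\ge s_{2R-1}$ and putting $r_i=s_{2i-2}-s_{2i-1}$. Hence
\[
\sum_i r_i=\sum_{i=1}^{R}\bigl(s_{2i-2}-s_{2i-1}\bigr)\equiv\sum_{i=1}^{R}\bigl(s_{2i-2}+s_{2i-1}\bigr)=\sum_{k=0}^{2R-1}s_k\pmod 2,
\]
because $-x\equiv x$ modulo $2$. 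The right-hand side is the sum of the multiset before sorting, namely $|\Lambda|+0+|\mu|$, which is the first claim. Sorting is a bijection of the multiset, so nothing about interlacing is used here.

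The consequence is then immediate from \eqref{eq:parity}. If $E^{(4)}_{\Lambda,\mu}\neq0$, every $r_i$ is even, so $\sum_i r_i$ is even, and by the first claim $|\Lambda|+|\mu|$ is even.

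For the product statement, fix $k$. If $\Lambda+e_k$ is not dominant, or $\mu$ does not doubly interlace it, the zero convention makes $E^{(4)}_{\Lambda+e_k,\mu}=0$ and there is nothing to prove. Likewise, if $\mu$ does not interlace $\Lambda$, the first factor vanishes. In the remaining case both entries are defined by \eqref{eq:parity}. Since $|\Lambda+e_k|+|\mu|=|\Lambda|+|\mu|+1$, the two sums have opposite parity, so at most one of them is even. By the consequence just proved, at most one of the two factors is nonzero, and the product vanishes.

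The only point needing care is the bookkeeping of domains. The parity identity is proved from the global rearrangement, which is defined for any $\Lambda,\mu$, so it is not affected by the validity boundary of Lemma \ref{lem:local}. The vanishing of the product is then applied only to pairs inside the interlacing domain, with the zero convention covering every pair outside it. I expect no genuine obstacle beyond stating this case split cleanly.
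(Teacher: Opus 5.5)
Your proof is correct and follows the paper's own argument. The paper likewise reads $\sum_i r_i=\sum_i(s_{2i-2}-s_{2i-1})\equiv\sum_k s_k=|\Lambda|+|\mu|\pmod 2$ directly from the global rearrangement, and obtains the product statement from the fact that adding one box changes $|\Lambda|+|\mu|$ by one. Your explicit case split over the zero convention simply spells out what the paper compresses into its final clause.
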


\begin{proof}
Write $x_i=s_{2i-2}$ and $y_i=s_{2i-1}$ for $1\le i\le R$, so that $r_i=x_i-y_i$ and the pairs
$(x_i,y_i)$ exhaust $s_0,\dots,s_{2R-1}$. Those are a rearrangement of the multiset
$\{\Lambda_1,\dots,\Lambda_R,0\}\cup\{\mu_1,\dots,\mu_{R-1}\}$, so
$\sum x+\sum y=|\mu|+|\Lambda|$.
Since $\sum r_i=\sum x-\sum y$ and $\sum x-\sum y\equiv\sum x+\sum y \pmod 2$, the congruence follows;
if all $r_i$ are even the left side is even, and adding a box to $\Lambda$ changes the right side by
one.
\end{proof}

Adding a single box therefore switches the entry off. Any pairing of surviving states on this side
must move by an even number of boxes --- which is consistent with the ribbon reading of
Lemma~\ref{lem:epsdet}, where the step size is $t$. Figure \ref{fig:cone3d} draws both facts at once:
the cone cut out by the double interlacing, and inside it the surviving entries occupying alternate
planes, $252$ of $819$ lattice points.

\begin{figure}[t]
\centering
\includegraphics[width=\textwidth]{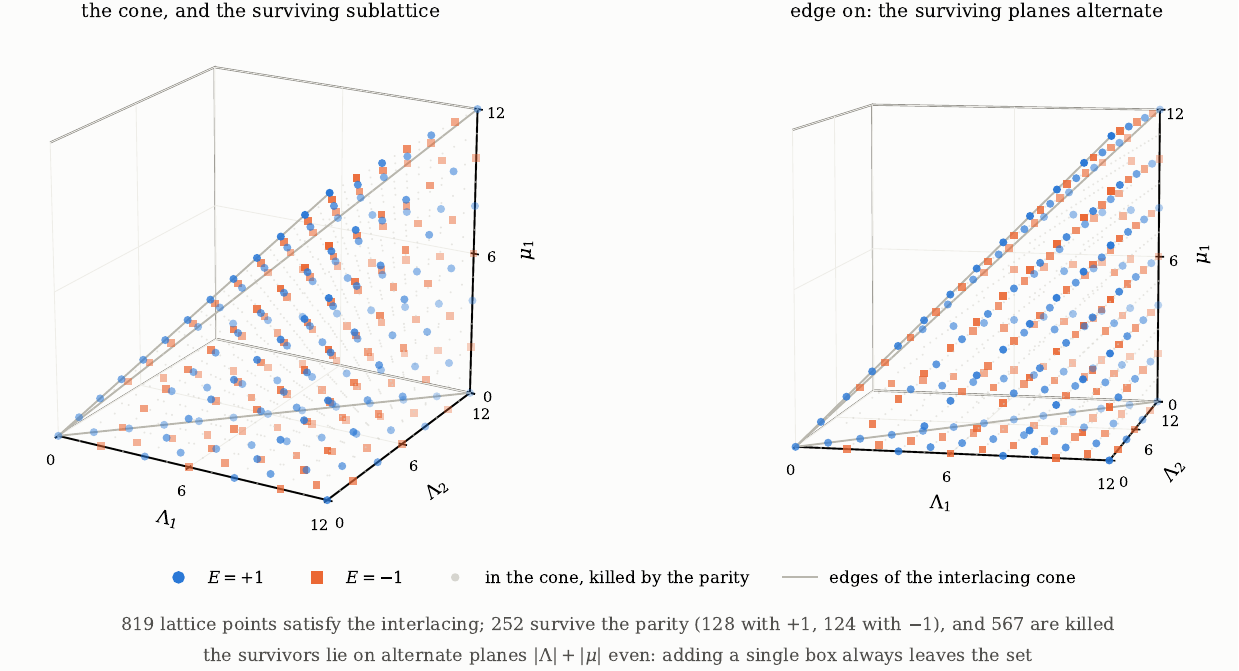}
\caption{The domain of the operator, drawn whole. At rank two a pair $(\Lambda,\mu)$ is a point of
$(\Lambda_1,\Lambda_2,\mu_1)$, so the whole domain is three-dimensional and both statements proved in
this section are visible at once: the double interlacing carves out a cone, and the parity of
Lemma~\ref{lem:checker} selects a sublattice inside it. Of the $819$ lattice points that interlace,
$252$ survive --- $128$ with $E=+1$ and $124$ with $-1$ --- and $567$ are killed by the parity; the
survivors lie on alternate planes $|\Lambda|+|\mu|$ even, which is why adding a single box always
leaves the set. \emph{Right:} the same solid seen edge on, where the alternation of the planes is
what one sees. Everything is computed from the operator.}
\label{fig:cone3d}
\end{figure}

\subsection{The operator}
Formula \eqref{eq:parity} reads $A=E^{(4)}a$, and $E^{(4)}$ can be studied once, independently of
$\beta$: the vector $a$ carries what comes from the companion paper, the matrix carries what the
torsion does. In a box of rank $R=3$ with entries at most $8$ --- a $165\times45$ matrix --- we find
\stverif: entries exactly in $\{0,\pm1\}$ ($385$ negative, $406$ positive, density $10.65\%$); the
weight is a product of local factors, so $E^{(4)}$ is a \emph{transfer matrix} along the chain and
not a generic matrix; it is triangular, with no violation of $|\mu|\le|\Lambda|$ and with a minimal
$\Lambda$ of $|E|=1$ for each of the $45$ weights $\mu$; and it is \emph{not} of M\"obius type ---
the signed row and column sums range over $-4\dots+5$ rather than concentrating on $\{0,\pm1\}$.

\section{Where the cancellation is, and where it is not}\label{sec:where}

\subsection{It is not inside a cell}
Yacobi's construction stratifies the branching semigroup into order types --- which branch the
$\min$ and the $\max$ of \eqref{eq:local} take at each position. Inside a fixed order type each $r_i$
becomes a linear form, so a cell is a much more structured place to look for a pairing than the whole
set. We segmented the survivors at $\mu_{\max}$ by cell on all seven forms, with the cell sums
reproducing $A$ in each \stverif. The cancellation is \emph{not} internal: in four of the seven forms
every cell contains a single survivor, so there is nothing to cancel within one; and the form with
eight cells has sums $+1,-2,+1,-2,-1,+2,+1,+1$, none distinguished, cancelling only when combined.
Across the seven forms, two of the twenty-six cells sum to zero --- neither of them in that one ---
against none of the eighteen cells of a decoy partition.

\subsection{The coordinates were the problem}
On the branching side the coefficient at the top weight is a genuine collective cancellation: in the
largest form, twenty-five terms with individual summands as large as $798$ and running sums reaching
$455$ from zero, all landing on $\pm1$ (Figure \ref{fig:collapse}). We then built the same number on
the other side of \eqref{eq:square}, as a signed set of atoms $(\nu,Q)$ where $\nu$ ranges over the
shapes with $\varepsilon^{(t)}_{\lambda,\nu}\neq0$ and $Q$ indexes the branching multiplicity
$b_{\nu\mu}$ --- for which a complete tableau rule is Sundaram's \cite{Sundaram} \stext{} and an
explicit bijective one, valid outside the stable range, is Watanabe's \cite{Watanabe} \stext, the two
being connected combinatorially by Azenhas \cite{Azenhas} \stext{} --- each atom weighing $\pm1$. Since
$\varepsilon$ depends on $\nu$ alone, atoms sharing a $\nu$ share a sign. The last of those
references matters for what one would do next: it moves the indexing set out of the quantum
symmetric-pair formalism and into Littlewood--Richardson--Sundaram tableaux, where a sign-reversing
involution would be a classical object rather than one inside a black box.

\keybox{\emph{The change of coordinates does not simplify the cancellation; it removes it.} At the
highest surviving weight, the seven forms carry \textbf{one} or \textbf{three} atoms. Where there is
one, $A_{\mu_{\max}}=\pm1$ is a single surviving term and nothing cancels at all. Where there are
three, the signs are two against one. The same coefficient, on the branching side, needed
twenty-five terms of size up to $798$.}

We state this as an observation and not as a theorem: seven forms is not a population, and the
pattern ``one or three, with signs $2$ against $1$'' has to survive a wider range before it is worth
a name. What it already does is relocate Conjecture \ref{conj:H}: in the coordinates of
\eqref{eq:eps} the statement to prove is about \emph{how many shapes $\nu$ reach the top weight with
a nonzero ribbon sign}, not about why large integers cancel.

\begin{remark}[the step size is not fitted]
Lemma \ref{lem:checker} says a move on the branching side cannot be a single box. On the other side
the sign is a $t$-ribbon sign, so the natural move has exactly $t$ cells. In two of the seven forms
an opposite-sign pair of shapes differs by exactly one $4$-ribbon, and a decoy testing ribbons of
other sizes fires in none of the seven \stverif. The numbers are small; the alignment of the two step
sizes is not.
\end{remark}

\section{What this settles of the companion paper}\label{sec:answers}

The companion paper closes with six problems. This one bears on three of them, and we say in each
case how far.

\subsection*{The instrument for the second pair}
That problem asks not for a proof of the vanishing at $r\ge2$ -- which the companion paper settles at $t=2$ by
an extremal argument -- but for an \emph{instrument}: a machine that computes the coefficients whose
signed sums cancel, in terms one can reason with. Equation \eqref{eq:A} is such a machine. The
coefficients are $A_\mu=\sum_\eta B_{\eta,\mu}\tau^C_t(\eta)$, with $B$ the symplectic branching
multiplicities and $\tau^C_t$ given in closed form by Lemma~\ref{lem:T}; both factors are computable
without expanding a single monomial, and the branching side has a standard monomial basis
\stext. This does not explain the cancellation, and we do not claim it does. What it does is convert
``why do these signed sums vanish'' into the single sharp statement of Conjecture~\ref{conj:H}, and
localise the difficulty in the interaction between $B$ and a function taking only the values
$0,\pm1$. On the odd side the instrument is now sharper than that: Proposition
\ref{prop:transversal} removes $B$ from the numerator altogether and leaves a signed count of
transversals of residue classes, so there the interaction to understand is not between a branching
matrix and a filter but between that count and one division.

\subsection*{An operator that sees both halves}
That problem asks whether there is an operator acting on alphabets of the form (union of orbits)
$\cup$ (reciprocal pairs) -- the two halves having, at present, two different formalisms that do not
compose. The composite of \eqref{eq:A},
\[
R_{\mathrm{virt}}(\Sp_{2R})\ \xrightarrow{\ \text{branch}\ }\
R(\Sp_{2r})\otimes R(\Sp_{t-2})\ \xrightarrow{\ 1\otimes\mathcal T_t\ }\ R(\Sp_{2r}),
\]
is one such operator for \emph{even} $t$; for odd $t$ its counterpart is \eqref{eq:oddA}, through
$B_{R'}\to B_{m'}\times D_r$. A single construction producing both remains open, and is Problem
\ref{prob:idealodd}. What both do is take the object with all pairs free and return the object with
the orbit frozen, and after Theorem \ref{thm:fusion} the second arrow has the same meaning in the two
branches: it is the minimal fusion projection on the frozen factor. We record two limits on what
that means. It is not a Verschiebung and we do not claim it factors one --- the factorisation
formalism for these evaluations is Albion's \cite{Alb23,Albion} \stext{} --- the Verschiebung
$\varphi_t$ is his --- and our composite is not one of
its operators; and it is built after the fact from a branching plus an evaluation, rather than from a
symmetric-function operation, so it answers the question in the weak sense of exhibiting a map and
not in the strong sense of making the two formalisms compose.

\subsection*{A sieving phenomenon that keeps a parameter}
Here we must be careful, because a neighbouring question has a negative answer and it is not this
one. What we have tested, and what fails, is cyclic sieving in the \emph{free reciprocal} parameter:
there the object is, up to a global sign and a balancing phase, a product of $q$-integers, and the
dressing makes the evaluations irrational away from a short list of orders and negative at $q=1$ for
a substantial fraction of shapes. The companion paper's question is the other one -- a cyclic action
in the frozen direction, refined by the free weight -- and nothing here bears on it. We state the
negative only so that the two are not confused.

\section{Verification}\label{sec:verif}

Every statement above was checked, and the ranges and counts are collected here once. Where a
statement is proved, the check is a control on the proof and on the implementation; where it is only
measured, the entry says so. Each measured statement is accompanied by the \emph{decoy} that was run
against it: a deliberately wrong version which had to fail, and whose failure count is printed. A
statement whose decoy also passes is recorded as untested, not as confirmed.

\begingroup\footnotesize\setlength{\tabcolsep}{4.0pt}
\rowcolors{2}{hband}{white}
\begin{longtable}{@{}>{\raggedright\arraybackslash}p{0.335\linewidth}%
                  >{\raggedright\arraybackslash}p{0.200\linewidth}%
                  >{\raggedleft\arraybackslash}p{0.295\linewidth}@{\hspace{5pt}}l@{}}
\toprule
statement & range & result & status\\
\midrule
\endfirsthead
\multicolumn{4}{@{}l}{\footnotesize\itshape Verification, continued}\\[1pt]
\toprule
statement & range & result & status\\
\midrule
\endhead
\midrule
\multicolumn{4}{r@{}}{\footnotesize\itshape continues on the next page}\\
\endfoot
\bottomrule
\endlastfoot

Lemma \ref{lem:T}, even $t$, both directions & $t\le12$, $\eta_1\le14,\dots,4$ &
$453$ weights, $0+0$ fail & \stproved\\
\quad the two routes agree (character vs bialternant) & same range &
$453/453$, no discrepancy & \stproved\\
\quad the sign, not only $|\tau|=1$ & same range &
$66/66$; random sign $27/66$ & \stproved\\
\quad decoy: no folding, only the $c_j$ distinct & same range &
errs on $136$ weights & \stproved\\
\quad decoy: wall at $\{0\}$ instead of $\{0,t/2\}$ & same range &
errs on $101$ weights & \stproved\\
Cor.~\ref{cor:evensize}: $\tau^C_t(\eta)\neq0\Rightarrow|\eta|$ even & $t\le12$, full support &
proved; $13,36,64,81,48$ of each; odd-branch decoy $\approx$ chance & \stproved\\
Proposition \ref{prop:oddfilter}, odd $t$, both directions & $t\le7$ &
$10/10$, $72/72$, $406/406$ & \stverif\\
\quad decoy: the same test at modulus $t+1$ & same range &
agrees only $7$, $29$, $208$ times & \stverif\\
\quad the odd filter is \emph{not} the symplectic one & $t\le9$ &
the two differ; least witness $t=5$, $\eta=(1,0)$ & \stverif\\
Proposition \ref{prop:red}, even $t$ & $t\le6$, $\beta_i\le15$ &
proved; control on $6435$ forms, $0$ counterexamples & \stproved\\
Proposition \ref{prop:red}, odd $t$ & $t=3$, $r=2$, $\beta_i\le11$ &
proved; control $260/260$, cross-$\beta$ decoy $258/259$ & \stproved\\
Equation \eqref{eq:branch}: the peeling closes & $t=4,6$; $r=2$ &
remainder $0$ in $13/13$ & \stverif\\
\quad $B_{\eta,\mu}\in\ZZ$ & same &
$13/13$ & \stverif\\
Equation \eqref{eq:A} against $\Phi_{t,r}$ apart & same &
identical monomial by monomial, $13/13$ & \stverif\\
\quad decoy: the filter removed ($\tau\equiv1$) & same &
disagrees $13/13$ & \stverif\\
\quad decoy: the $\tau$ of the wrong order & same &
disagrees $10/13$, \emph{ties in} $3$ & \stverif\\
\quad Prob.~\ref{prob:threshold}: the tie is \emph{not} a level threshold & same &
both alcoves are $\eta_1<1$: same, minimal level & \stproved\\
\quad \quad it is pointwise agreement on $\mathrm{supp}\,B$ & same &
$13/13$; ties $\Leftrightarrow$ $0$ disagreements there & \stverif\\
\quad \quad and the same three tie at order $t+4$ & same &
$3/13$, the same three & \stverif\\
Equation \eqref{eq:mumax} & $t\le8$, $r\le3$ &
$332/332$ & \stverif\\
\quad decoy: the other natural shift $(N-1,\dots,N-r)$ & same &
fails & \stverif\\
\quad the Minkowski identity on the vertices & $t=4$, $r=3$ &
$(14,10,6)=(5,3,1)+(9,7,5)$; $\mathrm{Vert}=W(C_3)v_{\mathrm{dom}}$, $3/3$ & \stverif\\
Prop.~\ref{prop:newtden}(i)--(ii): $\Newt(N_\delta)$ is the $C_r$ zonotope
& $t\le12$, $r\le4$ &
proved; $=$ orbit polytope $44/44$, $2^rr!$ vertices $44/44$ & \stproved\\
\quad against the expanded $N_\delta$ & $N\le8$ &
$9/9$: no cancellation shrinks it & \stverif\\
\quad decoys: radius $t$; no long roots; short radius $2$ & same &
$0/44$, $0/44$, $0/33$ (at $r\ge2$) & \stverif\\
Conj.~\ref{conj:A} ($\mu_{\max}$ unique) and Conj.~\ref{conj:H} ($|A|=1$) & four populations &
$396/396$, $332/332$, $16/16$, $13/13$ & \stverif\\
$|A_\mu|\le1$ away from the top: \emph{false} & $t=4$, $m=1$ &
$9$ of $57$ weights carry $|A_\mu|=2$ & \stverif\\
\S\ref{sec:top16}: the descent is $\Delta=(1,1)$ & $t=6$, $r=2$, $\beta_i\le13$ &
$16/16$ & \stverif\\
\quad $\mu_{\max}$ by branching $=$ by the Laplace route & same &
$16/16$, two unrelated machineries & \stverif\\
\quad the highest $\eta$ of the failing block survives & same &
$16/16$; $5$ to $9$ survivors cancel & \stverif\\
The residue & $t=4$, $\beta_i\le13$ &
$1716$ forms, $296$ zeros, $275+9+12$ & \stverif\\
\quad the same & $t=6$, $\beta_i\le13$ &
$715$ forms, $236$ zeros, $224+12$ & \stverif\\
How the residue vanishes: inherited / by cancellation & $t=4$; $t=6$ &
$9+12$ of $21$; $4+8$ of $12$; never by support & \stverif\\
\quad decoy: the $\tau$ of the wrong order & $t=4$ &
changes the split & \stverif\\
\quad the same decoy & $t=6$ &
\emph{ties}: same split, so untested & \stverif\\
$|\lambda|\equiv N(N-1)/2 \pmod 2$ on the residue & $t=4$ and $t=6$ &
$21/21$ and $12/12$; $718$, $216$ false $+$ & \stverif\\
The vanishing concentrates on three $t$-cores & $t=6$, $\beta_i\le13$ &
$3$ of $36$; $28.6\%$ against a decoy's $10.1\%$ & \stverif\\
$\lambda^{(0)}\neq\varnothing$, inside one core & core $(1,1,1)$ &
$6/6$ against $0/6$; $p\approx0.1\%$ by chance & \stverif\\
\quad the same, in the next core & core $(2,1)$ &
$4/4$ but $8/20$ false $+$: \emph{dead} & \stverif\\
\quad best combination found & $t=6$ &
$10/12$ with $8$ false $+$: not a criterion & \stverif\\
Prop. \ref{prop:square}: the two factorisations agree & $t=4$, $r=2$ &
proved; control $5/5$, each side rebuilds $\Phi$ & \stproved\\
\quad decoy: the $\varepsilon$ of a different order & same &
disagrees $5/5$ & \stverif\\
Lemma \ref{lem:epsdet}: $\varepsilon$ as a $0/1$ determinant & --- &
proved; a computational device only & \stproved\\
\quad the bound $\varepsilon\in\{0,\pm1\}$, which does \emph{not} follow from it & --- &
Littlewood's ribbon sign \cite{LR34} & \stext\\
\eqref{eq:parity}: the parity route against the character route & $t=4$, $r=2$ &
$7/7$ & \stverif\\
\quad decoy: the same without the rearrangement & same &
disagrees $7/7$ & \stverif\\
Lemma \ref{lem:local}: the local form of the $r_i$ & box $\le8$, $R=3$ &
proved; $5368/5368$ under interlacing & \stproved\\
\quad and outside the interlacing & same &
disagrees on $6732$ of $6732$ & \stverif\\
(L1)--(L3): the odd conjecture localises & $t\le5$, $r\le3$ &
$1735$ pairs, $132$ forms, no exception & \stverif\\
\quad but on that box every $a^B_\Lambda=1$, so (L3) is untested there & same &
$476/476$; both selection decoys $132/132$ & \stverif\\
\quad multiplicity-freeness fails one box up & $t=3$, $r=2$, $\beta_i\le13$ &
only $47\%$ of $70\,685$ are $1$; up to $23$ & \stverif\\
\quad (L3) re-measured where it can fail, i.e.\ Conj.~\ref{conj:L3} & $t=3$, $r=2$, $\beta_i\le12$ &
$678/678$; $78\%$ of shapes carry some $a^B>1$, up to $13$ & \stverif\\
\quad and again in a second configuration & $t=5$, $r=2$, $\beta_i\le12$ &
$323/323$; $52\%$ carry some $a^B>1$, up to $4$ & \stverif\\
Conj.~\ref{conj:tie}: the tie cancels & $t\le5$, $r\le3$ &
$27/27$ tied shapes; the biconditional $132/132$ & \stverif\\
\quad the subset formula that would prove (L1) & $t=3$, $r=2$ &
over-predicts the support in $66$ of $67$ & \stverif\\
Rem.~\ref{rem:gkrs}: \eqref{eq:gkrsfilt} up to the global sign $s_r$
& $(t,r)$ as below$^{\dagger}$ &
\textbf{proved}; $245/245$, decoys $0/221$ & \stproved\\
\quad the cyclotomic half of $s_r$: $\prod_j(1-z_j^t)=(-1)^r(\prod_jz_j^{t/2})\Delta_t$ & --- &
\textbf{proved}, Prop.~\ref{prop:crossden} & \stproved\\
\quad and the residual convention factor, so that $s_r=(-1)^r$ & same as above &
$+1$ throughout; measured, not derived & \stverif\\
Lem.~\ref{lem:muinj}: $w\mapsto\mu_w$ injective, hence $\nu\in\{0,\pm1\}$ & $5$ pairs $(t,r)$ &
proved; $147/147$, decoy $w\mapsto\eta_w$ $0/147$ & \stproved\\
Prop.~\ref{prop:transversal}: $\nu$ is a signed transversal count & $5$ pairs $(t,r)$ &
proved; $301/301$ support and values; decoys $68$, $24$ & \stproved\\
\quad part (v): the top is the transversal of smallest $V$ & same &
proved; $277/277$ and unique; decoys $32$, $0$ & \stproved\\
Rem.~\ref{rem:tie}: $\mu^+_{\max}$ is the largest of those tops
& $t\le5$, $r\le3$ &
right in $105$ of $132$, and \emph{iff} the top is attained once: $132/132$ & \stverif\\
Prop.~\ref{prop:divided}: the inverse formula \eqref{eq:divided} & $4$ pairs $(t,r)$ &
\textbf{proved}; $256/256$, back-multiplication $256/256$ & \stproved\\
\quad and $\max|c|=1$ on that population --- this is (L1), \emph{not} proved & same &
$256/256$; open in general & \stverif\\
\quad decoys: even progression; no straightening sign; only $k=(1,\dots,1)$ & same &
$0/256$, $57/256$, $102/256$ & \stverif\\
\quad the dichotomy of Rem.~\ref{rem:divided}: $1$ term, or sum $0$ & same &
$1016$ single; $322+11$ multiple, all summing $0$ & \stverif\\
\quad three mechanisms for it: box; exchange involution; $W(D_r)$ & same &
$282/333$, $166/266$ and $197/333$, $43/333$ & \stverif\\
\quad the affine folding of the frozen block is not it & same &
$\det$ or $\delta$ flips, never both: $313/313$; but $\delta$ only $62$ & \stverif\\
Prop.~\ref{prop:determinant}: $c=\pm\epsilon_t\det M$ & $12$ configs, $t\le11$, $r\le4$ &
proved; $37\,330/37\,330$ & \stproved\\
\quad the global sign $\kappa_{t,r}$ of \eqref{eq:detsign} & same &
$=+1$ in $12/12$, so $c=\epsilon_t\det M$ on the tested range; not proved. \emph{Not} the sign of
\eqref{eq:gkrsfilt}, which is $(-1)^r$ & \stverif\\
\quad the discarded columns really are zero & same &
$526/526$ have $\det M=0=c$ & \stverif\\
\quad decoys: even $k$; folding signs dropped & same &
$0/11012$ and $11012/11819$, scored only where scorable & \stverif\\
Cor.~\ref{cor:unimodular}: $M$ is totally unimodular & same &
$9561/9561$ distinct matrices, all minors, not sampled & \stproved\\
\quad the reduction that makes that exhaustive & same &
$37\,330$ pairs $\to$ $9561$ matrices $\to$ $16$--$88$ patterns & \stverif\\
\quad Rem.~\ref{rem:threematrices}: the three nonsingular patterns & $t=3$, $r=2$, $\beta_i\le6$ &
$16$ patterns, $13$ singular; the three carry $463$ pairs & \stverif\\
\quad \quad and the one class the larger box adds & $\beta_i\le12$ &
$17$ patterns, none lost; the new one has rank $1$ & \stverif\\
Rem.~\ref{rem:divided}: the narrow toggle, chirality coupled to the folding sign
& $4$ pairs $(t,r)$, $256$ shapes &
$313/313$; the literal $2t$ toggle alone $175/313$ & \stverif\\
\quad and there the $-1$ is the Weyl determinant, not the folding & same &
$167$ of $175$ against $8$ & \stverif\\
\quad Rem.~\ref{rem:tie} decoys: the opposite transversal; the dominance-largest $\Lambda$
& $t\le5$, $r\le3$ &
$39/132$ and $104/132$ & \stverif\\
Cor.~\ref{cor:galoisquad}: $D_t^2=(-t)^n$ and $\gamma_t$ trivial iff $n$ even
& $3\le t\le23$, both branches &
proved; $21/21$ moduli, $170/170$ pairs (unit, modulus) & \stproved\\
\quad reading $c$ off one corner of the $2^r$ shifts & $245$ GKRS weights &
\emph{fails}: works in $121$ of $245$; supports collide in $131$ & \stverif\\
\quad the back-substitution, from GKRS data only & same &
$245/245$, remainder $0$, never $|c|>1$ & \stverif\\
Prop.~\ref{prop:eventransversal}: the same reading for even $t$ & $4$ pairs $(t,r)$ &
proved; $112/112$ support and values, top $101/101$, decoy $72$ & \stproved\\
\quad and the even identity, with denominator $\frac{1-z^t}{1-z^2}$ & $5$ pairs $(t,r)$ &
$266/266$, sign $+1$ throughout; decoys $0$, $0$, $5$ of $222$ & \stverif\\
Prop.~\ref{prop:galois}(i): units preserve both loci & $t\le12$, $n\le3$ &
proved; $118/118$ units, $0/58$ non-units & \stproved\\
Prop.~\ref{prop:galois}(ii): the $B\leftrightarrow C$ translation of \emph{supports}
& odd $t\le11$, $n\le3$ &
proved; $12/12$, and no other shift works & \stproved\\
\quad and it fails for even $t$ from rank $2$ & $t\le14$, $n\le3$ &
witness at $(6,2)$: $16$ vs $8$; sizes differ in $9$ of $9$ & \stverif\\
\quad but not at rank $1$, where $t\equiv2\ (4)$ works & $t\le14$ &
$3$ even cases are translates anyway & \stverif\\
Prop.~\ref{prop:galoissign}: $\tau_t(ka)=\gamma_t(k)\tau_t(a)$ & $t=3,5,7$ &
proved; $8/8$, $40/40$, $120/120$; $\gamma_t$ a character & \stproved\\
Cor.~\ref{cor:oddsign}: the odd filter in closed form & $t=3,5,7,9$ &
proved; $\epsilon_t=+1,-1,+1,+1$, controls $4/4$, $10/10$, $20/20$, $15/15$ & \stproved\\
\quad the two classical inputs: Gauss's lemma and \cite{Pan06} & odd $t\le61$ &
$788/788$ and $788/788$ pairs (unit, modulus) & \stverif\\
\quad $A_\rho\equiv(-2)(1,\dots,m')$, hence $\epsilon_t=\gamma_t(-2)$ & odd $t\le61$ &
$30/30$, and the closed value $30/30$ & \stverif\\
\quad the value is \emph{not} constant on an orbit & $t=4$, $m=1$ &
witness: $\tau(1)=+1$, $\tau(3)=-1$ & \stverif\\
the Galois saving: one evaluation per orbit, plus $\gamma_t$ & $t\le12$, $n\le3$ &
factor $=\varphi(t)$ once the action is free & \stverif\\
Lemma \ref{lem:checker}: the checkerboard & box $\le8$ &
proved; $3333$ pairs, $0$ failures & \stproved\\
$E^{(4)}$: transfer, triangular, $\{0,\pm1\}$ & $165\times45$ &
density $10.65\%$; $0$ triangularity violations & \stverif\\
\quad M\"obius structure: \emph{absent} & same &
row and column sums span $-4\dots+5$ & \stverif\\
\S\ref{sec:where}: cancellation inside a cell: \emph{no} & $7$ forms &
$4$ of $7$ have singleton cells only; $2$ of the $26$ cells sum to $0$ & \stverif\\
The atoms at the top weight & $7$ forms &
\textbf{one} or \textbf{three}, signs $2$ vs $1$ & \stverif\\
\quad opposite pair differing by one $t$-ribbon & same &
$2$ of $7$; decoy on other sizes fires in $0$ & \stverif\\
Values in $\{0,\pm1\}$ at the \emph{principal} torsion element & --- &
Nadimpalli--Pattanayak--Prasad \cite{NPP}, Thm.~4.1; Polo \cite{Polo};
Kostant \cite{Kostant} at $d=h$ & \stext\\
Our element is principal ($t$ odd) / is not ($t$ even, $m\ge2$) & all $t$ &
proved both ways: $\rho^\vee(\zeta)$ has spectrum $\zt$; and $2b_m\equiv k$ forces a parity
contradiction & \stproved\\
\quad and the multiset $\{\alpha(g)\}$ agrees, as an implementation control & $t\le11$ &
conjugacy decided the same way in every case & \stverif\\
The centraliser dichotomy decides vanishing & $t\le8$ &
$t$ odd: $488$ of $488$; $t$ even: $562$ of $688$ & \stverif\\
Singular dies, regular folds with a sign & --- &
Andersen--Stroppel \cite{AndersenStroppel} & \stext\\
Branching algebra with a standard monomial basis & --- &
Kim--Yacobi \cite{KimYacobi} & \stext\\
Vanishing when a residue class is empty & --- &
Littlewood \cite{LR34}; Ayyer--Kumari \cite{AK22} & \stext\\
\end{longtable}
\endgroup

\noindent
$^{\dagger}$ The four GKRS rows run over all dominant $\Lambda$ with $\Lambda_1$ bounded at
$(t,r)=(3,2),(5,2),(3,3)$ and $(7,2)$, $245$ weights in all.

\noindent
Two entries deserve to be read twice. The decoy for the order of the filter \emph{ties} on three
forms of thirteen: on those the wrong filter reproduces the object, so they are untested by it, not
confirmed. And the best discriminant on the residue is perfect inside one stratum and wrong inside
the next; it is listed because a vanishing criterion is exactly the kind of statement that is easy to
fit on twelve objects, and the second stratum is what stopped us from fitting it.

\subsection{The ancillary bundle}\label{sec:anc}
Everything the tables above appeal to travels with the paper, in three directories.

\begin{itemize}
\item \texttt{gates/} --- the computations behind the claims, $269$ scripts, of which $242$ carry the
archived run that produced their numbers as \texttt{*\_OUT.txt} and \texttt{*\_DUMP.json}. The
remaining $27$ are helpers and exploratory probes; none of them is the sole support of a number
quoted here.
\item \texttt{figures/} --- the $15$ scripts that draw the figures: the thirteen of the figures
themselves, and the two that redraw them with Spanish labels for the translated edition. Each
recomputes its data from the closed forms rather than reading a saved file, and each refuses to draw
if its controls fail, so no figure in this paper can disagree with its own data.
\item \texttt{audits/} --- the $25$ checks run on the manuscript itself rather than on the
mathematics: that every reference resolves, that the verification and attribution tables agree on the
status of each result, that no displayed formula has an unbalanced delimiter or a symbol used in two
senses, that every number in the text still matches the dump it came from, and that the two editions
cite the same sources the same number of times --- comparing the sets of keys is not enough, and a
citation present in one edition and missing in the other slipped past exactly that way once. One
of them refuses outright on a control character anywhere in either source: a shell heredoc once
wrote a literal backspace inside a macro name, \LaTeX{} compiled it without a murmur because the
byte is invisible, and the printed formula had eaten the macro.
\end{itemize}

\noindent
The header of each script states what it tests and, where the claim is a measurement, the
\emph{decoy} run against it. Scripts ending in \texttt{.sage} need SageMath; the rest are plain
Python~3, and the figure scripts additionally need matplotlib. A manifest lists all of them with a
one-line description and says which carry archived output.

We say plainly what the bundle is not. It is not a per-row map from the tables above to the scripts:
the tables are organised by statement and the scripts by computation, and the two groupings do not
coincide. What it is, is complete --- every number quoted has a script in the bundle, and $215$ of
them have the run archived beside the script, so that a reader who does not wish to execute anything
can still see what was executed.

\section{Attribution}\label{sec:attr}

We separate what we use from what we claim, in two lists rather than one, because the two are read
for different reasons.

\subsection*{What we use}
\begingroup
\rowcolors{2}{hband}{white}
\small
\begin{longtable}{@{}>{\raggedright\arraybackslash}p{0.58\linewidth}%
                   >{\raggedright\arraybackslash}p{0.34\linewidth}@{}}
\toprule
statement & due to \\
\midrule
\endhead
$t$-quotient factorisation at a root-of-unity orbit; vanishing when a residue class is empty
& Littlewood; Ayyer--Kumari \stext \\
reciprocal-pair factorisation into symplectic and orthogonal characters
& Ciucu--Krattenthaler; Ayyer--Behrend \stext \\
character at a \emph{principal} torsion element: the centraliser dichotomy, and its constant
& Nadimpalli--Pattanayak--Prasad; Polo \stext \\
the same at the Coxeter element
& Kostant \stext \\
Young-diagrammatic framework for restrictions to reductive subgroups of maximal rank
& Koike--Terada \stext \\
singular weight dies, regular weight folds with the sign of the folding element
& Andersen--Stroppel \stext \\
character on a reflection coset equals a character of the orbit algebra
& Jantzen; Kumar--Lusztig--Prasad \stext \\
multiplication by a unit as a permutation of $\ZZ/t$, and its sign
& Zolotarev; Frobenius; Lerch; Gauss's lemma in Jacobi's form \stext \\
the same on the \emph{folded} half system: $\sgn(\sigma_k)=\left(\frac kt\right)^{(t+1)/2}$, and that
it is a real even character
& Pan \cite{Pan06} \stext \\
signed Galois covariance of Weyl data: the Galois image differs by the sign of the affine Weyl
element that returns it to the alcove
& Fuchs--Schellekens--Schweigert \cite{FSSGalois} \stext \\
the character of an equal-rank pair as an alternating multiplet over a spinor denominator
& Gross--Kostant--Ramond--Sternberg; the cubic Dirac operator behind it, Kostant; the loop-group
extension, Landweber \stext \\
isobaric divided-difference operators, and that the one attached to $w_0$ is antisymmetrisation
followed by division by the Weyl denominator
& Demazure, in the form of Harada--Landweber--Sjamaar \cite{HLS} \stext \\
the same for a subgroup of maximal rank: the relative antisymmetriser, the operator identity
\eqref{eq:LS44}, and the vanishing of the alternating sum of a GKRS multiplet under the forgetful
map & Landweber--Sjamaar \cite{LS13} \stext \\
branching algebra of $\Sp_{2n}\downarrow\Sp_{2n-2}$ with a standard monomial basis
& Kim--Yacobi \stext \\
additivity of Newton polytopes under products, used in Proposition~\ref{prop:newtden}
& Ostrowski \stext \\
tableau models for the branching multiplicities $b_{\nu\mu}$, and the bridge between them
& Sundaram; Watanabe; Azenhas \stext \\
the multiplicity space at $t=4$ as a module for $(SL_2)^R$, and its diagonal
& Yacobi \stext \\
\bottomrule
\end{longtable}
\endgroup

\subsection*{What is ours}

\noindent
One thing has to be said before the table rather than inside it, because a table of rows hides it.
The odd branch of this paper --- \S\S\ref{sec:oddgkrs}--\ref{sec:det}, which is where the paper gets
furthest --- rests on an observation that is \emph{not ours}: that the obstruction we had hit is the
relative denominator of an equal-rank pair, and that the Gross--Kostant--Ramond--Sternberg formula
therefore applies. A correspondent made that observation. Everything from Proposition
\ref{prop:crossden} onwards --- the identity \eqref{eq:gkrsfilt}, the transversal count, the
inversion of the division, the determinant --- is downstream of it. We proved those statements and
we take responsibility for them, but we would not have been looking there. The same correspondent
supplied Lemma \ref{lem:muinj}, the divided-difference route, the two-line refutation of the affine
guess, the narrow toggle that closes $313$ of $313$, the form of Theorem \ref{thm:cancel} as an
identity, the Weyl-majorisation order in Conjecture \ref{conj:A}, and the counterexample in
Lemma \ref{lem:epsdet}. The route by which Theorem \ref{thm:cancel} is finally proved is the same
correspondent's: that the parity of a fibre should be read through
$\operatorname{per}\equiv\det\pmod2$ and the total unimodularity of $M$, rather than through a
cancellation mechanism on the fibre itself. Ours is the identification of the fibre with a permanent
(Lemma \ref{lem:permanent}), the laminarity (Lemma \ref{lem:laminar}) and the two counting lemmas
that follow it. The rows below repeat each of these in place; this paragraph is here so that
the sum of them is visible in one go.

\begingroup
\rowcolors{2}{hband}{white}
\small
\begin{longtable}{@{}>{\raggedright\arraybackslash}p{0.58\linewidth}%
                   >{\raggedright\arraybackslash}p{0.34\linewidth}@{}}
\toprule
statement & standing \\
\midrule
\endhead
the reduction of Proposition~\ref{prop:red}, both parities & proved \\
the explicit sign in Lemma~\ref{lem:T}, in type-$C$ coordinates & proved \\
Lemma~\ref{lem:regular}: the filter is regularity in $G$ itself
& proved (even); after \cite{NPP} (odd) \\
Theorem~\ref{thm:fusion}: both filters are the minimal fusion projections
& proved here; the fusion machinery is \cite{AndersenStroppel} \\
Proposition~\ref{prop:square}: the two factorisations commute & proved \\
Proposition~\ref{prop:newtden}: $\Newt(N_\delta)$ entirely --- the $C_r$ zonotope, hence an orbit
polytope with $2^rr!$ vertices --- and the dominant vertex, hence the shift
& proved; the additivity used is \cite{Ostrowski}, and the \emph{shape} of the answer is not a
surprise and not ours: for a plain Vandermonde $\prod_{i<j}(x_i-x_j)$ the same argument gives the
zonotope of the positive roots of type $A$, which is the permutohedron, and that is standard. What
is ours is the computation for \emph{our} alphabet --- that the long directions get radius $t+1$ and
the short ones radius $1$, and the resulting dominant vertex \\
Proposition~\ref{prop:ideal}: the vanishing locus is cut by an ideal & proved \\
Corollary~\ref{cor:selection}: the parity selection rule, even $t$ & proved \\
Corollary~\ref{cor:evensize}: the even filter is supported on $|\eta|$ even & proved \\
Lemma~\ref{lem:checker}: the checkerboard of signs & proved \\
Lemma~\ref{lem:epsdet}: $\varepsilon^{(t)}_{\lambda,\nu}$ as a determinant of zeros and ones
& proved, and a computational device only. The bound $\varepsilon\in\{0,\pm1\}$ does \emph{not}
follow from it: that is Littlewood's ribbon sign \cite{LR34}. The counterexample showing the
convention $h_k=0$ for $k<0$ cannot be dropped is a correspondent's \\
Lemma~\ref{lem:local}: the local form \eqref{eq:local} of the $r_i$
& proved; the $r_i$ themselves are Yacobi's \cite{Yacobi}, and what is ours is that under the
interlacing the global rearrangement defining them is unnecessary \\
Proposition~\ref{prop:galois}: the two \emph{nonvanishing loci} differ by $\sigma_2$
& proved; the Galois action itself is classical \\
Proposition~\ref{prop:galoissign}: at minimal level the Galois sign is a \emph{weight-independent}
character $\gamma_t$
& proved; the general signed covariance is \cite{FSSGalois}, and the two factors are Gauss's lemma
and \cite{Pan06} \\
Corollary~\ref{cor:galoisquad}: $\gamma_t$ is trivial or the quadratic character of
$\mathbb Q(\sqrt{-t})$ according to the parity of $n$, via $D_t^2=(-t)^n$
& proved, but the \emph{mechanism} is not ours and is old: extracting a quadratic character from a
matrix of roots of unity whose square is a scalar --- so that a square root of $\pm t$ appears and
the Galois group acts on it by a sign --- is Schur's linear-algebra route to the sign of the
quadratic Gauss sum \cite{Schur21,Murty}, and thence to quadratic reciprocity. Schur uses the \emph{trace} of the full
matrix $(\zeta^{rs})$; ours is the \emph{determinant} of its antisymmetric part, and the evaluation
$D_t^2=(-t)^n$ is elementary from the orthogonality of the discrete sine transform. What is ours is
the identification of $\gamma_t$ with that character, not the idea of looking there \\
Corollary~\ref{cor:oddsign}: the odd filter in closed form with its sign, and that the frozen vector
is the half system times the unit $-2$
& proved; the value of $\gamma_t(-2)$ is \cite{Pan06} + Gauss \\
Lemma~\ref{lem:auxC}: the auxiliary type-$C$ reading is regularity in $\mathcal R^C$ & proved \\
Lemma~\ref{lem:muinj}: $w\mapsto\mu_w$ is injective, hence $\nu\in\{0,\pm1\}$
& proved; \emph{the argument is a correspondent's} \\
Proposition~\ref{prop:divided}: the division by $\Delta_t=\psi^t(\Delta_1)$ inverts as a sum over an
arithmetic progression & proved, but little of it is new in itself: inverting a centred difference
operator on functions of finite support is elementary telescoping, and the divided-difference
formalism is \cite{HLS} with its maximal-rank sequel \cite{LS13}. What is ours is the identification
of the specialised denominator with that operator, and the consequence for (L1). The suggestion to
attack the division this way, the clean operator form $D_{t}=\prod_j(T_{-te_j}-T_{+te_j})$ we adopt,
and the caution about the character lattice are all a correspondent's \\
Proposition~\ref{prop:determinant}: the quotient is $\pm\epsilon_t\det M$ for an explicit $0/{\pm}1$
matrix & proved; that a signed sum over perfect matchings is a determinant is of course not ours ---
what is ours is the matrix \\
Theorem~\ref{thm:cancel}: a multi-hit fibre at a dominant regular index sums to zero
& proved; \emph{stating it as an identity rather than as the inequality (L1) is a
correspondent's recommendation}, and \emph{so is the route through
$\operatorname{per}\equiv\det\pmod2$} \\
Lemma~\ref{lem:permanent}: the fibre count is a permanent & proved; the bijection is already inside
the proof of Proposition \ref{prop:determinant} --- what is new is reading it without the signs \\
Lemma~\ref{lem:laminar}: the blocks are laminar & proved; ours, and the point is that it is not a
statement about $M$ at all but about the dominance with which the $X$ are enumerated \\
Lemma~\ref{lem:sdr}: the laminar count, and that it is odd only when it is $1$ & proved; \emph{the
product itself is classical} --- the lower bound on the number of systems of distinct representatives
is M.~Hall's \cite[Theorem 2]{Hall48}, and $\prod_k(|F_k|-k+1)$ its standard sharpening for a family
ordered by size; we claim nothing for either. Ours is that the laminar hypothesis makes the bound an
equality, and the parity consequence, which is the only part the paper uses \\
Lemma~\ref{lem:doubled}: a doubled entry forces an even fibre & proved; ours, and elementary \\
Corollary~\ref{cor:periodic}: the filter is a function on a finite set & proved; an immediate
consequence of Lemma \ref{lem:regular}, recorded because the rest of the paper computes with it \\
Conjecture~\ref{conj:A}: one $W(C_r)$-orbit polytope
& measured, not proved; that the order has to be Weyl majorisation and not the root order --- with
which the statement would be false on our own data --- we owe to a correspondent \\
Corollary~\ref{cor:unimodular}: $M$ is totally unimodular & proved, Lemma \ref{lem:c1p}; the
invariance of total unimodularity under signed row and column permutations, and the total
unimodularity of interval matrices, are elementary and standard --- the consecutive-ones theory is
Fulkerson--Gross \cite{FG65} \\
Proposition~\ref{prop:graph}: each block is a multigraph whose non-loop part lies inside
$K_{2,n-1}+\alpha\omega$, and its dependencies are that graph's cycles & proved; the boundary map and the suggestion to build the graph explicitly rather
than to record that a graphic matroid is present are a correspondent's; that the kernel of an
oriented incidence matrix is the cycle space is textbook. \emph{What is ours is the identification of
the graph} --- two terminals, one interior vertex per row boundary --- and Remark \ref{rem:cycles},
which is where the classification of the singular blocks closes \\
Corollary~\ref{cor:forest}: $c\ne0\iff|I_X|=1\iff$ every block graph is a forest & proved; ours,
and it is only the three preceding statements read in one line \\
Theorem~\ref{thm:odd}: the odd branch in one statement & proved, and \emph{nothing in it is new};
assembling the five clauses into a single theorem is a correspondent's recommendation \\
Proposition~\ref{prop:transversal}: the numerator is a signed transversal count, with a
computation-free vanishing criterion and an explicit top weight
& proved; but it is Lemma~\ref{lem:muinj} combined with Corollary~\ref{cor:oddsign}, and the first
of those is a correspondent's argument, so the proposition is ours only in the assembly \\
Proposition~\ref{prop:eventransversal}: the same reading for even $t$ --- two forbidden classes, no
chirality, denominator $\prod_j(1-z_j^t)/(1-z_j^2)$ --- and the parity dichotomy inside the numerator
& proved \\
Remark~\ref{rem:eventransversal}: the even identity, and the contrast between the two divisors
& verified \\
Proposition~\ref{prop:crossden}: the cross factor \eqref{eq:crossden}, and that it is the
specialisation of the \cite{GKRS} spinor denominator, collapsing cyclotomically to
$\prod_j(1-z_j^t)$
& proved; the general spinor denominator and its interpretation are \cite{GKRS,Kostant99}, and
\emph{the observation that GKRS applies here at all is a correspondent's} --- it is the hinge of the
whole odd branch \\
the identity \eqref{eq:gkrsfilt}: \cite{GKRS} composed with Proposition~\ref{prop:crossden} and the
odd filter & proved up to the global sign $s_r$, whose cyclotomic half $(-1)^r$ is now proved in
Prop.~\ref{prop:crossden} and whose residual convention factor we fixed by computation \\
Remark~\ref{rem:adams}: the specialised denominator is the $t$-th Adams image of the
$B_r\supset D_r$ spinor denominator & observed; \emph{the observation is a correspondent's} \\
the reduction of (L1) to the division by $\Delta_t$, and that the division reproduces $c$
& the reduction is proved, and so is that the division never makes a $2$ --- Theorem
\ref{thm:cancel} with Corollary \ref{cor:unimodular} \\
\S\ref{sec:whatcancels}: that affine folding cannot be the mechanism, since a shift by $2t$ is a
product of two reflections and carries sign $+1$
& proved; \emph{the two-line argument is a correspondent's} \\
\S\ref{sec:whatcancels}: the narrow toggle, with the chirality coupled to the folding sign, which
accounts for $313$ of $313$ cancelling pairs
& measured; \emph{the move is a correspondent's suggestion}, as is the prediction --- confirmed,
$167$ of $175$ --- that the sign comes from the representative and not from the jump \\
the $B/C$ discrepancy at $a_i=t/2$, and its witness in $\Sp_4$ & computed \\
the odd pipeline \eqref{eq:oddbranch}, and Proposition~\ref{prop:parity} & verified \\
the composite \eqref{eq:A} as a computational object & verified \\
the full formula \eqref{eq:mumax} for $\mu_{\max}$ & conditional on Conj.~\ref{conj:A} \\
Conjecture~\ref{conj:H} and its evidence, both parities & open \\
its localisation (L1)--(L3) on the odd side, and the Laplace route
that fails at (L1) (Remark~\ref{rem:L1route}) & measured; not proved \\
(L2) and (L3), sharpened into Conjectures~\ref{conj:tie} and \ref{conj:L3}
& measured; not proved \\
the residue of \S\ref{sec:residue} and the failed discriminants & computed \\
\bottomrule
\end{longtable}
\endgroup

\section{Open problems}\label{sec:open}

Each of the following is stated with the line of work it belongs to, because in every case the
lineage is what makes the question precise rather than merely unanswered. Figure \ref{fig:problems}
places the six against what the paper proves and what it borrows; they are the boxes it leaves open.

\begin{figure}[!ht]
\centering
\includegraphics[width=\textwidth]{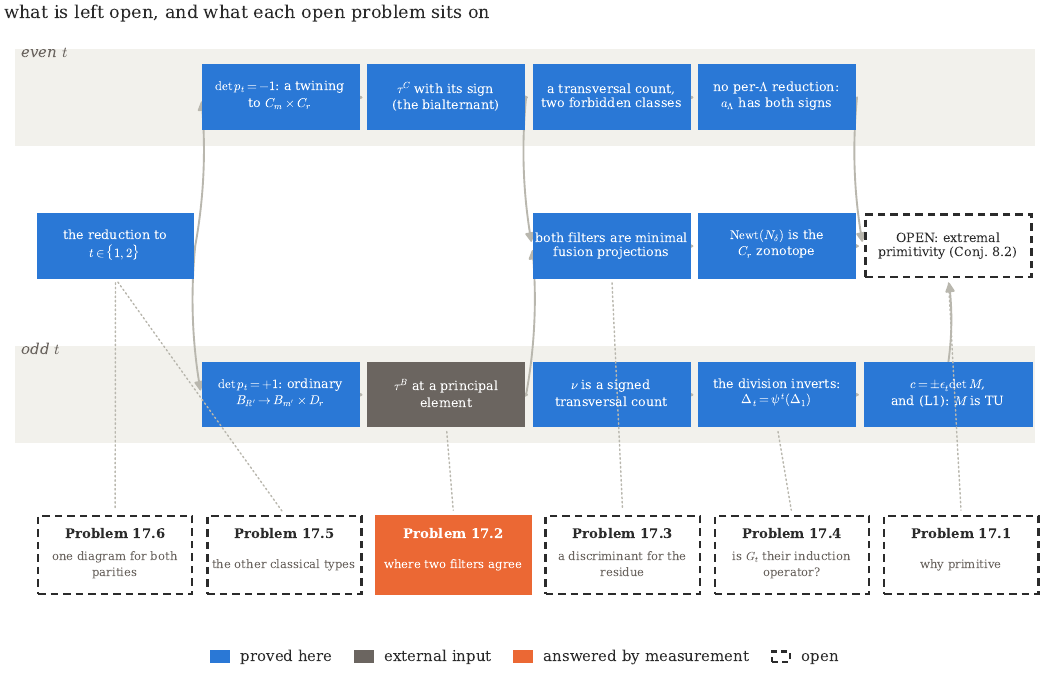}
\caption{The two routes to Conjecture \ref{conj:H}, and where each open problem attaches. Boxes are
statements and arrows are dependencies; the colour is the status column of \S\ref{sec:verif} ---
filled for what is proved here, grey for the one external input, dashed for what is open. The
\emph{upper} band is the even branch and the \emph{lower} one the odd, and the asymmetry between them
is the thesis of the paper rather than an accident of drawing: the odd branch reaches four boxes
further, because for a single $\Lambda$ its character is genuine and the even one is virtual. What
the two share is the spine in the middle, and Theorem \ref{thm:fusion} is where the fork closes.
Problem \ref{prob:threshold} carries the third colour because it is the one the paper answers rather
than asks: the level-threshold explanation it proposed is refuted here, and what replaces it is a
measured biconditional. Statements are named rather than numbered, and the problem numbers are read
from the \texttt{.aux}, so the figure cannot drift when the numbering does; the script refuses to
draw unless the set of problems it shows is exactly the set the section states --- which is how the
merge of two duplicate problems into Problem \ref{prob:idealodd} was kept from leaving a stale
figure behind --- and unless every label measures inside its own box, in pixels. Computed by
\texttt{fig\_problems.py}.}
\label{fig:problems}
\end{figure}

\begin{problem}[why primitive]\label{prob:unit}
Prove Conjecture \ref{conj:H}, or find the shape it really has. The difficulty is easy to locate and
hard to remove: $A_\mu$ is the value at a torsion element of the character of a \emph{virtual}
module, whereas the theory available --- Lemma \ref{lem:regular} here, and \cite{NPP} for a principal
element --- constrains the value of an \emph{irreducible} character, and at a torsion element whose
relation to $\eta$ is fixed. An integer combination of values in $\{0,\pm1\}$ is a priori an
arbitrary integer, and on our data the individual terms are not small.

The parity split of Proposition \ref{prop:parity} suggests attacking the two halves differently. On
the odd side the input to the filter is a genuine character, the torsion element is principal, and
the localisation of \S\ref{sec:oddlocal} has the top weight reached by very few surviving $\eta$
--- in every form tested, exactly one $\Lambda$ contributes there, which is (L2) and is measured, not
proved; an
extremal-branching argument for the pair $SO_{2R'+1}\downarrow SO_{2m'+1}\times SO_{2r}$ may
therefore suffice there. On the even side it will not, and the promising coordinates are those of
\S\ref{sec:where}, where the same integer is a signed count of one or three atoms.

We can say more precisely where the odd side now stands, because we tried it. The conjecture
localises into (L1)--(L3) of \S\ref{sec:oddlocal}, of which (L1) --- that a single $\Lambda$
contributes $0$ or $\pm1$ --- is the representation-theoretic half and the one that looked reachable.
The Laplace expansion of the $B_{R'}$ Weyl numerator along the frozen rows does give an injective
correspondence between regular subsets and weights, and would give (L1) with a closed formula, but
the normalisation defeats it: the denominator does not reduce to one term, and the trivial
representation is already a counterexample (Remark \ref{rem:L1route}). What a proof needs is that
argument carried out with the denominator in product form. We record this so that the next attempt
starts after the obstruction rather than before it.

That is where the obstruction stopped being anonymous, and the rest of this paragraph is what
happened when it was followed. Proposition \ref{prop:crossden} identifies the cross factor as the
relative denominator of the equal-rank pair $B_{R'}\supset B_{m'}\times D_r$, which puts the question
inside the Gross--Kostant--Ramond--Sternberg formula \cite{GKRS}; specialising that formula at the
torsion point gives \eqref{eq:gkrsfilt}, Lemma \ref{lem:muinj} makes its numerator
multiplicity-free after the projection, and Proposition \ref{prop:transversal} turns the numerator
into a signed count of transversals with values in $\{0,\pm1\}$ and an explicit top weight. So (L1)
is no longer a question about branching multiplicities but about one division --- and that division
has since been carried out. The divisor is the $t$-th Adams image of the $B_r\supset D_r$ spinor
denominator (Remark \ref{rem:adams}), so it is a product of commuting centred difference operators
and inverts in closed form, as a sum over a fibre (Proposition \ref{prop:divided}); regrouped over
signed permutations instead of subsets, that sum is $\pm\epsilon_t\det M$ for an explicit $0/{\pm}1$
matrix (Proposition \ref{prop:determinant}). (L1) then follows in either of two ways,
and they are not equivalent: the identity that a fibre meeting the support more than once contributes
nothing (Theorem \ref{thm:cancel}), or the total unimodularity of $M$ (Corollary
\ref{cor:unimodular}), which gives the bound outright. Both are now available, and the second is
what proves the first; \S\ref{sec:map} says why that was the form to attack. That is the sharpest thing we can say about Conjecture \ref{conj:H} on either side.

One reading we have deliberately not pursued: primitivity invites the language of simple currents
\cite{FSS}. Two of the three things that would have to hold are now available --- Theorem
\ref{thm:fusion} shows that the object does define a class in a fusion quotient, and that this class
is what the torsion evaluation computes. The third is not, and it is the decisive one: that $\pm[0]$
comes from a genuinely invertible object rather than from a virtual combination that cancels. Since
$M_{\mu_{\max}}$ is virtual on the even side by Remark \ref{rem:virtual}, that is exactly where the
question bites. We raise it as speculative for that reason and no other.
\end{problem}

\begin{problem}[where two filters agree]\label{prob:threshold}
On three forms out of thirteen the filter of the wrong order reproduces the object exactly, so those
forms are untested by that decoy. An earlier version of this problem proposed the explanation and
asked for a proof; we then measured it, and the explanation is wrong. We leave the whole route
visible, because what replaces the question is sharper than the question was.

\emph{What was asked.} In type $A$ the analogous phenomenon has a clean answer: at
sufficiently high level no wall is crossed and fusion coefficients coincide with tensor product
multiplicities, with the threshold given explicitly \cite{Feingold}. For type $C$ the alcove is also
explicit --- for even order it is $\sum_i m_i < \ell/2 - n$ in the fundamental-weight coordinates
\cite{AndersenStroppel} --- so the question seemed to pose itself: do two filters of different order
agree on $\Phi_{t,r}$ exactly when the support of $B_{\eta,\mu}$ lies inside the alcove of the
smaller one?

\emph{Why that cannot be it.} The two alcoves are the same one. Our decoy is not the same group at a
higher level: it is $\tau$ at order $t+2$ on $C_{m+1}$, with $\eta$ padded by a zero. In
fundamental-weight coordinates $\sum_i m_i=\eta_1$, so the alcove of $C_n$ at order $\ell$ is
$\{\eta_1<\ell/2-n\}$, and
\[
\text{true filter: }\ \tfrac t2-m=1,
\qquad
\text{decoy: }\ \tfrac{t+2}{2}-(m+1)=1 .
\]
Both sit at the minimal level, for every $t$; there is no smaller alcove for a support to be inside,
and no threshold to cross \stverif. The reason is structural rather than numerical: raising the order
by two raises the rank of the frozen group by one, and the two moves cancel in $\ell/2-n$ exactly.

\emph{What the measurement gives instead}, and it is a biconditional with no exception. The wrong
filter reproduces $\Phi_{t,r}$ precisely when it agrees with the right one \emph{pointwise on}
$\mathrm{supp}\,B$: on the three tying forms the two filters agree at every one of the $4$, $14$ and
$10$ weights of the support, and on each of the other ten they differ somewhere, from one weight to
eight. Thirteen of thirteen \stverif. So the tie is not a collective cancellation in
$\sum_\eta B_{\eta,\mu}\tau(\eta)$ --- which is what a level threshold would have had to be --- but
the filter itself, evaluated where the two rules happen to coincide. The decoy at order $t+4$ ties on
exactly the same three forms, which is what an agreement-region explanation predicts and a threshold
one does not.

\emph{The problem, restated.} Describe
$\mathcal A_t=\{\eta:\tau^C_t(\eta)=\tau^C_{t+2}(\eta,0)\}$. It is not simply an alcove condition,
and the two parities of the rank already differ: at $t=6$, rank two, the two rules agree on every
$\eta$ with $\eta_1\le4$ and first differ at $\eta_1=5$, so there the region \emph{is} an initial
segment; at $t=4$, rank one, they agree at $\eta_1\in\{0,1,2,3,5,7,9,11,12\}$ and differ at
$\{4,6,8,10\}$, which is no initial segment at all \stverif. A description of $\mathcal A_t$ would
turn the three ties from a gap in a decoy into a statement, and would say which decoy to run instead:
one that moves the support off $\mathcal A_t$. Consistently with all of this, the maximum of $\eta_1$
over $\mathrm{supp}\,B$ does separate the ties inside each configuration --- $3$ against
$5,6,7,7,11,13,17$ at $t=4$, and $3$ and $4$ against $6$ and $8$ at $t=6$ --- but the cut is not the
same number in the two, so it is a symptom of $\mathcal A_t$ and not a threshold.

A second measurement, from an unrelated test, is left as it was found. Classifying how the residue
vanishes, the decoy that swaps the filter for the one of the next order changes the answer at $t=4$
and \emph{does not change it at all} at $t=6$: the same four inherited, the same eight by
cancellation. We had read that as a threshold too. It is now one more instance to explain from
$\mathcal A_t$, and we have not done it.
\end{problem}

\begin{problem}[a discriminant for the residue]\label{prob:residue}
Find a criterion for the residual vanishing, or show that none of a given shape exists. What we can
say is where it is not: not the emptiness of the $t$-core, not the stable-range rectangles that
settle the inherited half, not the parity, and not the quotient component at the self-paired residue,
which is perfect inside one stratum and wrong inside the next. What we can say is a necessary
condition --- a parity, and membership of one of three $t$-cores out of thirty-six.

We add a methodological remark rather than a hint. With a residue of a dozen forms no conjunction of
two features can be resolved, since the candidate rules have more freedom than the data. The next
step is a larger population, not more features, and any rule proposed before that is fitting noise.
\end{problem}

\begin{problem}[is $G_t$ their induction operator?]\label{prob:LS}
Landweber and Sjamaar \cite{LS13} construct, for a closed subgroup of maximal rank, a relative
twisted $\mathrm{Spin}^c$-induction and a character formula for it, together with GKRS-type
multiplets, in equivariant K-theory. Our $G_t=D_t^{-1}$ is a coefficient-level object: an inverse of
a centred difference operator on functions of finite support, obtained after specialising at the
torsion element and applying the fusion projection. Is $G_t$ the specialisation of their relative
induction operator for $B_{R'}\supset B_{m'}\times D_r$, and if so does their character formula
already contain Proposition \ref{prop:divided}? We can state the question precisely and cannot
answer it; an affirmative answer would place \eqref{eq:divided} inside an existing theory rather than
beside it.

The sharper form of the question, and the one we would actually like answered, is the one Remark
\ref{rem:LSgap} isolates. Their identity \eqref{eq:LS44} yields the vanishing of the alternating sum
of a multiplet \emph{after} the forgetful map, and we have checked that our $\nu$ satisfies it. Our
Theorem \ref{thm:cancel} asks for the vanishing on each fibre of the coefficient extraction of
the $\psi^t$-dilated operator, separately, and the fibrewise version of their weighting is false. So:
can \eqref{eq:LS44} be refined coefficientwise --- can the single vanishing
$\prod_{\alpha\in R^+_M}(1-1)=0$ be resolved into one vanishing per fibre? That refinement would turn
their global multiplet identity into our local one. Theorem \ref{thm:cancel} and (L1) are now proved
by elementary means, so the question is no longer whether their theory can \emph{prove} them but
whether it \emph{explains} them: why should a relative induction identity in $K$-theory produce,
after specialisation, exactly this graphic-matroid and permanent structure?
\end{problem}

\begin{problem}[the other classical types]\label{prob:types}
The companion paper asks what the same alphabet does to $sp_\lambda$, $so_\lambda$ and $o_\lambda$,
reports measurements across the types and finds no law covering the whole table. The instrument of
this paper was not available then, and it is built for the question: the composite of \eqref{eq:A} is
a branching followed by an evaluation, and neither step is specific to the Schur case. What we do not
know is whether the two steps survive the change of type in the same form, and in particular whether
the analogue of Conjecture \ref{conj:H} holds --- extremal primitivity is the part of the structure with the
least reason to be stable, and therefore the part worth testing first.
\end{problem}

\begin{problem}[one diagram for both parities]\label{prob:idealodd}
An earlier version of this problem asked for a minimal generating set of the fusion ideal at odd
order in type $C$, on the grounds that our odd filter lived in that regime. It does not: the odd
filter is of type $B$ (\S\ref{sec:odd}), and Theorem \ref{thm:fusion} identifies its quotient
without needing minimality. The question that replaces it is the one Theorem \ref{thm:fusion} makes
natural. Both pipelines now read
\[
\text{branching}\ \longrightarrow\ \text{minimal fusion projection on the frozen factor},
\]
with the parity changing only the first arrow --- twined $C_R\rightsquigarrow C_m\times C_r$ for even
$t$, ordinary $B_{R'}\to B_{m'}\times D_r$ for odd. Construct a single functorial diagram producing
both, so that Proposition \ref{prop:parity} becomes a corollary of one statement rather than a table
of two.

Half of that is now available, and saying which half is the useful part of the problem. Both
inclusions have \emph{equal rank}, both therefore obey the same equal-rank character formula, and
after the frozen specialisation the two differ only in which positive roots are complementary:
$e_i\pm f_j$ together with $f_j$ in the odd case, giving the divisor $\prod_j(1-z_j^{\,t})$ of
Proposition \ref{prop:crossden}; and $e_i\pm f_j$ alone in the even one, giving
$\prod_j(1-z_j^{\,t})/(1-z_j^{2})$, by Remark \ref{rem:eventransversal}. The numerator is a signed
transversal count on both sides, with one forbidden residue class instead of two and a chirality
factor instead of none. So the second and third arrows are one construction already.

What is not uniform is the \emph{first} arrow, and it has shrunk to a single sign: Proposition
\ref{prop:red} is stated uniformly in the parity, and what decides whether the object then lives on
$C_R$ or on $B_{R'}$ is $\det p_t=(-1)^{t+1}$ of \eqref{eq:det}. The problem is therefore no longer
``unify a table of two'' but ``exhibit a construction over $O(N)$ whose restriction to each component
gives the corresponding branching'', with the component read off that determinant. We do not have it.

Two things about that first arrow are worth stating so that they are not looked for twice. There is
no orbit algebra of the \emph{odd} object to find: by \eqref{eq:det} the odd point lies in the
identity component, the first step is an ordinary restriction to $SO_{2R'+1}$, and no twining occurs.
Only the even side has one, and there it is classical --- the frozen block $\{1,-1\}$ with the
reciprocal pairs makes the alphabet a reflection coset, and the algebra whose characters compute the
value is the \emph{orbit} Lie algebra of that coset, the Langlands dual of the fixed-point
subalgebra, which for our coset is $C_R$; that is Jantzen's twining theorem \cite{Jantzen} in the
form of Kumar--Lusztig--Prasad \cite{KLP}. And one attempt failed, so that it is not repeated: the
question cannot be settled by asking which basis the object expands in. $B_r$ and $C_r$ share a Weyl
group, so their irreducible characters are two bases of the \emph{same} ring of invariants and both
expansions close with integer coefficients, always. The expansion is indeed consistently shorter in
the symplectic basis for even $t$ and in the odd orthogonal basis for odd $t$ --- three of three and
five of five in our range --- but economy of an expansion is a heuristic, and the determinant is not.
\end{problem}

\section{Closing: one parity, seen three times}\label{sec:closing}

It is worth saying plainly what the paper found, because the same elementary fact surfaces three
times in three different formalisms and it would be easy to read the three as unrelated.

The fact is the parity of $t$. It appears first as a \emph{determinant}: by \eqref{eq:det} the
evaluation point $p_t$ has $\det p_t=(-1)^{t+1}$, so for odd $t$ it lies in the identity component
and for even $t$ it does not. That single sign decides which group the object lives on, hence
whether the first arrow of \eqref{eq:A} is an ordinary restriction or a twining, and hence the
entire dichotomy of \S\ref{sec:odd}: $B_{R'}\to B_{m'}\times D_r$ against
$C_R\rightsquigarrow C_m\times C_r$.

It appears a second time as an \emph{invertibility}. The two conditions $a_i\equiv0$ and
$2a_i\equiv0$ are different at an even modulus and equivalent as soon as $2$ is invertible, which is
Remark \ref{rem:BC}; so the extra wall exists for even $t$ and not for odd. Proposition
\ref{prop:galois} makes this an identification rather than an inspection: for odd $t$ the two
\emph{nonvanishing loci} are exchanged by the Galois element $\sigma_2$ and are translates of one
another (Figure \ref{fig:galois}), while the values differ by the character $\gamma_t$ of
Proposition \ref{prop:galoissign}. This is not the determinant restated: one is a statement about the
component group, the other about root lengths, and they happen to be governed by the same integer.

It appears a third time as a \emph{ribbon-size parity}, in Corollary \ref{cor:selection}. A surviving
orbit-side term differs from $\lambda$ by $kt$ cells, while the subsequent symplectic restriction
changes the size by an even number; hence for even $t$ every surviving coefficient lies in the parity
class of $|\lambda|$; that is Corollary \ref{cor:selection}, and at $t=4$ its branching-side
incarnation is exactly the checkerboard of Lemma \ref{lem:checker}. Note that no \emph{sign} enters here: the argument is about sizes modulo $2$, not
about the sign of a ribbon tiling, which depends on the heights of the ribbons and not on how many
were removed. Here the formalism is combinatorial and owes nothing to either of the previous two.

We would call this a third \emph{manifestation} rather than a third independent mechanism. Three
formalisms --- algebraic groups, root systems, ribbon combinatorics --- each produce a visible
difference between odd and even order, and the paper's structure is largely the record of following
each of them to where it leads.

One caution, because the temptation to make everything the parity is strong and would be wrong. The
\emph{signs} do not see the parity of $t$; they see a finer invariant. By Corollary
\ref{cor:galoisquad} the Galois character $\gamma_t$ is trivial exactly when $t\equiv1,2\pmod4$ ---
uniformly in the two branches --- and by
Corollary \ref{cor:oddsign} the normalisation of the odd filter is $-1$ exactly when
$t\equiv5\pmod8$ --- in the reading order of Lemma \ref{lem:T}, and Remark \ref{rem:convention} says
what the other order does to that sentence. Those are refinements of the parity, not further
instances of it, and their
mechanism is classical rather than ours --- Gauss's lemma and \cite{Pan06}. The honest summary is
that the \emph{support} of everything in this paper is governed by $t$ modulo $2$, and the
\emph{global Galois and normalisation signs} $\gamma_t$ and $\epsilon_t$ by $t$ modulo $4$ and $8$.
Not every sign in the paper: the individual ones carry straightening, shuffle, folding and ribbon
heights, and none of those is a function of $t$ alone.

The conjecture that remains open, Conjecture \ref{conj:H}, is the one place where the formalisms do
not yet meet, and the two parities are no longer equally far from it. The odd side now reaches the
numerator: by Proposition \ref{prop:transversal} it is a signed transversal count with values in
$\{0,\pm1\}$, and the division by the specialised spinor denominator is carried out in closed form
in \S\ref{sec:invert}, so on the odd side \emph{(L1)} is settled and what remains is \emph{(L2)}
and \emph{(L3)}. The even side has
the atoms of \S\ref{sec:where} and no such reduction, because the equal-rank formula there acts on a
virtual character. No argument we have covers both.

To close with the same precision the abstract opens with, and the distinction matters because
Proposition \ref{prop:divided} is one of the things here we are fondest of: of the three localised
statements (L1)--(L3), (L1) is now proved entire: its numerator, \emph{and the division} --- explicit
in closed form, as a sum along an arithmetic progression --- \emph{and} the bound on the fibre sum
the division produces, in the stronger form of Theorem \ref{thm:cancel}, that every multi-hit fibre
at a dominant regular coefficient index sums to zero. What closed it was not a cancellation mechanism on the fibre but the observation that
the fibre count is a permanent, together with the dominance under which the $X$ are enumerated.
(L2) and (L3) are
\emph{measured and not proved}; Conjectures \ref{conj:tie} and \ref{conj:L3} settle the case where
the extremal weight is attained once and leave the tied case open. Every
statement in this paper marked \stverif\ in the tables of \S\ref{sec:verif} is a computation and not
a theorem, and we would rather leave the frontier visible than let a table of agreements read as a
proof.

\subsection*{Code, data and disclosure}
Every computation quoted in this paper is performed by a named script distributed as an ancillary
file, and every number quoted is reproduced by an archived run of one: $269$ scripts for the claims,
of which $242$ carry their archived output, $15$ that draw the figures, and $25$ that check the
manuscript itself. Section \ref{sec:anc} describes the bundle and a manifest names every script and
what it does; where a statement is a measurement rather than a theorem, the entry in
\S\ref{sec:verif} also records the decoy that was run against it. The same scripts and the same
archived output are kept at
\url{https://github.com/karlesmarin/schur-orbit-and-reciprocal-pair}, which is a convenience: the
ancillary files carry everything the paper appeals to. Generative AI (Claude, Anthropic) was used
throughout as a research assistant --- for literature search, for writing and checking the ancillary
code, and on the prose. The mathematics, and the responsibility for it, are the author's.

\end{document}